\documentclass[12pt, reqno]{amsart}
\usepackage[margin=2cm]{geometry}
\usepackage{subfig}
\usepackage{bm}

\usepackage[colorlinks=true,linkcolor=blue,citecolor=blue,urlcolor=blue,citebordercolor={0 0 1},urlbordercolor={0 0 1},linkbordercolor={0 0 1}]{hyperref} 

\usepackage{amsmath,amssymb,amstext,verbatim,amsthm,mathrsfs,tikz-cd}
\usepackage[all]{xy}

\theoremstyle{plain}
\newtheorem{thm}{Theorem}[section]
\newtheorem{theorem}[thm]{Theorem}
\newtheorem{cor}[thm]{Corollary}
\newtheorem{lemma}[thm]{Lemma}
\newtheorem{prop}[thm]{Proposition}

\newtheorem{definition}[thm]{Definition}

\newtheorem{remark}[thm]{Remark}

\theoremstyle{remark}
\newtheorem{example}[thm]{Example}

\makeatletter
\def\makeCal#1{\expandafter\newcommand\csname c#1\endcsname{\mathcal{#1}}}
\def\makeBB#1{\expandafter\newcommand\csname b#1\endcsname{\mathbb{#1}}}
\def\makeFrak#1{\expandafter\newcommand\csname f#1\endcsname{\mathfrak{#1}}}
\count@=0 \loop \advance\count@ 1 \edef\y{\@Alph\count@}
\expandafter\makeCal\y \expandafter\makeBB\y \expandafter\makeFrak\y
\ifnum\count@<26 \repeat

\newcommand {\Hom}{\operatorname{Hom}}

\newcommand {\Aut}{\operatorname{Aut}}

\newcommand{\tensor}{\otimes}

\newcommand{\Pic}{\operatorname{Pic}}

\renewcommand{\leq}{\leqslant}

\newcommand{\Spec}{\operatorname{Spec}}
\newcommand{\Ext}{\operatorname{Ext}}

\newcommand{\Coh}{\operatorname{Coh}}

\newcommand{\D}{D}

\newcommand{\bs}{\mathbf{s}}

\begin{document}

\title{Geometric helices on del Pezzo surfaces from tilting}
\date{}
\author{Pierrick Bousseau}
\address{The Mathematical Institute, University of Oxford, Oxford, OX2 6GG, UK}
\email{pierrick.bousseau@maths.ox.ac.uk}

\begin{abstract}
    We prove that all geometric helices in the derived category of coherent sheaves on a del Pezzo surface are  related  by a sequence of elementary operations: rotation, shifting, orthogonal reordering, tensoring by a line bundle, and tilting.  As a consequence, any two non-commutative crepant resolutions of the affine cone over a del Pezzo surface are related by mutations. The proof relies on a geometric interpretation of tilting operations as cluster transformations acting on toric models of a log Calabi--Yau surface mirror to the del Pezzo surface. 
\end{abstract}

\maketitle

\setcounter{tocdepth}{1}

%%%% These 2 lines to print the extended TOC
%\setcounter{tocdepth}{4}
%\setcounter{secnumdepth}{4}
%%%%
\tableofcontents

%%%%%%%%%%%%%%%%%%%%%%%%%%%%%%%%%%%%%%%%%%%%%%%
%%%%%%%%%%%%%%%%%%%%%%%%%%%%%%%%%%%%%%%%%%%%%%%
%%%%%%%%%%%%%%%%%%%%%%%%%%%%%%%%%%%%%%%%%%%%%%%
%%%%%%%%%%%%%%%%%%%%%%%%%%%%%%%%%%%%%%%%%%%%%%%

\section{Introduction}

\subsection{Motivation and main result}
Let $Z$ be a del Pezzo surface over $\bC$ and $D(Z)$ denote its bounded derived category of coherent sheaves. Let $n$ be the rank of the Grothendieck group $K(Z)$ of $Z$, so that $n=\rho+2$, where $\rho$ is the Picard rank of $Z$.  
A \emph{geometric helix} on $Z$, in the sense of  \cite{BS}, is a sequence of objects $\bH= (E_i)_{i \in \bZ}$ in $\D(Z)$ satisfying the following conditions:
\begin{itemize}
\item[(i)] for each $i\in \bZ$, the corresponding thread $(E_{i+1}, \ldots, E_{i+n})$ is a full exceptional collection,
\item[(ii)] $E_{i+n}=E_i\tensor \omega_Z^{-1}$ for all $i\in \bZ$,
\item[(iii)] for all $i<j$, $\Hom^k(E_i,E_j)=0$ unless $k=0$. 
\end{itemize}
A thread $\bE=(E_{i+1}, \dots, E_{i+n})$ in a geometric helix is called a \emph{very strong} exceptional collection.

Geometric helices and very strong exceptional collections are of particular interest because they provide an algebraic description of the derived category $D(X)$ of the non-compact Calabi--Yau 3-fold $X$ given by the total space of the canonical line bundle of $Z$. Indeed, let $\bE=(E_1,\dots, E_n)$ be a very strong exceptional collection on $Z$, and let $\pi: X \rightarrow Z$ be the natural projection. 
By \cite[Theorems 1.6-1.7]{BS}, setting $T:= \bigoplus_{i=1}^n \pi^\star E_i$, the algebra $B(\bE):=\mathrm{End}(T)$ is a noetherian 
$CY_3$ algebra isomorphic to the path algebra of a non-degenerate quiver with potential $(Q,W)$. Moreover, there is an equivalence 
\[ \Phi_\bE: D(B(\bE)) \xrightarrow{\sim} D(X)\,,\] 
where $D(B(\bE))$ is the bounded derived category of finitely-generated $B(\bE)$-modules -- see also \cite[\S 4.2]{segal} and 
\cite[\S 4.2]{deTh_vdB}. Descriptions of $D(X)$ in terms of quivers with potential play an important role in both enumerative geometry \cite{arguz2026mock, ABquivers, BMP, bridgeland2024invariant} and theoretical physics \cite{aspinwall2, aspinwall, closset, clossetu, Her, Her_Karp}. 

Geometric helices on del Pezzo surfaces always exist, but they are far from unique. 
Starting from a geometric helix 
$\bH$, one can construct many others by applying standard operations in the derived category, such as rotation, shifting, orthogonal reordering, and tensoring by a line bundle; see \S\ref{sec_geometric_tilting} for details. These operations do not change the associated algebra $B(\bE)$ or the quiver with potential $(Q, W)$. More interesting are the \emph{tilting operations} introduced by Bridgeland–Stern \cite{BS}, building on earlier work of Herzog \cite{Her} in the theoretical physics literature. Tilting operations act non-trivially on the associated algebra  $B(\bE)$ and induce combinatorial mutations of the quiver with potential \cite{DWZ}.
The main result of this paper, stated as Theorem \ref{thm_main} in the main body, shows that tilting operations, together with the basic operations above, suffice to connect any two geometric helices. In particular, any two quivers with potential associated with geometric helices on a del Pezzo surface are related by mutations.

\begin{theorem} \label{thm_main_intro}
    Let $Z$ be a del Pezzo surface. Any two geometric helices on $Z$ are related by a sequence of the following operations:
     rotation, shifting, orthogonal reordering, tensoring by a line bundle, and tilting.
\end{theorem}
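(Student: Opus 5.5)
We would reduce the statement to a connectivity statement about exceptional collections, and then prove that connectivity by mirror symmetry, as the abstract suggests. The first step is to recall the Kuleshov--Orlov and Bondal--Polishchuk facts. On a del Pezzo surface, every exceptional object is a shift of a sheaf. Any two full exceptional collections are related by mutations (the braid group action), shifts, and tensoring by line bundles. More precisely, the braid group $B_n$ together with shifts acts transitively on full exceptional collections.

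The difficulty is that a general braid group element destroys the geometric (very strong) property. Tilting, in the sense of Bridgeland--Stern, is the special kind of mutation that keeps us inside the class of geometric helices. So the key is to show that the ``tilting graph'' on geometric helices, modulo the basic operations, is connected.

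To do this, we would attach to each geometric helix $\bH$, taken up to rotation, shifting, orthogonal reordering and tensoring by line bundles, a combinatorial datum. The natural candidate is the toric model of the mirror log Calabi--Yau surface $(Y,D)$. Concretely:
\begin{itemize}
\item the classes of the objects in a thread, viewed in $K(Z)$ with the Euler form, determine a ``charge'' configuration: rank together with $c_1$;
\item the vanishing in condition (iii) forces the slopes to be cyclically ordered;
\item this produces a fan, or equivalently a toric model $(\bar Y,\bar D)$ with $Y\to\bar Y$ a blowup at points on the boundary;
\item the number of blowups on each boundary divisor is read off from the orthogonal blocks of the helix.
\end{itemize}

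We would then check the dictionary between the two sides. A tilting operation at a position where the helix is geometric should correspond to an elementary transformation of toric models: blow up a point on a boundary divisor, then blow down the strict transform of the fibre. On the cluster side, this is a mutation of seeds of the associated cluster variety. The routine part is verifying this compatibility from the explicit formulas for left and right mutations of the classes in $K(Z)$.

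The main step is the converse: distinct toric models of the same log Calabi--Yau surface $(Y,D)$ are connected by such elementary transformations. We would get this from the known result that toric models of a Looijenga pair are related by mutations and toric blowups and blowdowns (Gross--Hacking--Keel, and Blanc's generation of the symplectic Cremona group). The extra work is to show that these toric blowups can always be avoided or absorbed, so that only mutations remain. Here we would use the fact that for $(Y,D)$ mirror to a del Pezzo surface, $D$ is a cycle of $n$ components, and we may work only with minimal toric models having exactly $n$ rays. Two further points must then be checked:
\begin{itemize}
\item a geometric helix is determined by its toric model up to the basic operations, via the reconstruction of exceptional objects from their classes;
\item every toric model arising along a chain of mutations comes from some geometric helix, meaning that tilting remains possible at every step.
\end{itemize}

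We expect the main obstacle to be this last point: proving that a mutation of toric models which is geometrically allowed, with the blown-up point on the appropriate divisor, always lifts to a genuine tilting of a geometric helix, that is, that the very strong condition is preserved. The first approach we would try is a slope and positivity argument using Bridgeland--Stern's criterion for when a tilt of a geometric helix is geometric. This criterion would be translated into the requirement that the relevant boundary divisor of $\bar D$ carries an exceptional curve of $Y\to\bar Y$, which holds by construction of the toric model.
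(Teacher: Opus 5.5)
Your outline (attach a toric model to a helix, show tilting acts by mutation, invoke the Hacking--Keating/Blanc factorization) matches the first half of the paper. The step that fails is recovering the helix from its toric model, and that is where most of the work lies. A toric model is built from a fan in $\bZ^2$, so it only sees the vectors $\psi([F_i])=(r(F_i),d(F_i))$ with multiplicities. These do not determine the classes $[E_i]\in K(Z)$.

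If two very strong collections have the same such data, then $[F_i]\mapsto[F_i']$ is an isometry of $(K(Z),\chi)$ preserving $r$ and $d$, that is, an element of $O(Z)=W(Z)\ltimes\Pic^0(Z)$. Conversely, using Karpov--Nogin, every $w\in O(Z)$ sends a geometric helix $\bH$ to a geometric helix $w\cdot\bH$ with classes $w([E_i])$ and the same toric data. Tensoring absorbs $\Pic^0(Z)$, but $W(Z)$ is not produced by your basic operations.

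Concretely, on a degree one del Pezzo surface the helices $w\cdot\bH$, $w\in W(E_8)$, are pairwise distinct. At most $11\cdot 11!=439084800<696729600=|W(E_8)|$ of them are reachable from $\bH$ by rotation, shift, orthogonal reordering and tensoring. Indeed, a composite of these operations is a shift, a rotation by $k \bmod 11$, a permutation inside equal-slope blocks and a tensoring. The shift must be trivial, and the line bundle is then forced.

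In your language, fixing the toric model leaves the monodromy ambiguity $\Gamma(Y,D)\simeq O(Z)$ in the marking $\mathrm{Ker}\,\psi\simeq\Lambda_{(Y,D)}$. Gorodentsev's reconstruction from classes cannot be applied until this ambiguity is resolved.

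The missing idea is that $W(Z)$, and indeed $\widehat W(Z)$, acts on geometric helices through tiltings and orthogonal reorderings. The paper proves this by identifying $\widehat W(Z)$ with the group $W_{\mathscr{S}}$ generated by the swaps $t^{\bs}_{jk}$ of two seed vectors with equal image under $\psi$. By Friedman, the roots of $(Y,D)$ are all $(-2)$-classes in $\Lambda_{(Y,D)}$. After deformation, each root is $E-F$ for two exceptional curves meeting the same boundary component. The factorization theorem then makes a toric model with $E$ and $F$ as exceptional curves mutation equivalent to the original one. Each $t^{\bs}_{jk}$ is realized by tilting to a thread with seed $\bs$, swapping the two now orthogonal objects, and tilting back.

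Several further points need repair.
\begin{itemize}
\item The fan must come from the dual collection $\bF$. The vectors $(r(E_i),d(E_i))$ of a thread all lie in a half-plane. The vectors $(r(F_i),d(F_i))$ are cyclically ordered around the origin, and it is the $[F_i]$ that transform by the seed mutation formula under tilting.
\item You must show that the mirror surfaces of two helices are deformation equivalent. The paper shows that the intersection form on $\mathrm{Ker}\,\psi$ is $-\chi$, hence independent of the helix, and then uses the classification of the ten $q$-Painlev\'e types.
\item The factorization only relates the two fans up to $GL(2,\bZ)$. A determinant $-1$ element reverses the cyclic order, which rotations and tensorings cannot undo. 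The paper reduces to $SL(2,\bZ)$ by composing the isomorphism of minimal models with the elliptic involution, which negates the volume form.
\end{itemize}

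The step you single out as the main obstacle is actually the easy one. Bridgeland--Stern show that tilts of a good thread are very strong, and after rotation there is a thread good for any given object. Together these lift every seed mutation to a tilt, up to an automorphism preserving $\langle-,-\rangle$ and fixing $\mathrm{Ker}\,\psi$, which commutes with mutations.

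Finally, the toric blow-ups in the factorization are harmless fan refinements. A toric model need not have $n$ rays either: in the $\bP^1\times\bP^1$ example, four seed vectors span only three rays.
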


Theorem \ref{thm_main_intro} also has a natural application in the context of non-commutative crepant resolutions. Recall that, following Van den Bergh \cite{bergh2002non}, a non-commutative crepant resolution of a Gorenstein algebra $R$ is a $R$-algebra $\Lambda =\mathrm{End}_R(M)$, where $M$ is a finitely generated reflexive $R$-module, such that $\Lambda$ is Cohen-Macaulay as $R$-module and is of finite global dimension. A general theory of mutations of non-commutative crepant resolutions has been developed in \cite{hara2026stability, IyamaReiten, IyamaWemyss, IWtits, vdB_ICM}.
For a del Pezzo surface $Z$, consider the Gorenstein ring $R_Z:= \bigoplus_{k \in \bZ_{\geq 0}} H^0(Z, \omega_Z^{-k})$. Geometrically, $\Spec R_Z$ is the affine Gorenstein canonical 3-fold singularity obtained from $X$ by contracting the zero section $Z$ to a point. According to
\cite[Proposition 7.2]{bergh2002non}, the algebra $B(\bE)$ is a non-commutative crepant resolution of $R_Z$  for every very strong exceptional collection $\bE$ on $Z$. Moreover, tilting operations of the helix generated by $\bE$ correspond to mutations of non-commutative crepant resolutions. Recently, Nordskova proved \cite[Theorem II.3.1]{nord} -- see also \cite[Theorem 1.1]{NvdB} --
that any non-commutative crepant resolution of the completion $\widehat{R}_Z$ of $R_Z$ is Morita equivalent to an algebra $B(\bE)$ for some very strong exceptional collection $\bE$ on $Z$.
Consequently, Theorem \ref{thm_main_intro} implies the following corollary:

\begin{cor} \label{cor_intro}
Let $Z$ be a del Pezzo surface. Any two non-commutative crepant resolutions of the completion $\widehat{R}_Z$ of $R_Z= \bigoplus_{k \in \bZ_{\geq 0}} H^0(Z, \omega_Z^{-k})$ are related by a sequence of mutations.
\end{cor}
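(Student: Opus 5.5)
The corollary should follow by combining Theorem \ref{thm_main_intro} with Nordskova's result and the dictionary between tilting and mutation of non-commutative crepant resolutions. Let $\Lambda_1,\Lambda_2$ be two non-commutative crepant resolutions of $\widehat{R}_Z$. By \cite[Theorem II.3.1]{nord} (see also \cite[Theorem 1.1]{NvdB}), each $\Lambda_a$ is Morita equivalent to $\widehat{B}(\bE_a):=B(\bE_a)\tensor_{R_Z}\widehat{R}_Z$ for some very strong exceptional collection $\bE_a$ on $Z$. Let $\bH_a$ be the geometric helix generated by $\bE_a$. By Theorem \ref{thm_main_intro}, $\bH_1$ and $\bH_2$ are related by a finite sequence of elementary operations.

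I would then handle the operations one type at a time and track the induced algebra.

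\begin{itemize}
\item \textbf{Rotation, shifting, orthogonal reordering, tensoring by a line bundle.} As recalled in \S\ref{sec_geometric_tilting}, these leave $B(\bE)$ unchanged up to isomorphism or Morita equivalence. For instance, tensoring by a line bundle $L$ is an autoequivalence of $D(X)$ sending $T$ to $T\tensor \pi^\star L$, with the same endomorphism ring. Rotation replaces one summand $\pi^\star E_i$ by $\pi^\star(E_i\tensor\omega_Z^{-1})\cong \pi^\star E_i\tensor \pi^\star\omega_Z^{-1}$, which is isomorphic to $\pi^\star E_i$ since $\pi^\star\omega_Z$ is trivial on $X$. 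So $T$ is unchanged up to isomorphism.
\item \textbf{Tilting.} Here $T$ changes by replacing one indecomposable summand, and one must check that this is precisely an Iyama--Wemyss mutation of the corresponding reflexive module $M=\pi_\star T$ over $R_Z$. This is the claim recalled before the corollary: tilting operations correspond to mutations of non-commutative crepant resolutions. I would verify it by identifying the exchange triangle defining the tilt with the minimal right (or left) $\mathrm{add}(M/M_i)$-approximation sequence. Since $X\to\Spec R_Z$ is crepant and $T$ is tilting, pushing forward gives an exact sequence of reflexive modules, which is exactly the defining sequence of mutation.
\end{itemize}

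Finally, completion along the cone point commutes with forming $\mathrm{End}$ and with approximation sequences, because $\widehat{R}_Z$ is flat over $R_Z$. So these statements descend to $\widehat{R}_Z$, and composing them gives a chain of mutations and Morita equivalences from $\Lambda_1$ to $\Lambda_2$.

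The main obstacle is the tilting step. One must ensure that a tilt, which may involve a shift making an object a complex, still yields a sheaf-level summand whose pushforward is a reflexive module. One must also ensure that the tilting triangle pushes forward to an honest short exact approximation sequence rather than only a triangle. I would handle this using the very strong condition (iii), which gives vanishing of higher Ext on $X$ for the tilted collection as well, via \cite{BS}. I would then invoke the known compatibility of the Bridgeland--Stern tilt with Iyama--Wemyss mutation, as cited in the text preceding the corollary.
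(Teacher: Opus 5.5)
Your route is the paper's. The paper deduces Corollary \ref{cor_intro} in the paragraph preceding it from four ingredients: Nordskova's theorem \cite[Theorem II.3.1]{nord}, Theorem \ref{thm_main_intro}, the remark that rotation, shifting, orthogonal reordering and tensoring by a line bundle do not change $B(\bE)$, and the assertion that tilting of helices corresponds to mutation of non-commutative crepant resolutions. It gives no more detail on the last point than you do. Your worry about shifts appearing in the tilt is settled by Lemma \ref{lem_shift}.

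\emph{Your justification of the rotation step is false.} On $X=\mathrm{Tot}(\omega_Z)$ the line bundle $\pi^\star\omega_Z$ is not trivial. Its tautological section vanishes exactly along the zero section $Z_0\subset X$, so $\pi^\star\omega_Z\cong\cO_X(Z_0)$, and its restriction to $Z_0$ is $\omega_Z\not\cong\cO_Z$. You are conflating it with $\omega_X$, which is the bundle that is trivial. Hence $\pi^\star(E_1\otimes\omega_Z^{-1})\not\cong\pi^\star E_1$, and $T$ really does change under rotation. For example, on $\bP^2$, rotating $(\cO,\cO(1),\cO(2))$ replaces $\cO_X$ by $\pi^\star\cO(3)$. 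Its global sections form the irrelevant ideal $\bigoplus_{p\geq 1}H^0(\omega_Z^{-p})$ of $R_Z$, not $R_Z$ itself.

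The conclusion you need still holds, for either of two reasons.
\begin{itemize}
\item[(a)] The formula $\Hom_X(\pi^\star A,\pi^\star B)=\bigoplus_{p\geq 0}\Hom_Z(A,B\otimes\omega_Z^{-p})$ identifies $B(\bE)$ with $\bigoplus_{i}\bigoplus_{m\geq i}\Hom(E_i,E_m)$, where $i$ runs over the thread and $m\in\bZ$; this is the rolled-up helix algebra of \cite{BS}. Since $\Hom(E_a,E_b)=0$ for $a>b$ in a geometric helix of bundles, and $E_{a+n}=E_a\otimes\omega_Z^{-1}$, this algebra depends only on $\bH$, not on the chosen thread.
\item[(b)] On the punctured cone $U=X\setminus Z_0\cong\Spec R_Z\setminus\{\mathfrak m\}$, the tautological section trivializes $\pi^\star\omega_Z$. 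So the reflexive module $M=\Gamma(U,T|_U)$, which satisfies $B(\bE)\cong\mathrm{End}_{R_Z}(M)$ because $B(\bE)$ is Cohen--Macaulay and hence reflexive, is unchanged by rotation.
\end{itemize}

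Relatedly, in your tilting step the module to mutate should be this $M=\Gamma(U,T|_U)$. It should not be $\Gamma(X,T)$, which need not be reflexive, as the $\bP^2$ example shows. It also should not be written ``$\pi_\star T$'', since in the paper $\pi$ denotes the projection $X\to Z$.
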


Corollary \ref{cor_intro} was conjectured by Nordskova in \cite[Conjecture II.1.18]{nord}, who also proved it in the cases $Z=\bP^2$ and $Z=\bP^1 \times \bP^1$. Even in these special cases, our proof of Theorem \ref{thm_main_intro}
and Corollary \ref{cor_intro} differs from the approach of \cite{nord}\footnote{While this paper was being completed, the author was informed that Nordskova and Van den Bergh independently obtained a different proof of Corollary \ref{cor_intro}, now available in \cite{NvdB}.}. More generally, it is conjectured in \cite[Conjecture 1.3-Theorem 1.4]{hara2026stability} that any two non-commutative crepant resolutions of a 3-dimensional complete local Gorenstein isolated singularity are related by mutations. This has been proven by Iyama-Wemyss for terminal singularities in \cite{IWtits}. Corollary \ref{cor_intro} establishes the conjecture for a class of non-terminal singularities.

\subsection{Structure of the proof}
Let $\bE=(E_1, \dots, E_n)$ be a very strong exceptional collection on $Z$. To prove Theorem \ref{thm_main_intro} we consider the dual exceptional collection $\bF=(F_n, \dots, F_1)$ and the corresponding basis $\bs(\bE)=\{[F_i]\}_{1 \leq i \leq n}$ of $K(Z)$. Tilting operations on the helix generated by $\bE$ induce seed mutations on $\bs(\bE)$, viewed as a seed in the sense of cluster algebras. We prove that the seeds $\bs(\bE)$ are of $q$-Painlev\'e type in the sense of \cite{Mizuno}. In particular, $\bs(\bE)$ determines a lattice
polygon in $\bR^2$, referred to as a T-polygon 
-- see \S\ref{sec_q_painleve}. 

A deep combinatorial result of Kasprzyk--Nill--Prince \cite{KNP} classifies T-polygons up to mutations and action of $GL(2,\bZ)$. A simpler and more conceptual proof of this result was later obtained by Lutz \cite{lutz} using a factorization result in the birational geometry of log Calabi--Yau surfaces \cite[Proposition 3.27]{hacking_keating}--\cite[Theorem 1]{blanc}. The relevant log Calabi--Yau surfaces are constructed from the seeds $\bs(\bE)$ via an explicit blow-up construction of toric surfaces, following the work of Gross--Hacking--Keel \cite{GHKbir} on the birational geometry of cluster algebras. These log Calabi--Yau surfaces are classically related to del Pezzo surfaces by mirror symmetry \cite{mirror_orbifold, AKO, corti2023cluster, GGLP, gugiatti2025mirrors, lutz}.
The factorization result \cite[Proposition 3.27]{hacking_keating}--\cite[Theorem 1]{blanc} shows that different presentations of a log Calabi--Yau surface as blow-ups of toric varieties are related by a sequence of elementary cluster mutations.
In Theorem \ref{thm_q_P}, based on the geometric proof of \cite{lutz}, we refine the classification of T-polygons by allowing only linear transformations in $SL(2,\bZ)$ instead of all $GL(2,\bZ)$.

Using the classification of T-polygons up to mutations, we reduce the problem to the following question. Let
$\bE=(E_1, \dots, E_n)$ and $\bE'=(E_1', \dots, E_n')$
be two very strong exceptional collections with the same ranks $r(E_i)=r(E_i')$ and the same anticanonical degrees $d(E_i)=d(E_i')$ for all $1 \leq i \leq n$. Then, we must show that $\bE$ and $\bE'$ are related by a sequence of elementary transformations. 

To do this, we show that the seed $\bs(\bE)$ and $\bs(\bE')$ are related by an element of the orthogonal group of $Z$, defined as the group of isometries of $K(Z)$ with respect to the Euler form that preserve both the rank and the anticanonical degree. In Proposition \ref{prop_weyl_affine_gen}, we show that $O(Z)$ admits a semi-direct decomposition  $O(Z)=W(Z)\ltimes \Pic^0(Z)$, where $W(Z)$ is the Weyl group generated by the reflections in the $(-2)$-curve classes, and $\Pic^0(Z)$ is the group of line bundles orthogonal to the canonical divisor. 

To conclude the proof, we establish the existence of an action of the Weyl group $W(Z)$, and more generally of an affine Weyl group $\widehat{W}(Z)$, on very strong exceptional collections by tilting operations and orthogonal reordering. This result is obtained as a special case of a more general result on the geometry of log Calabi--Yau surfaces, Theorem \ref{thm_weyl}, which identifies a combinatorial Weyl group defined in terms of seed mutations with a geometric Weyl group defined in terms of $(-2)$-curve classes on log Calabi--Yau surfaces. 
The factorization result \cite[Proposition 3.27]{hacking_keating}--\cite[Theorem 1]{blanc} again plays a crucial role in the argument.

The structure of the paper is as follows.
In \S \ref{sec_seeds_mutations}, we review the correspondence between seed mutations and the geometry of log Calabi--Yau surfaces, and prove
in Theorem \ref{thm_weyl} the general comparison between combinatorial and geometric Weyl groups.
In \S \ref{sec_weyl_del_pezzo}, we study the Weyl group, the affine Weyl group, and the orthogonal group of a del Pezzo surface $Z$, all acting by isometries on $K(Z)$. In particular, we establish the semi-direct decomposition $O(Z)=W(Z)\ltimes \Pic^0(Z)$
 in Proposition \ref{prop_weyl_affine_gen}.
 In \S \ref{sec_very_strong}, we review geometric helices, very strong exceptional collections, and tilting operations. Finally, in
\S \ref{sec_final}, we prove that the seeds $\bs(\bE)$ associated to very strong exceptional collections are of $q$-Painlev\'e type, and we conclude the proof of the main result, Theorem \ref{thm_main}.

\subsection{Related works}

\subsubsection{Mutations of exceptional collections} For historical reasons, the word mutation is used in two distinct contexts: for exceptional collections and for quivers. These two notions are not compatible in general. As discussed above, tilting of very strong exceptional collections on del Pezzo surfaces corresponds to quiver mutations. 
As reviewed in \S \ref{sec_geometric_tilting}, tilting operations can be interpreted as particular sequences of mutations of exceptional collections that preserve the very strong condition. In contrast, individual mutations of exceptional collections do not generally preserve this property.  Kuleshov--Orlov \cite[Theorem 7.7]{KO} proved that any two full exceptional collections of sheaves on a del Pezzo surface are related by a sequence of mutations -- see also \cite[\S 6]{GorKul} for an exposition of the proof.
This differs from Theorem \ref{thm_main_intro}, which concerns connecting very strong exceptional collections via tilting operations. Moreover, our proof of Theorem \ref{thm_main_intro} does not rely on \cite[Theorem 7.7]{KO} -- see Remark \ref{rem_KN}.

\subsubsection{Hille--Perling polygons and Hacking bundles} 
In this paper, we associate to each very strong exceptional collection on a del Pezzo surface a seed of 
$q$-Painlevé type, and hence a T-polygon in 
$\bR^2$. An alternative construction, due to Hille--Perling \cite{HP,perling}, also assigns a T-polygon to a very strong exceptional collection.
The two constructions are not obviously the same.  In particular, the T-polygons arising from the Hille--Perling construction are defined only up to the action of $GL(2,\mathbb{Z})$, whereas for our purposes it is essential that our T-polygon is defined without this ambiguity. Nevertheless, we expect that the two constructions produce T-polygons that agree up to the action of $GL(2,\mathbb{Z})$. Furthermore, it follows from \cite{KNP,lutz} that T-polygons modulo 
$GL(2,\bZ)$ are in one-to-one correspondence with 
$\bQ$-Gorenstein degenerations of del Pezzo surfaces to toric Fano surfaces. This suggests a natural connection between our results and Hacking’s construction of exceptional vector bundles via degenerations to toric surfaces \cite{hacking, hacking2, tevelev2022categorical, urzua2025wahl}. Finally, T-polygons and their mutations also play a significant role in the symplectic geometry of del Pezzo surfaces and their mirrors
\cite{evans_smith, hacking_keating, hacking_keating_2, vianna0, vianna}.
We leave the exploration of these questions for future work.

\subsubsection{Seiberg duality in physics}
Quivers with potential describing the derived category $D(Z)$ of a del Pezzo surface $Z$ appear naturally in theoretical physics. For instance, they describe the four-dimensional $\mathcal{N}=1$ superconformal field theories living on a stack of D3 branes placed at the tip of the cone over the del Pezzo surface, which are related via the AdS/CFT correspondence to IIB string theory on $\mathrm{AdS}_5 \times X_5$, where $X_5$ is a five-dimensional Sasaki-Einstein manifold \cite{Her, Her_Karp}. In another direction, these quivers with potential define supersymmetric quantum mechanics that describe BPS particles in the four-dimensional $\mathcal{N}=2$ theories obtained from M-theory on $\bR^4 \times S^1 \times X$  \cite{closset, clossetu}.
Tilting operations on geometric helices, that is, quiver mutations, correspond to Seiberg dualities for the four dimensional $\mathcal{N}=1$ theories or for the supersymmetric quantum mechanics. Therefore, Theorem \ref{thm_main_intro} can be reformulated in these contexts as stating that any two of these physical theories can be related by a sequence of Seiberg dualities.

%%%%%%%%%%%%%%%%%%%%%%%%%%%%%%%%%%%%%%%%%%%%%%%
%%%%%%%%%%%%%%%%%%%%%%%%%%%%%%%%%%%%%%%%%%%%%%%
%%%%%%%%%%%%%%%%%%%%%%%%%%%%%%%%%%%%%%%%%%%%%%%
%%%%%%%%%%%%%%%%%%%%%%%%%%%%%%%%%%%%%%%%%%%%%%%

\subsection*{Acknowledgements} The question considered in this work was brought to my attention during discussions with Tom Bridgeland and Luca Giovenzana on a separate project. I am grateful to Tom Bridgeland for many helpful exchanges, as well as valuable comments and corrections. I also thank Dominic Joyce, Anya Nordskova, and Richard Thomas for feedback and corrections on a first version of this paper.
This work was supported by a Sloan Research Fellowship from the Alfred P. Sloan Foundation. For the purpose of open access, the author has applied a CC BY public copyright licence to any author accepted manuscript arising from this submission.

\section{Seeds, mutations, and log Calabi--Yau surfaces}
\label{sec_seeds_mutations}

In this section, we review the notions of seeds and seed mutations in a finite-rank free abelian group endowed with a skew-symmetric form of rank two. We then review, following \cite{GHKbir}, the geometric interpretation of seed mutations as cluster mutations acting on toric models of log Calabi–Yau surfaces. The main result, Theorem~\ref{thm_weyl}, identifies a Weyl group defined combinatorially in terms of seed mutations with a Weyl group defined geometrically using log Calabi–Yau surfaces. Finally, we review the classification of seeds of $q$-Painlev\'e type following \cite{KNP, lutz, Mizuno}.

\subsection{Seeds, cyclically ordered seeds, and mutations}
\label{sec_seeds}

Throughout \S \ref{sec_seeds_mutations}, we fix a free abelian group $N$ of finite rank $n$ together with a group homomorphism
\[ \psi: N \longrightarrow \bZ^2 \,\]
with finite cokernel. We denote by $K=\mathrm{Ker}(\psi)$ its kernel.
Let $\langle-,-\rangle=\det(-,-)$ be the standard determinant pairing on $\bZ^2$. By slight abuse of notation, we use the same symbol for the induced integral skew-symmetric bilinear form on $N$ obtained by pullback along $\psi$.

In this paper, a \emph{seed} in $N$ is an unordered basis $\bs=\{e_i\}_{i\in I}$ of $N$ such that $\psi(e_i) \in \bZ^2$ is a non-zero primitive vector for all $i\in I$. A \emph{cyclic ordering} of a seed $\bs=\{e_i\}_{i\in I}$ is an ordering $I \simeq \{1, \dots, n\}$ such that the vectors $\psi(e_i)$ are counterclockwise cyclically oriented in $\bR^2$: after making the standard identification $\bR^2=\bC$, there exist determinations $\theta_i$ of the arguments of $\psi(e_i) \in \bC$ such that
\begin{equation} \label{eq_cyclic}
\theta_1 \leq \theta_2 \leq \dots \leq \theta_{n-1} \leq \theta_n \leq \theta_1 + 2 \pi\,.\end{equation}
A \emph{cyclically ordered} seed $\tilde{\bs}=(e_i)_{1 \leq i \leq n}$ consists of a seed together with a choice of cyclic ordering. Given a cyclically ordered seed $\tilde{\bs}$, we write $\bs$ for the underlying seed obtained by forgetting the cyclic ordering.

Let $\bs=\{e_i\}_{i\in I}$ be a seed in $N$. For every  
 $j \in I$ and $\epsilon \in \{\pm\}$, the \emph{seed mutation} $\mu_j^\epsilon(\bs)$ of $\bs$ is the seed $\mu_j^\epsilon(\bs)=\{e_i'\}_{i \in I}$ given by 
\begin{equation} \label{eq_seed_mutation}
e_i'= \begin{cases}
-e_j \,\,\,\,\,\text{if} \,\, i=j \\ e_i+[\epsilon\langle{e_i,e_j\rangle}]_+ e_j\,\,\,\ \text{if}\,\,\, i\neq j \,,
\end{cases}\end{equation}
where $[a]_+:= \max(0,a)$ for all $a \in \bR$. Two seeds $\bs$ and $\bs'$ are \emph{mutation equivalent} if there exist sequences $j_1, \dots, j_k$ and $\epsilon_1, \dots, \epsilon_k \in \{\pm\}$ such that $\bs' = (\mu_{j_k}^{\epsilon_k} \circ \cdots \circ \mu_{j_1}^{\epsilon_1}) (\bs)$. For every seed $\bs=\{e_i\}_{i\in I}$ in $N$, $j \in I$ and $\epsilon \in \{\pm\}$, we have $\mu_j^{-\epsilon}(\mu_j^\epsilon(\bs)) = \bs$, and so mutation equivalence defines an equivalence relation on the set of seeds. 
For every seed $\bs$ in $N$, $j \in I$ and $\epsilon \in \{\pm\}$, the seed $\bs'=\{e_i'\}_{i \in I}:= \mu_j^\epsilon(\mu_j^\epsilon(\bs))$ is obtained by applying to $\bs$ the linear transformation $T_{e_j}^\epsilon$ defined by
\begin{equation} \label{eq_spherical}
 T_{e_j}^\epsilon(e_i) = e_i+ \epsilon \langle e_i, e_j\rangle e_j \,.
\end{equation}
In particular, two seeds $\bs$ and $\bs'$ related by the linear transformation 
$ T_{e_j}^\epsilon$ are mutation equivalent.

\begin{remark} \label{remark_seed_isom}
In \cite[Definition 2.3]{Mizuno}, two seeds are called mutation equivalent if they are related by a sequence of seed mutations and seed isomorphisms, where a seed isomorphism is a linear automorphism of $N$ preserving $\langle-,-\rangle$ up to a sign. For the present paper, it is essential that seed isomorphisms are not included in the definition of mutation equivalence. This discrepancy in terminology is taken into account when we refer to results of \cite{Mizuno} in \S \ref{sec_q_painleve} below. 
\end{remark}

\subsection{Seeds and log Calabi--Yau surfaces} \label{sec_log_CY}

In this section we review the relationship between seed mutations and the birational geometry of log Calabi–Yau surfaces, following \cite{GHKbir}. In particular, we recall how seeds give rise to log Calabi–Yau surfaces via toric models and how seed mutations correspond to cluster birational transformations between these models.

In this paper, a \emph{log Calabi--Yau surface} is a pair $(Y,D)$ consisting of a smooth projective surface over $\bC$ together with a non-empty singular normal crossings anticanonical divisor $D$. We refer to \cite{friedman2015geometry, GHKbir, GHK} for background on the geometry of log Calabi--Yau surfaces. A morphism $\pi:(\widetilde{Y},\widetilde{D}) \rightarrow (Y,D)$ between log Calabi--Yau surfaces is called a \emph{corner blow-up} (resp.\! an \emph{interior blow-up}) if $\pi$ is the blow-up at a singular (resp.\! smooth) point of $D$, and $\widetilde{D}$ is the preimage (resp.\! the strict transform) of $D$. 
A \emph{toric model} for $(Y,D)$ is a morphism $\pi: (Y,D) \rightarrow (\overline{Y}, \overline{D})$ where $\pi$ is a composition of interior blow-ups, and $(\overline{Y}, \overline{D})$ is a smooth projective toric surface with its toric boundary divisor. 
The toric model is \emph{framed} if we additionally fix an identification $\overline{Y} \setminus \overline{D} \simeq (\bC^\star)^2$.
By \cite[Lemma 1.3]{GHK}, after a sequence of corner blow-ups any log Calabi–Yau surface admits a toric model.

We next recall the construction of log Calabi--Yau surfaces from seeds in $N$ following \cite{GHKbir}.
Let $\bs=\{e_i\}_{i\in I}$ be a seed in $N$. Choose a smooth complete fan $\Sigma$ in 
$\bR^2$ containing the rays $\bR_{\geq 0} \psi(e_i)$ for every $i\in I$. Let $\overline{Y}$ be the smooth projective toric surface with fan $\Sigma$, and let $\overline{D} \subset \overline{Y}$ be the toric boundary divisor.
Denote by $\overline{D}_i$ the irreducible toric divisor corresponding to the ray $\bR_{\geq 0}\psi(e_i)$. The inclusion 
$\Sigma \subset \bR^2$ induces an identification $\overline{Y} \setminus \overline{D} \simeq (\bC^\star)^2$.
For each $i \in I$, choose a point $p_i \in \overline{D}_i$ such that $p_i$ is a smooth point of $\overline{D}$, and assume that $p_i \neq p_j$ if $i \neq j$. Let $Y$ be the blow-up of $\overline{Y}$ at the points $p_i$, and let $D$ be the strict transform of $\overline{D}$. 
The pair $(Y,D)$ is a log Calabi--Yau surface.
The log Calabi--Yau surface $(Y,D)$, together with the blow-up morphism $\pi: (Y,D) \rightarrow (\overline{Y}, \overline{D})$ and the identification $\overline{Y} \setminus \overline{D} \simeq (\bC^{\star})^2$, is called a \emph{framed toric model} for the seed $\bs$. Framed toric models for $\bs$ are unique up to corner blow-ups, corresponding to refinements of $\Sigma$, and deformation equivalence, corresponding to varying the points $p_i$ on the toric divisors $\overline{D}_i$.

By \cite[Proposition 4.1]{GHK}, if the collection of points $(p_i)_{i \in I}$ is chosen in the complement of the union of countably many proper Zariski closed subsets of $\prod_{i\in I} \overline{D}_i$, then the log Calabi--Yau surface is \emph{generic} in the sense that $Y \setminus D$ does not contain any smooth rational curve with self-intersection $(-2)$. The framed toric model is called generic in this case.  

We now describe the geometric interpretation of seed mutations.
Let $\pi: (Y,D) \rightarrow (\overline{Y},\overline{D})$ be a generic framed toric model for a seed $\bs$ in $N$. Let $j \in I$ and $\epsilon \in \{\pm\}$, and consider the mutated seed $\bs':= \mu_j^\epsilon(\bs)$ as in \eqref{eq_seed_mutation}. By \cite[\S 5]{GHKbir}, after possibly performing corner blow-ups on $(Y,D)$, there exists a generic framed toric model for the seed $\bs'$ of the form $\pi': (Y,D) \rightarrow (\overline{Y}',\overline{D}')$ for the same surface $(Y,D)$, 
such that the induced birational map
\[ (\bC^\star)^2=\overline{Y} \setminus \overline{D} \dashrightarrow (\bC^\star)^2 = \overline{Y}' \setminus \overline{D}'\]
is an elementary cluster transformation -- see \cite[\S 5]{GHKbir} for details. 
Concretely, after subdividing $\Sigma$ if necessary, we may assume that the ray $-\bR_{\geq 0} \psi(e_j)$ also belongs to $\Sigma$. The projection $\bR^2 \rightarrow \bR^2/\bR \psi(e_j)$ then induces a toric morphism $\overline{Y} \rightarrow \bP^1$ which is generically a $\bP^1$-fibration. 
The morphism $\pi': (Y,D) \rightarrow (\overline{Y}',\overline{D}')$ is obtained by contracting the $(-1)$-curve given by the strict transform in $Y$ of the $\bP^1$-fiber passing through $p_j$. We say that the framed 
toric models $\pi$ and $\pi'$ are related by \emph{cluster mutation}.

A fundamental result of \cite{hacking_keating} (see also \cite{blanc}) shows that any birational transformation arising from a change in the presentation of a log Calabi–Yau surface as a blow-up of a toric surface factors into cluster mutations. Since this statement plays a central role in the present paper, we record it for convenience.

\begin{theorem}\label{thm_factorization}(\cite[Proposition 3.27]{hacking_keating}, see also \cite[Theorem 1]{blanc}).
    Let $\pi: (Y,D) \rightarrow (\overline{Y},\overline{D})$ be a generic  framed toric model for a seed $\bs$ in $N$. Let $\pi': (Y,D) \rightarrow (\overline{Y}', \overline{D}')$ be a toric model for $(Y,D)$ which is the composition of interior blow-ups at distinct points. Then, there exists a seed $\bs'$ obtained from $\bs$ by a sequence $\mu$ of seed mutations, and an isomorphism 
    $\overline{Y}' \setminus \overline{D}' \simeq (\bC^\star)^2$ making $\pi'$ into a framed toric model for $\bs'$ which is related to $\pi$ by a sequence of cluster mutations induced by $\mu$.
\end{theorem}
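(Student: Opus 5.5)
The plan is to deduce the statement from the factorization theorem for toric models of Looijenga pairs in \cite{blanc} and \cite{hacking_keating}, after matching that theorem's hypotheses to the present setting. We then track how the seed is carried along the factorization.

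\emph{Step 1: corner blow-ups.} Replace $(Y,D)$ by a common corner blow-up so that both $\pi$ and $\pi'$ are toric models of the same pair. Corner blow-ups correspond to refining the fans $\Sigma$ and $\Sigma'$, and they do not change the seed $\bs$ or the framed toric model up to the equivalence recalled above.

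\emph{Step 2: factorization.} By the factorization result of Blanc and Hacking--Keating, the birational map $\overline{Y}\setminus\overline{D}\dashrightarrow \overline{Y}'\setminus\overline{D}'$ of tori preserving the volume form factors, after further corner blow-ups, into elementary steps. Each step contracts the strict transform of a $\bP^1$-fibre through a blown-up point and then reblows on the opposite boundary divisor, possibly composed with a toric automorphism. We fix the framing on $\overline{Y}'$ by transporting the framing of $\overline{Y}$ along this chain.

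\emph{Step 3: matching steps to seed mutations.} We induct along the chain. At each stage we have a generic framed toric model for the current seed. The elementary step contracts the fibre through some $p_j$ in the direction $\pm\psi(e_j)$. By \cite[\S5]{GHKbir}, this is exactly the cluster mutation associated with $\mu_j^\epsilon$, where the sign $\epsilon$ records which side the point moves to. Since $\pi'$ blows up distinct points, every exceptional curve of $\pi'$ corresponds to one point. Hence at the end the collection of exceptional $(-1)$-curves, with their boundary components, determines a basis $\bs'$ with $\psi(e'_i)$ primitive, and $\bs'=\mu(\bs)$.

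\emph{Step 4: the toric automorphism.} Any leftover toric automorphism is absorbed into the choice of the isomorphism $\overline{Y}'\setminus\overline{D}'\simeq(\bC^\star)^2$.

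The main obstacle is the compatibility of genericity and of the distinct-points hypothesis along the chain. An intermediate model could a priori blow up infinitely near points, which would not come from a seed. Genericity of $(Y,D)$ rules this out: there are no $(-2)$-curves in $Y\setminus D$, so every intermediate model still blows up distinct points.
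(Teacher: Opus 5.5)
Your proposal is correct and follows the paper's route: the paper gives no independent proof and records the statement as a direct consequence of Hacking--Keating's factorization of toric models into mutations (with Blanc's theorem behind it), read through the Gross--Hacking--Keel dictionary between cluster mutations of framed toric models and seed mutations, and your use of genericity is what makes each step a genuine seed mutation, since having no $(-2)$-curves in $Y\setminus D$ excludes, at every stage of the chain, both infinitely near centres and two centres on one fibre. One small correction: the sign $\epsilon$ does not record which side $p_j$ moves to, since $p_j$ always moves to the divisor of $-\psi(e_j)$; instead, $\mu_j^+(\bs)$ and $\mu_j^-(\bs)$ differ by the transvection $T^{+}_{e_j}$, that is, by reframing the new model with a shear in $SL(2,\bZ)$ fixing $\psi(e_j)$.
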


\subsection{The intersection form}
\label{sec_intersection}

Let $\mathscr{S}$ be a mutation equivalence class of seeds in $N$.
In this section we review the construction of a canonical integral symmetric bilinear form
$(-,-)_{\mathscr{S}}$
on $K=\mathrm{Ker}(\psi)$ called the intersection form.
The construction follows \cite[\S 5]{GHKbir} and \cite[\S 3.1]{Mizuno} and uses the geometry of log Calabi--Yau surfaces.

Let $\bs=\{e_i\}_{i\in I} \in \mathscr{S}$ and $\pi: (Y,D) \rightarrow (\overline{Y},\overline{D})$ be a framed toric model for $\bs$ as in \S\ref{sec_log_CY}, with exceptional curves $E_i$ indexed by $i\in I$.
Let $NS(Y)$ denote the N\'eron-Severi group of divisors in $Y$, endowed with the intersection form $(\alpha , \beta) \mapsto \alpha \cdot \beta$.
Consider the subgroup
\begin{equation} \label{eq_lambda}
\Lambda_{(Y,D)}:=\{\alpha \in NS(Y)\,|\, \alpha \cdot D_\rho =0 \,\,\text{for every irreducible component} \,\,D_\rho\,\, \text{of}\,\, D\} \,\end{equation}
of divisor classes orthogonal to the irreducible components of $D$.

According to \cite[Theorem 5.5]{GHKbir} and \cite[Proposition 3.8]{Mizuno}, there is an isomorphism
\begin{equation} \label{eq_isom}
     \iota_\pi: K \xrightarrow{\sim} \Lambda_{(Y,D)} \,,
\end{equation}
defined as follows.
Let $a \in K$, that is  $a= \sum_{i\in I} a_i e_i \in  N$
such that $\sum_{i\in I}a_i \psi(e_i)=0$.
Then, 
\begin{equation} \label{eq_iota}
\iota_\pi(a):= \pi^\star C_a -\sum_{i\in I} a_i E_i\,,\end{equation} 
where $C_a \in NS(\overline{Y})$ is the unique curve class on the toric surface $\overline{Y}$ such that
\begin{equation} \label{eq_CD}
C_a \cdot\overline{D}_\rho=\sum_{\substack{i \in I \\ \overline{D}_i=\overline{D}_\rho}} a_i \,,\end{equation}
for every ray $\rho$ of $\Sigma$, where $\overline{D}_\rho$ denotes the corresponding toric divisor of $\overline{Y}$.  
The inverse map $\iota_\pi^{-1}$ is given by 
\begin{align} \label{eq_iota_inverse}
    \iota_\pi^{-1}: \Lambda_{(Y,D)} &\longrightarrow K \subset N \\ \nonumber
    \alpha &\longmapsto \sum_{i\in I} (\alpha \cdot E_i)e_i  \,.
\end{align}

Using the isomorphism $\iota_\pi$, we define the \emph{intersection form} $(-,-)_{\mathscr{S}}$ on $K$ as the restriction to $\Lambda_{(Y,D)}$ of the intersection form on $NS(Y)$. By \cite[Theorem 5.6]{GHKbir} (see also \cite[Lemmas 3.10-3.15]{Mizuno}), this bilinear form $(-,-)_{\mathscr{S}}$ is independent of the choices of the framed toric model $\pi$ and of the seed $\bs \in \mathscr{S}$. Hence, it depends only on the mutation equivalence class $\mathscr{S}$. 

We now give a purely combinatorial description of this intersection form.
Let $I \simeq \{1,\dots, n\}$ be a cyclic ordering of the seed $\bs=\{e_i\}_{i\in I}$, and let $\tilde{\bs}=(e_i)_{1 \leq i \leq n}$ as in 
\eqref{eq_cyclic}.
Define the bilinear form 
\[ \chi_{\tilde{\bs}}: N \times N \longrightarrow \bZ\]
by
\begin{equation} \label{eq_bilinear}
\chi_{\tilde{\bs}}(e_i, e_j):=
\begin{cases}
1 \,\,\,\text{if} \,\,\, i=j \\
\langle e_i,e_j \rangle\,\,\text{if}\,\,\, i>j\\
0 \,\,\,\, \text{if}\,\,\, i <j \,.
\end{cases}
\end{equation}
Since $\chi_{\tilde{\bs}}(e_i,e_j)-\chi_{\tilde{\bs}}(e_j,e_i)= \langle e_i,e_j\rangle$, 
and $K$ is the kernel of $\langle-,-\rangle$, the restriction $\chi_{\tilde{\bs}}(-,-)|_K$ is a symmetric bilinear form on $K$. In Lemma \ref{lem_brackets} below, we show that the intersection form $(-,-)_{\mathscr{S}}$
coincides with $-\chi_{\tilde{\bs}}(-,-)|_K$.
We first prove an elementary result in the geometry of toric surfaces which will be used in the proof of  Lemma \ref{lem_brackets}.

\begin{lemma} \label{lem_C2}
Let $\overline{Y}$ be a smooth projective toric surface. Let $\rho_1, \dots, \rho_m$ be the rays of the fan of $\overline{Y}$ cyclically ordered in $\bR^2$, with primitive generators  $u_{\rho_1}, \dots, u_{\rho_m} \in \bZ^2$, and corresponding toric divisors $\overline{D}_{\rho_1}, \dots, \overline{D}_{\rho_m}$. Then, for every class $C$ in the N\'eron-Severi group $NS(\overline{Y})$ of $\overline{Y}$, we have 
\begin{equation} \label{eq_C2}
C  \cdot C = \sum_{1 \leq i <j \leq m}\alpha_{\rho_i} \alpha_{\rho_j} \langle u_{\rho_i}, u_{\rho_j} \rangle \,,\end{equation}
where $\alpha_{\rho_i}:= C \cdot \overline{D}_{\rho_i}$ for all $1 \leq i \leq m$.
\end{lemma}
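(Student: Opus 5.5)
Both sides of \eqref{eq_C2} are quadratic forms in $C$, so the plan is to reduce the identity to one about the numbers $\alpha_{\rho_i}$ and then check it on a spanning set. First I will express $C$ in terms of the toric divisors. Write $C=\sum_k c_k \overline{D}_{\rho_k}$. For a smooth complete toric surface with cyclically ordered rays, the intersection numbers are:
\begin{itemize}
\item $\overline{D}_{\rho_k}\cdot\overline{D}_{\rho_{k\pm1}}=1$;
\item $\overline{D}_{\rho_k}^2=-b_k$, where $u_{\rho_{k-1}}+u_{\rho_{k+1}}=b_k u_{\rho_k}$;
\item all other products vanish.
\end{itemize}
Hence $\alpha_{\rho_k}=c_{k-1}-b_kc_k+c_{k+1}$ with indices mod $m$. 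The key consequence is the linear relation
\[ \sum_k \alpha_{\rho_k} u_{\rho_k} = \sum_k c_k\bigl(u_{\rho_{k-1}}+u_{\rho_{k+1}}-b_k u_{\rho_k}\bigr)=0, \]
which is the standard fact that $(C\cdot \overline{D}_\rho)_\rho$ lies in the kernel of $\bZ^{\Sigma(1)}\to\bZ^2$. Conversely, any $\alpha$ in this kernel determines a unique $C$, since $NS(\overline{Y})\cong\mathrm{Ker}(\bZ^{\Sigma(1)}\to\bZ^2)$ via intersection numbers. So I regard both sides as quadratic forms on this kernel lattice.

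It then suffices to check that the associated symmetric bilinear forms agree. For the right-hand side, the polarization $B(\alpha,\beta)=\frac12\sum_{i<j}(\alpha_i\beta_j+\beta_i\alpha_j)\langle u_i,u_j\rangle$ is not obviously symmetric-friendly. I will instead use the non-symmetric form $\chi(\alpha,\beta)=\sum_{i<j}\alpha_i\beta_j\langle u_i,u_j\rangle$, whose antisymmetrization is $\chi(\alpha,\beta)-\chi(\beta,\alpha)=\langle\sum\alpha_iu_i,\sum\beta_ju_j\rangle=0$ on the kernel. So $\chi$ is symmetric there, and $\chi(\alpha,\alpha)$ is the RHS. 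I then need $\chi(\alpha,\beta)=C\cdot C'$ for all $C,C'$, and by bilinearity it is enough to take $C'=\overline{D}_{\rho_k}$, with $\beta=e_{k-1}-b_ke_k+e_{k+1}$.

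The computation is $\chi(\alpha,\beta)=\sum_{i<j}\alpha_i\beta_j\langle u_i,u_j\rangle$, which should equal $\alpha_k=C\cdot\overline{D}_{\rho_k}$. Expanding, $\sum_j\beta_j[\ldots]$ gives $\sum_{i<j}\alpha_i\beta_j\langle u_i,u_j\rangle=\sum_i\alpha_i\langle u_i,\sum_{j>i}\beta_ju_j\rangle$. The partial sums $\sum_{j>i}\beta_ju_j$ are computable because $\sum_j\beta_ju_j=0$:
\begin{itemize}
\item for $i<k-1$ the partial sum is $0$;
\item for $i\ge k+1$ it is $0$;
\item for $i=k-1$ it is $-b_ku_k+u_{k+1}=-u_{k-1}$, so that term is $\alpha_{k-1}\langle u_{k-1},-u_{k-1}\rangle=0$;
\item for $i=k$ it is $u_{k+1}$, giving $\alpha_k\langle u_k,u_{k+1}\rangle=\alpha_k$ by smoothness and counterclockwise order.
\end{itemize}
The total is $\alpha_k$, as desired. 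This assumes the ordering starts away from $k$, i.e.\ $2\le k\le m-1$.

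The main obstacle is the wrap-around index bookkeeping when $k=1$ or $k=m$, since $\chi$ depends on where the cyclic order is cut. I will handle it by showing that $\chi$ restricted to the kernel is invariant under cyclic rotation of the starting index. Moving index $1$ to the end changes $\chi(\alpha,\beta)$ by $\alpha_1\langle u_1,\sum_j\beta_ju_j\rangle-\langle\sum_i\alpha_iu_i,u_1\rangle\beta_1=0$ on the kernel. Rotation invariance then reduces the boundary cases to the interior case above.
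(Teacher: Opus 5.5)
Your argument is correct, but it takes a different route from the paper's proof.

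\textbf{The paper's route.} The paper argues geometrically. Since both sides are quadratic forms and the ample cone is open in $NS(\overline{Y})$, it suffices to treat ample $C$. For such $C$, $C\cdot C$ is the (normalized) area of the lattice polygon of $C$, whose edges have outer normals $u_{\rho_i}$ and lattice lengths $\alpha_{\rho_i}$. The shoelace formula for a polygon with cyclically ordered edge vectors then gives $\sum_{i<j}\alpha_{\rho_i}\alpha_{\rho_j}\langle u_{\rho_i},u_{\rho_j}\rangle$.

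\textbf{Your route.} You work directly with the intersection matrix of a smooth toric surface. You introduce the non-symmetric form $\chi(\alpha,\beta)=\sum_{i<j}\alpha_i\beta_j\langle u_{\rho_i},u_{\rho_j}\rangle$ and observe that it is symmetric, and independent of where the cyclic order is cut, on the relation lattice $\mathrm{Ker}(\bZ^m\to\bZ^2)$. You then verify $\chi(\alpha(C),\alpha(\overline{D}_{\rho_k}))=C\cdot\overline{D}_{\rho_k}$ by the partial-sum computation. That computation checks out: the cases $i<k-1$, $i=k-1$, $i=k$ and $i\geq k+1$ are all correct. The rotation step also legitimately moves any $k$ into an interior position, since $m\geq 3$.

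\textbf{Comparison.} The paper's proof is shorter but relies on Fulton's volume formula and a density argument. Yours is self-contained and integral, with no passage to ample classes. It also proves the polarized identity $C\cdot C'=\chi(\alpha(C),\alpha(C'))$ for all $C,C'$ directly. This is exactly the mechanism that makes $\chi_{\tilde{\bs}}$ symmetric on $K$ in Lemma~\ref{lem_brackets}; the paper recovers it there only by polarizing the quadratic statement.

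\textbf{Minor simplification.} Neither the isomorphism $NS(\overline{Y})\cong\mathrm{Ker}(\bZ^m\to\bZ^2)$ nor the symmetry of $\chi$ is actually needed. What you use is only that $C\mapsto\alpha(C)$ is linear with image in the kernel, and that the $\overline{D}_{\rho_k}$ span $NS(\overline{Y})$. Linearity in the second slot then gives the identity for all $C'$, and you conclude by setting $C'=C$.
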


\begin{proof}
Since $\overline{Y}$ is projective, the cone of ample divisors is open in $NS(\overline{Y})$. Therefore, it suffices to prove \eqref{eq_C2} for ample classes $C$. 
By \cite[\S 5.3, Corollary, p.111]{Fultontoric},
the self-intersection $C\cdot C$ of an ample class $C$ is equal to the area of a lattice polygon whose sides have outer normal vectors $u_{\rho_i}$
and integral lengths $\alpha_{\rho_i}:= C \cdot \overline{D}_{\rho_i}$.
Since the vectors $u_{\rho_i}$ are cyclically ordered in $\bR^2$, this area is given by $\sum_{1 \leq i<j\leq n} \alpha_{\rho_i} a_{\rho_j} \langle u_{\rho_i}, u_{\rho_j}\rangle$, which
establishes \eqref{eq_C2}.
\end{proof}

\begin{lemma} \label{lem_brackets}
Let $\tilde{\bs}$ be a cyclically ordered seed with underlying seed $\bs\in \mathscr{S}$. 
Then, for every $\alpha, \beta \in K$, we have 
\[ (\alpha, \beta)_{\mathscr{S}} = - \chi_{\tilde{\bs}}(\alpha, \beta)\,.\]
\end{lemma}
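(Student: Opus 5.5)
The plan is to reduce to a self-intersection computation on a framed toric model and then apply Lemma \ref{lem_C2}. Both $(-,-)_{\mathscr{S}}$ and $-\chi_{\tilde{\bs}}(-,-)|_K$ are symmetric bilinear forms on $K$. So by polarization it suffices to prove $(\alpha,\alpha)_{\mathscr{S}} = -\chi_{\tilde{\bs}}(\alpha,\alpha)$ for every $\alpha = \sum_i a_i e_i \in K$.

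\textbf{Step 1: expand both sides.} Fix a framed toric model $\pi:(Y,D)\to(\overline{Y},\overline{D})$ for $\bs$. This is allowed because the intersection form does not depend on this choice. By \eqref{eq_iota}, and since the $E_i$ are disjoint $(-1)$-curves orthogonal to $\pi^\star NS(\overline{Y})$, we get
\[ (\alpha,\alpha)_{\mathscr{S}} = \iota_\pi(\alpha)\cdot\iota_\pi(\alpha) = C_\alpha\cdot C_\alpha - \sum_{i} a_i^2 .\]
On the other side, \eqref{eq_bilinear} gives
\[ \chi_{\tilde{\bs}}(\alpha,\alpha) = \sum_i a_i^2 + \sum_{i>j} a_ia_j\langle e_i,e_j\rangle .\]
The claim is therefore equivalent to
\[ C_\alpha\cdot C_\alpha = \sum_{1\le i<j\le n} a_ia_j\langle e_i,e_j\rangle .\]

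\textbf{Step 2: apply Lemma \ref{lem_C2}.} List the rays $\rho_1,\dots,\rho_m$ of $\Sigma$ in counterclockwise order. By \eqref{eq_CD}, $C_\alpha\cdot\overline{D}_{\rho_k}$ is the sum of the $a_i$ with $\psi(e_i)$ spanning $\rho_k$. Substituting into \eqref{eq_C2} and expanding gives a sum of terms $a_ia_j\langle \psi(e_i),\psi(e_j)\rangle$, one for each pair of indices lying on distinct rays, ordered according to the ray order. Pairs lying on the same ray contribute zero, since $\langle e_i,e_j\rangle=0$ for them; this also covers ties in \eqref{eq_cyclic}, including the case $\theta_n=\theta_1+2\pi$. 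Rays containing no $\psi(e_i)$ contribute zero. It remains to compare the ray ordering with the cyclic ordering of $\tilde{\bs}$.

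\textbf{Step 3: compare the orderings.} This is the main obstacle. Both orderings are counterclockwise, but they may start at different points. I will show that the quadratic expression $Q=\sum_{i<j}a_ia_j\langle e_i,e_j\rangle$ is invariant under cyclically rotating the ordering. Moving $e_1$ from first to last position replaces each term $a_1a_j\langle e_1,e_j\rangle$ by $a_ja_1\langle e_j,e_1\rangle$. This changes $Q$ by
\[ -2a_1\sum_{j}a_j\langle e_1,e_j\rangle = -2a_1\langle \psi(e_1),\psi(\alpha)\rangle ,\]
which vanishes because $\psi(\alpha)=0$ for $\alpha\in K$. Reorderings within a single ray also do not change $Q$. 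Hence the ray order from Lemma \ref{lem_C2} and the cyclic order of $\tilde{\bs}$ give the same value, so $C_\alpha\cdot C_\alpha = Q$, which completes the argument.
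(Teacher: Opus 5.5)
Your proposal is correct and follows the paper's proof. You polarize to the diagonal, write $(\alpha,\alpha)_{\mathscr{S}} = C_\alpha\cdot C_\alpha - \sum_i a_i^2$ via $\iota_\pi$, and evaluate $C_\alpha\cdot C_\alpha$ with Lemma \ref{lem_C2}. Your Step 3 adds something the paper leaves implicit: it shows that $\sum_{i<j}a_ia_j\langle e_i,e_j\rangle$ is unchanged under cyclic rotation (because $\psi(\alpha)=0$) and under reordering within a ray, which justifies matching the ray order of the fan with the cyclic order of $\tilde{\bs}$, including the wrap-around case $\theta_n=\theta_1+2\pi$.
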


\begin{proof}
Let $\tilde{\bs}=(e_i)_{1 \leq i\leq n}$, and set $v_i:= \psi(e_i)$ for all $1 \leq i \leq n$. 
Since $(-,-)_{\mathscr{S}}$ and $\chi_{\tilde{\bs}}(-,-)|_K$ are symmetric bilinear forms on $K$, it suffices to show that $(a,a)_{\mathscr{S}} = - \chi_{\tilde{\bs}} (a,a)$ for every 
$a =\sum_{i=1}^n a_i e_i \in K$, that is, such that $\sum_{i=1}^n a_iv_i=0$. 
By \eqref{eq_bilinear}, we have 
\begin{equation} \label{eq_chi}
\chi_{\tilde{\bs}}(a,a)= \sum_{i=1}^n a_i^2 + \sum_{1 \leq j <i \leq n } a_i a_j \langle v_i,v_j\rangle
= \sum_{i=1}^n a_i^2 - \sum_{1 \leq i<j \leq n} a_i a_j \langle v_i,v_j\rangle \,.\end{equation}
Let $\pi: (Y,D) \rightarrow (\overline{Y},\overline{D})$ be a framed toric model for $\bs$ as in \S\ref{sec_log_CY}, with exceptional curves $E_i$ indexed by $1 \leq i \leq n$.
Since $E_i^2=-1$ for all 
$1 \leq i \leq n$, it follows from \eqref{eq_iota} that
\[(a,a)_{\mathscr{S}} = -\sum_{i=1}^n a_i^2 + C_a \cdot C_a \,, \]
where $C_a \cdot C_a$ is the self-intersection of the curve class $C_a$ on the toric surface $\overline{Y}$.
Since the vectors $v_i$ are cyclically ordered in $\bR^2$
by \eqref{eq_cyclic}, it follows from Lemma \ref{lem_C2} applied to $C_a$ and \eqref{eq_CD} that 
\[ C_a \cdot C_a = \sum_{1\leq i<j\leq n} a_i a_j \langle v_i, v_j\rangle \,,\]
which concludes the proof by comparison with \eqref{eq_chi}.
\end{proof}

\begin{remark}
    A cyclically ordered seed $\tilde{\bs}=(e_i)_{1 \leq i \leq n}$ in $N$ determines a quiver with linear ordering, with $n$ vertices indexed by $1 \leq i \leq n$, and $\langle e_i,e_j \rangle$ arrows from the vertex $i$ to the vertex $j$ for all $1 \leq i<j \leq n$. The matrix of the bilinear form $\chi_{\tilde{\bs}}$ is the ``unipotent companion matrix" of this quiver with linear ordering, in the sense of  \cite{fomin2024cyclically}.
 \end{remark}

\subsection{Weyl groups}
\label{sec_weyl}
The main result of this section, Theorem \ref{thm_weyl}, establishes the equality between two Weyl groups: one defined combinatorially in terms of seed mutations, and the other defined geometrically via the associated log Calabi--Yau surface. Throughout this section, we fix a mutation equivalence class $\mathscr{S}$ of seeds in $N$.
For every seed $\bs =\{e_i\}_{i\in I}\in \mathscr{S}$, and distinct elements $e_j$ and $e_k$ of $\bs$ such that $\psi(e_j)=\psi(e_k)$,  
consider the unique linear map 
\begin{equation} \label{eq_t_jk}
t_{jk}^{\bs}: N \rightarrow N\end{equation} 
such that $t_{jk}^{\bs}(e_i)=e_i$ for all $i \notin \{j,k\}$, $t_{jk}^{\bs}(e_j)=e_k$, and $t_{jk}^{\bs}(e_k)=e_j$. In other words, $t_{jk}^{\bs}$ simply permutes the elements $e_j$ and $e_k$ of the basis $\bs=\{e_i\}_{i\in I}$ of $N$.

\begin{definition} \label{def_weyl_S}
The Weyl group of $\mathscr{S}$ is the subgroup $W_{\mathscr{S}}$ of automorphisms of $N$ generated by the linear transformations $t_{jk}^{\bs}$ for every seed $\bs =\{e_i\}_{i\in I}\in \mathscr{S}$, and distinct elements $e_j$ and $e_k$ of $\bs$ such that $\psi(e_j)=\psi(e_k)$.
\end{definition}

We give below 
in Lemma \ref{lem_W_S} an alternative description of $W_{\mathscr{S}}$ as a group of isometries of $K=\mathrm{Ker}(\psi)$
endowed with 
the intersection form 
$(-,-)_{\mathscr{S}}$ introduced in \S\ref{sec_intersection}.

\begin{definition} \label{def_root}
    A \emph{root} of $\mathscr{S}$ is an element $\alpha \in N$ for which there exists a seed $\bs=\{e_i\}_{i\in I} \in \mathscr{S}$, and two distinct elements $e_j$ and $e_k$ of $\bs$ such that 
    $\psi(e_j)=\psi(e_k)$ and $\alpha=e_j-e_k$. We denote by $\Phi_{\mathscr{S}}$ the set of roots of $\mathscr{S}$.
\end{definition}

Every root $\alpha = e_j-e_k$ of $\mathscr{S}$ satisfies $\psi(\alpha)=\psi(e_j)-\psi(e_k)=0$, that is, $\alpha \in K=\mathrm{Ker}(\psi)$, and so $\Phi_{\mathscr{S}} \subset K$.

\begin{lemma} \label{lem_root}
Every root $\alpha \in \Phi_{\mathscr{S}}$ satisfies 
$(\alpha, \alpha)_{\mathscr{S}}=-2$.
\end{lemma}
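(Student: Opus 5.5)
Since $\alpha \in K$, we may compute $(\alpha,\alpha)_{\mathscr{S}}$ with Lemma \ref{lem_brackets}, choosing a convenient cyclic ordering of the seed that exhibits $\alpha$ as a root. Alternatively, we may compute directly in a framed toric model. We sketch both routes; either suffices.

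By Definition \ref{def_root}, choose a seed $\bs=\{e_i\}_{i\in I}\in\mathscr{S}$ and distinct $e_j,e_k\in\bs$ with $\psi(e_j)=\psi(e_k)$ and $\alpha=e_j-e_k$. Since $\psi(e_j)=\psi(e_k)$, the two vectors have the same argument. Hence there is a cyclic ordering $\tilde{\bs}=(e_i)_{1\leq i\leq n}$ of $\bs$ in which $e_j$ and $e_k$ occupy consecutive positions, say $j=m$ and $k=m+1$: ties in the arguments $\theta_i$ in \eqref{eq_cyclic} may be broken arbitrarily. Note also that $\langle e_j,e_k\rangle=\det(\psi(e_j),\psi(e_k))=0$.

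By Lemma \ref{lem_brackets}, $(\alpha,\alpha)_{\mathscr{S}}=-\chi_{\tilde{\bs}}(\alpha,\alpha)$. Expanding bilinearly with \eqref{eq_bilinear},
\[
\chi_{\tilde{\bs}}(e_j-e_k,e_j-e_k)=\chi_{\tilde{\bs}}(e_j,e_j)+\chi_{\tilde{\bs}}(e_k,e_k)-\chi_{\tilde{\bs}}(e_j,e_k)-\chi_{\tilde{\bs}}(e_k,e_j).
\]
The first two terms equal $1$ each. Of the cross terms, one is $0$ and the other is $\pm\langle e_j,e_k\rangle=0$. Hence $\chi_{\tilde{\bs}}(\alpha,\alpha)=2$, and so $(\alpha,\alpha)_{\mathscr{S}}=-2$. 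Consecutiveness of $j$ and $k$ is not actually needed; only the existence of a cyclic ordering and the vanishing of $\langle e_j,e_k\rangle$ are used.

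As a cross-check, we can compute geometrically via \eqref{eq_iota}. For $a=e_j-e_k$, the class $C_a$ satisfies $C_a\cdot\overline{D}_\rho=0$ for all rays $\rho$: the ray $\bR_{\geq0}\psi(e_j)=\bR_{\geq 0}\psi(e_k)$ receives $1-1=0$ in \eqref{eq_CD}, and all other rays receive $0$. Hence $C_a=0$, since a class on a smooth complete toric surface is determined by its intersections with the toric divisors. Therefore $\iota_\pi(\alpha)=E_k-E_j$, which has self-intersection $-1-1=-2$ because the exceptional curves are disjoint.

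There is no real obstacle here. The only point requiring care is that a cyclic ordering exists when two vectors are equal, which \eqref{eq_cyclic} allows because the inequalities there are non-strict.
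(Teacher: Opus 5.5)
Your proposal is correct, and your geometric ``cross-check'' is exactly the paper's proof. The paper takes a framed toric model for $\bs$, notes $\iota_\pi(e_j-e_k)=E_k-E_j$, and uses $E_j^2=E_k^2=-1$ and $E_j\cdot E_k=0$. You additionally justify $C_a=0$, which the paper leaves implicit.

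Your primary route is genuinely different. It goes through Lemma~\ref{lem_brackets} and reduces the claim to a short computation, valid for any cyclic ordering:
\[
\chi_{\tilde{\bs}}(e_j-e_k,e_j-e_k)=1+1-0-\langle e_k,e_j\rangle=2 .
\]
This is rigorous. Lemma~\ref{lem_brackets} is proved earlier and independently, so there is no circularity. It is not, however, more elementary: Lemma~\ref{lem_brackets} is itself proved from the toric model via Lemma~\ref{lem_C2}, so the geometric input is only hidden.

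Each route has its own payoff. The paper's direct route is shorter and exhibits the root as the explicit class $E_k-E_j$, a difference of disjoint $(-1)$-curves meeting the same boundary component. That is exactly the picture reused in Lemma~\ref{lem_root_1} and Theorem~\ref{thm_weyl}. Your combinatorial route fits naturally with the formula for $t^{\bs}_{jk}$ in Lemma~\ref{lem_tijk}. Combined with Lemma~\ref{lem_isometry1}, it also gives $\chi(\alpha,\alpha)=2$ directly for seeds coming from very strong exceptional collections.
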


\begin{proof}
By Definition \ref{def_root}, there exists a seed $\bs=\{e_i\}_{i\in I}$ and
distinct elements $e_j$ and $e_k$ of $\bs$ such that $\psi(e_j)=\psi(e_k)$ and $\alpha=e_j-e_k$.
To calculate $(\alpha, \alpha)_{\mathscr{S}}$, consider a framed toric model 
$\pi: (Y,D) \rightarrow (\overline{Y},\overline{D})$ for $\bs$ as in \S\ref{sec_log_CY}, with exceptional curves $E_i$ indexed by $i\in I$, and the isomorphism $\iota_\pi: K \xrightarrow{\sim} \Lambda_{(Y,D)}$ given in \eqref{eq_isom}. Then, we have $\iota_\pi (\alpha)=\iota_\pi(e_j-e_k)=E_k-E_j$. 
Since $E_j^2=E_k^2=-1$ and $E_j \cdot E_k=0$, we have $(E_j-E_k)^2=-2$, and so the result follows.
\end{proof}

For every $\alpha \in K$ with $(\alpha, \alpha)_{\mathscr{S}}=-2$, consider the isometry of $(K, (-,-)_{\mathscr{S}})$ given by the reflection with respect to $\alpha$:
\begin{align*}
    s_\alpha : K &\longrightarrow K \\
    \beta &\longmapsto \beta + (\beta, \alpha)_{\mathscr{S}} \,\alpha \,.
\end{align*}

\begin{lemma} \label{lem_tijk}
Consider a seed $\bs =\{e_i\}_{i\in I}\in \mathscr{S}$, and distinct elements $e_j$ and $e_k$ of $\bs$ such that $\psi(e_j)=\psi(e_k)$.
Then, for every cyclic ordering $\tilde{\bs}$ of $\bs$, denoting $\alpha=e_j-e_k$, we have 
\[ t_{jk}^\bs(\beta) = \beta -\chi_{\tilde{\bs}}(\beta, \alpha) \alpha\]
for every $\beta \in N$. In particular, the automorphisms $t_{jk}^\bs$ preserve $K \subset N$ and $t_{jk}^\bs|_K = s_\alpha$.
\end{lemma}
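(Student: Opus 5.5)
The plan is to check the identity on the basis $\bs=\{e_i\}_{i\in I}$ of $N$. Both sides of $t_{jk}^\bs(\beta)=\beta-\chi_{\tilde{\bs}}(\beta,\alpha)\alpha$ are linear in $\beta$, so it suffices to verify it for $\beta=e_i$, $i\in I$. Fix a cyclic ordering $\tilde{\bs}=(e_1,\dots,e_n)$ with determinations $\theta_i$ as in \eqref{eq_cyclic}. By symmetry in $j,k$ (replacing $\alpha$ by $-\alpha$ leaves $\chi_{\tilde{\bs}}(\beta,\alpha)\alpha$ unchanged), we may assume $j<k$. Throughout we use $\langle e_i,e_j\rangle=\langle e_i,e_k\rangle$, which holds because $\psi(e_j)=\psi(e_k)$, and in particular $\langle e_j,e_k\rangle=0$.

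First the two moving basis vectors, directly from \eqref{eq_bilinear}. We have $\chi_{\tilde{\bs}}(e_j,\alpha)=\chi_{\tilde{\bs}}(e_j,e_j)-\chi_{\tilde{\bs}}(e_j,e_k)=1-0=1$, so the right-hand side gives $e_j-\alpha=e_k$. Similarly $\chi_{\tilde{\bs}}(e_k,\alpha)=\langle e_k,e_j\rangle-1=-1$, so the right-hand side gives $e_k+\alpha=e_j$. Both agree with $t_{jk}^\bs$.

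For $i\notin\{j,k\}$ we need $\chi_{\tilde{\bs}}(e_i,e_j)=\chi_{\tilde{\bs}}(e_i,e_k)$. If $i<j$ both values are $0$. If $i>k$ both equal $\langle e_i,e_j\rangle=\langle e_i,e_k\rangle$. The remaining case is $j<i<k$, where the difference is $\langle e_i,e_j\rangle$, and we must show this vanishes. Since $\psi(e_j)=\psi(e_k)$, we have $\theta_k-\theta_j\in 2\pi\bZ$, and \eqref{eq_cyclic} forces $\theta_k\in\{\theta_j,\theta_j+2\pi\}$. If $\theta_k=\theta_j$, then $\theta_j\le\theta_i\le\theta_k$ forces $\psi(e_i)$ to point in the same direction as $\psi(e_j)$, so $\langle e_i,e_j\rangle=0$.

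The main obstacle is the wrap-around case $\theta_k=\theta_j+2\pi$. This forces $j=1$, $k=n$, and $\theta_n=\theta_1+2\pi$, so every intermediate $e_i$ lies between them in the ordering. I would first try to show that we may re-choose the determinations so that $\theta_n=\theta_1$, which requires all vectors to be parallel, and otherwise argue directly. If neither works, this boundary case must be excluded: either by reading \eqref{eq_cyclic} so that vectors with equal $\psi$ receive equal determinations, or by passing to the rotated ordering $(e_n,e_1,\dots,e_{n-1})$. Under that rotation $j$ and $k$ become adjacent with equal argument, reducing to the case already handled. I expect this to be the only delicate point.

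Finally, since $\alpha\in K$, the formula shows $t_{jk}^\bs(K)\subset K$. For $\beta\in K$, Lemma \ref{lem_brackets} gives $\chi_{\tilde{\bs}}(\beta,\alpha)=-(\beta,\alpha)_{\mathscr{S}}$, hence $t_{jk}^\bs(\beta)=\beta+(\beta,\alpha)_{\mathscr{S}}\alpha=s_\alpha(\beta)$. Here $s_\alpha$ is defined because $(\alpha,\alpha)_{\mathscr{S}}=-2$ by Lemma \ref{lem_root}.
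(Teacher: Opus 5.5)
Your approach is the paper's own: check the identity on the basis $\bs$ using \eqref{eq_bilinear}, then restrict to $K$ via Lemma~\ref{lem_brackets}. Your computation is correct whenever $\theta_k=\theta_j$. The wrap-around case you flag, however, cannot be closed by ``arguing directly'', because the displayed identity is false there. The paper's one-line ``direct computation'' does not address this, so the statement's ``for every cyclic ordering'' is too strong.

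Here is a counterexample. Take $N=\bZ^3$ with $\psi(e_1)=\psi(e_3)=(1,0)$ and $\psi(e_2)=(0,1)$. The ordering $(e_1,e_2,e_3)$ is cyclic in the sense of \eqref{eq_cyclic}, with $\theta=(0,\pi/2,2\pi)$. For $\alpha=e_1-e_3$ we get $\chi_{\tilde{\bs}}(e_2,\alpha)=\langle e_2,e_1\rangle=-1$, so the right-hand side at $\beta=e_2$ is $e_2+\alpha\neq e_2=t_{13}^{\bs}(e_2)$.

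This is the general picture. In the wrap-around case, every $e_i$ not strictly between $e_j$ and $e_k$ has $\psi(e_i)=\psi(e_j)$. Note that wrap-around only forces $\theta_j=\theta_1$ and $\theta_k=\theta_n$, not $j=1$ and $k=n$ as you wrote. Since $\psi$ has finite cokernel, some $e_i$ with $j<i<k$ must then have $\langle e_i,e_j\rangle\neq 0$. So the identity on all of $N$ holds exactly when $\theta_j=\theta_k$.

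The case does occur in the paper's setting. For $\bE=(\cO(0,1),\cO(1,1),\cO(2,2),\cO(3,2))$ on $\bP^1\times\bP^1$, one has $\psi([F_1])=\psi([F_4])=(1,2)$ and $\psi([F_2])=(-1,0)$, so $\tilde{\bs}(\bE)$ is of wrap-around type.

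So you should settle your hedge, but neither of your proposed exits works.
\begin{itemize}
\item Passing to a rotated ordering proves the formula for a \emph{different} $\tilde{\bs}$, since $\chi_{\tilde{\bs}}$ on $N\times N$ changes under rotation.
\item Re-reading \eqref{eq_cyclic} so that equal vectors receive equal determinations would make Proposition~\ref{prop_cyclic}(ii) false for the example above.
\end{itemize}

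What is actually true is the following.
\begin{itemize}
\item[(a)] The displayed formula holds for every cyclic ordering in which all $e_i$ with $i$ between $j$ and $k$ satisfy $\psi(e_i)=\psi(e_j)$, equivalently $\theta_j=\theta_k$. Such an ordering always exists: move the terminal block $e_k,\dots,e_n$, all equal to $\psi(e_k)$, to the front and lower their determinations by $2\pi$.
\item[(b)] The ``in particular'' assertions hold for every cyclic ordering. First, $t_{jk}^{\bs}$ commutes with $\psi$, so it preserves $K$. Second, your final paragraph, run with an ordering as in (a), gives $t_{jk}^{\bs}|_K=s_\alpha$, because Lemma~\ref{lem_brackets} is valid for any cyclic ordering.
\end{itemize}
You can also see (b) directly: for $\beta=\sum_i b_ie_i\in K$ in the wrap-around case, the error term $\sum_{j<i<k}b_i\langle e_i,e_j\rangle$ equals $\langle\beta,e_j\rangle=0$.
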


\begin{proof}
The formula for $t_{jk}^\bs$ follows by a direct computation using the definition of $\chi_{\tilde{\bs}}$ given in \eqref{eq_bilinear}.
The statement regarding the restriction to $K$ then follows since $\chi_{\tilde{\bs}}(\beta, \alpha)=-(\beta, \alpha)_{\mathscr{S}}$ for every $\alpha, \beta \in K$ by Lemma \ref{lem_brackets}.
\end{proof}

By Lemma \ref{lem_tijk}, $W_{\mathscr{S}}$ preserves $K \subset N$, and so we obtain by restriction a group homomorphism 
\[ \rho: W_{\mathscr{S}} \rightarrow \mathrm{Aut}(K)\]
sending the generators $t_{jk}^\bs$
to the reflections $s_\alpha$ with $\alpha=e_j-e_k$, where $\bs=(e_i)_{i\in I}$.

\begin{lemma} \label{lem_W_S}
The restriction map
$\rho: W_{\mathscr{S}} \rightarrow \mathrm{Aut}(K)$
is injective and defines an isomorphism between $W_{\mathscr{S}}$
and the subgroup of isometries of $(K, (-,-)_{\mathscr{S}})$ generated by the reflections $s_\alpha$ with respect to the roots $\alpha \in \Phi_{\mathscr{S}}$ . 
\end{lemma}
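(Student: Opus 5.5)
The image of $\rho$ is immediate from Lemma \ref{lem_tijk}. Each generator $t_{jk}^\bs$ restricts to $s_\alpha$ with $\alpha=e_j-e_k\in\Phi_{\mathscr{S}}$. Conversely, every root arises this way by Definition \ref{def_root}, so $\rho(W_{\mathscr{S}})$ is exactly the group generated by the reflections $s_\alpha$, $\alpha\in\Phi_{\mathscr{S}}$. These reflections are isometries of $(K,(-,-)_{\mathscr{S}})$ because $(\alpha,\alpha)_{\mathscr{S}}=-2$ by Lemma \ref{lem_root}. It therefore remains to prove injectivity.

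For injectivity, the plan is to show that every $w\in W_{\mathscr{S}}$ is determined by its restriction to $K$. The key observation is that each generator $t_{jk}^\bs$ commutes with $\psi$, since $\psi(e_j)=\psi(e_k)$ means swapping $e_j$ and $e_k$ does not change $\psi$. Hence every $w\in W_{\mathscr{S}}$ satisfies $\psi\circ w=\psi$, which is equivalent to $w-\mathrm{id}$ taking values in $K$. Moreover, by Lemma \ref{lem_tijk}, $t_{jk}^\bs-\mathrm{id}=-\chi_{\tilde{\bs}}(-,\alpha)\alpha$ has image in $\bZ\alpha\subset K$.

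Now suppose $w|_K=\mathrm{id}$. Then $w-\mathrm{id}$ vanishes on $K$, so it factors through $N/K\cong\psi(N)\subset\bZ^2$ and gives a homomorphism $\psi(N)\to K$. Showing that this homomorphism vanishes is the main obstacle. To handle it, I would compute $w-\mathrm{id}$ more precisely. For a generator, the map $\beta\mapsto-\chi_{\tilde{\bs}}(\beta,\alpha)$ is a linear form on $N$. Since $\alpha\in K$, the antisymmetric part $\langle\beta,\alpha\rangle$ vanishes, so $\chi_{\tilde{\bs}}(\beta,\alpha)=\chi_{\tilde{\bs}}(\alpha,\beta)$. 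The idea is that these linear forms are restrictions of a single canonical symmetric-type pairing $N\times K\to\bZ$ that is independent of the seed; geometrically this is the intersection pairing of $\iota$-images with all of $NS(Y)$. Concretely, I would extend $\iota_\pi$ to an embedding $N\to NS(Y)$ by $e_i\mapsto -E_i$ plus toric corrections, chosen compatibly with mutation as in \cite[\S5]{GHKbir}. Then $W_{\mathscr{S}}$ is realized as a subgroup of reflections of $NS(Y)$ in $(-2)$-classes of $\Lambda_{(Y,D)}$. Each such reflection acts as $x\mapsto x+(x\cdot\alpha)\alpha$, which is determined by its action on the span of the roots.

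To finish the argument, I would use that an element of the group generated by reflections $x\mapsto x+(x\cdot\alpha)\alpha$, for $\alpha\in\Lambda$, acts on $NS(Y)$ compatibly with the orthogonal decomposition. Over $\bQ$, $NS(Y)$ splits as the span of the classes $D_\rho$ direct sum $\Lambda$ modulo radical issues, and the reflections fix the $D_\rho$ since $\alpha\cdot D_\rho=0$. So an element acting trivially on $\Lambda$ and fixing all the $D_\rho$ acts trivially on $NS(Y)\otimes\bQ$. It therefore acts trivially on the image of $N$, which gives injectivity. The delicate points are the possible degeneracy of the pairing on $\Lambda$ (the span of the $D_\rho$ together with $\Lambda$ may fail to be all of $NS(Y)\otimes\bQ$ when $D$ is not negative definite) and the compatibility of the extension of $\iota$ across different seeds. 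If the degeneracy causes trouble, I would instead argue directly: $w-\mathrm{id}$ is a sum of terms $c\,\alpha$ whose coefficients are given by a seed-independent pairing with $K$, so it is determined by $w|_K$.
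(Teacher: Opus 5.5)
Your description of the image of $\rho$ is correct, and you identify exactly the right issue: for $w\in W_{\mathscr{S}}$ with $w|_K=\mathrm{id}$, the map $w-\mathrm{id}$ factors through a homomorphism $N/K\to K$, and this must be shown to vanish. The step you propose for this is false. The linear forms $\beta\mapsto\chi_{\tilde{\bs}}(\beta,\alpha)$ attached to the generators are \emph{not} restrictions of one seed-independent pairing $N\times K\to\bZ$, and no mutation-compatible extension of $\iota_\pi$ to $N$ exists.

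In fact the obstacle cannot be removed, because injectivity fails. Take $\bs=\{e_i\}\in\mathscr{S}$ with $\psi(e_j)=\psi(e_k)$, $j\neq k$, and put $\alpha=e_j-e_k$ and $f=T^+_{e_j}$. Then $\bs':=f(\bs)=\mu_j^+(\mu_j^+(\bs))\in\mathscr{S}$. Since $\langle e_k,e_j\rangle=0$, we have $f(e_j)=e_j$ and $f(e_k)=e_k$. So $t^{\bs'}_{jk}=f\circ t^{\bs}_{jk}\circ f^{-1}$ is again a generator realizing the same root $\alpha$, and a two-line computation gives $t^{\bs'}_{jk}(\beta)=t^{\bs}_{jk}(\beta)+\langle\beta,e_j\rangle\,\alpha$. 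Hence
\[ g:=t^{\bs'}_{jk}\circ t^{\bs}_{jk}=\mathrm{id}+\langle -,e_j\rangle\,\alpha\in W_{\mathscr{S}} . \]
This $g$ is the identity on $K$ but not on $N$: since $\psi$ has finite cokernel, $\langle\beta,e_j\rangle\neq0$ for some $\beta$. It even has infinite order. Concretely, for the seed of Example~\ref{example_cyclic} on $\bP^1\times\bP^1$ with $j=2$, $k=3$, one gets $g([F_1])=[F_1]+2([F_2]-[F_3])$. Equivalently, $\chi_{\tilde{\bs}'}(-,\alpha)$ and $\chi_{\tilde{\bs}}(-,\alpha)$ agree on $K$ but differ on $N$ by $\langle -,e_j\rangle$.

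So no argument can close this gap as the statement is written. Of the two delicate points you flag, the degeneracy of $\Lambda_{(Y,D)}$ is harmless. By the Hodge index theorem its radical has rank at most one, and an isometry $\mathrm{id}+u$ of $NS(Y)$ with $u$ valued in a rank-one isotropic subspace must have $u=0$. The compatibility across seeds is what is fatal.

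The paper's own proof breaks at the same place. It builds $\varphi\colon NS(Y)\to N$ from the toric model of one fixed seed $\bs$ and asserts $\varphi\circ r_{\iota_\pi(\alpha_m)}=t^{\bs_m}_{j_mk_m}\circ\varphi$ for every $m$. This holds for roots realized in $\bs$ itself. In the example above, however, $\iota_\pi(\alpha)=E_k-E_j$ for both generators, while $t^{\bs}_{jk}\neq t^{\bs'}_{jk}$ and $\varphi$ is surjective, so the identity fails for $\bs_m=\bs'$.

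Two things do survive. First, the description of $\rho(W_{\mathscr{S}})$ as the group generated by the reflections $s_\alpha$. Second, faithfulness on $K$ for any group generated by maps $\beta\mapsto\beta-B(\beta,\alpha)\alpha$, where $B$ is a single non-degenerate form on $N$ restricting to $-(-,-)_{\mathscr{S}}$ on $K$ with rank-one radical there, for example the Euler form on $K(Z)$. In that case $(g-\mathrm{id})(N)$ lies in the radical and the isometry condition forces it to vanish. This is presumably what is intended in the del Pezzo application.
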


\begin{proof}
By Lemma \ref{lem_tijk}, 
it suffices to show that $\rho$ is injective. Let $g \in W_{\mathscr{S}}$ be such that $\rho(g)=\mathrm{id}$. Write \[ g=t_{j_\ell k_\ell}^{\bs_\ell} \circ \dots \circ t_{j_1 k_1}^{\bs_1}\] and denote by $\alpha_1, \dots, \alpha_\ell \in \Phi_{\mathscr{S}}$ the corresponding roots. Fix a seed $\bs =\{e_i\}_{i\in I} \in \mathscr{S}$, and 
let $\pi: (Y,D) \rightarrow (\overline{Y},\overline{D})$ be a framed toric model for $\bs$ as in \S\ref{sec_log_CY}, with exceptional curves $E_i$
indexed by $i\in I$. Consider the identification 
$\iota_\pi: K \xrightarrow{\sim} \Lambda_{(Y,D)} \subset NS(Y)$ given by \eqref{eq_iota}, and
the linear map 
\begin{align*}
    \varphi: NS(Y) &\longrightarrow N \\
    \beta &\longmapsto 
    \sum_{i\in I}(\beta \cdot E_i)e_i \,.
\end{align*}
Since $\varphi(-E_i)=e_i$ for all $i\in I$, the map $\varphi$ is surjective.
By Lemma \ref{lem_root}, the roots 
$\alpha_m$ satisfy $(\alpha_m, \alpha_m)_{\mathscr{S}}=-2$ for all 
$1 \leq m \leq \ell$, and so the classes $\iota_\pi(\alpha_m)$
in $\Lambda_{(Y,D)} \subset NS(Y)$
have self-intersection $-2$. The corresponding reflection $r_{\iota_\pi(\alpha_m)}$ acts on $NS(Y)$ and fixes the irreducible components of $D$. Moreover, we have 
$\varphi \circ \iota_\pi = \mathrm{id}$ by \eqref{eq_iota_inverse}, and so it follows 
from Lemma \ref{lem_tijk} that $\varphi \circ r_{\iota_\pi(\alpha_m)} = t_{j_m k_m}^{\bs_m} \circ \varphi$. 
Since $\rho(g)=\mathrm{id}$, the composition  $\widetilde{g}:= r_{\iota_\pi(\alpha_\ell)} \circ \dots \circ r_{\iota_\pi(\alpha_1)}$ acts trivially on $\Lambda_{(Y,D)}$. In addition, the reflections  $r_{\iota_\pi(\alpha_m)}$ act trivially on the classes of irreducible components of $D$, therefore the same holds for $\widetilde{g}$. Consequently,
$\widetilde{g}$ acts trivially on the whole of $NS(Y)$. 
Using the compatibility 
$\varphi \circ \widetilde{g} = g \circ \varphi$ together with the surjectivity of $\varphi$, we deduce that $g=\mathrm{id}$, and this completes the proof of the injectivity of $\rho$.  
\end{proof}

Finally, we describe the Weyl group $W_{\mathscr{S}}$ in terms of the geometry of log Calabi--Yau surfaces. Fix a seed $\bs \in \mathscr{S}$ and $\pi: (Y,D) \rightarrow (\overline{Y},\overline{D})$ a framed toric model for $\bs$ as in \S\ref{sec_log_CY}, with exceptional curves $E_i$
indexed by $i\in I$. 
Following \cite[Definition 1.6]{GHK}, a \emph{root} of $(Y,D)$ is an element $\alpha \in \Lambda_{(Y,D)}$ with $\alpha \cdot \alpha=-2$, which is realized by a smooth rational curve on a deformation equivalent pair $(Y',D')$. We denote by $\Phi$ the set of roots of $(Y,D)$.
The \emph{Weyl group} $W$ of $(Y,D)$ is the group of isometries of $NS(Y)$ generated by the reflections with respect to the roots $\alpha \in \Phi$. Elements of $W$ fix the classes of the irreducible components of $D$, and so are uniquely determined 
by their action on the orthogonal subgroup $\Lambda_{(Y,D)} \subset NS(Y)$. Hence, $W$ can also be viewed as a subgroup of the group of isometries of $\Lambda_{(Y,D)}$.
If $(\widetilde{Y},\widetilde{D}) \rightarrow (Y,D)$ is a corner blow-up, then there is a canonical identification $\Lambda_{(Y',D')} \simeq \Lambda_{(Y,D)}$, and so a corresponding identification of the sets of roots and of the Weyl groups of $(Y',D')$ and $(Y,D)$. Therefore, we consider in the following $(Y,D)$ up to corner blow-ups.

\begin{lemma} \label{lem_root_1}
Let $E$ and $F$ be two disjoint smooth rational curves in $Y$ such that $E^2=F^2=-1$, $E$ and $F$ are not contained in $D$ and intersect the same irreducible component of $D$. Then $E-F$ is a root of $(Y,D)$.
\end{lemma}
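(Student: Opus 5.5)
We have to show three things about $\alpha := E-F$: that it lies in $\Lambda_{(Y,D)}$, that $\alpha\cdot\alpha=-2$, and that it is realized by a smooth rational $(-2)$-curve on some pair deformation equivalent to $(Y,D)$. The first two are immediate.

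First, since $E$ is a smooth rational curve with $E^2=-1$, adjunction gives $K_Y\cdot E=-1$, so $D\cdot E=1$. Because $E\not\subset D$, every intersection with a component of $D$ is nonnegative. Hence $E$ meets exactly one irreducible component $D_\rho$ of $D$, transversally at a single point, and meets no other component. The same holds for $F$, and by hypothesis it is the same component $D_\rho$. Therefore $(E-F)\cdot D_\sigma=0$ for every component $D_\sigma$, so $\alpha\in\Lambda_{(Y,D)}$. Since $E\cdot F=0$, we get $\alpha^2=-1-1=-2$.

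The remaining step is realization, which I expect to be the main obstacle. The plan is to contract $E$ and $F$ and move the two blown-up points together.
\begin{itemize}
\item Let $p:(Y,D)\to(Y_0,D_0)$ contract the disjoint $(-1)$-curves $E$ and $F$. These are interior blow-downs, since each curve meets $D$ at one smooth point of $D$. So $(Y_0,D_0)$ is a log Calabi--Yau surface and $Y$ is its blow-up at two distinct smooth points $q_E,q_F$ of the same component $D_{0,\rho}$.
\item Consider the family over $(D_{0,\rho}\cap D_0^{\mathrm{sm}})^2$ that blows up $Y_0$ at a pair of points $(x,y)$, extended over the diagonal in the standard way: blow up $x$, then blow up the point on the exceptional curve corresponding to the direction of $D_{0,\rho}$, that is, the intersection point with the strict transform of $D_{0,\rho}$.
\item One can realize this uniformly by the iterated blow-up of $Y_0\times B$ along sections, with $B$ a smooth curve through $(q_E,q_F)$ meeting the diagonal. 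Then all fibres are log Calabi--Yau pairs with the same boundary combinatorics, giving a deformation of pairs in the sense of \cite{GHK}.
\item In this family the classes $E$ and $F$ are transported locally constantly. In the special fibre, the first exceptional curve has strict transform $C$ with $C^2=-2$, and $C$ is disjoint from the new boundary because the second blow-up happened exactly at $C\cap \widetilde{D}_{0,\rho}$. The second exceptional curve $G$ is a $(-1)$-curve meeting $D$.
\item Under the parallel-transport identification, the total transform of the first point is $C+G$ and the second exceptional class is $G$. So $E-F$ (or $F-E$, depending on which point is blown up first) is the class of the smooth rational curve $C$, up to sign.
\end{itemize}

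The sign does not matter, since roots are closed under negation, or equivalently one can swap the roles of $E$ and $F$. Hence $\alpha$ is a root of $(Y,D)$.

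The delicate points are two. First, one must check that this degeneration is a genuine deformation of log Calabi--Yau pairs preserving the identification of $NS$ and of $\Lambda$. This follows from flatness of the iterated blow-up along sections over a smooth base, which is standard. Second, one must note that $(Y,D)$ is only considered up to corner blow-ups, which is harmless.
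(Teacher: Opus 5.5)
Your proof is correct and follows the same route as the paper: contract $E$ and $F$ to two points on the same boundary component, let these points collide along that component, and blow up successively (the second time at the infinitely near point in the direction of $D_{0,\rho}$). This produces a deformation-equivalent pair carrying a smooth rational $(-2)$-curve of class $E-F$. You also spell out steps the paper leaves implicit: that $E-F\in\Lambda_{(Y,D)}$, that $(E-F)^2=-2$, and the parallel-transport identification $E\mapsto C+G$, $F\mapsto G$.
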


\begin{proof}
There exists a morphism $(Y,D) \rightarrow (Y',D')$ that contracts $E$ and $F$ to two distinct points $p$ and $q$ lying on the same irreducible component $D'_i$ of $D'$. Bringing the points $p$ and $q$ together on $D'$ and blowing them up successively defines a log Calabi--Yau surface deformation equivalent to $(Y,D)$ containing a smooth rational curve of class $E-F$, so $E-F \in \Phi$.
\end{proof}

\begin{theorem} \label{thm_weyl}
Let $\mathscr{S}$ be a mutation equivalence class of seeds in $N$, and $\pi: (Y,D) \rightarrow (\overline{Y},\overline{D})$ be a framed toric model for a seed
$\bs \in \mathscr{S}$. 
Under the identification
$\iota_\pi: K \xrightarrow{\sim} \Lambda_{(Y,D)}$, the roots and the Weyl group of $\mathscr{S}$ coincide with the roots and the Weyl group of the log Calabi--Yau surface $(Y,D)$ respectively:
    \[  \Phi_{\mathscr{S}} = \Phi  \,\,\, \text{and} \,\,\, W_{\mathscr{S}} = W \,.\]
\end{theorem}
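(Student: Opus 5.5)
Once $\Phi_{\mathscr{S}} = \Phi$ is established, the equality $W_{\mathscr{S}} = W$ follows from Lemma \ref{lem_W_S}. That lemma identifies $W_{\mathscr{S}}$ with the group generated by the reflections $s_\alpha$, $\alpha\in\Phi_{\mathscr{S}}$. Transporting by $\iota_\pi$, which is an isometry by definition of $(-,-)_{\mathscr{S}}$, these become the reflections of $NS(Y)$ in the classes $\iota_\pi(\alpha)$ restricted to $\Lambda_{(Y,D)}$, and $W$ is determined by its action on $\Lambda_{(Y,D)}$. So the plan is to prove both inclusions of roots.

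\emph{Inclusion $\Phi_{\mathscr{S}}\subset\Phi$.} Let $\alpha=e_j-e_k$ with $e_j,e_k\in\bs'$ for some $\bs'\in\mathscr{S}$ and $\psi(e_j)=\psi(e_k)$. Write $\bs'=\mu(\bs)$ for a sequence of seed mutations $\mu$. By \cite[\S 5]{GHKbir}, applied iteratively after corner blow-ups, there is a generic framed toric model $\pi':(Y,D)\to(\overline{Y}',\overline{D}')$ for $\bs'$ on the same surface (up to deformation), related to $\pi$ by cluster mutations. Here genericity may require deforming the points $p_i$; roots depend only on the deformation class, so this is harmless. The independence statement \cite[Theorem 5.6]{GHKbir} should give $\iota_{\pi'}=\iota_\pi$ as maps $K\to\Lambda_{(Y,D)}$; I would check this on a single cluster mutation, where it is the compatibility underlying the well-definedness of $(-,-)_{\mathscr{S}}$. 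Then $\iota_\pi(\alpha)=E'_k-E'_j$, where $E'_j,E'_k$ are exceptional curves of $\pi'$. These are disjoint $(-1)$-curves meeting the same component $\overline{D}'$-strict transform, since $\psi(e_j)=\psi(e_k)$. Lemma \ref{lem_root_1} then shows $E'_k-E'_j\in\Phi$.

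\emph{Inclusion $\Phi\subset\Phi_{\mathscr{S}}$.} This is the main obstacle. Let $\beta\in\Phi$ be realized by a smooth rational $(-2)$-curve $C$ on a deformation $(Y',D')$ of $(Y,D)$; I will transport the framed toric model $\pi$ to $(Y',D')$ along the deformation. Since $\beta\in\Lambda$, $C$ is disjoint from $D'$. I would first use the standard fact, which I would take from \cite{GHK}, that the Weyl group acts transitively on the chambers, together with a reduction via the factorization, to find a toric model $\pi''$ of $(Y',D')$ in which $C$ is the strict transform of a fiber-type configuration. Concretely, I aim to show there is a toric model obtained by contracting $(-1)$-curves, at distinct points, in which $C$ is the difference $E''_k-E''_j$ of two exceptional curves over the same boundary component. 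Such a $C$ arises from an infinitely near blow-up, so I would first perturb to separate the points. I would do this by deforming $(Y',D')$ back toward generic while tracking the class $\beta$: choose a $(-1)$-curve $F$ with $F\cdot C=1$, contract $F$ and then the image of $C$, producing two exceptional classes $F$ and $F+C$ whose difference is $\pm\beta$. Completing to a toric model via \cite[Lemma 1.3]{GHK} then gives the configuration. Finding such an $F$ compatible with a toric model is the delicate step.

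Having a toric model $\pi''$ of a generic deformation with $\beta=E''_k-E''_j$, Theorem \ref{thm_factorization} provides a seed $\bs''=\mu(\bs)\in\mathscr{S}$ for which $\pi''$ is framed and related to $\pi$ by cluster mutations. As in the first inclusion, $\iota_{\pi''}=\iota_\pi$, so $\iota_\pi^{-1}(\beta)=e''_j-e''_k$, up to sign, with $\psi(e''_j)=\psi(e''_k)$. This lies in $\Phi_{\mathscr{S}}$; if a sign is needed, it is handled by swapping $j$ and $k$. This completes $\Phi=\Phi_{\mathscr{S}}$, and hence $W_{\mathscr{S}}=W$.
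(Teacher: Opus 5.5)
Your architecture is the paper's. You reduce to $\Phi_{\mathscr{S}}=\Phi$ via Lemma \ref{lem_W_S}, get $\Phi_{\mathscr{S}}\subset\Phi$ from cluster mutations plus Lemma \ref{lem_root_1}, and get $\Phi\subset\Phi_{\mathscr{S}}$ by finding a toric model in which the root is a difference of two exceptional curves over the same boundary component, then invoking Theorem \ref{thm_factorization}. The reduction and the first inclusion are fine.

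The gap is the step you yourself call delicate. Nothing in your argument shows that a root $\beta\in\Phi$ is, on some deformation, of the form $E-F$ with $E,F$ disjoint interior $(-1)$-curves meeting the same component of $D$. Your mechanism needs a $(-1)$-curve $F$ with $F\cdot C=1$ on a surface carrying the $(-2)$-curve $C$, and you give no reason such an $F$ exists. Interior $(-1)$-curves do exist (as $I\neq\emptyset$), but they could be disjoint from $C$ or meet it with multiplicity $\geq 2$. If $C$ is not contracted by a given toric model, its image in the toric surface can pass through the blown-up points with high multiplicity. The appeal to transitivity of $W$ on chambers does not fill this in either. That is a statement about the group generated by reflections in $\Phi$, and says nothing about which roots are differences of exceptional classes, which is exactly what must be proved.

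The paper gets this existence statement from \cite[Theorem 6.12]{friedman2015geometry}. Since $I\neq\emptyset$, $Y$ contains $(-1)$-curves, so $Y\not\cong\mathbb{P}^1\times\mathbb{P}^1,\mathbb{F}_2$. Friedman's theorem then gives, up to deformation, disjoint interior $(-1)$-curves $E,F$ meeting the same component of $D$ with $\alpha=E-F$.

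With that input the rest of your plan goes through as in the paper, and more simply:
\begin{itemize}
\item contract $E$ and $F$ (no infinitely near points arise, so your perturbation step is unnecessary);
\item take a toric model of the contracted pair after corner blow-ups by \cite[Lemma 1.3]{GHK};
\item apply Theorem \ref{thm_factorization} to the composite.
\end{itemize}
Conversely, if you had your $F$, contracting $F$, then the image of $C$, then separating the two centers along the boundary component is just the proof of Lemma \ref{lem_root_1} run backwards. It would produce Friedman's configuration. So your route reduces entirely to the unproved existence of $F$, which is essentially the content of the cited theorem.
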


\begin{proof}
It suffices to prove that $\Phi_{\mathscr{S}}=\Phi$. We first show the inclusion $\Phi_{\mathscr{S}} \subset \Phi$. Let $\alpha \in \Phi_{\mathscr{S}}$. By Definition \ref{def_root}, there exists a seed $\bs'=\{e_i'\}_{i\in I} \in \mathscr{S}$ and distinct elements $e_j'$ and $e_k'$ of 
$\bs'$ such that $\psi(e_j')=\psi(e_k')$ and $\alpha=e_j'-e_k'$. Since both seeds $\bs$ and $\bs'$ are in $\mathscr{S}$, there exists a sequence of seed mutations that $\bs'=\mu(\bs)$.
As reviewed in \S \ref{sec_log_CY}, it follows that, up to corner blow-ups,
there exists a framed toric model $\pi': (Y, D) \rightarrow (\overline{Y}', \overline{D}')$ for $\bs'$ related to $\bs$ by cluster mutations induced by 
$\mu$. 
By construction, there exist exceptional curves $E_j'$ and $E_k'$ for $\pi'$, intersecting the same irreducible component of $\overline{D}'$, such that $\iota_{\pi'}(\alpha)=E'_j-E'_k$, and so
$\alpha \in \Phi$ by Lemma \ref{lem_root_1}.

It remains to show the reverse inclusion $\Phi \subset \Phi_{\mathscr{S}}$. Let $\alpha \in \Phi$. If $I=\emptyset$, there is nothing to show, so assume that $I\neq \emptyset$.
Then $Y$ contains $(-1)$-curves and so $Y$ is not isomorphic to $\bP^1 \times \bP^1$ or $\mathbb{F}_2$. Therefore, by
\cite[Theorem 6.12]{friedman2015geometry},
up to replacing $(Y,D)$ by a deformation-equivalent log Calabi--Yau surface, there exist two disjoint smooth rational curves $E$ and $F$ in $Y$ such that $\alpha=E-F$, 
$E^2=F^2=-1$, $E$ and $F$ are not contained in $D$ and intersect the same irreducible component of $D$. Let $(Y',D')$ be the log Calabi--Yau surface obtained from $(Y,D)$ by contracting $E$ and $F$. Up to replacing $(Y,D)$ and $(Y',D')$ by corner blow-ups, $(Y',D')$ admits a toric model $\pi': (Y',D') \rightarrow (\overline{Y}',\overline{D}')$ by  \cite[Lemma 1.3]{GHK}. Then, by Theorem \ref{thm_factorization} 
(\cite[Proposition 3.27]{hacking_keating}, see also \cite[Theorem 1]{blanc}), there exists an identification 
$(\bC^\star)^2 \simeq \overline{Y}' \setminus \overline{D}'$ such that
the composition $(Y,D) \rightarrow (\overline{Y}',\overline{D}')$ is a
framed toric model for a seed $\bs'$ related to $\bs$ by a sequence of mutations. Since $E$ and $F$ are exceptional curves of $(Y,D) \rightarrow (\overline{Y}',\overline{D}')$ intersecting the same irreducible component of $D$, it follows that $\iota_\pi^{-1}(\alpha) \in \Phi_{\mathscr{S}}$.
\end{proof}

\subsection{Seeds of $q$-Painlev\'e type and T-polygons}
\label{sec_q_painleve}

In this section, we first review the concept of seeds of 
$q$-Painlevé type, together with the associated T-polygons. We then discuss the geometry of the associated log Calabi--Yau surfaces. The main result is Theorem \ref{thm_q_P} classifying images in $\bZ^2$ of seeds of $q$-Painlev\'e type up to mutation and $SL(2, \bZ)$.

\begin{definition} \label{def_q_painleve}
A mutation equivalence class $\mathscr{S}$ of seeds in $N$ is of \emph{$q$-Painlev\'e type} if the intersection form 
$(-,-)_{\mathscr{S}}$ on $K$ is negative semi-definite but not negative definite.
A seed in $N$ is of $q$-Painlev\'e type if its mutation equivalence class is of $q$-Painlev\'e type.
\end{definition}

By \cite[Proposition 3.22 (3)]{Mizuno}, if $\bs$ is a seed of $q$-Painlev\'e type, then there exists a unique primitive element
\begin{equation} \label{eq_delta_S}
    \delta_{\mathscr{S}} = \sum_{i\in I} c_{\bs,i} e_i \in K
\end{equation}
such that $c_{\bs,i}\in \bZ_{>0}$ and $(\delta_\mathscr{S}, \delta_\mathscr{S})_\mathscr{S}=0$. Moreover, $\delta_{\mathscr{S}}$ depends only on the mutation equivalence class $\mathscr{S}$ of $\bs$.

Let $\bs$ be a seed of $q$-Painlev\'e type. Following \cite[Eq. (3.7)]{Mizuno}, we define a convex polygon $P_{\bs}$ in $\bR^2$ as the intersection of the half-spaces $\{
v\in \bR^2\,|\, \langle v, \psi(e_i)\rangle \geq -c_{\bs, i}\}$ for all $i\in I$. By \cite[Proposition 3.30]{Mizuno}, $P_\bs$ is a Fano polygon without remainder, or equivalently a \emph{T-polygon} in the terminology of \cite{lutz}: 
\begin{itemize}
    \item[(i)] $P_{\bs}$ is a bounded convex polygon in $\bR^2$ containing the origin $0$ in its interior and each vertex of $P_\bs$ is a primitive element of $\bZ^2$,
    \item[(ii)] for every edge $e$ of $P_\bs$, the lattice length of $e$ is divisible by the lattice distance between $e$ and the origin.
\end{itemize}
Fix a cyclic ordering $\tilde{\bs}=(e_i)_{1 \leq i\leq n}$ of $\bs$. Let $v_1, \dots, v_m$ be the cyclically ordered collection of vectors in $\bZ^2$ of the form $\psi(e_i)$, without repetition. Then it follows from the proof of \cite[Proposition 3.30]{Mizuno} that $v_1, \dots, v_m$ are exactly the primitive directions of the edges of $P_\bs$. Moreover, given an edge $e$ with primitive direction $v_j$, the lattice distance between $e$ and the origin is equal to $c_{\bs,i}$ for any $1\leq i\leq n$ such that $v_j=\psi(e_i)$. Finally, the lattice length of $e$ is equal to this lattice distance times the number of $1 \leq i\leq n$ such that $\psi(e_i)=v_j$.

\begin{lemma} \label{lem_coeff}
Let $\bs=\{e_i\}_{i \in I}$ and $\bs'=\{e_i'\}_{i\in I}$
be two seeds of $q$-Painlev\'e type in $N$.
Let $\tilde{\bs}=(e_i)_{1 \leq i \leq n}$ and $\tilde{\bs}'=(e_i')_{1\leq i\leq n}$
be cyclic orderings of $\bs$ and $\bs'$ respectively. 
Assume that there exists $f \in SL(2,\bZ)$ such that $\psi(e_i)=f(\psi(e_i'))$ for all $1 \leq i\leq n$. Then, we have  $c_{\bs,i} = c_{\bs',i}$ for all $1 \leq i \leq n$.
\end{lemma}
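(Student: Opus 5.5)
The plan is to transport the isotropic vector $\delta_{\mathscr{S}'}$ through the linear isomorphism of $N$ that matches the two cyclically ordered seeds, and then use the uniqueness of $\delta_{\mathscr{S}}$ recalled in \eqref{eq_delta_S}. Let $\mathscr{S}$ and $\mathscr{S}'$ be the mutation classes of $\bs$ and $\bs'$. Let $g: N \to N$ be the unique linear automorphism with $g(e_i') = e_i$ for all $1 \leq i \leq n$. Both seeds are bases of $N$, so $g$ is indeed an automorphism.

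First I check that $g$ intertwines all the relevant structures. On the basis $\bs'$, the hypothesis $\psi(e_i) = f(\psi(e_i'))$ gives $\psi \circ g = f \circ \psi$. Since $f$ is invertible, this implies $g(K) = K$. Since $f \in SL(2,\bZ)$ preserves $\det$, we get $\langle g(x), g(y)\rangle = \langle x, y\rangle$ for all $x, y \in N$. In particular $\langle e_i, e_j \rangle = \langle e_i', e_j'\rangle$ for all $i,j$. Comparing with the definition \eqref{eq_bilinear}, which uses the same index ordering for both cyclic orderings, we obtain $\chi_{\tilde{\bs}}(g(x), g(y)) = \chi_{\tilde{\bs}'}(x,y)$ for all $x,y \in N$. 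Lemma \ref{lem_brackets} identifies $-\chi_{\tilde{\bs}}|_K$ with $(-,-)_{\mathscr{S}}$ and $-\chi_{\tilde{\bs}'}|_K$ with $(-,-)_{\mathscr{S}'}$. Hence $g|_K$ is an isometry from $(K,(-,-)_{\mathscr{S}'})$ onto $(K,(-,-)_{\mathscr{S}})$.

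Next I apply this isometry to $\delta_{\mathscr{S}'} = \sum_i c_{\bs',i} e_i'$. Its image is $g(\delta_{\mathscr{S}'}) = \sum_i c_{\bs',i} e_i$, and this element has the following properties:
\begin{itemize}
\item it lies in $K$, because $g(K) = K$;
\item it is primitive in $K$, because $g$ is an automorphism;
\item its coefficients in the basis $\bs$ are all strictly positive;
\item it satisfies $(g(\delta_{\mathscr{S}'}), g(\delta_{\mathscr{S}'}))_{\mathscr{S}} = (\delta_{\mathscr{S}'},\delta_{\mathscr{S}'})_{\mathscr{S}'} = 0$, by the isometry property.
\end{itemize}
Since $\bs$ is of $q$-Painlev\'e type, the uniqueness statement from \cite[Proposition 3.22 (3)]{Mizuno} recalled in \eqref{eq_delta_S} forces $g(\delta_{\mathscr{S}'}) = \delta_{\mathscr{S}}$. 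Comparing coefficients in the basis $\bs$ then gives $c_{\bs,i} = c_{\bs',i}$ for all $i$.

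The only delicate point is the exact form of the uniqueness statement. I need it to say that $\delta_{\mathscr{S}}$ is the unique primitive isotropic element of $K$ with positive coefficients with respect to the given seed $\bs$, which is how it is stated in \eqref{eq_delta_S}. If only a weaker statement is available, I would argue directly instead. By negative semi-definiteness, any isotropic vector lies in the radical of $(-,-)_{\mathscr{S}}$. Since the form is negative semi-definite but not negative definite, this radical is nonzero, and I would want it to have rank one (as in \cite{Mizuno}). In that case a primitive isotropic vector is determined up to sign, and the sign is fixed by the positivity of the coefficients. Everything else in the argument is a formal verification.
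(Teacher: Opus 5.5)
Your proof is correct, but it takes a different route from the paper's.

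The paper argues through T-polygons. It asserts that the hypothesis gives $P_\bs = f(P_{\bs'})$. It then uses that the lattice distances from the origin to the edges, which are the $c_{\bs,i}$ by the description of $P_\bs$ following \cite[Proposition 3.30]{Mizuno}, are $SL(2,\bZ)$-invariant.

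You stay inside $N$ instead. The automorphism $g$ with $g(e_i')=e_i$ intertwines $\psi$ with $f$, and it preserves $\langle-,-\rangle$ because $\det f=1$. It therefore carries $\chi_{\tilde{\bs}'}$ to $\chi_{\tilde{\bs}}$, so by Lemma \ref{lem_brackets} it is an isometry $(K,(-,-)_{\mathscr{S}'})\to(K,(-,-)_{\mathscr{S}})$. The uniqueness in \eqref{eq_delta_S} then forces $g(\delta_{\mathscr{S}'})=\delta_{\mathscr{S}}$.

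Your version is more self-contained, since it uses only \eqref{eq_delta_S} and Lemma \ref{lem_brackets}. It also makes explicit what the paper's one-line argument leaves implicit: the $c_{\bs,i}$, and hence $P_\bs$, are determined by the cyclically ordered vectors $\psi(e_i)$ in an $SL(2,\bZ)$-equivariant way. The equality $f(P_{\bs'})=P_\bs$ does not follow directly from the definition. Unwinding it, $f(P_{\bs'})$ is cut out by the half-planes $\langle v,\psi(e_i)\rangle\geq -c_{\bs',i}$, so the equality is essentially the statement being proved. It rests on Mizuno's identification of $\psi(\bs)$ with the polygon data. In exchange, the paper's version is shorter and gives the $c_{\bs,i}$ their geometric meaning as lattice heights of edges.

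Your closing paragraph about the ``delicate point'' is unnecessary. \eqref{eq_delta_S} is stated exactly as you need it: uniqueness among primitive isotropic elements of $K$ with positive coefficients in the given seed. Primitivity is preserved because $K$ is saturated in $N$ and $g(K)=K$.
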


\begin{proof}
The assumption implies that $P_\bs=f(P_{\bs'})$ with $f \in SL(2, \bZ)$. The result follows since the lattice distances between the origin and the edges of a lattice polygon are invariant under the action of $SL(2,\bZ)$.
\end{proof}

The following results collect facts about toric models of seeds of $q$-Painlev\'e type, following \cite{friedman2015geometry, lutz, mandel, Mizuno, Sak}. A log Calabi--Yau surface $(Y,D)$ is called \emph{minimal} if no component of $D$ is an exceptional curve. Every log Calabi--Yau surface admits a unique minimal model $\nu: (Y,D) \rightarrow (Y^\nu,D^\nu)$, where $\nu$ is a composition of corner blow-ups and $(Y^\nu, D^\nu)$ is a minimal log Calabi--Yau surface obtained by successively contracting exceptional curves in $D$.

\begin{lemma} \label{lem_log_cy-painleve}
Let $(Y,D)$ be a minimal log Calabi--Yau surface. Then, the following are equivalent:
\begin{itemize}
    \item[(i)] $(Y,D)$ is the minimal model of a toric model of a seed of $q$-Painlev\'e type.
    \item[(ii)] The intersection form on $\Lambda_{(Y,D)}$ is negative semi-definite but not negative definite.
    \item[(iii)] The divisor $D$ is either irreducible with $D^2=0$, or a cycle of smooth rational curves $D_i$ with $D_i^2=-2$ for all $i$.
    \item[(iv)] $(Y,D)$ is deformation equivalent to a log Calabi--Yau surface $(Y',D')$ which admits an elliptic fibration having $D'$ as a fiber.
\end{itemize}
\end{lemma}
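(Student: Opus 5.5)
The plan is to prove the cycle of implications (i)$\Rightarrow$(ii)$\Rightarrow$(iii)$\Rightarrow$(iv)$\Rightarrow$(ii)$\Rightarrow$(i). Throughout, I use that corner blow-ups induce canonical identifications $\Lambda_{(\widetilde Y,\widetilde D)}\simeq\Lambda_{(Y,D)}$, so the intersection form on $\Lambda$ is an invariant of the minimal model.

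\emph{(i)$\Rightarrow$(ii).} Let $\pi$ be a framed toric model of a seed $\bs$ of $q$-Painlev\'e type whose minimal model is $(Y,D)$. The isomorphism $\iota_\pi$ of \eqref{eq_isom} identifies $(K,(-,-)_{\mathscr S})$ with $\Lambda$ of the toric model. That lattice is in turn identified with $\Lambda_{(Y,D)}$. Definition \ref{def_q_painleve} then gives (ii).

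\emph{(ii)$\Rightarrow$(iii).} Since $\Lambda_{(Y,D)}$ is the orthogonal complement of the components $D_i$ in $NS(Y)$, and $NS(Y)$ has signature $(1,\rho-1)$ by the Hodge index theorem, I would argue as follows.
\begin{itemize}
\item If the span of the $D_i$ contains a class of positive square, then $\Lambda$ is negative definite.
\item Hence, under (ii), the intersection matrix $(D_i\cdot D_j)$ is negative semi-definite. Its radical is nonzero, because the radical of $\Lambda$ must come from an isotropic vector orthogonal to all $D_i$.
\item So there is a nonzero class $F=\sum a_iD_i$ with $F\cdot D_j=0$ for all $j$ and $F^2=0$. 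Zariski's lemma for the (connected) cycle $D$ then forces the matrix to be negative semi-definite of corank one.
\end{itemize}
Minimality says no $D_i$ is a $(-1)$-curve. Adjunction gives $D_i^2=-2+\#(\text{neighbours})\cdot(\ldots)$: concretely $D\cdot D_i=\deg K_{D_i}^{-1}$ restricted, so $D_i^2+D_i\cdot(D-D_i)=2-2g=\ldots$. Combined with the semi-definiteness, the classification of such configurations (as in Kodaira's list for cycles) then gives one of two cases: either $D$ is irreducible nodal with $D^2=0$, or all $D_i^2=-2$ forming an $\tilde A$ cycle. The main obstacle is the step ruling out positive self-intersection or non-$(-2)$ components; here I would use $D_i^2\ge -2$ for minimal anticanonical cycles (from $D_i\cdot D=2-(\text{valence})\ldots$) together with the fact that $\sum D_i^2=0$ is forced for a semi-definite cycle.

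\emph{(iii)$\Rightarrow$(iv).} In case (iii), $D\sim -K_Y$ satisfies $D^2=0$, and $h^0(-K_Y)\ge 2$ by Riemann--Roch ($\chi(-K_Y)=1+K_Y^2=1$, plus the contribution from $h^0(K_Y+D)$). After deforming $(Y,D)$ to generic position, following \cite{friedman2015geometry}, the pencil $|D|$ is base-point free and gives an elliptic fibration with $D$ as a fiber. Alternatively I would cite \cite{Sak}, \cite{mandel} for the existence of a deformation to a rational elliptic surface.

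\emph{(iv)$\Rightarrow$(ii)$\Rightarrow$(i).} If $D'$ is a fiber $F$ of an elliptic fibration, then $\Lambda$ is contained in $F^\perp$, which is negative semi-definite by Zariski/Hodge index, and $F$ itself lies in $\Lambda$ with $F^2=0$. Deformation invariance then gives (ii).

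For (ii)$\Rightarrow$(i), choose a toric model after corner blow-ups (\cite[Lemma 1.3]{GHK}). Perturbing the blown-up points to be distinct, and using Lemma \ref{lem_log_cy-painleve}'s setting of \S\ref{sec_log_CY}, this is a framed toric model for the seed $\bs=\{e_i\}$ with $\psi(e_i)$ given by the rays through the blown-up points. By Theorem 5.5 of \cite{GHKbir}, its intersection form on $K$ is that of $\Lambda$, so $\bs$ is of $q$-Painlev\'e type.
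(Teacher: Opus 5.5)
Your (i)$\Leftrightarrow$(ii) and (iv)$\Rightarrow$(ii) are fine and agree with the paper. The paper proves (i)$\Leftrightarrow$(ii) the same way and simply cites \cite[Theorem 4.2]{mandel} for (ii)$\Leftrightarrow$(iii)$\Leftrightarrow$(iv). The two steps you prove by hand instead, however, rest on false assertions.

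\textbf{(ii)$\Rightarrow$(iii).} The decisive inequality you invoke, $D_i^2\ge -2$ ``for minimal anticanonical cycles'', is false. Minimality only forbids $(-1)$-curves in $D$. Blowing up $k\ge 4$ distinct smooth points of one line of the toric triangle in $\bP^2$ gives a minimal pair with a component of self-intersection $1-k\le -3$. Nor is $\sum_i D_i^2=0$ forced: for a cycle of $n$ curves of self-intersection $-2$ the sum is $-2n$; what vanishes is $D^2=\sum_i D_i^2+2n$.

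The argument has to run the other way. For $n\ge 2$:
\begin{itemize}
\item Semi-definiteness on the span $S$ of the $D_i$ gives $D_i^2\le 0$.
\item $D_i^2=0$ is impossible, since then $(D_i+\epsilon D_{i+1})^2>0$ for small $\epsilon>0$.
\item Minimality rules out $D_i^2=-1$.
\end{itemize}
Hence $D_i^2\le -2$ for all $i$. Write $-(D_i\cdot D_j)_{i,j}=C+\mathrm{diag}(-D_i^2-2)$, with $C$ the affine Cartan matrix of type $\widetilde{A}_{n-1}$, which is positive semi-definite with kernel spanned by $(1,\dots,1)$. This shows the form on $S$ is negative definite unless every $D_i^2=-2$. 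The form on $S$ is in fact degenerate: otherwise $NS(Y)_{\bQ}=S\oplus\Lambda_{\bQ}$, and Hodge index would put a positive class in $\Lambda$. That is also the correct justification of your ``radical is nonzero'' step, which as written conflates the radical of $\Lambda$ with that of $S$.

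\textbf{(iii)$\Rightarrow$(iv).} Riemann--Roch gives only $\chi(-K_Y)=1+K_Y^2=1$, hence $h^0(-K_Y)\ge 1$, not $\ge 2$. The sequence $0\to\mathcal{O}_Y\to\mathcal{O}_Y(D)\to\mathcal{O}_D(D)\to 0$ together with $h^1(\mathcal{O}_Y)=0$ gives $h^0(-K_Y)=1+h^0(\mathcal{O}_D(D))$. This equals $2$ exactly when the multidegree-zero line bundle $\mathcal{O}_D(D)$ is trivial. For a general pair in the deformation class it is not: for example, $\bP^2$ blown up at nine general points of a nodal cubic has $|{-K_Y}|=\{D\}$. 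So deforming ``to generic position'' goes in the wrong direction.

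You must instead \emph{specialize} to a pair with $\mathcal{O}_D(D)\simeq\mathcal{O}_D$, using that every period is realized (for instance by moving the blown-up points of a toric model). There a section restricting to a nowhere-vanishing section of $\mathcal{O}_D(D)$ shows that $|D|$ is a base-point-free pencil. Since $D^2=0$ and $p_a(D)=1$, it gives an elliptic fibration with $D$ as a fibre. Your fallback of citing \cite{mandel} is exactly the paper's route and is the cleanest repair.
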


\begin{proof}
We have (i) $\implies$ (ii) by definition of seeds of $q$-Painlev\'e type, and (ii) $\implies (i)$ since every log Calabi--Yau surface admits a toric model, and $\Lambda_{(Y,D)}$ is invariant under corner blow-ups. The equivalences
(ii) $\Leftrightarrow$ (iii) $\Leftrightarrow$ (iv) are (6) $\Leftrightarrow$ (5) $\Leftrightarrow$ (7)
in \cite[Theorem 4.2]{mandel}.
\end{proof}

\begin{prop} \label{prop_classification_log_cy}
There are exactly ten deformation equivalence classes of minimal log Calabi--Yau surfaces arising as minimal models of toric models of seeds of $q$-Painlev\'e type. Moreover, each deformation equivalence class is uniquely determined by the isometry class of the intersection form $\Lambda_{(Y,D)}$. Using the notation of \cite[Table 4]{Sak}, the ten possibilities for $\Lambda_{(Y,D)}$ are the following affine root systems:
\[ A_0^{(1)}\,\,\,A_1^{(1)}\,\,\, A_{1,|\alpha|^2=8}^{(1)}\, \,\,  (A_1 + A_{1,|\alpha|^2=14})^{(1)} \,\,\, (A_2+A_1)^{(1)}\,\,\, A_4^{(1)}\, \,\, D_5^{(1)} \,\,\, E_6^{(1)} \,\,\, E_7^{(1)} \,\,\, E_8^{(1)} \,.\]
\end{prop}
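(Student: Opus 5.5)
By Lemma \ref{lem_log_cy-painleve}, the minimal log Calabi--Yau surfaces in question are exactly those whose intersection form on $\Lambda_{(Y,D)}$ is negative semi-definite but not negative definite. Equivalently, up to deformation, they are the rational elliptic surfaces $(Y',D')$ in which $D'$ is a fiber, with $D'$ either an irreducible fiber with $D'^2=0$ or a cycle of $(-2)$-curves, that is, a fiber of Kodaira type $I_k$. So the plan is to reduce the proposition to the known classification of anticanonical cycles on rational elliptic surfaces, as in Sakai's classification of surfaces for $q$-Painlev\'e equations \cite{Sak}, and then read off $\Lambda_{(Y,D)}$.

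\textbf{Step 1: the surfaces are rational elliptic.} Starting from a minimal model $(Y,D)$ as in (i), use (iv) to deform to a surface with an elliptic fibration having $D$ as a fiber. Then $K_Y^2=0$, and since $Y$ is rational, $Y$ is a blow-up of $\bP^2$ at $9$ points. Therefore $NS(Y)\simeq \bZ^{1,9}$ with $K_Y^\perp\simeq E_8^{(1)}$, and $\delta=-K_Y=[D]$ is isotropic.

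\textbf{Step 2: enumerate the possible boundaries.} By (iii), $D$ is a multiplicative-type fiber of type $I_k$, with $k\ge 1$ (the case $I_1$ corresponding to an irreducible $D$ with $D^2=0$). Since we only consider pairs up to deformation, the remaining data is the embedding of the boundary lattice $\langle D_1,\dots,D_k\rangle$ into $E_8^{(1)}$. The key input here is Looijenga's and Friedman's result \cite{friedman2015geometry} that deformation classes of log Calabi--Yau pairs are determined by the marked lattice data. Together with Sakai's list, this should give exactly the cases $k\in\{1,\dots,9\}$, with two non-equivalent embeddings for $k=8$. This produces the ten surface types of Sakai's $q$-discrete list $A_0^{(1)*},\dots,A_8^{(1)}$, together with the extra case $A_7^{(1)'}$.

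\textbf{Step 3: compute $\Lambda_{(Y,D)}$.} In each case, $\Lambda_{(Y,D)}$ is the orthogonal complement of the $D_i$, and its isometry type is the symmetry lattice in \cite[Table 4]{Sak}. This recovers the ten listed affine root systems, which are pairwise non-isometric; one can distinguish them, for instance, by rank and discriminant of $\Lambda/\bZ\delta$. This gives the uniqueness statement: the isometry class of $\Lambda_{(Y,D)}$ determines $k$ and the embedding, and hence, by the Torelli-type result above, the deformation class.

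\textbf{Step 4: realizability.} Each of the ten classes genuinely arises from a seed of $q$-Painlev\'e type. By Lemma \ref{lem_log_cy-painleve}, (ii) implies (i), and each such surface admits a toric model after corner blow-ups \cite[Lemma 1.3]{GHK}. One could alternatively exhibit the sixteen reflexive polygons, which are T-polygons, as concrete seeds.

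The main obstacle is Step 2: proving that the two $k=8$ embeddings are the only non-uniqueness, and more generally that the lattice data classifies deformation types. For this I would rely on \cite{friedman2015geometry} and cite Sakai's table rather than redo the lattice-embedding enumeration.
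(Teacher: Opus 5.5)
Your proposal is correct and follows the paper's route: you reduce via Lemma \ref{lem_log_cy-painleve} to boundaries of type $I_k$ and then rest on the classification in \cite[Propositions 9.15--9.16]{friedman2015geometry} and \cite[\S 4, Table 4]{Sak}, with your rank/discriminant check on $\Lambda_{(Y,D)}/\bZ\delta$ making the uniqueness statement explicit. One correction to your optional aside: a reflexive polygon with $b$ boundary points gives a seed with $n=b\in\{3,\dots,9\}$ elements, hence $k=12-n\geq 3$, so the sixteen reflexive polygons realize only eight of the ten classes and miss the $E_7^{(1)}$ and $E_8^{(1)}$ cases; realizability must therefore come from (ii)$\Rightarrow$(i), as in your main argument.
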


\begin{proof}
By (i) $\Leftrightarrow$ (iii) in Lemma \ref{lem_log_cy-painleve}, it suffices to classify up to deformation log Calabi--Yau surfaces $(Y,D)$ such that $D$ is either irreducible with $D^2=0$, or a cycle of smooth rational curves $D_i$ with $D_i^2=-2$ for all $i$. This is done in \cite[Proposition 9.15-9.16]{friedman2015geometry} -- see also \cite[\S 4]{Sak}.
\end{proof}

\begin{remark}
The ten deformation classes of log Calabi–Yau surfaces in 
Proposition \ref{prop_classification_log_cy} 
are precisely the ten deformation classes of log Calabi--Yau surfaces that appear in Sakai’s classification of the 
$q$-Painlevé equations \cite{BGM}, \cite[\S 3.2]{Mizuno}, \cite{Sak}. 
This connection motivates the terminology 
``$q$-Painlevé type" introduced in \cite{Mizuno}. These log Calabi–Yau surfaces are also deformation equivalent to the rational elliptic fibrations which are  mirror to the ten deformation classes of del Pezzo surfaces \cite{mirror_orbifold, AKO, corti2023cluster, GGLP, gugiatti2025mirrors, lutz}. 
In each case, the lattice $K=\Lambda_{(Y,D)}$ admits a decomposition $K= \bZ \delta_{\mathscr{S}} \oplus Q_{\mathscr{S}}$, where $Q_{\mathscr{S}}$ is a negative definite lattice. 
A direct inspection shows that the lattices $Q_\mathscr{S}$ coincide precisely with the ten lattices $K_Z^{\perp}$ given by the orthogonal complements of the canonical divisor in the N\'eron-Severi groups $NS(Z)$ of the ten deformation classes of del Pezzo surfaces $Z$ \cite[Ch 8]{dolgachev} -- see \cite{mizoguchi} for an early version of this observation. 
In Remark \ref{rem_del_pezzo_painleve}, a geometric description of the identification
$Q_{\mathscr{S}} \simeq K_Z^\perp$ is obtained using geometric helices on del Pezzo surfaces. 
\end{remark}

The following result classifies the images $\psi(\bs)$ in $\bZ^2$ of seeds of $q$-Painlev\'e type  up to mutations and $SL(2,\bZ)$. It plays a key role in the proof of the main result of this paper, Theorem \ref{thm_main}, on the classification of geometric helices on del Pezzo surfaces. 

\begin{theorem} \label{thm_q_P}
Let $\bs=\{e_i\}_{i\in I}$ and $\bs'=\{e_j'\}_{j \in J}$ be two seeds of $q$-Painlev\'e type in $N$, with corresponding mutation equivalence classes $\mathscr{S}$ and $\mathscr{S}'$ respectively. Assume that $(K,(-,-)_{\mathscr{S}})$ and 
$(K,(-,-)_{\mathscr{S}'})$ are isometric. 
Then, there exists a sequence of mutations $\mu$ and a linear automorphism 
$g \in SL(2,\bZ)$ such that 
\[ \psi(\bs') = g\circ \psi(\mu(\bs)) \,.\]
\end{theorem}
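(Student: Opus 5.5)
We will follow the geometric strategy of \cite{lutz}, replacing the $GL(2,\bZ)$ ambiguity by an $SL(2,\bZ)$ one through an orientation argument. Choose generic framed toric models $\pi:(Y,D)\to(\overline{Y},\overline{D})$ for $\bs$ and $\pi':(Y',D')\to(\overline{Y}',\overline{D}')$ for $\bs'$. By Lemma \ref{lem_log_cy-painleve}, their minimal models are of the type in Proposition \ref{prop_classification_log_cy}. Since $(K,(-,-)_{\mathscr{S}})\simeq\Lambda_{(Y,D)}$ and $(K,(-,-)_{\mathscr{S}'})\simeq\Lambda_{(Y',D')}$ are isometric, and $\Lambda$ is invariant under corner blow-ups, the uniqueness statement in Proposition \ref{prop_classification_log_cy} shows that the minimal models are deformation equivalent. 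Hence, after corner blow-ups on both sides, $(Y,D)$ and $(Y',D')$ are deformation equivalent as log Calabi--Yau pairs. Deforming $\pi'$ along with $(Y',D')$ (the points $p_i$ move on the toric divisors), we may therefore assume that $(Y',D')=(Y,D)$ is the same generic surface, with $\pi'$ a second toric model of it.

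Next we apply Theorem \ref{thm_factorization} to the toric model $\pi'$ of $(Y,D)$; it is a blow-up at distinct points by construction. This produces a sequence of mutations $\mu$ and a framing $\overline{Y}'\setminus\overline{D}'\simeq(\bC^\star)^2$ making $\pi'$ a framed toric model for $\mu(\bs)$. The original framing of $\pi'$ (as a model for $\bs'$) and this new framing differ by an automorphism of the torus $(\bC^\star)^2$ that extends to the toric surface $\overline{Y}'$. Such an automorphism is a translation composed with a lattice automorphism $g\in GL(2,\bZ)$ preserving the fan. The rays carrying the exceptional points, with multiplicities, are then identified: we get $\psi(\bs')=g\circ\psi(\mu(\bs))$ as multisets of primitive vectors, since both describe the exceptional curves of the same $\pi'$ meeting the divisors $\overline{D}'_\rho$.

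The main obstacle is to ensure $g\in SL(2,\bZ)$ rather than $GL(2,\bZ)$. The deformation identifying $(Y',D')$ with $(Y,D)$ may reverse the cyclic orientation of the boundary cycle $D$, and this is exactly what could produce $\det g=-1$. We would first try to fix the orientation of the identification: the isometry class determines the deformation type, and we choose the deformation equivalence so that the induced map on the cycle $D$ respects the orientations coming from the counterclockwise orderings of the rays. If the deformation only matches $D$ up to reflection, we compose with a complex-conjugation or an orientation-reversing automorphism of the pair realized by a symmetry of the fan. Concretely, the plan is to check, using the list in Proposition \ref{prop_classification_log_cy}, that each of the ten minimal models admits an automorphism (up to deformation) reversing the cyclic order of $D$. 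For $D$ irreducible this is automatic; for cycles of $(-2)$-curves we would exhibit the reflection of the cycle on an explicit toric model, for example a reflection-symmetric T-polygon.

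Given this, both the combinatorial framing and the geometric identification preserve the orientation of the boundary. Hence $g$ preserves cyclic orientation of the rays, so $\det g=1$, which proves the theorem.
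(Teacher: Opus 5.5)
\textbf{There is a genuine gap, at the one step that matters.} Your set-up is the paper's (following Lutz). Isometric lattices give deformation-equivalent minimal models and hence an identification $f$ of the pairs. The factorization theorem then produces the mutations, and the two framings differ by some $g\in GL(2,\bZ)$. You also correctly see that $\det g=-1$ occurs exactly when the identification reverses the orientation of $D$, equivalently when $\varphi_f^\star\Omega=-\Omega$.

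What is missing is the only new ingredient needed to pass from $GL(2,\bZ)$ to $SL(2,\bZ)$: an automorphism of the minimal model $(Y^\nu,D^\nu)$ acting by $-1$ on $\Omega$. You defer this to a case-by-case check over the ten classes that is never carried out. The parts you do address are not correct:
\begin{itemize}
\item \emph{Irreducible $D^\nu$ is not automatic.} When $D^\nu$ is an irreducible nodal curve (the $E_8^{(1)}$ case), an isomorphism of pairs can exchange the two branches of $D^\nu$ at the node. After the corner blow-ups at the node needed to reach a toric model, this reverses the boundary cycle and sends $\Omega$ to $-\Omega$. So you still need an automorphism exchanging the branches, and its existence requires proof.
\item \emph{Complex conjugation does not help.} It is antiholomorphic, so it gives no isomorphism of pairs to which Theorem~\ref{thm_factorization} applies.
\item \emph{Cycle cases are only a suggestion.} ``A reflection-symmetric T-polygon'' would have to be exhibited in each of the ten mutation classes. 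If you want an honest automorphism of a surface, you must also check that a symmetric choice of blown-up points still gives a surface on which the factorization step can be run.
\end{itemize}

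\textbf{How the paper closes the gap.} It gives a uniform construction. By Lemma~\ref{lem_log_cy-painleve}(iv), deform so that $Y^\nu$ carries an elliptic fibration with $D^\nu$ as a fiber. This fibration has a section by \cite[Corollary 4.7]{lutz}, so fiberwise inversion $\iota$ is an automorphism of $(Y^\nu,D^\nu)$. It acts by $-1$ on $H^0(F,\omega_F)$ for every fiber $F$. Since $H^0(K_{Y^\nu})=0$, the residue map $H^0(\omega_{Y^\nu}(D^\nu))\to H^0(\omega_{D^\nu})$ is an isomorphism, so $\iota^\star\Omega=-\Omega$. Replacing $f$ by $f\circ\iota$ when necessary gives $\varphi_f^\star\Omega=\Omega$, and Lutz's argument then yields $g\in SL(2,\bZ)$. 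This treats all ten classes at once, including the irreducible case, where $\iota$ swaps the branches at the node.

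\textbf{A workable version of your idea.} If you prefer symmetric polygons, use them purely combinatorially, as in the remark after the paper's proof. Suppose the class contains a T-polygon $P$ with $hP=P$ for some $h\in GL(2,\bZ)$ with $\det h=-1$. Then any relation $P_{\bs'}\sim gP_{\bs}$ with $\det g=-1$ can be converted into one with $gh\in SL(2,\bZ)$. No surface automorphism or genericity is needed, but you must still actually exhibit such a $P$ in each of the ten classes.
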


\begin{proof}
By \cite[Proposition 3.30]{Mizuno}, if $\bs$ is a seed of $q$-Painlev\'e type,
the data of the seed $\psi(\bs)$ in $\bZ^2$ is equivalent that of the associated T-polygon $P_\bs$. 
Moreover, by \cite[Propositions 3.15-3.17]{KNP}, mutations T-polygons applied to $P_\bs$, as defined in \cite[\S 3]{mutations},\cite[\S 2]{KNP}, correspond to mutations of $\psi(\bs)$ in $\bZ^2$. These mutations canonically lift to mutations of $\bs$ in $N$. 
Thus, it suffices to prove that if $\bs$ and $\bs'$ are two seeds of $q$-Painlev\'e type with isometric $(K,(-,-)_{\mathscr{S}})$ and 
$(K,(-,-)_{\mathscr{S}'})$, then the polygons $P_\bs$ and $P_{\bs'}$ are related by mutations and the action of
$SL(2, \bZ)$.
The corresponding statement with $GL(2,\bZ)$ in place of $SL(2,\bZ)$ is proved in \cite[Corollary 3.33]{Mizuno} using the classification of T-polygons up to mutations and $GL(2,\bZ)$ proved in \cite[Theorem 5.6]{KNP}. However, Theorem \ref{thm_q_P} is stated for $SL(2,\bZ)$, not for $GL(2,\bZ)$: while two T-polygons differing by $GL(2,\bZ)$ might not be related by $SL(2,\bZ)$, we claim that they are always related by $SL(2,\bZ)$ and a sequence of mutations.
To lift the result from $GL(2,\bZ)$ to $SL(2, \bZ)$, we explain how to modify the geometric proof of the classification of T-polygons given in \cite[Theorem 4.16]{lutz}. 

Let $\pi: (Y,D) \rightarrow (\overline{Y}, \overline{D})$ and $\pi': (Y',D') \rightarrow (\overline{Y}', \overline{D}')$ be framed toric models of $\bs$ and $\bs'$ respectively, as in \S \ref{sec_log_CY}. Let $\nu: (Y,D) \rightarrow (Y^\nu, D^\nu)$ and $\nu': (Y',D') \rightarrow ((Y')^\nu, (D')^\nu)$ be the corresponding minimal models.
Since $(K,(-,-)_{\mathscr{S}})$ and 
$(K,(-,-)_{\mathscr{S}'})$ are isometric, Proposition \ref{prop_classification_log_cy} implies that the log Calabi--Yau surfaces $(Y^\nu, D^\nu)$ and $((Y')^\nu, (D')^\nu)$ are deformation equivalent. 
Hence, after replacing $(Y,D)$ and $(Y',D')$ by deformation equivalent log Calabi--Yau surfaces, we may assume that there exists an isomorphism 
\[ f: (Y^\nu, D^\nu) \xrightarrow{\sim}((Y')^\nu, (D')^\nu)\,.\] Let $\varphi_f: (\bC^\star)^2 = \overline{Y} \setminus D \dashrightarrow (\bC^\star)^2 = \overline{Y}' \setminus \overline{D}'$ be the induced birational transformation. Let $\Omega = \frac{dx}{x}\wedge \frac{dy}{y}$ on $(\bC^\star)^2$. As in the proof of \cite[Theorem 4.16]{lutz}, we have $\varphi_f^\star \Omega = \pm \Omega$. If $\varphi_f^\star \Omega=\Omega$, then the end of the proof of \cite[Theorem 4.16]{lutz}, relying on the factorization result reviewed in Theorem \ref{thm_factorization}, implies that $P_\bs$ and $P_{\bs'}$ are related by mutations and the action of $SL(2, \bZ)$, since $SL(2,\bZ)$ is the group of group automorphisms of $(\bC^\star)^2$ preserving $\Omega$. Moreover, the proof of \cite[Theorem 4.16]{lutz} reduces the case $\varphi_f^\star \Omega=-\Omega$ to the case $\varphi_f^\star \Omega=\Omega$ by composing $\varphi_f$ with an element of $GL(2,\bZ)$ of determinant $-1$. To avoid using $GL(2,\bZ)$ and prove Theorem \ref{thm_q_P}, it suffices to show that, up to changing $f$, we can always assume $\varphi_f^\star \Omega=\Omega$. 

By Lemma \ref{lem_log_cy-painleve}, one can assume up to deformation that $(Y^\nu, D^\nu)$ admits an elliptic fibration $\rho: Y^\nu \rightarrow \bP^1$ having $D^\nu$ as a fiber. Moreover, $\rho$ always admits a section by \cite[Corollary 4.7]{lutz}. Since $\rho$ is relatively minimal, there exists an involution $\iota$ of $Y^\nu$ acting on smooth fibers of $\rho$ as the inverse $z \mapsto -z$ with respect to the group law defined by the section. In particular, for every fiber $F$ of $\rho$, the involution $\iota$ acts on the one-dimensional space $H^0(F,\omega_F)$ of sections of the dualizing sheaf $\omega_F$ by multiplication by $-1$. Since $Y^\nu$ is rational, we have $H^0(Y^\nu, K_{Y^\nu})=0$, and so the residue map $H^0(Y^\nu, \omega_{Y^\nu}(D^\nu)) \rightarrow H^0(D^\nu, \omega_{D^\nu})$ is an isomorphism. Therefore, we have $\iota^\star \Omega = - \Omega$. Hence, if $\varphi_f^\star \Omega=-\Omega$, we may always replace $f$ by $f'=f \circ \iota$ so that $\varphi_{f'}^\star \Omega=\Omega$, and this concludes the proof.
\end{proof}

\begin{remark}
The proof of the classification of T-polygons up to mutations and $GL(2,\bZ)$ given in \cite{KNP} is combinatorial. One might prove Theorem \ref{thm_q_P} in a similar explicit combinatorial way by finding for each of the ten mutation classes a T-polygon $P$, an element $g \in GL(2,\bZ)$ with $\det(g)=-1$, and an explicit sequence of mutations relating $P$ and $g \cdot P$ up to $SL(2,\bZ)$. The geometric proof of Theorem \ref{thm_q_P}, following \cite{lutz}, avoids such case-by-case explicit analysis. 
\end{remark}

\section{Affine Weyl groups and del Pezzo surfaces}
\label{sec_weyl_del_pezzo}

In this section, we introduce and examine the relationships between the finite Weyl group $W(Z)$, the affine Weyl group $\widehat{W}(Z)$ and the orthogonal group $O(Z)$, all acting on the Grothendieck group $K(Z)$ of the derived category of coherent sheaves on a del Pezzo surface $Z$.

\subsection{Del Pezzo surfaces, Euler form, and Weyl groups}
\label{sec_dP}

Let $Z$ be a del Pezzo surface over $\bC$, that is, a smooth projective connected surface over $\bC$ with ample anticanonical line bundle $\omega_Z^{-1}$. Then $Z$ is isomorphic either to $\bP^1 \times \bP^1$ or to 
the blow-up of $\bP^2$ at $m$ points in general position,
for some integer $0 \leq m \leq 8$ -- see for example \cite[\S 8]{dolgachev}. We write $NS(Z)$ for the N\'eron-Severi group of $Z$, and $\Pic(Z)$ for its Picard group. Since $H^1(Z,\cO_Z)=0$, the first Chern class induces an isomorphism $c_1: \Pic(Z) \xrightarrow{\sim} NS(Z)$.
We denote by $K_Z=c_1(\omega_Z)$ the canonical divisor class and by $\alpha \cdot \beta$ the intersection pairing of divisors $\alpha, \beta \in NS(Z)$. One has $K_Z^2>0$, and by the Hodge index theorem the restriction of the intersection form to the orthogonal complement
\[ K_Z^\perp:= \{ \alpha \in NS(Z) \,|\, \alpha \cdot K_Z = 0 \}\]
is negative definite.

Let $\D(Z)=\D^b\Coh(Z)$ be the bounded derived category of coherent sheaves on $Z$, and $K(Z)=K_0(\D(Z))$ its Grothendieck group. For an object $E$ in $D(Z)$, we write $[E] \in K(Z)$ for its class in $K(Z)$, $r(E)$ for its rank,
$d(E)=c_1(E)\cdot (-K_Z)$ for its anticanonical degree, and $\chi(E)$ for its Euler characteristic. The numerical invariants $r(E)$, $d(E)$, and $\chi(E)$ depend only on the class $[E] \in K(Z)$.  This gives rise to a linear map
\begin{align*}
\psi=(r,d) : K(Z) &\longrightarrow \bZ^2 \\
[E] &\longmapsto (r(E), d(E)) \,.
\end{align*}

Let \(\delta \in K(Z)\) denote the class $[\cO_x]$ of the skyscraper sheaf at any point $x \in Z$. This class is independent of the choice of $x$ and satisfies 
$(r(\delta), d(\delta),\chi(\delta))=(0,0,1)$.
The Grothendieck group $K(Z)$ is a free abelian group of rank $3\leq n\leq 11$, and admits a decomposition
\begin{align} \label{eq_K_Z}
    K(Z) &\xrightarrow{\sim} \bZ [\cO_Z] \oplus NS(Z) \oplus \bZ \delta \\
    [E]& \longmapsto (r(E), c_1(E), \chi(E)-r(E)) \,. \nonumber
\end{align}
By the Riemann--Roch theorem, one could replace $\chi(E)-r(E)$ in \eqref{eq_K_Z} by $\mathrm{ch}_2(E)+\frac{d(E)}{2}$.
With respect to this decomposition, the  kernel of the degree map is
\[ \mathrm{Ker}(d)= \bZ[\cO_Z] \oplus K_Z^\perp \oplus \bZ \delta\,,\]
the kernel of the rank map is 
\[ \mathrm{Ker}(r)=   NS(Z) \oplus \bZ \delta\,,\]
and the kernel of $\psi=(r,d)$ is
\begin{equation} \label{eq_ker_psi}
\mathrm{Ker}(\psi) = K_Z^\perp \oplus \bZ \delta \,.\end{equation}
We denote by $\chi(-,-)$ the Euler form on $K(Z)$, defined by 
\[ \chi(E,F)=\sum_{i\in \bZ}(-1)^i \dim \Hom^i(E,F) \,.\]
By the Riemann-Roch theorem, the Euler form is given  explicitly by 
\begin{equation} \label{eq_RR}
\chi(E,F)= r(E)\chi(F)+r(F)\chi(E)-r(E)r(F) -c_1(E) \cdot c_1(F) -r(F) d(E) \,.\end{equation}
In particular, we have $\chi(\delta, \delta)=0$, and $\chi(\delta, E)=\chi(E,\delta)=r(E)$ for any object $E$ of $D(Z)$.

\begin{lemma} \label{lem_sym}
Let $Z$ be a del Pezzo surface.
\begin{itemize}
\item[(i)] The skew-symmetrization of the Euler form $\chi(-,-)$ is given by:
\begin{equation}
\chi(E,F) - \chi(F,E) =r(E)d(F)-r(F)d(E) \,.
\end{equation}
\item[(ii)] The restriction of the Euler form $\chi(-,-)$ to
$\mathrm{Ker}(d)$ is symmetric: for every $[E], [F] \in \mathrm{Ker}(d)$, one has
\begin{equation} \label{eq_chi_A} \chi (E,F)=r(E)\chi(F)+r(F)\chi(E)-r(E)r(F) -c_1(E) \cdot c_1(F) \,.\end{equation}
Equivalently, for every $(r,C,m) \in  \mathrm{Ker}(d) = \bZ [\cO_Z] \oplus K_Z^{\perp} \oplus \bZ \delta$, one has
\begin{equation} \label{eq_chi_new}
\chi((r,C,m), (r',C',m')) = rm'+r'm +rr' -C \cdot C' \,.\end{equation}
\item[(iii)] The restriction of the Euler form $\chi(-,-)$ to $\mathrm{Ker}(r) = NS(Z) \oplus \bZ \delta$ is given by the negative of the intersection form on $NS(Z)$:
for every $C, C' \in NS(Z)$ and $m,m' \in \bZ$, one has 
\[ \chi((0,C,m), (0,C',m')) = - C \cdot C'\,.\]
\end{itemize}
\end{lemma}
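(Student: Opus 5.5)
All three parts follow directly from the Riemann--Roch formula \eqref{eq_RR}, combined with the decomposition \eqref{eq_K_Z}. No geometric input beyond these is needed.

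For (i), I write \eqref{eq_RR} for $\chi(E,F)$ and for $\chi(F,E)$ and subtract. The terms $r(E)\chi(F)+r(F)\chi(E)-r(E)r(F)$ and $-c_1(E)\cdot c_1(F)$ are symmetric in $E$ and $F$, so they cancel. The only surviving contribution is $-r(F)d(E)+r(E)d(F)$, which is the claimed formula.

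For (ii), if $[E],[F]\in \mathrm{Ker}(d)$, then (i) gives $\chi(E,F)=\chi(F,E)$ at once. Dropping the term $r(F)d(E)=0$ from \eqref{eq_RR} yields \eqref{eq_chi_A}.

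To obtain \eqref{eq_chi_new}, I translate coordinates via \eqref{eq_K_Z}. An element $(r,C,m)$ has rank $r$, first Chern class $C$, and $\chi=m+r$. The condition $C\in K_Z^\perp$ is exactly $d=0$. Substituting into \eqref{eq_chi_A} gives
\[ r(m'+r')+r'(m+r)-rr'-C\cdot C' = rm'+r'm+rr'-C\cdot C'. \]
The one point to check is that $(r,C,m)\mapsto$ class is linear, so the formula applies to arbitrary lattice elements and not only to classes of sheaves. This is clear because \eqref{eq_RR} is bilinear in the invariants $(r,c_1,\chi)$.

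For (iii), I take $r=r'=0$ in \eqref{eq_RR}. The formula still holds on all of $\mathrm{Ker}(r)$, even when $d\neq 0$, because every term involving $d$ is multiplied by a rank. All terms vanish except $-C\cdot C'$.

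There is no real obstacle here. The only care needed is bookkeeping the shift $\chi=m+r$ in the coordinates \eqref{eq_K_Z}.
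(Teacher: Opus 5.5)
Your proposal is correct and follows the paper's route exactly: the paper simply says all three parts follow immediately from the Riemann--Roch formula \eqref{eq_RR}. Your computations supply those details correctly, namely the cancellation of the symmetric terms in (i), the substitution $\chi = m + r$ in the coordinates of \eqref{eq_K_Z} for (ii), and setting $r=r'=0$ in (iii).
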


\begin{proof} The result follows immediately from \eqref{eq_RR}.
\end{proof}

Finally, we define affine and finite roots and Weyl groups of a del Pezzo surface.

\begin{definition}  \label{def_affine}
Let $Z$ be a del Pezzo surface.
An \emph{affine root} of $Z$ is an element $\alpha \in \mathrm{Ker}(\psi) =  K_Z^\perp \oplus \bZ \delta$ such that $\chi(\alpha,\alpha)=2$.
We denote by $\widehat{\Phi}_Z$ the set of affine roots of $Z$.
For every $\alpha \in \widehat{\Phi}_Z$, the reflection of $K(Z)$ with respect to $\alpha$ is the linear automorphism
\begin{align} \label{eq_reflections}
    r_\alpha : K(Z) &\longrightarrow K(Z) \\
    \beta &\longmapsto \beta - \chi(\beta, \alpha) \alpha \,. \nonumber
\end{align}
The \emph{affine Weyl group} $\widehat{W}(Z)$ is the group of linear automorphisms of $K(Z)$ generated by the reflections with respect to the affine roots $\widehat{\Phi}_Z$.
\end{definition}

\begin{remark}
Every affine root $\alpha \in \widehat{\Phi}_Z$ belongs to $\mathrm{Ker}(\psi)$ by definition, and so $\chi(\alpha,\beta)=\chi(\beta, \alpha)$ for every $\beta \in K(Z)$ by Lemma \ref{lem_sym}(i).
\end{remark}

\begin{definition} \label{def_weyl_finite}
    Let $Z$ be a del Pezzo surface. A \emph{finite root} of $Z$ is an affine root contained in $K_Z^\perp$. We denote by $\Phi_Z$ the set of finite roots of $Z$. 
    The \emph{finite Weyl group} $W(Z)$ is the group of linear automorphisms of $K(Z)$ generated by reflections with respect to the finite roots $\alpha \in \Phi_Z$.
\end{definition}

The group $W(Z)$ fixes the classes $K_Z, [\cO_Z], \delta \in K(Z)$ and preserves the subgroup $K_Z^\perp$. Thus, $W(Z)$ is determined by its action by isometries on $K_Z^\perp$.
By the Hodge index theorem, the intersection form is negative definite on $K_Z^\perp$, 
and so $W(Z)$ is a finite group.

\begin{prop} \label{prop_finite_weyl}
Let $Z$ be a del Pezzo surface. Then, the finite Weyl group $W(Z)$ is isomorphic to the group of isometries of $NS(Z)$ fixing $K_Z$.
\end{prop}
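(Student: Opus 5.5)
The plan is to compare both groups on $NS(Z)$ and then invoke the classical description of isometries of $NS(Z)$ fixing $K_Z$ (Manin, Dolgachev \cite[Ch 8]{dolgachev}).

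\emph{Step 1: from $K(Z)$ to $NS(Z)$.} A finite root $\alpha\in\Phi_Z$ is an element $(0,\alpha,0)\in K_Z^\perp\subset K(Z)$. By Lemma \ref{lem_sym}(iii), $\chi(\alpha,\alpha)=2$ means $\alpha\cdot\alpha=-2$ in $NS(Z)$. Again by Lemma \ref{lem_sym}(iii), for $C\in NS(Z)$ we get $r_\alpha(C)=C+(C\cdot\alpha)\alpha$, which is the usual reflection of $NS(Z)$ in the $(-2)$-class $\alpha$. It fixes $K_Z$, since $\alpha\cdot K_Z=0$. From \eqref{eq_RR} we get $\chi([\cO_Z],\alpha)=\chi(\alpha)=0$ and $\chi(\delta,\alpha)=r(\alpha)=0$. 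Hence $r_\alpha$ fixes $[\cO_Z]$ and $\delta$ and preserves the summand $NS(Z)$ in \eqref{eq_K_Z}. Restriction therefore gives a homomorphism
\[ W(Z)\longrightarrow O(NS(Z))_{K_Z}:=\{g\in O(NS(Z))\,|\,g(K_Z)=K_Z\}. \]
This map is injective, because an element of $W(Z)$ acts trivially on $[\cO_Z]$ and $\delta$ and is determined by its action on $NS(Z)$. Its image is the subgroup $W'$ generated by reflections in the $(-2)$-classes of $K_Z^\perp$. So it remains to show $W'=O(NS(Z))_{K_Z}$.

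\emph{Step 2: small cases.} For $Z=\bP^2$ both groups are trivial. For $Z=\bP^1\times\bP^1$ we have $K_Z^\perp=\bZ(h_1-h_2)$. An isometry fixing $K_Z=-2h_1-2h_2$ must be the identity or the swap of $h_1,h_2$, and the swap is the reflection in $h_1-h_2$. For the blow-up of $\bP^2$ at one point, $K_Z^\perp=\bZ(h-3e)$ contains no $(-2)$-class. Writing out an isometry fixing $K_Z=-3h+e$ in the basis $h,e$ shows directly that it is the identity.

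\emph{Step 3: general case $Z=\mathrm{Bl}_m\bP^2$, $m\ge2$.} Take $g\in O(NS(Z))_{K_Z}$. It permutes the finite set of exceptional classes $\{e\,|\,e^2=e\cdot K_Z=-1\}$. It therefore sends an exceptional configuration (an orthogonal $m$-tuple of exceptional classes $e_1,\dots,e_m$, which can be contracted to $\bP^2$) to another one. The key classical input is that $W'$ acts transitively on exceptional configurations \cite[Ch 8]{dolgachev}. Equivalently, the exceptional configurations correspond to Weyl chambers of the root system $\Phi_Z$ in $K_Z^\perp$. Composing $g$ with an element of $W'$, we may thus assume $g(e_i)=e_i$ for all $i$. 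Then $g$ also fixes $h=\tfrac13(-K_Z-\sum_i e_i)$, and hence $g=\mathrm{id}$, since $h,e_1,\dots,e_m$ form a basis of $NS(Z)$.

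I expect the main obstacle to be the transitivity in Step 3. I would first try to cite it directly. If a self-contained argument is needed, I would use the standard chamber argument. Fix the chamber $\{x\,|\,x\cdot(e_i-e_{i+1})\ge0,\ x\cdot(h-e_1-e_2-e_3)\ge0\}$, take $g(e_1,\dots,e_m)$, and reflect repeatedly in simple roots pairing negatively with the current configuration. A height function such as $h\cdot g(h)$ decreases at each step, so the process terminates at the standard configuration.
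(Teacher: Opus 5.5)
Your proof is correct. For $m\geq 3$ it rests on the same classical input as the paper, which simply cites \cite[Corollary 8.2.15]{dolgachev}. Transitivity of $W'$ on exceptional configurations is essentially a reformulation of that corollary: any ordered orthogonal $m$-tuple of exceptional classes extends to a basis $(h',e_1',\dots,e_m')$ of $NS(Z)$ with the standard Gram matrix and $K_Z=-3h'+\sum_i e_i'$, so the isometries fixing $K_Z$ act simply transitively on such tuples.

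The real differences are elsewhere:
\begin{itemize}
\item You spell out the passage from $W(Z)\subset\Aut(K(Z))$ to the reflection group $W'$ on $NS(Z)$. The paper only records this in the paragraph before the proposition.
\item In the low-rank cases the paper argues lattice-theoretically. It lists the isometries of the rank $\leq 2$ lattice $K_Z^\perp$ and rules out the non-Weyl ones using the fact that the reflection in a primitive $\alpha$ preserves the unimodular lattice $NS(Z)$ only if $|\alpha^2|\leq 2$. Here $\alpha^2=-8$ for $\mathrm{Bl}_1\bP^2$ and $\alpha^2=-14$ for $\mathrm{Bl}_2\bP^2$.
\item You instead compute in a basis for $\bP^1\times\bP^1$ and $\mathrm{Bl}_1\bP^2$, and fold $\mathrm{Bl}_2\bP^2$ into the exceptional-configuration argument.
\end{itemize}
Your route is more uniform and, via the chamber argument, could be made self-contained. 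The paper's route is shorter given the citation.

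A few small corrections:
\begin{itemize}
\item Since $K_Z=-3h+\sum_i e_i$, the formula should read $h=\tfrac13(-K_Z+\sum_i e_i)$. The sign slip is harmless.
\item For $m=2$ your chamber uses the root $h-e_1-e_2-e_3$, which does not exist there, and the paper only uses the citation for $m\geq 3$. The check is immediate, though. The only exceptional classes are $e_1$, $e_2$ and $h-e_1-e_2$; the only orthogonal pair is $\{e_1,e_2\}$; and the reflection in $e_1-e_2$ swaps its two orderings.
\item If you define exceptional configurations as tuples contractible to $\bP^2$, you should say why $g$ preserves the set. On a del Pezzo surface every orthogonal $m$-tuple of numerically exceptional classes is of this kind. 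Alternatively, define exceptional configurations numerically from the start.
\item In the chamber sketch, $h\cdot g(h)$ is unchanged by the reflections in $e_i-e_{i+1}$, so it does not decrease at every step. The strict drop comes only from the reflection in $h-e_1-e_2-e_3$ after sorting the $b_i$. This uses the Noether-type inequality $b_1+b_2+b_3>a$ for $g(h)=ah-\sum_i b_i e_i$ with $a>1$.
\end{itemize}
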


\begin{proof}
When $Z$ is isomorphic to the blow-up of $\bP^2$ in $3 \leq m \leq 8$ points, this is \cite[Corollary 8.2.15]{dolgachev}. The remaining cases are verified directly as follows. We use the fact that if the reflection with respect to some primitive $\alpha \in NS(Z)$ preserves $NS(Z)$, then $|\alpha^2| \leq 2$:
indeed, this reflection is given by 
\[ \beta \longmapsto \beta -2 \frac{\beta \cdot \alpha}{\alpha^2}  \alpha\]
and as $NS(Z)$ is unimodular and $\alpha$ is primitive, there exists $\beta$ such that $\beta \cdot \alpha=1$, and so $2/\alpha^2\in \bZ$. 
If $Z=\bP^2$, then $K_Z^\perp=0$, and so the result is trivial. If $Z=\bP^1 \times\bP^1$, then $K_Z^\perp=\bZ \beta$ with $\beta^2=-2$, and so the only non-trivial isometry of $K_Z^{\perp}$ is the reflection with respect to $\beta$. If $Z$ is the blow-up of $\bP^2$ in one point, then, as reviewed in Lemma \ref{lem_del_pezzo_lie}(iii) below, $K_Z^\perp=\bZ \alpha$ with $\alpha^2=-8$. The only non-trivial isometry of $K_Z^\perp$ is the reflection with respect to $\alpha$, which does not extend to an isometry of $NS(Z)$ fixing $K_Z$ because $|\alpha^2|=8>2$. Similarly, if $Z$ is the blow-up of $\bP^2$ in two points, then, as reviewed in Lemma \ref{lem_del_pezzo_lie}(ii) below, there exists a unique up to sign class $\beta \in K_Z^\perp$ with $\beta^2=-2$, and the orthogonal of $\bZ \beta$ in $K_Z^\perp$ is $\bZ \alpha$ with $\alpha^2=-14$. 
An isometry $f$ of $K_Z^{\perp}$ must send $\beta$ to $\pm \beta$. Upon composing with the reflection with respect to $\beta$, we may assume that $f(\beta)=\beta$. If $f$ is non-trivial, then $f$ is the reflection with respect to $\alpha$, which again does not extend to an isometry of $NS(Z)$ fixing $K_Z$ because $|\alpha^2|=14>2$.
\end{proof}

\subsection{Orthogonal groups and finite Weyl groups}

In this section, we introduce the orthogonal group of a del Pezzo surface $Z$ as a group of isometries of $K(Z)$, and we describe it explicitly in terms of the finite Weyl group in Proposition \ref{prop_weyl_affine_gen}.

\begin{definition} \label{def_orthogonal}
    Let $Z$ be a del Pezzo surface. The \emph{orthogonal group} of $Z$, 
    denoted by $O(Z)$, is the group of isometries of $(K(Z), \chi(-,-))$ preserving the rank and the degree.
\end{definition}

The finite and affine Weyl groups $W(Z)$ and $\widehat{W}(Z)$ are naturally subgroups of $O(Z)$.
Define the subgroup $\Pic^0(Z)$ of $\Pic(Z)$ by 
\[ \Pic^0(Z):=\{ L \in \Pic(Z)\,|\, c_1(L) \cdot K_Z= 0\} \,.\]

\begin{lemma} \label{lem_tensor_0}
The action of $\Pic^0(Z)$ on $K(Z)$ by tensor product realizes $\Pic^0(Z)$ as a subgroup of the orthogonal group $O(Z)$.
\end{lemma}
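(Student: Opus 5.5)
We need to show three things: tensoring by $L\in\Pic^0(Z)$ preserves the Euler form, preserves rank and degree, and gives a faithful action, so that $\Pic^0(Z)\to O(Z)$ is an injective group homomorphism.

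\emph{Isometry.} For any line bundle $L$, the functor $-\otimes L$ is an autoequivalence of $D(Z)$. It induces isomorphisms $\Hom^i(E\otimes L,F\otimes L)\simeq \Hom^i(E,F)$, so $\chi(E\otimes L,F\otimes L)=\chi(E,F)$. The map $[E]\mapsto[E\otimes L]$ is well defined and linear on $K(Z)$, and $L\mapsto (-\otimes L)$ is a group homomorphism $\Pic(Z)\to \Aut(K(Z))$ because $(E\otimes L)\otimes L'\simeq E\otimes(L\otimes L')$.

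\emph{Rank and degree.} Rank is clearly preserved. Since $c_1(E\otimes L)=c_1(E)+r(E)c_1(L)$, we get
\[
d(E\otimes L)=d(E)-r(E)\,c_1(L)\cdot K_Z .
\]
This equals $d(E)$ exactly when $c_1(L)\cdot K_Z=0$, that is, when $L\in\Pic^0(Z)$. Hence $\Pic^0(Z)$ lands in $O(Z)$.

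\emph{Faithfulness.} Suppose $L\in\Pic^0(Z)$ acts trivially on $K(Z)$. Then $[L]=[\cO_Z\otimes L]=[\cO_Z]$. Using the decomposition \eqref{eq_K_Z}, comparing first Chern classes gives $c_1(L)=0$ in $NS(Z)$. Since $c_1:\Pic(Z)\to NS(Z)$ is an isomorphism, $L\simeq\cO_Z$.

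No step presents a real obstacle. The only points needing care are the well-definedness of the induced map on $K(Z)$, and the fact that injectivity uses $\Pic(Z)\simeq NS(Z)$, which holds because $H^1(Z,\cO_Z)=0$.
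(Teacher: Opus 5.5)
Your proposal is correct and follows essentially the same argument as the paper: rank is preserved, $c_1(E\otimes L)=c_1(E)+r(E)c_1(L)$ so the degree is unchanged because $c_1(L)\cdot K_Z=0$, and $\chi(E\otimes L,F\otimes L)=\chi(E,F)$ because $-\otimes L$ is an autoequivalence. Your extra faithfulness check, deducing $c_1(L)=0$ from $[L]=[\cO_Z]$ and then using $\Pic(Z)\simeq NS(Z)$, is valid and is a detail the paper leaves implicit.
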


\begin{proof}
For every $[E], [F] \in K(Z)$, and $L=\cO(D) \in \Pic^0(Z)$ with $D\in K_Z^\perp$,  we have $r(E \otimes L)= r(E)$, $c_1(E \otimes L)=c_1(E)+ r(E) D$, and $\chi(E \otimes L, F\otimes L)=\chi(E,F)$.
\end{proof}

\begin{lemma} \label{lem_tensor}
For every $L=\cO_Z(D) \in \Pic^0(Z)$ with $D \in K_Z^\perp$, the action of $L$ on $K(Z)$ by tensor product fixes the class $K_Z \in NS(Z) \subset K(Z)$, preserves the subgroup $\bZ [\cO_Z] \oplus K_Z^\perp \oplus \bZ \delta$, and acts on 
the latter as follows:
\begin{align} \label{eq_t_L}
    t_L: \bZ [\cO_Z] \oplus K_Z^\perp \oplus \bZ \delta &\longrightarrow \bZ [\cO_Z] \oplus K_Z^\perp \oplus \bZ \delta\\
     (r, C, m) &\longmapsto (r, C+r D, m+C \cdot D + \frac{r}{2} D^2) \,. \nonumber
\end{align}
\end{lemma}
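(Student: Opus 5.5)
The plan is a direct computation in the decomposition \eqref{eq_K_Z}, using Grothendieck--Riemann--Roch or, more simply, multiplicativity of the Chern character: $\mathrm{ch}(E\otimes L)=\mathrm{ch}(E)\,\mathrm{ch}(L)$ with $\mathrm{ch}(L)=1+D+\frac{D^2}{2}$.

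First, tensoring by $L$ is an automorphism of $K(Z)$ that preserves rank. It sends $c_1$ to $c_1+rD$ and $\mathrm{ch}_2$ to $\mathrm{ch}_2+c_1\cdot D+\frac r2 D^2$. In particular, for a class in $\mathrm{Ker}(r)=NS(Z)\oplus\bZ\delta$ (where $r=0$), the first Chern class is unchanged. Applying this to the class $K_Z\in NS(Z)\subset K(Z)$, which corresponds to $(0,K_Z,0)$ (concretely $[\cO_Z]-[\omega_Z^{-1}]$ or similar), we get that $K_Z$ is fixed. The only thing to check is that the $\delta$-coordinate, which by the remark after \eqref{eq_K_Z} equals $\mathrm{ch}_2+\frac{d}{2}$, does not move. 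Its change is $c_1\cdot D+\frac r2D^2=K_Z\cdot D=0$, since $D\in K_Z^\perp$.

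Next, let $(r,C,m)$ lie in $\bZ[\cO_Z]\oplus K_Z^\perp\oplus\bZ\delta$, so that $d=-C\cdot K_Z=0$. The new first Chern class is $C+rD$, which still lies in $K_Z^\perp$, so the new degree is $0$ and the subgroup is preserved. For the last coordinate I use $m=\mathrm{ch}_2+\frac d2=\mathrm{ch}_2$, since $d=0$ both before and after. Hence the new coordinate is $m+C\cdot D+\frac r2D^2$, which gives \eqref{eq_t_L}.

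The integrality of $\frac r2D^2$ needs a remark, since the formula must be valid in $\bZ$. By adjunction, $D^2+D\cdot K_Z$ is even, and $D\cdot K_Z=0$, so $D^2$ is even. There is no real obstacle here. The only point needing care is confirming that the identification of the $\delta$-coordinate with $\chi-r=\mathrm{ch}_2+\frac d2$ holds, which follows from Riemann--Roch, $\chi=r+\frac{d}{2}+\mathrm{ch}_2$ on a rational surface.
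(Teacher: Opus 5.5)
Your proof is correct and is essentially the paper's argument. The paper computes $\chi(E\otimes L)$ directly by Riemann--Roch, while you compute $\mathrm{ch}_2(E\otimes L)$ by multiplicativity of $\mathrm{ch}$ and translate via $\chi-r=\mathrm{ch}_2+\frac{d}{2}$, which is the same computation. A minor point that does not affect the argument: the class $(0,K_Z,0)$ is represented by $[\omega_Z]-[\cO_Z]$, whereas $[\cO_Z]-[\omega_Z^{-1}]$ has $\delta$-coordinate $-K_Z^2$.
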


\begin{proof}
For every $[E] \in K(Z)$, we have $r(E \otimes L)= r(E)$, $c_1(E \otimes L)=c_1(E)+ r(E) D$, and, 
by the Riemann--Roch theorem:
\[ \chi(E \otimes L)  = \chi(E)+c_1(E) \cdot D + \frac{r(E)}{2}(D^2-D\cdot K_Z)=\chi(E)+c_1(E) \cdot D + \frac{r(E)}{2} D^2\,,\]
where we used that $D \cdot K_Z=0$. 
\end{proof}

\begin{lemma} \label{lem_O}
For every $f \in O(Z)$, there exists $L \in Pic^0(Z)$ such that $f([\cO_Z])=[L]$.
\end{lemma}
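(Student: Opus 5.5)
Let $f\in O(Z)$ and write $f([\cO_Z])=(r,C,m)$ in the decomposition \eqref{eq_K_Z}. Since $f$ preserves rank and degree, $r=r(\cO_Z)=1$ and $d=d(\cO_Z)=0$, so $C\in K_Z^\perp$. We then show that the $\delta$-component is forced to equal that of $[\cO_Z(C)]$. Using the isometry property (not just rank and degree) will be the key input.

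First, note that $f$ fixes $\delta$ up to the constraints of the form. Indeed $\mathrm{Ker}(\psi)=K_Z^\perp\oplus\bZ\delta$ is preserved by $f$, since $f$ preserves $r$ and $d$. By Lemma \ref{lem_sym}(ii), the restriction of $\chi$ to $\mathrm{Ker}(\psi)$ is $-C\cdot C'$, so $\delta$ spans the radical of $\chi|_{\mathrm{Ker}(\psi)}$, using that $K_Z^\perp$ is negative definite. Hence $f(\delta)=\pm\delta$. Moreover $\chi(\delta,\cO_Z)=r(\cO_Z)=1$ and $\chi(f\delta,f\cO_Z)=\chi(\pm\delta,(1,C,m))=\pm1$, so the sign is $+$ and $f(\delta)=\delta$. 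This step is not strictly needed, but it is a useful check.

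The main computation uses $\chi(\cO_Z,\cO_Z)=1$ and the fact that $f([\cO_Z])$ lies in $\mathrm{Ker}(d)$. Then \eqref{eq_chi_new} gives
\[1=\chi((1,C,m),(1,C,m))=2m+1-C^2,\]
so $m=C^2/2$. Since $C\in K_Z^\perp$, adjunction gives $C^2=C^2+C\cdot K_Z\equiv 0 \pmod 2$, so this is consistent. Set $L=\cO_Z(C)\in\Pic^0(Z)$. By Lemma \ref{lem_tensor} applied to $(1,0,0)=[\cO_Z]$, we get $[L]=t_L(1,0,0)=(1,C,\tfrac12 C^2)$, which equals $f([\cO_Z])$.

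There is no serious obstacle here. The only point needing care is the bookkeeping of the $\delta$-coordinate: in \eqref{eq_K_Z} it is $\chi-r$ rather than $\chi$ or $\mathrm{ch}_2$, and the convention must match \eqref{eq_chi_new} and \eqref{eq_t_L}, as it does.
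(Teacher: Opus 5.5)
Your proof is correct and follows the paper's argument. You write $f([\cO_Z])=(1,C,m)$ with $C\in K_Z^\perp$, use \eqref{eq_chi_new} and the isometry property to get $m=C^2/2$, and identify this class with $[\cO_Z(C)]$; the paper does that last step by Riemann--Roch directly, you via Lemma \ref{lem_tensor}, which is the same computation. The $f(\delta)=\delta$ step and the parity remark are harmless but unnecessary, as you note.
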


\begin{proof}
Since $f$ preserves both the rank and the degree, the class $f([\cO_Z])$
is of the form $f([\cO_Z])=(1,D,m) \in \bZ [\cO_Z] \oplus K_Z^{\perp} \oplus \bZ \delta$. 
Because $f$ is also an isometry for the Euler form, we obtain from \eqref{eq_chi_new}:
\[1 = \chi(\cO_Z, \cO_Z) = \chi(f([\cO_Z]), f([\cO_Z])) 
= 2m+1-D^2 \,.\]
It follows that $m=\frac{D^2}{2}$. On the other hand, since $D\in K_Z^\perp$, the Riemann-Roch theorem yields $\chi(\cO_Z(D))=1+\frac{D^2}{2}$. Therefore, $\chi(\cO_Z(D))-r(\cO_Z(D))=\frac{D^2}{2}=m$, and so the line bundle $L:=\cO_Z(D) \in \Pic^0(Z)$ satisfies $[L]=f([\cO_Z])$.
\end{proof}

\begin{prop} \label{prop_weyl_affine_gen}
The orthogonal group $O(Z)$ is generated by the finite Weyl group $W(Z)$ and the subgroup $\Pic^0(Z)$. Moreover, these subgroups combine as a semi-direct product, so that
\[ O(Z)=W(Z)\ltimes \Pic^0(Z) \,. \]
\end{prop}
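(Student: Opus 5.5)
Given $f \in O(Z)$, the plan is to peel off a line bundle and then show that what remains lies in $W(Z)$.

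\emph{Step 1: normalize $f$ on $[\cO_Z]$.} By Lemma \ref{lem_O}, $f([\cO_Z])=[L]$ for some $L\in \Pic^0(Z)$. Set $f_1:=(-\otimes L^{-1})\circ f$. This lies in $O(Z)$ by Lemma \ref{lem_tensor_0} and fixes $[\cO_Z]$.

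\emph{Step 2: show $f_1$ fixes $\delta$ and preserves $NS(Z)$.} Since $f_1$ preserves rank, it preserves $\mathrm{Ker}(r)=NS(Z)\oplus\bZ\delta$. Inside $\mathrm{Ker}(r)$, the radical of $\chi$ is $\bZ\delta$ by Lemma \ref{lem_sym}(iii), because the intersection form on $NS(Z)$ is unimodular. An isometry preserves this radical, so $f_1(\delta)=\pm\delta$. Using $\chi(\cO_Z,\delta)=r(\delta)+\dots$, or more simply $\chi(\delta,\cO_Z)=r(\cO_Z)=1$ together with $f_1(\cO_Z)=\cO_Z$, we get $\chi(f_1\delta,\cO_Z)=1$, hence $f_1(\delta)=\delta$.

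For $C\in NS(Z)$, write $f_1(0,C,0)=(0,C',m)$. Then
\[\chi(\cO_Z,(0,C,0))=\chi(\cO_Z,(0,C',m)).\]
By \eqref{eq_RR}, $\chi(\cO_Z,(0,C,0))=\chi((0,C,0))$, which is $0$ under the decomposition \eqref{eq_K_Z}: the third coordinate of $(0,C,0)$ is $\chi-r=0$. Similarly $\chi(\cO_Z,(0,C',m))=m$. Hence $m=0$, so $f_1$ preserves $NS(Z)$.

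Now $f_1$ acts on $NS(Z)$ as an isometry of the intersection form, by Lemma \ref{lem_sym}(iii). It preserves degree, i.e.\ $C\cdot(-K_Z)$, so by unimodularity it fixes $K_Z$. By Proposition \ref{prop_finite_weyl}, $f_1|_{NS(Z)}=w|_{NS(Z)}$ for some $w\in W(Z)$. Since $w$ also fixes $[\cO_Z]$ and $\delta$, and $K(Z)=\bZ[\cO_Z]\oplus NS(Z)\oplus\bZ\delta$, we conclude $f_1=w$. Therefore $O(Z)$ is generated by $W(Z)$ and $\Pic^0(Z)$.

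\emph{Step 3: semi-direct product structure.} Three facts are needed.
\begin{itemize}
\item[(a)] $W(Z)\cap\Pic^0(Z)=\{1\}$. Elements of $W(Z)$ fix $[\cO_Z]$, while $-\otimes L$ sends $[\cO_Z]$ to $[L]\neq[\cO_Z]$ unless $L$ is trivial. The action of $\Pic^0(Z)$ is faithful because $c_1$ is injective on $\Pic(Z)$.
\item[(b)] $\Pic^0(Z)$ is normal, with $w\circ(-\otimes\cO(D))\circ w^{-1}=-\otimes\cO(wD)$. To check this, verify that both sides agree on $[\cO_Z]$, on $NS(Z)$ and on $\delta$, using the explicit formula for tensoring on all of $K(Z)$. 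The formula in Lemma \ref{lem_tensor} extends to all of $K(Z)$, giving $(r,C,m)\mapsto(r,C+rD,m+C\cdot D+\tfrac r2 D^2)$ with $D\cdot K_Z=0$. Since $w$ is an isometry of $NS(Z)$ fixing $[\cO_Z]$ and $\delta$, conjugating this formula by $w$ replaces $D$ by $wD$.
\item[(c)] Steps 1--2 show that $O(Z)=W(Z)\cdot\Pic^0(Z)$.
\end{itemize}
Together, (a)--(c) give $O(Z)=W(Z)\ltimes\Pic^0(Z)$.

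The main obstacle is the bookkeeping in Step 2 with the decomposition \eqref{eq_K_Z}: one must check that $f_1$ preserves $NS(Z)$ exactly, not merely modulo $\delta$, which is what the pairing against $\cO_Z$ secures. One must also confirm that the tensor formula holds on the whole of $K(Z)$ and not only on $\mathrm{Ker}(d)$; this is immediate from the Riemann--Roch computation in Lemma \ref{lem_tensor}, since $D\cdot K_Z=0$.
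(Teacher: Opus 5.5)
Your proof is correct and, for the generation statement, follows the paper's argument essentially step by step: you normalize $f$ on $[\cO_Z]$ via Lemma~\ref{lem_O}, show $f(\delta)=\delta$ and $f(NS(Z))\subset NS(Z)$ by pairing against $[\cO_Z]$, deduce $f(K_Z)=K_Z$, and conclude with Proposition~\ref{prop_finite_weyl}. For the semi-direct product the paper argues slightly differently: trivial intersection holds because $W(Z)$ is finite while $\Pic^0(Z)$ is torsion-free, and normality holds because $\Pic^0(Z)$ is the kernel of the induced map $O(Z)\to O(K_Z^\perp)$ on $(K_Z^\perp\oplus\bZ\delta)/\bZ\delta$. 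Your explicit conjugation formula $w\circ t_{\cO(D)}\circ w^{-1}=t_{\cO(wD)}$ is equally valid, and it has the small bonus of identifying the conjugation action of $W(Z)$ on $\Pic^0(Z)\simeq K_Z^\perp$ as the natural one.
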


\begin{proof}
By Lemma \ref{lem_O}, for any $f \in O(Z)$, there exists $g \in \Pic^0(Z)$ such that $g \circ f([\cO_Z])=[\cO_Z]$. Therefore, to prove the generation of $O(Z)$ by $W(Z)$ and $\Pic^0(Z)$, it suffices to prove that every $f \in O(Z)$ such that $f([\cO_Z])=[\cO_Z]$ lies in $W(Z)$. 

Let $f \in O(Z)$ fixing $[\cO_Z]$. 
As $f$ preserves the rank, it preserves the decomposition of $K(Z)$ into the direct sum of $\bZ[\cO_Z]$ and $NS(Z) \oplus \bZ \delta$.
We first analyze the image of $\delta$. 
Since $f$ preserves rank and degree, we must have $f(\delta)\in K_Z^{\perp}\oplus \bZ \delta$. 
Now $\chi(\delta,\delta)=0$, and the Euler form is positive definite on $K_Z^\perp$ by the Hodge index theorem, and zero on $\bZ \delta$, so $f(\delta)=\pm \delta$. To determine the sign, observe that, since $f$ fixes $[\cO_Z]$, we have $\chi([\cO_{Z}],f(\delta))=\chi(f([\cO_Z]),f(\delta))=
\chi([\cO_Z], \delta)=1$. This implies that $f(\delta)=\delta$.

Next consider the action of $f$ on $NS(Z)$. For any class $C \in NS(Z)$, the image $f(C)$ is of the form $(0,C',m) \in \bZ[\cO_Z] \oplus NS(Z) \oplus \bZ \delta$. 
Since $f$ preserves the Euler form and fixes $[\cO_Z]$, we have 
\[ 0=\chi((0,C,0))=\chi([\cO_Z],(0,C,0))=\chi([\cO_Z], (0,C',m))=\chi((0,C',m))=m\,,\]
thus $m=0$. Therefore, $f$ preserves the decomposition $NS(Z) \oplus \bZ \delta$ and acts trivially on $\bZ \delta$. Finally, as $f$ preserves the degree and the intersection form is non-degenerate on $NS(Z)$, we necessarily have $f(K_Z)=K_Z$. The restriction of $f$ to $NS(Z)$ is an isometry fixing $K_Z$, and hence $f \in W(Z)$ by Proposition 
\ref{prop_finite_weyl}. This concludes the proof that the orthogonal group $O(Z)$ is generated by $W(Z)$ and $\Pic^0(Z)$.

To prove that $O(Z)=W(Z)\ltimes \Pic^0(Z)$, first observe that the intersection of $W(Z)$ and $\Pic^0(Z)$ is reduced to the identity since $W(Z)$ is finite and $\Pic^0(Z)$ is torsion free. Therefore, it remains to show that $\Pic^0(Z)$ is a normal subgroup of $O(Z)$. 
Any element of $O(Z)$ preserves $\mathrm{Ker}(r,d)=K_Z^\perp \oplus \bZ \delta$, and preserves $\bZ \delta$ since 
$\bZ \delta$ is the kernel of the Euler form. Therefore, there exists an induced action on the quotient $(K_Z^\perp \oplus \bZ \delta)/\bZ \delta \simeq K_Z^\perp$, and so a group homomorphism $O(Z) \rightarrow O(K_Z^\perp)$, where $O(K_Z^\perp)$ is the group of isometries of $K_Z^\perp$. 
Then $\Pic^0(Z)$ is exactly the kernel of
$O(Z) \rightarrow O(K_Z^\perp)$ and so is a normal subgroup of $O(Z)$.
This completes the proof.
\end{proof}

\subsection{Orthogonal groups and affine Weyl groups}
In this section we prove Proposition~\ref{prop_orthogonal_affine_weyl}, which describes the relation between the orthogonal group of a del Pezzo surface $Z$ and its affine Weyl group. We also explain how this affine Weyl group naturally identifies with those arising in the theory of affine Lie algebras.
These results are not used in the proof of the main theorem in \S\ref{sec_final} and are included for their intrinsic interest.

\begin{definition}
 The \emph{finite root lattice} $R(Z)$
    of a del Pezzo surface $Z$ is the subgroup of $K_Z^\perp$ generated by the finite roots $\alpha \in \Phi_Z$.
\end{definition}

\begin{lemma} \label{lem_del_pezzo_lie}
    Let $Z$ be a del Pezzo surface. Then, the following holds:
    \begin{enumerate}
        \item[(i)] If $Z$ is not isomorphic to the blow-up of $\bP^2$ in one or two points, then
        \[ K_Z^\perp = R(Z)\,.\]
        Moreover, $R(Z)$ is the root lattice of a semisimple Lie algebra $\mathfrak{g}_Z$ in the list 
        \[ A_0\,, \,\,\,A_1\,,\,\,\,  A_1 + A_2\,, \,\,\,  A_4\,,\, \,\, D_5\,, \,\,\, E_6\,,\,\,\, E_7\,, \,\,\, E_8 \,.\]
        \item[(ii)] If $Z$ is isomorphic to the blow-up of $\bP^2$ in two points, then $R(Z)$ is the root lattice of the simple Lie algebra $\mathfrak{g}_Z$ of type $A_1$, that is, $R(Z)=\bZ \beta $ with $\beta \cdot \beta=-2$, and the orthogonal of $R(Z)$ in $K_Z^\perp$ is $R(Z)^\perp= \bZ \alpha$
        with $\alpha \cdot \alpha = -14$. 
        \item[(iii)] If $Z$ is isomorphic to the blow-up of $\bP^2$ in one point, then $R(Z)= \emptyset$ and
        \[ K_Z^\perp = \bZ \alpha\]
        with $\alpha \cdot \alpha = -8$.
    \end{enumerate} 
\end{lemma}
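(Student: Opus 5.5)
The statement is a case-by-case lattice computation, which I would carry out in an explicit basis of $NS(Z)$. The first step is to note what a finite root means concretely. By Lemma \ref{lem_sym}(iii), for $\alpha\in K_Z^\perp\subset NS(Z)\subset K(Z)$ we have $\chi(\alpha,\alpha)=-\alpha\cdot\alpha$. Hence $\Phi_Z=\{\alpha\in K_Z^\perp \,|\, \alpha\cdot\alpha=-2\}$, and $R(Z)$ is the sublattice spanned by the $(-2)$-classes of the negative definite lattice $K_Z^\perp$. I then use the classification recalled in \S\ref{sec_dP}: $Z\simeq\bP^1\times\bP^1$, or $Z$ is the blow-up of $\bP^2$ at $0\le m\le 8$ general points. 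In the latter case I take the orthogonal basis $H,E_1,\dots,E_m$ with $H^2=1$ and $E_i^2=-1$, so that $K_Z=-3H+\sum_i E_i$.

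For (i), the cases are as follows.
\begin{itemize}
\item If $m=0$, then $K_Z^\perp=0=R(Z)$, which is type $A_0$.
\item If $Z=\bP^1\times\bP^1$ with fiber classes $A,B$, then $K_Z=-2A-2B$ and $K_Z^\perp=\bZ(A-B)$ with $(A-B)^2=-2$, which is type $A_1$.
\item If $3\le m\le 8$, I invoke the classical description in \cite[Ch.~8]{dolgachev}. There $K_Z^\perp$ has basis the simple roots $E_1-E_2,\dots,E_{m-1}-E_m$ and $H-E_1-E_2-E_3$. These all have square $-2$, so $K_Z^\perp=R(Z)$. Their Dynkin diagram is $E_m$, which for $m=3,4,5$ reads $A_2+A_1$, $A_4$, $D_5$.
\end{itemize}
To check that these $m$ classes span $K_Z^\perp$, and not only a finite-index sublattice, I would compare discriminants. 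Since $NS(Z)$ is unimodular and $K_Z$ is primitive, $|\mathrm{disc}(K_Z^\perp)|=K_Z^2=9-m$, which matches the determinant of the Cartan matrix of $E_m$.

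For (iii), take $m=1$. The condition $K_Z\cdot(aH+bE_1)=-3a-b=0$ gives $K_Z^\perp=\bZ\alpha$ with $\alpha=H-3E_1$ and $\alpha^2=-8$. A class $k\alpha$ has square $-8k^2\neq -2$, so there are no roots and $R(Z)=0$.

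For (ii), take $m=2$. Set $\beta=E_1-E_2$, with $\beta^2=-2$, and $\alpha=2H-3E_1-3E_2$, which spans the orthogonal complement of $\beta$ in $K_Z^\perp$ and has $\alpha^2=4-18=-14$. The lattice $\bZ\beta\oplus\bZ\alpha$ has discriminant $28$, while $K_Z^\perp$ has discriminant $7$, so it has index $2$. Indeed $K_Z^\perp$ is generated by $\beta$ and $(\alpha+\beta)/2=H-E_1-2E_2$. Writing a general element as $v=x\beta+y\alpha$ with $x,y\in\frac12\bZ$ and $x+y\in\bZ$, we get $v^2=-2x^2-14y^2$. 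This equals $-2$ only if $y=0$ and $x=\pm1$, since $14y^2\le 2$ and $y\in\frac12\bZ$ force $y=0$. So $\Phi_Z=\{\pm\beta\}$, $R(Z)=\bZ\beta$ is of type $A_1$, and $R(Z)^\perp=\bZ\alpha$ with $\alpha^2=-14$.

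I expect the main obstacle to be the equality $K_Z^\perp=R(Z)$ in (i) for $3\le m\le 8$, that is, showing there is no index issue. I would handle it by the discriminant comparison above, or simply cite \cite[Ch.~8]{dolgachev} for it. Everything else is a short explicit computation.
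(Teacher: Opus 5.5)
Your proposal is correct and takes the same route as the paper. The paper invokes the classical blow-up description of $Z$ (citing Manin and Dolgachev) and exhibits the same classes: $\beta=E_1-E_2$ and $\alpha=\pm(2H-3E_1-3E_2)$ in (ii), and $\alpha=H-3E_1$ in (iii). Your discriminant comparison for $3\le m\le 8$ and your index-$2$ argument showing $\Phi_Z=\{\pm\beta\}$ in case (ii) simply spell out details the paper leaves to those references.
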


\begin{proof}
This is a classical result in the theory of del Pezzo surfaces, following from the description of $Z$ as a blow-up of $\bP^2$, see for instance \cite[Chapter IV]{manin} or
\cite[Chapter 8]{dolgachev}. 
In (ii), one can take $\beta=E_1-E_2$ and $\alpha=-2H+3E_1+3E_2$,   
where $H$ is the pullback of the class of a line in $\bP^2$, and $E_1$, $E_2$ are the exceptional curves of the blow-up $Z \rightarrow \bP^2$. In (iii), one can take $\alpha=H-3E$, where $H$ is the pullback of the class of a line in $\bP^2$, and $E$ is the exceptional curve of the blow-up $Z \rightarrow \bP^2$.
\end{proof}

By Lemma~\ref{lem_del_pezzo_lie}, the real vector space 
$R(Z) \otimes \bR$ admits a canonical identification with the dual of the Cartan subalgebra of a finite-dimensional semisimple Lie algebra $\mathfrak{g}_Z$.
We explain below how enlarging $R(Z)$ to
$\bZ [\cO_Z] \oplus R(Z) \oplus \bZ \delta$
naturally gives rise to the dual of the Cartan subalgebra of an affine Lie algebra. 
For background on affine Lie algebras, we refer to \cite{kac}.

\begin{prop} \label{prop_affine_lie}
Let $Z$ be a del Pezzo surface. Then, $(\bZ [\cO_Z] \oplus R(Z) \oplus \bZ \delta) \otimes \bR$ 
endowed with the negative of the restriction of the Euler form is isometric to the dual of the Cartan subalgebra of an affine Lie algebra $\widehat{\mathfrak{g}}_Z$. Moreover, the affine roots of $Z$ are the real roots of the corresponding affine root system.
\end{prop}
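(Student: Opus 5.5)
The plan is to use the explicit form \eqref{eq_chi_new} of the Euler form on $\mathrm{Ker}(d)=\bZ[\cO_Z]\oplus K_Z^\perp\oplus\bZ\delta$ and restrict it to $\widehat{\fh}^*:=(\bZ[\cO_Z]\oplus R(Z)\oplus\bZ\delta)\otimes\bR$. Consider the negated form
\[ -\chi((r,C,m),(r',C',m')) = C\cdot C' - rm'-r'm - rr' .\]
On $R(Z)$ this is the intersection form, which is negative definite. This is the standard convention in the paper, where roots have self-intersection $-2$; so I will compare with the Kac normalization up to an overall sign. I will state the isometry as one between $\widehat{\fh}^*$ with $\chi$ and the standard affine space with the normalized invariant form $(\,\cdot\,|\,\cdot\,)$, in which real roots have $(\alpha|\alpha)=2$. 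Since the statement says ``negative of the Euler form'', I will check the sign carefully and match whichever convention makes roots have square $2$.

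The first step is a change of basis that removes the $rr'$ term. Set $\Lambda_0:=[\cO_Z]-\tfrac12\delta$, or better work over $\bR$ directly. Then $\chi(\Lambda_0,\Lambda_0)=1\cdot(-\tfrac12)\cdot2+1=0$, $\chi(\Lambda_0,\delta)=1$, $\chi(\delta,\delta)=0$, and $\Lambda_0,\delta$ are orthogonal to $R(Z)$. Hence, in the basis $\Lambda_0,\delta$ together with $R(Z)\otimes\bR$, the form $\chi$ is the positive definite form $-C\cdot C'$ on $\mathring{\fh}^*:=R(Z)\otimes\bR$, plus a hyperbolic plane spanned by $\Lambda_0$ and $\delta$. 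This matches exactly the standard description of the dual Cartan $\widehat{\fh}^*=\mathring{\fh}^*\oplus\bR\Lambda_0\oplus\bR\delta$ of the untwisted affine algebra $\widehat{\fg}_Z$, with $(\Lambda_0|\delta)=1$ and $(\Lambda_0|\Lambda_0)=(\delta|\delta)=0$ [Kac, \S6.2]. Here $\mathring{\fh}^*$ carries the Killing-normalized form in which the roots of $\fg_Z$ have square length $2$. By Lemma~\ref{lem_del_pezzo_lie}, $R(Z)$ is the root lattice of $\fg_Z$ and the finite roots of $Z$ are the elements of square $2$ for $-C\cdot C'$, which are its roots; all components are simply laced. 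The degenerate cases $A_0$ and $\emptyset$ are handled separately: there $\widehat{\fh}^*$ is just the hyperbolic plane, and there are no roots.

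The second step identifies roots. The real roots of the untwisted affine system are $\{\alpha+k\delta : \alpha\in\mathring\Phi,\ k\in\bZ\}$ [Kac, Prop.~6.3]. An affine root of $Z$ lies in $K_Z^\perp\oplus\bZ\delta$ with $\chi=2$. By \eqref{eq_chi_new}, $\chi((0,C,m),(0,C,m))=-C^2$, so the condition is $C^2=-2$ with $m$ arbitrary. The main obstacle is showing that such $C$ lies in $R(Z)$ and is a finite root. In cases (i) this holds because $K_Z^\perp=R(Z)$ and its $(-2)$-vectors are exactly the roots of the simply laced lattice. In case (ii), write $C=a\beta+b\alpha/\gcd$; more carefully, $K_Z^\perp$ may be only an overlattice of $\bZ\beta\oplus\bZ\alpha$. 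I would verify directly, with $\beta=E_1-E_2$, $\alpha=-2H+3E_1+3E_2$, that $(-2)$-classes in $K_Z^\perp$ are $\pm\beta$, by solving $C=xH+y_1E_1+y_2E_2$ with $3x+y_1+y_2=0$ and $-x^2+y_1^2+y_2^2=-2$. In case (iii), $-8b^2=-2$ has no solution. In all cases, then, the affine roots are $\{\alpha+k\delta\}$, which are the real roots. This completes the proof, modulo checking that the $\Lambda_0$ change of basis over $\bR$ is harmless, which it is since the statement is over $\bR$.
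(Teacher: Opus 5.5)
Your argument is correct. For the isometry it is the paper's argument: both match Kac's decomposition $\bR\Lambda_0\oplus\mathring{\mathfrak{h}}^\star\oplus\bR\delta$ against the explicit form \eqref{eq_chi_new}.

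Your sign bookkeeping is the consistent one. On $R(Z)$ the form $-\chi$ is negative definite, so for $\ell=\mathrm{rk}\,R(Z)\geq 1$ the space endowed with $-\chi$ has signature $(1,\ell+1)$, whereas Kac's normalized form has signature $(\ell+1,1)$. The statement therefore holds only up to an overall sign. Your isometry for $\chi$ itself, with $\Lambda_0\mapsto[\cO_Z]-\tfrac12\delta$, is exactly right. By contrast, the paper's choice $\Lambda_0\mapsto-[\cO_Z]+\tfrac{\delta}{2}$ together with $-\chi$ reproduces Kac's form on $\bR\Lambda_0\oplus\bR\delta$ but not on $R(Z)\otimes\bR$.

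The paper gives no separate argument for the assertion about roots, and your appeal to Kac's description $\{\alpha+k\delta\}$ of the real roots supplies it. However, the step you single out as the main obstacle is immediate from the definitions. If $C\in K_Z^\perp$ and $C^2=-2$, then $(0,C,0)$ is an affine root contained in $K_Z^\perp$, that is, a finite root, so $C\in\Phi_Z\subset R(Z)$ by the definition of $R(Z)$. Hence $\Phi_Z$ is exactly the set of classes $C$ with $C^2=-2$ in the simply laced lattice $R(Z)$ of Lemma \ref{lem_del_pezzo_lie}, i.e.\ the roots of $\mathfrak{g}_Z$. Your Diophantine checks in cases (ii)--(iii) only re-verify that lemma. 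If you keep the one in case (ii), the equation must read $-x^2+y_1^2+y_2^2=2$; with your sign it has no solutions at all.
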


\begin{proof}
Using the notation of \cite[Chapter 6]{kac}, the dual of the Cartan subalgebra of an affine Lie algebra is of the form 
\[ \mathfrak{h}^\star = \bR \Lambda_0 \oplus \mathring{\mathfrak{h}}^\star \oplus \bR \delta \,,\]
where $\mathring{\mathfrak{h}}^\star$  is the dual of the Cartan subalgebra of a finite-dimensional semisimple Lie algebra.
Comparing \cite[Eq. (6.2.7)]{kac} with \eqref{eq_K_Z}, and \cite[Eq. (6.2.1)-(6.2.4)]{kac} with \eqref{eq_chi_new}, we obtain, that 
\[ \Lambda_0 \mapsto -[\cO_Z]+\frac{\delta}{2}\,,\,\,\,\, \mathring{\mathfrak{h}}^\star \simeq R(Z) \otimes \bR\,,\,\,\,\,\,\delta \mapsto \delta\,,\] defines an isometry between $(\bZ [\cO_Z] \oplus R(Z) \oplus \bZ \delta) \otimes \bR$  and the dual of the Cartan subalgebra of an affine Lie algebra $\widehat{\mathfrak{g}}_Z$. 
\end{proof}

\begin{remark} The affine Lie algebras $\widehat{\mathfrak{g}}_Z$ appearing by Proposition \ref{prop_affine_lie} form the list
\[ A_0^{(1)}\,,\,\,\,A_1^{(1)}\,,\,\,\,   (A_2+A_1)^{(1)}\,,\,\,\, A_4^{(1)}\,,\, \,\, D_5^{(1)}\,, \,\,\, E_6^{(1)}\,, \,\,\, E_7^{(1)}\,, \,\,\, E_8^{(1)} \,.\]
\end{remark}

\begin{prop} \label{prop_orthogonal_affine_weyl}
    Let $Z$ be a del Pezzo surface. Then, the following holds:
    \begin{enumerate}
        \item[(i)] If $Z$ is not isomorphic to the blow-up of $\bP^2$ in one or two points, then
        \[ O(Z) = \widehat{W}(Z)\,.\]
        \item[(ii)] If $Z$ is isomorphic to the blow-up of $\bP^2$ in two points, then 
        \[ O(Z) = \widehat{W}(Z) \ltimes  \bZ\,.\]
        \item[(iii)] If $Z$ is isomorphic to the blow-up of $\bP^2$ in one point, then $\widehat{W}(Z)$ is trivial and
        \[ O(Z)=\bZ\,.\]
       
    \end{enumerate} 
\end{prop}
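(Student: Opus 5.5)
Since $O(Z)=W(Z)\ltimes \Pic^0(Z)$ by Proposition \ref{prop_weyl_affine_gen}, the plan is to compare $\widehat{W}(Z)$ with this semi-direct product. The key computation concerns the affine reflections. An affine root has the form $\alpha=(0,\beta,m)\in K_Z^\perp\oplus\bZ\delta$ with $\chi(\alpha,\alpha)=-\beta^2=2$, so $\beta$ is a finite root and $\alpha=\beta+m\delta$. Conversely, every such element is an affine root. I will check directly from \eqref{eq_chi_new} that $r_{\beta+m\delta}\circ r_\beta$ equals the tensor product by $\cO_Z(\pm m\beta)$, using the formula \eqref{eq_t_L} for $t_L$ and checking on $[\cO_Z]$, $K_Z^\perp$ and $\delta$; the sign is fixed by the computation. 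This is the standard affine Weyl group fact that $\widehat{W}=W\ltimes$ (translations by the coroot lattice). It gives
\[\widehat{W}(Z)=W(Z)\ltimes \{\cO_Z(D)\,|\, D\in R(Z)\}\subset W(Z)\ltimes \Pic^0(Z).\]
For the inclusion $\supset$ I need every $t_{\cO(D)}$ with $D\in R(Z)$ to be a product of such pairs. This follows because $D\mapsto t_{\cO(D)}$ is a homomorphism and $R(Z)$ is generated by the finite roots.

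With this in hand, $O(Z)/\widehat{W}(Z)\simeq \Pic^0(Z)/R(Z)\simeq K_Z^\perp/R(Z)$. The cases then come from Lemma \ref{lem_del_pezzo_lie}.

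In case (i), $K_Z^\perp=R(Z)$, so $O(Z)=\widehat{W}(Z)$.

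In case (iii), there are no finite roots. Since every affine root projects to a finite root, there are no affine roots either, so $\widehat{W}(Z)$ is trivial. Also $W(Z)$ is trivial, and $O(Z)=\Pic^0(Z)\simeq K_Z^\perp=\bZ\alpha\simeq\bZ$.

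In case (ii), $R(Z)=\bZ\beta$ and $R(Z)^\perp=\bZ\alpha$. The main obstacle is producing a complement. Since $\bZ\beta\oplus\bZ\alpha$ has index $7$ in the unimodular-complement lattice $K_Z^\perp$ (discriminant $28$ versus $4$), I cannot simply take $\bZ\alpha$. Instead I will use the fact that $K_Z^\perp/R(Z)$ is torsion-free of rank one, since $\beta$ is primitive. I pick $D_0\in K_Z^\perp$ mapping to a generator, for instance $D_0=H-2E_1-E_2$, so that $K_Z^\perp=\bZ\beta\oplus\bZ D_0$.

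I will also need $t_{\cO(D_0)}$ to normalize $\widehat{W}(Z)$. This is automatic, since $\widehat{W}(Z)$ is generated by reflections in the set of affine roots, and that set is preserved by all of $O(Z)$ because $O(Z)$ consists of isometries preserving $\mathrm{Ker}(\psi)$. More generally, $\widehat{W}(Z)$ is normal in $O(Z)$ for the same reason. Finally, $\langle t_{\cO(D_0)}\rangle\cap\widehat{W}(Z)$ is trivial, because the map $O(Z)\to \Pic^0(Z)/R(Z)$ sends $t_{\cO(kD_0)}$ to $k$ times a generator. Hence $O(Z)=\widehat{W}(Z)\rtimes\bZ$, which is the claimed semi-direct product written as in (ii).

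The remaining point to verify carefully is that the map $O(Z)\to K_Z^\perp/R(Z)$, obtained by composing the projection to $\Pic^0(Z)$ with the quotient, is a well-defined homomorphism with kernel $\widehat{W}(Z)$. Since $W(Z)$ preserves $R(Z)$, this follows from the semi-direct product structure.
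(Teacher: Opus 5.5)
Your proposal is correct and follows essentially the paper's route: the paper compares $O(Z)=W(Z)\ltimes\Pic^0(Z)$ with $\widehat{W}(Z)=W(Z)\ltimes(\text{translations by } R(Z))$, obtaining the latter by citing Kac's Proposition 6.5 after identifying $\mathrm{Ker}(d)\otimes\bR$ with an affine Cartan dual; your identity $r_{\beta+m\delta}\circ r_\beta=t_{\cO(-m\beta)}$ (the sign is minus) is exactly the computation behind Kac's translations, and your explicit treatment of (ii) and (iii) fills in what the paper leaves as ``similarly''. There are three small slips to fix, none affecting the argument: checking on $[\cO_Z]$, $K_Z^\perp$, $\delta$ only covers $\mathrm{Ker}(d)$, so you should also note that both sides fix $K_Z$, as the paper does; $\bZ\beta\oplus\bZ\alpha$ has index $2$, not $7$, in $K_Z^\perp$ (the discriminants are $28$ and $K_Z^2=7$), though your choice $D_0=H-2E_1-E_2$ does complete $\beta$ to a basis; and $O(Z)\to K_Z^\perp/R(Z)$ is a homomorphism because $W(Z)$ acts \emph{trivially} on $K_Z^\perp/R(Z)$ (each $r_\beta$ moves $D$ by $(D\cdot\beta)\beta$), not merely because $W(Z)$ preserves $R(Z)$.
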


\begin{proof}
By Lemma \ref{lem_tensor} and Proposition \ref{prop_weyl_affine_gen}, both  $O(Z)=W(Z)\ltimes \Pic^0(Z)$ and $\widehat{W}(Z)$ fix the class 
$K_Z \in NS(Z) \subset K(Z)$
and preserve $\mathrm{Ker}(d)$. 
Therefore, it suffices to prove the result in restriction to $\mathrm{Ker}(d)$.
For (i), assume that $Z$ is not isomorphic to the blow-up of $\bP^2$ in one or two points. By Lemma \ref{lem_del_pezzo_lie}(i), we have $K_Z^\perp=R(Z)$, and so by Proposition \ref{prop_affine_lie}, $\mathrm{Ker}(d) \otimes \bR$ is isometric to the dual of the Cartan subalgebra 
\[\mathfrak{h}^\star = \bR \Lambda_0 \oplus \mathring{\mathfrak{h}}^\star \oplus \bR \delta\]
of an affine Lie algebra $\widehat{\mathfrak{g}}_Z$. 
Therefore, \cite[Proposition 2.5]{kac} identifies the affine Weyl group $\widehat{W}(Z)$ with the semi-direct product 
\[ W(Z)\ltimes \mathring{\mathfrak{h}}^\star\]
where $\mathring{\mathfrak{h}}^\star = K_Z^\perp$ acts by ``translations" given explicitly in \cite[Eq. 6.5.2]{kac}. Using the identification between $\mathrm{Ker}(d) \otimes \bR$ and $\mathfrak{h}^\star$ given in the proof of Proposition \ref{prop_affine_lie}, we see that the action of $K_Z^\perp= \mathring{\mathfrak{h}}^\star$ on $\mathfrak{h}^\star$ given by \cite[Eq. 6.5.2]{kac} agrees precisely with the action of $\Pic^0(Z) \simeq K_Z^\perp$
given by Lemma \ref{lem_tensor}. For (ii)-(iii), the result follows similarly using Lemma \ref{lem_del_pezzo_lie}(ii)-(iii).
\end{proof}

\section{Geometric helices on del Pezzo surfaces and tilting operations} 
\label{sec_very_strong}

In this section, we first recall general facts about exceptional collections on del Pezzo surfaces and then review following \cite{BS} the notions of geometric helices and tilting operations.

\subsection{Exceptional collections on del Pezzo surfaces}
\label{sec_ex_collections}
In this section, we recall several foundational results on exceptional collections on del Pezzo surfaces, mainly due to Gorodentsev \cite{Gor} and Kuleshov–Orlov \cite{KO}. We adopt the notation introduced in \S\ref{sec_dP} for the derived categories of del Pezzo surfaces.
We define the slope of a vector bundle $E$ on a del Pezzo surface $Z$ by  $\mu(E)=d(E)/r(E)\in \bQ$. We then define slope stability in the usual way. If $E$ is a torsion sheaf, we set $\mu(E)=\infty$.

An object $E$ in the derived category $D(Z)$ is \emph{exceptional} if $\Hom(E,E)=\bC$ and $\Hom^k(E,E)=0$ for all $k \neq 0$. By \cite[Proposition 2.10]{KO}, every exceptional object in $D(Z)$ is isomorphic to a shift of a coherent sheaf. 
Moreover, if $E$ is an exceptional sheaf, then, by \cite[Proposition 2.9]{KO} and \cite[2.4.\! Theorem]{Gor}, either
\begin{itemize}
\item[(i)] $E$ is a slope stable vector bundle,  or 
\item[(ii)] $E=\cO_D(k)$ is the push-forward of a line bundle $\cO_{\bP^1}(k)$, $k \in \bZ$, from an  exceptional curve $D \simeq \bP^1 \subset Z$.
\end{itemize}
By \cite[Corollary 2.5]{Gor}, an exceptional sheaf $E$ is uniquely determined by its class $[E] \in K(Z)$ in the Grothendieck group.

\begin{lemma} \label{lem_primitive}
Let $E$ be an exceptional object in $D(Z)$. Then:
\begin{itemize}
\item[(i)] $(r(E), d(E)) \neq (0, 0)$. 
\item[(ii)] $r(E)$ and $d(E)$ are coprime.
\end{itemize}
\end{lemma}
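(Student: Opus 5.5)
By \cite[Proposition 2.10]{KO}, every exceptional object is a shift of an exceptional sheaf. A shift only changes the signs of $r$ and $d$, and this affects neither claim. So we may assume $E$ is an exceptional sheaf. We then split into the two cases of the dichotomy recalled above from \cite[Proposition 2.9]{KO} and \cite[2.4.\! Theorem]{Gor}.

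\emph{Torsion case.} Suppose $E=\cO_D(k)$, pushed forward from an exceptional curve $D\simeq \bP^1$. Then $r(E)=0$ and $c_1(E)=[D]$. Hence $d(E)=-K_Z\cdot D=1$ by adjunction, since $D^2=-1$ and $D$ is rational. So $(r,d)=(0,1)$, which is nonzero and primitive. Both (i) and (ii) hold.

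\emph{Locally free case.} Suppose $E$ is a slope stable vector bundle of rank $r=r(E)\geq 1$. Then (i) is immediate. For (ii) we use the Riemann--Roch formula \eqref{eq_RR} with $E=F$. Writing $c=c_1(E)$ and $d=d(E)$, exceptionality gives
\[
1=\chi(E,E)=2r\chi(E)-r^2-c\cdot c-rd .
\]
Let $g=\gcd(r,d)$. The terms $2r\chi(E)$, $r^2$ and $rd$ are all divisible by $g$, so $c\cdot c\equiv -1 \pmod g$. This alone does not force $g=1$, so we must also exploit the relation between $c$ and $K_Z$.

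We use that $NS(Z)$ is unimodular and that $c\cdot K_Z=-d\equiv 0\pmod g$. If $K_Z$ is primitive in $NS(Z)$, then by unimodularity every class with pairing against $K_Z$ divisible by $g$ can be written as $c=aK_Z+g\,x'+y$ with $y\in K_Z^\perp$. Refining this, the expression for $c\cdot c$ modulo $g$ reduces to $a^2K_Z^2$ plus even contributions. One would then combine this with the parity of $c\cdot c+c\cdot K_Z$ (Wu's formula) to reach a contradiction.

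This lattice-theoretic route looks messy. It also has to treat $\bP^2$ separately, where $K_Z=-3H$ is not primitive. I therefore expect this step to be the main obstacle, and I would instead rely on the literature. Namely, \cite[Corollary 2.5]{Gor} and Kuleshov--Orlov show that for exceptional bundles on del Pezzo surfaces the pair $(r,c_1)$ is primitive. More strongly, $r$ and $c_1\cdot K_Z$ are coprime: this is the standard fact that $\mu(E)$ determines the rank, proven via mutation from line bundles, and the coprimality is preserved under mutations.

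As a fallback, I would argue by induction using \cite[Theorem 7.7]{KO}. Every exceptional object lies in a full exceptional collection obtained from a standard one made of line bundles and torsion sheaves $\cO_D(k)$, for which $(r,d)$ is visibly primitive. For an exceptional pair $(A,B)$, mutation replaces $[B]$ by $[B]-\chi(A,B)[A]$. Since every full exceptional collection gives a basis of $K(Z)$, the vectors $\psi$ of the collection generate $\bZ^2$. The remaining task is to check that a primitive vector stays primitive after such a mutation, which requires controlling $\chi(A,B)$ against $r(A)d(B)-r(B)d(A)$ via Lemma~\ref{lem_sym}(i). This is again the key difficulty.
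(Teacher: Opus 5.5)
Your reduction to sheaves, the torsion case $\cO_D(k)$, and part (i) for bundles are fine. However, part (ii) for an exceptional vector bundle is never proved: the proposal ends by calling that step the key difficulty.

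The lattice-theoretic route cannot work, because $\chi(E,E)=1$ together with the arithmetic of $NS(Z)$ does not force coprimality. Take $Z$ the blow-up of $\bP^2$ in one point, with $H$ the pullback of a line and $\ell$ the exceptional curve, so $K_Z=-3H+\ell$. By \eqref{eq_K_Z}, the triple $(r,c_1,\chi)=(7,H+4\ell,6)$ is an honest class $v\in K(Z)$. It has $d=7$ and $c_1\cdot c_1=-15$, and \eqref{eq_RR} gives $\chi(v,v)=84-49+15-49=1$. Yet $\gcd(r,d)=7$. So some genuinely geometric input is required.

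Your appeal to the literature does not supply it. \cite[Corollary 2.5]{Gor}, as used in this paper, says that an exceptional sheaf is determined by its class. And ``coprimality is preserved under mutations'' is exactly what you leave unproved, so that paragraph is circular as written. Separately, the $\psi$-vectors of a full collection do not generate $\bZ^2$ when $Z=\bP^2$ or $\bP^1\times\bP^1$. In any case, generation says nothing about the primitivity of the individual vectors.

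The missing idea in the paper is to restrict to a smooth anticanonical curve $C\subset Z$. The bundle $E|_C$ has rank $r(E)$ and degree $c_1(E)\cdot C=d(E)$. It is simple by \cite[Lemma 3.6]{KO}. By Atiyah's classification \cite{atiyah}, a simple bundle on an elliptic curve has coprime rank and degree.

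Alternatively, your mutation fallback can be completed, and the step you found hard is immediate. For an exceptional pair $(A,B)$, Lemma \ref{lem_sym}(i) gives $\chi(A,B)=\langle\psi(A),\psi(B)\rangle$, so
\[
\psi(L_AB)=\psi(B)-\langle\psi(A),\psi(B)\rangle\,\psi(A).
\]
This is the image of $\psi(B)$ under the integral transvection $v\mapsto v-\langle\psi(A),v\rangle\psi(A)$, which lies in $SL(2,\bZ)$. Hence it preserves both non-vanishing and primitivity; right mutations behave the same way, and shifts only change signs.

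Now start from a standard collection $(\cO_{\ell_1}(-1),\dots,\cO_{\ell_m}(-1),\cO,\cO(H),\cO(2H))$ on a blow-up of $\bP^2$ with exceptional curves $\ell_i$. Its vectors $(0,1),(1,0),(1,3),(1,6)$ are primitive, and similarly for $\bP^1\times\bP^1$. This argument proves (ii). The cost is relying on the deep transitivity theorem \cite[Theorem 7.7]{KO}, plus the fact that $E$ extends to a full exceptional collection. The paper's restriction argument takes two lines.
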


\begin{proof}
By \cite[Proposition 2.10]{KO}, every exceptional object in $D(Z)$ is a shift of a sheaf on $Z$, so it suffices to prove the result when $E$ is a sheaf.  When $E=\cO_{D}(k)$ for an exceptional curve $D$, we have $(r(E), d(E))=(0,1)$ and so the result holds.
When $E$ is an exceptional bundle on $Z$, the restriction 
$E|_C$ to a smooth anticanonical curve $C \subset Z$ is a simple vector bundle by \cite[Lemma 3.6]{KO}. 
Viewed as a vector bundle on the elliptic curve 
$C$, it has rank $r(E)$ and degree
$d(E)$.
By Atiyah’s classification of vector bundles on elliptic curves \cite{atiyah}, the rank and degree of a simple vector bundle on 
$C$ are coprime. Hence 
$r(E)$ and $d(E)$ are coprime, which proves the claim.
\end{proof}

An \emph{exceptional collection} $\bE=(E_1,\dots, E_m)$ is a sequence of exceptional objects $E_i$ in $D(Z)$ such that $\Hom^k(E_j,E_i)=0$ for all $k \in \bZ$ and $1 \leq i <j \leq m$. An exceptional pair is an exceptional collection consisting of two objects. An exceptional collection $\bE$ is \emph{full} if the smallest full triangulated subcategory of $D(Z)$ containing all objects of $\bE$ is $D(Z)$. If $\bE=(E_1,\dots,E_n)$ is a full exceptional collection, then $([E_i])_{1\leq i\leq n}$ is a basis of $K(Z)$, and so the length $n$ of $\bE$ has to be equal to the rank of $K(Z)$. Conversely, since $Z$ is a del Pezzo surface, any exceptional collection of length $n=\mathrm{rk}\, K(Z)$ is full by \cite[Theorem 6.11]{KO}.

\begin{lemma} \label{lem_ex_0}
\label{any}
Suppose $(E_1,E_2)$ is an exceptional pair in $D(Z)$. Then 
\begin{equation}
\label{eq_chi_ex}
\chi(E_1,E_2)=r(E_1)d(E_2)-r(E_2)d(E_1).\end{equation}
Moreover, if $(E_1,E_2)$ is an exceptional pair of sheaves on $Z$, then $\Ext^2(E_1,E_2)=0$  and at most one of the spaces $\Ext^k(E_1,E_2)$ with $k=0,1$ is non-zero.
\end{lemma}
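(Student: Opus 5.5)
The formula \eqref{eq_chi_ex} follows from Lemma \ref{lem_sym}(i) once we know $\chi(E_2,E_1)=0$. Indeed, $(E_1,E_2)$ is an exceptional pair, so $\Hom^k(E_2,E_1)=0$ for all $k\in\bZ$, hence $\chi(E_2,E_1)=0$. The skew-symmetrization formula then gives
\[ \chi(E_1,E_2)=\chi(E_1,E_2)-\chi(E_2,E_1)=r(E_1)d(E_2)-r(E_2)d(E_1)\,.\]

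For the second part, assume $E_1,E_2$ are sheaves. To show $\Ext^2(E_1,E_2)=0$, I will use Serre duality: $\Ext^2(E_1,E_2)\simeq \Hom(E_2,E_1\otimes\omega_Z)^\vee$. I then split into cases according to the dichotomy recalled above, namely that each $E_i$ is either a slope stable bundle or of the form $\cO_D(k)$ for an exceptional curve $D$.

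The vanishing $\Hom(E_2,E_1\otimes\omega_Z)=0$ should follow from stability and the ampleness of $\omega_Z^{-1}$, as follows.
\begin{itemize}
\item If both sheaves are stable bundles, then $\mu(E_1\otimes\omega_Z)=\mu(E_1)-K_Z^2<\mu(E_1)$. So it suffices to have $\mu(E_2)\geq \mu(E_1)$, or more generally $\mu(E_2)>\mu(E_1)-K_Z^2$. Otherwise $\Hom(E_2,E_1)$, which vanishes by hypothesis, must be compared with a nonzero map. Here I would use the standard Kuleshov--Orlov argument: for an exceptional pair of bundles, a nonzero map $E_2\to E_1\otimes\omega_Z$ between stable bundles forces $\mu(E_2)\leq\mu(E_1)-K_Z^2$. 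Then $\chi(E_2,E_1)=0$ together with Riemann--Roch gives a contradiction. Concretely, restrict to a smooth anticanonical elliptic curve $C$, where Hom-vanishing is governed by slopes (Atiyah), as in the proof of Lemma \ref{lem_primitive}.
\item If $E_2=\cO_D(k)$ is torsion and $E_1$ is a bundle, then $\Hom(\cO_D(k),E_1\otimes\omega_Z)=0$, since torsion sheaves admit no maps to torsion-free sheaves.
\item If $E_1=\cO_D(k)$ is torsion and $E_2$ is a bundle, then $\Hom(E_2,\cO_D(k-1))=H^0(D,E_2^\vee|_D(k-1))$, using $\omega_Z|_D=\cO_D(-1)$. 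I will compare this group with $\Hom(E_2,\cO_D(k))=0$. Since $E_2^\vee|_D$ splits as a sum of line bundles, vanishing of the $H^0$ after twisting by $\cO(k)$ implies vanishing after twisting by $\cO(k-1)$.
\item If both sheaves are torsion, I would reduce to a direct computation on curves.
\end{itemize}

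Once $\Ext^2(E_1,E_2)=0$ is established, the remaining claim is that $\Hom$ and $\Ext^1$ are not both nonzero. This is the main obstacle. My first approach is to cite Kuleshov--Orlov's result that an exceptional pair of sheaves on a del Pezzo surface is either a Hom-pair or an Ext-pair, which they prove by restriction to an anticanonical curve. An alternative route would be to show that $\Hom(E_1,E_2)\neq 0$ forces $\mu(E_1)<\mu(E_2)$, and that this in turn kills $\Ext^1$ via the mutation and stability argument; this is the more delicate step.
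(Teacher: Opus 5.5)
Your proof of \eqref{eq_chi_ex} is exactly the paper's argument. For the second part, the paper simply invokes \cite[Corollary 2.11]{KO}, which contains both $\Ext^2(E_1,E_2)=0$ and the $\Hom$/$\Ext^1$ dichotomy. So citing Kuleshov--Orlov for the dichotomy is the paper's route, and citing the same corollary for $\Ext^2$ would finish the proof.

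Your independent argument for $\Ext^2(E_1,E_2)=0$, however, has a genuine gap in its main case: $E_1,E_2$ stable bundles with $\mu(E_2)\leq\mu(E_1)-K_Z^2$. The promised contradiction does not exist. The identity $\chi(E_2,E_1)=0$ together with Riemann--Roch only gives $\chi(E_1,E_2)=r(E_1)r(E_2)(\mu(E_2)-\mu(E_1))<0$, which is compatible with that inequality. The configuration also really occurs. On $\bP^1\times\bP^1$, the pair $(E_1,E_2)=(\cO(-1,6),\cO)$ is exceptional because $H^\bullet(\cO(-1,6))=0$, and $\mu(E_2)=0<2=\mu(E_1)-K_Z^2$. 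Here $\Hom(E_2,E_1\otimes\omega_Z)=H^0(\cO(-3,4))=0$, even though $\mu(E_1\otimes\omega_Z)=2>\mu(E_2)$ and $\cO(-3,4)$ has positive degree on an anticanonical curve $C$. So neither slope stability on $Z$ nor Atiyah's criterion on $C$ can detect this vanishing.

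The missing idea is the comparison with $\Hom(E_2,E_1)=0$ that you mention but do not carry out. Since $h^0(\omega_Z^{-1})=K_Z^2+1\geq 2$, pick $0\neq s\in H^0(\omega_Z^{-1})$ with zero locus $C$. For $E_1$ locally free, multiplication by $s$ is an injection $E_1\otimes\omega_Z\hookrightarrow E_1$ with cokernel $E_1|_C$. Hence
\[ \Hom(E_2,E_1\otimes\omega_Z)\hookrightarrow\Hom(E_2,E_1)=0 \]
for every sheaf $E_2$, and Serre duality gives $\Ext^2(E_1,E_2)=0$. This settles your first two cases at once, with no stability input.

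Your third case is correct; equivalently, use an injection $\cO_D(k-1)\hookrightarrow\cO_D(k)$. The fourth case, which you left vague, is trivial. For $E_1=\cO_D(k)$ and $E_2=\cO_{D'}(k')$, formula \eqref{eq_RR} gives $\chi(E_2,E_1)=-D\cdot D'$. The exceptional-pair condition therefore forces $D\cdot D'=0$, so $D\neq D'$ and $D\cap D'=\emptyset$, and all $\Ext$ groups between them vanish.
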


\begin{proof}
Since $(E_1,E_2)$ is an exceptional pair, we have $\chi(E_2, E_1)=0$, and so \eqref{eq_chi_ex} follows from Lemma \ref{lem_sym}(i).
The second part of Lemma \ref{lem_ex_0} is precisely \cite[Corollary 2.11]{KO}
\end{proof}

\begin{lemma} \label{lem_ex}
Let $(E_1,E_2)$ be an exceptional pair of sheaves on $Z$, then the following holds:
\begin{itemize}
\item[(i)] $\Hom(E_1,E_2)\neq  0\iff \mu(E_1)<\mu(E_2)$,
\item[(ii)] $\Ext^1(E_1,E_2)\neq  0\iff\mu(E_1)>\mu(E_2)$,
\item[(iii)] $\Ext^k(E_1,E_2)=0\,\,\, \text{for all}\,\,\, k\in \bZ\iff \mu(E_1)=\mu(E_2)$.
\end{itemize}
\end{lemma}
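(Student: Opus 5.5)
The plan is to combine the dichotomy of Lemma \ref{lem_ex_0} with the sign of the Euler form given by \eqref{eq_chi_ex}. For an exceptional pair of sheaves $(E_1,E_2)$, Lemma \ref{lem_ex_0} gives $\Ext^2(E_1,E_2)=0$, and at most one of $\Hom(E_1,E_2)$ and $\Ext^1(E_1,E_2)$ is non-zero. Hence
\[ \chi(E_1,E_2)=\dim\Hom(E_1,E_2)-\dim \Ext^1(E_1,E_2) \]
has exactly one non-zero term or vanishes. Its sign therefore detects which space is non-zero: $\chi(E_1,E_2)>0$ iff $\Hom\neq 0$, $\chi(E_1,E_2)<0$ iff $\Ext^1\neq 0$, and $\chi(E_1,E_2)=0$ iff all $\Ext^k(E_1,E_2)$ vanish. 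The sheaves have no negative Exts, and $\Ext^k=0$ for $k>2$ on a surface.

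Next I use \eqref{eq_chi_ex}: $\chi(E_1,E_2)=r(E_1)d(E_2)-r(E_2)d(E_1)$. I translate its sign into a slope comparison by cases on the ranks.

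First case: both sheaves are vector bundles, so $r(E_i)>0$. Dividing by $r(E_1)r(E_2)>0$ gives
\[ \chi(E_1,E_2)=r(E_1)r(E_2)\bigl(\mu(E_2)-\mu(E_1)\bigr), \]
which yields (i), (ii) and (iii) at once.

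Second case: at least one sheaf is torsion. By the dichotomy recalled above, such a sheaf is $\cO_D(k)$ for an exceptional curve $D$, with $(r,d)=(0,1)$ and slope $\infty$.
\begin{itemize}
\item If $E_2$ is torsion and $E_1$ is a bundle, then $\chi=r(E_1)>0$ and $\mu(E_1)<\infty=\mu(E_2)$, consistent with (i).
\item If $E_1$ is torsion and $E_2$ is a bundle, then $\chi=-r(E_2)<0$ and $\mu(E_1)>\mu(E_2)$, consistent with (ii).
\item If both are torsion, then $\chi=0$ and both slopes equal $\infty$, consistent with (iii).
\end{itemize}
The convention $\mu=\infty$ makes the statement literally true in all three subcases.

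The main care point is the step using Lemma \ref{lem_ex_0} to deduce that $\chi$ has a definite sign matching the non-vanishing space. This requires both $\Ext^2=0$ and the exclusivity of $\Hom$ and $\Ext^1$. Everything else is a direct sign computation, so I expect no real obstacle beyond handling the torsion cases with the $\infty$ slope convention.
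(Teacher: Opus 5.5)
Your proof is correct and takes the same route as the paper, whose proof just says the lemma follows immediately from Lemma \ref{lem_ex_0}: since $\Ext^2(E_1,E_2)=0$ and $\Hom$ and $\Ext^1$ cannot both be non-zero, the sign of $\chi(E_1,E_2)=r(E_1)d(E_2)-r(E_2)d(E_1)$ detects which space is non-zero. You also spell out the torsion cases $\cO_D(k)$, with $(r,d)=(0,1)$ and the convention $\mu=\infty$, which the paper leaves implicit.
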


\begin{proof}
The result follows immediately from Lemma \ref{lem_ex_0}.
\end{proof}

Two objects $E_1$ and $E_2$ in $D(Z)$ are said to be \emph{orthogonal} if $\Hom^k(E_1, E_2)=\Hom^k(E_2,E_1)=0$ for all $k \in \bZ$. Since every exceptional object in $D(Z)$ is the shift of a coherent sheaf, it follows from Lemma \ref{lem_ex}(iii) that two exceptional objects $E_1$ and $E_2$ in $D(Z)$ are orthogonal if and only if 
$\mu(E_1)=\mu(E_2)$. 
%Given an exceptional collection $\bE=(E_1, \dots, E_m)$, a \emph{block} in $\bE$ is a sequence $(E_{i+1}, \dots, E_{i+k})$ of consecutive objects in $\bE$ which are mutually orthogonal.

\subsection{Geometric helices on del Pezzo surfaces}
Let $Z$ be a del Pezzo surface, and set $n:= \mathrm{rk}\,K(Z)$.
A \emph{helix} on $Z$ is a sequence of objects $\bH= (E_i)_{i \in \bZ}$ in $\D(Z)$ such that
\begin{itemize}
\item[(i)] for each $i\in \bZ$ the corresponding thread $(E_{i+1}, \ldots, E_{i+n})$ is a full exceptional collection in $\D(Z)$,
\item[(ii)] $E_{i+n}=E_i\tensor \omega_Z^{-1}$ for all $i\in \bZ$.
\end{itemize}
By \cite[Remark 3.2]{BS}(c), it is in fact sufficient to require that a single thread of 
$\bH$ is a full exceptional collection.
The following notion appears under various names in
\cite{bergh2002non, bridgeland_tstructures, BS,  Her, Her_Karp, segal}. We adopt the terminology introduced in \cite{BS}.

\begin{definition}
A helix $\bH = (E_i)_{i\in \bZ}$ on $Z$ is \emph{geometric} if for all $i<j$
 \[\Hom^k(E_i,E_j)=0\text{ unless }k=0.\]
\end{definition} 

The following exceptional collections are precisely the threads of geometric helices. We follow the terminology introduced in \cite{nord}.

\begin{definition} 
A \emph{very strong exceptional collection} on $Z$ is a full exceptional collection $\bE=(E_1,\dots, E_n)$  such that
\[\Hom^k(E_i,E_j\tensor \omega_Z^{-p})=0,\]
for all $k\neq 0$ when $1\leq i,j\leq n$ and $p\geq 0$.
\end{definition}

Recall that an exceptional collection $\bE=(E_1, \cdots, E_n)$ is \emph{strong} if $\Hom^k(E_i, E_j)=0$ for all $k \neq 0$ and $1 \leq i, j \leq n$. A \emph{cyclic strong} exceptional collection is a full exceptional collection $\bE$ such that, denoting by $(E_i)_{i\in \bZ}$ the corresponding helix, the exceptional collection $(E_{i+1}, \dots, E_{i+n})$ is strong for all $i\in \bZ$. A very strong exceptional collection is clearly cyclic strong. The following results show that the converse also holds on del Pezzo surfaces.

\begin{lemma} \label{lem_bundles}
Let $Z$ be a del Pezzo surface. 
Suppose $\bE'=(E'_1,\ldots, E'_n)$ is a cyclic strong exceptional collection on $Z$. Then there is a unique $p\in \bZ$ such that all objects $E_i=E'_i[p]$ lie in $\Coh(Z)$. Moreover, each $E_i$ is a vector bundle.
\end{lemma}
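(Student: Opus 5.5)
We need to show: for a cyclic strong full exceptional collection on a del Pezzo surface, there is a unique shift $p$ with all $E'_i[p]$ sheaves, and these sheaves are vector bundles.

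\emph{Step 1: each object is a shifted sheaf.} By \cite[Proposition 2.10]{KO}, every $E'_i$ is isomorphic to $F_i[-p_i]$ for some coherent sheaf $F_i$ and some integer $p_i$.

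\emph{Step 2: all shifts agree.} Consider $i<j$. Strongness gives $\Hom^k(E'_i,E'_j)=0$ for $k\neq 0$, which reads $\Ext^{k+p_i-p_j}(F_i,F_j)=0$ for $k\ne0$. If $\Hom^\bullet(F_i,F_j)\neq 0$, then by Lemma \ref{lem_ex_0} exactly one $\Ext^m(F_i,F_j)$ with $m\in\{0,1\}$ is nonzero, and it must equal $p_i-p_j$. Thus $p_i-p_j\in\{0,1\}$, and if it equals $1$ then $\Ext^1(F_i,F_j)\ne 0$.

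The cyclic condition is needed to rule out the case $p_i-p_j=1$ and to handle orthogonal pairs. For the pair $(E'_j,E'_i\otimes\omega_Z^{-1})$, which lies in a common thread of the helix, we get the analogous constraint on $p_j-p_i$ for $\Hom^\bullet(F_j,F_i\otimes\omega_Z^{-1})$. By Serre duality this space equals $\Hom^{2-\bullet}(F_i,F_j)^\vee$. So if $\Ext^1(F_i,F_j)\ne0$, it contributes in degree $1$ on that side, forcing $p_j-p_i=1$, which contradicts $p_i-p_j=1$. Hence $p_i=p_j$ whenever $\Hom^\bullet(F_i,F_j)\ne 0$.

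Pairs with $\Hom^\bullet(F_i,F_j)=0$ are mutually orthogonal, so they have equal slopes by Lemma \ref{lem_ex}. To connect them, I would use fullness: the graph on $\{1,\dots,n\}$ with edges for nonvanishing $\Hom^\bullet$ (in either the thread or the cyclic sense) should be connected. If it split, $D(Z)$ would decompose as an orthogonal sum, contradicting indecomposability of $D(Z)$ for connected $Z$. More simply, the Serre-dual pair $\Hom^\bullet(F_j,F_i\otimes\omega^{-1})$ is nonzero unless both directions vanish.

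I need to double-check this Serre duality step, because $\Ext^1$ dualizes to $\Ext^1$ and the argument must produce an honest contradiction. I expect this to be the main technical point.

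\emph{Step 3: uniqueness and local freeness.} Uniqueness of $p$ is immediate, since a nonzero object cannot be a sheaf in two different degrees. For local freeness, suppose some $E_i=\cO_D(k)$ with $D$ an exceptional curve. Then $E_i\otimes\omega^{-1}=\cO_D(k+1)$, because $-K_Z\cdot D=1$. Also $\Ext^1(\cO_D(k+1),\cO_D(k))\cong H^1(\cO_{\bP^1}(-1))\oplus(\text{normal bundle term})$. More decisively, $\chi(\cO_D(k),\cO_D(k+1))$ computed from \eqref{eq_RR} is $-D^2=1$, while the local-to-global contribution gives $\Ext^1(\cO_D(k),\cO_D(k+1))\supseteq H^0(N_D\otimes\cO(1))=H^0(\cO_{\bP^1})\ne0$. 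This contradicts the vanishing required in the geometric helix condition, which is equivalent to the cyclic strong condition by Step 2 applied along the helix.

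Rather than rely on this computation, the cleaner route is to apply the slope dichotomy of Lemma \ref{lem_ex}. A torsion object has $\mu=\infty$, and strongness within threads that contain both $\cO_D(k)$ and a bundle $F$ preceding it twisted by $\omega^{-1}$ forces an $\Ext^1$, via Lemma \ref{lem_ex}(ii), in the pair $(\cO_D(k),F\otimes\omega^{-1})$, since $\mu(\cO_D(k))=\infty>\mu(F\otimes\omega^{-1})$. Such a bundle $F$ exists because a full collection cannot consist only of torsion sheaves, as that would violate rank generation of $K(Z)$. Hence every $E_i$ is a slope stable vector bundle.
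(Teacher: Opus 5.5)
Your Step 2 fails at exactly the point you flagged. The identity $\Hom^\bullet(F_j,F_i\otimes\omega_Z^{-1})\cong\Hom^{2-\bullet}(F_i,F_j)^\vee$ is false. Serre duality on a surface gives $\Ext^k(F_i,F_j)\cong\Ext^{2-k}(F_j,F_i\otimes\omega_Z)^\vee$, with $\omega_Z$ and not $\omega_Z^{-1}$. Twisting by $\omega_Z^{-1}$ instead gives $\Ext^{k}(F_j,F_i\otimes\omega_Z^{-1})\cong\Ext^{2-k}(F_i,F_j\otimes\omega_Z^{2})^\vee$, which is unrelated to $\Ext^\bullet(F_i,F_j)$. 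The pair that is genuinely Serre dual to $(E'_i,E'_j)$ is $(E'_j,E'_{i-n})$. But $E'_{i-n}$ lies more than $n$ places before $E'_j$ in the helix, so no thread contains both, and cyclic strongness says nothing about it. Consequently nothing in your argument excludes the case $p_i-p_j=1$. Your ``more simply'' connectivity remark rests on the same false identity.

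What you actually need is only $\Hom^\bullet(F_j,F_i\otimes\omega_Z^{-1})\neq0$, and the paper gets this from slopes rather than duality:
\begin{itemize}
\item $\Ext^1(F_i,F_j)\neq0$ gives $\mu(F_i)>\mu(F_j)$ by Lemma \ref{lem_ex}(ii), hence $\mu(F_i\otimes\omega_Z^{-1})=\mu(F_i)+K_Z^2>\mu(F_j)$.
\item Both objects lie in the thread starting at $E'_j$, so $(F_j,F_i\otimes\omega_Z^{-1})$ is an exceptional pair of sheaves. Lemma \ref{lem_ex}(i) then gives $\Hom(F_j,F_i\otimes\omega_Z^{-1})\neq0$.
\item Since $p_i-p_j=1$, this says $\Hom^1(E'_j,E'_{i+n})\neq0$, contradicting strongness of that thread.
\end{itemize}
With this substitution, your Step 2 together with the indecomposability argument for orthogonal pairs is essentially the paper's proof.

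In Step 3, drop the first computation. $\cO_D(k)$ and $\cO_D(k+1)$ are exactly $n$ places apart, so they are not in a common thread. Appealing to the equivalence with the geometric helix condition is circular, since that equivalence (Proposition \ref{prop_bundles}) is deduced from this very lemma. Your subsequent slope argument for local freeness is correct, using that some helix translate of any bundle in the thread lies within $n-1$ places before $\cO_D(k)$. It is slightly more direct than the paper's version, which first invokes indecomposability to find a torsion-free $E_i$ and a torsion $E_j$ with a nonzero $\Hom$, and then applies the same slope comparison to $(E_j,E_{i+n})$.
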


\begin{proof}
Let $\bE'=(E'_1,\ldots, E'_n)$ be a cyclic strong exceptional collection on $Z$. Suppose by contradiction that $\bE'$ is not the global shift of a very strong exceptional collection of sheaves. Then, since $D(Z)$ is indecomposable by \cite[Proposition 3.10]{Huy}, there exist $i<j$ such that $E_i'$, $E_j'$ are shifts of sheaves by different integers and $\Hom(E_i', E_j') \neq 0$. Up to a applying a global shift to the collection, we may assume that $E_i'$ is a sheaf, and so $E_j'=E_j[k]$ where $E_j$ is a sheaf and $k \in \bZ \setminus \{0\}$. Then $\mathrm{Ext}^k(E_i', E_j)= \Hom(E_i', E_j[k])=\Hom(E_i', E_j') \neq 0$. Since $(E_i', E_j)$ is an exceptional pair of sheaves, Lemma \ref{lem_ex_0} implies $k=1$ and Lemma \ref{lem_ex}(ii) implies $\mu(E_i')>\mu(E_j)$. 
Consider now the helix $(E_\ell')_{\ell \in \bZ}$ generated by $\bE'$. By construction, we have $\mu(E_{i+n}')=\mu(E_i' \otimes \omega_Z^{-1})=\mu(E_i')+K_Z^2 > \mu(E_j)$.
On the other hand, since $j <i+n <j+n$ and  $(E_j',\dots, E_{j+n-1}')$ is an exceptional collection, $(E_j, E_{i+n}')$ is an exceptional pair of sheaves.
Applying Lemma \ref{lem_ex}(i), we deduce that $\Hom(E_j, E_{i+n}') \neq 0$ and so 
$\Hom^{1}(E_j', E_{i+n}')=\Hom^{1}(E_j[1], E_{i+n}')=\Hom(E_j, E_{i+n}')\neq 0$, in contradiction with the fact that
$(E_j',\dots, E_{j+n-1}')$ is strong.

It remains to prove that if $\bE=(E_1, \dots, E_n)$ be a cyclic strong exceptional collection of sheaves, then every $E_i$ is a vector bundle. 
Assume by contradiction that $\bE$ contains torsion sheaves. Since $\bE$
 is full, it cannot consist entirely of torsion sheaves. Moreover, $D(Z)$ is indecomposable, and so there exists $1 \leq i,j \leq n$ such that $E_i$ is torsion free, $E_j$ is torsion and either $\Hom(E_i, E_j)\neq 0$ or $\Hom(E_j,E_i) \neq 0$. Since there are no non-zero maps from a torsion sheaf to a torsion free sheaf, we have $\Hom(E_j,E_i)=0$, so $\Hom(E_i,E_j) \neq 0$ and necessarily $i<j$. 
 As above, consider the helix $(E_\ell')_{\ell \in \bZ}$ generated by $\bE'$. Then $(E_j, E_{i+n})$ is an exceptional pair of sheaves in the exceptional collection $(E_j, \dots, E_{j+n-1})$, with $E_{i+n}=E_i \otimes \omega_Z^{-1}$ torsion free, and so with $\mu(E_j)=\infty >\mu(E_{i+n})$.
 Then Lemma \ref{lem_ex}(ii) implies that
 $\Ext^1(E_j,E_{i+n}) \neq 0$, contradicting the assumption that $(E_j, \dots, E_{j+n-1})$ is strong. This concludes the proof of (i).
\end{proof}

\begin{prop} \label{prop_bundles}
Let $\bE=(E_1,\ldots, E_n)$ be a full exceptional collection on a del Pezzo surface $Z$. Then, the following are equivalent:
\begin{itemize}
\item[(i)] $\bE$ is very strong.
\item[(ii)] $\bE$ is cyclic strong.
\item[(iii)] Up to a shift, $\bE$ is a full strong exceptional collection of vector bundles such that   
\begin{equation}\label{slope}\mu(E_1)\leq \mu(E_2)\leq \cdots \leq \mu(E_n)\leq \mu(E_1)+K_Z^2.\end{equation}
\end{itemize}
\end{prop}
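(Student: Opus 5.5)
We prove the cycle of implications (i) $\Rightarrow$ (ii) $\Rightarrow$ (iii) $\Rightarrow$ (i).

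\textbf{(i) $\Rightarrow$ (ii).} This was already observed: for $i<j$ in the helix, $E_j$ is of the form $E_{j'}\otimes\omega_Z^{-p}$ with $1\leq j'\leq n$ and $p\geq 0$, after translating indices so that $E_i$ lies in the thread $\bE$.

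\textbf{(ii) $\Rightarrow$ (iii).} By Lemma \ref{lem_bundles}, after a shift all $E_i$ are vector bundles, and $\bE$ is strong by definition. For the slope inequalities, take $i<j$ in the thread. The pair $(E_i,E_j)$ is an exceptional pair of sheaves with $\Ext^1(E_i,E_j)=0$, so Lemma \ref{lem_ex}(ii) gives $\mu(E_i)\leq\mu(E_j)$. For the last inequality, use the rotated thread $(E_2,\dots,E_n,E_{n+1})$, which is strong by cyclic strongness. Here $E_{n+1}=E_1\otimes\omega_Z^{-1}$, and $(E_n,E_{n+1})$ is an exceptional pair of sheaves with vanishing $\Ext^1$. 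Lemma \ref{lem_ex}(ii) then gives $\mu(E_n)\leq \mu(E_1)+K_Z^2$, using $\mu(E\otimes\omega_Z^{-1})=\mu(E)+K_Z^2$.

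\textbf{(iii) $\Rightarrow$ (i).} This is the main step. We must show $\Hom^k(E_i,E_j\otimes\omega_Z^{-p})=0$ for $k\neq 0$, all $1\leq i,j\leq n$ and $p\geq 0$.

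The case $p=0$ is strongness. For $p\geq 1$, Serre duality gives
\[\Ext^2(E_i,E_j\otimes\omega_Z^{-p})\simeq\Hom(E_j,E_i\otimes\omega_Z^{p+1})^\vee.\]
This vanishes because $E_j$ and $E_i\otimes \omega_Z^{p+1}$ are both slope stable bundles and, by \eqref{slope},
\[\mu(E_i\otimes\omega_Z^{p+1})=\mu(E_i)-(p+1)K_Z^2<\mu(E_i)-K_Z^2\leq\mu(E_j).\]
Stability is available since each $E_i$ is an exceptional vector bundle, hence slope stable by the dichotomy recalled in \S\ref{sec_ex_collections}.

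For $\Ext^1$, I would use the helix structure instead of a direct stability argument. The objects $E_i$ and $E_j\otimes\omega_Z^{-p}=E_{j+pn}$ are separated by $p$ full rotations. I would write the helix as the sequence $(E_\ell)_{\ell\in\bZ}$ and check that every consecutive pair $(E_\ell,E_m)$ with $\ell<m<\ell+n$ lies in a common thread. Such a pair is an exceptional pair of sheaves, and by \eqref{slope} together with its translates by $\omega_Z^{-1}$ we have $\mu(E_\ell)\leq\mu(E_m)$. Lemma \ref{lem_ex} then gives vanishing of $\Ext^1$, and of $\Ext^2$ by Lemma \ref{lem_ex_0}.

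The harder case is $m-\ell\geq n$, where $(E_\ell,E_m)$ need not be an exceptional pair. Here I would reduce to \emph{cyclic strong} and then invoke the fact that the helix category forms a geometric helix: for a helix whose threads are all strong, geometricity follows by \cite[Lemma 3.3 / Theorem 3.?]{BS}-type arguments. Concretely, I would proceed by induction on $p$. Take a short resolution of $E_j\otimes\omega_Z^{-p}$ in terms of the objects of the previous thread. Such a resolution comes from the left mutation and its Koszul-type description, because $E_{j+pn}$ is the left mutation of $E_{j+(p-1)n}$ through the intermediate thread up to shift. Combined with the $\Ext^2$ vanishing above, this gives $\chi(E_i,E_{j+pn})=\dim\Hom$ by a positivity argument.

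The expected main obstacle is exactly this $\Ext^1$ vanishing for $p\geq1$. If the induction proves awkward, I would try an alternative. First show $\Ext^1=0$ between stable bundles of slopes $\mu(E_i)\leq\mu(F)$ with a gap of at least $K_Z^2$. To do this, restrict to a smooth anticanonical elliptic curve $C$: the bundles restrict to simple, hence stable, bundles by \cite[Lemma 3.6]{KO}, and there $H^1$ vanishes when the slope difference is positive. Then lift back to $Z$ using the exact sequence $0\to F\otimes\omega_Z\to F\to F|_C\to 0$ and induction on $p$, with the $p=0$ case supplied by strongness.
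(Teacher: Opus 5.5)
Your arguments for (i)$\Rightarrow$(ii) and (ii)$\Rightarrow$(iii) match the paper's. For (iii)$\Rightarrow$(i) you take a genuinely different and more careful route.

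The paper handles $p>0$ in one line. It notes $\mu(E_i)\leq\mu(E_j\otimes\omega_Z^{-p})$ and then invokes Lemma~\ref{lem_ex_0} and Lemma~\ref{lem_ex}(i). Those lemmas are statements about exceptional pairs. The pair $(E_i,E_j\otimes\omega_Z^{-p})$ need not be one once the two objects no longer lie in a common thread of the helix. Already for $i=j$, $p=1$ one has $\Ext^2(E_i\otimes\omega_Z^{-1},E_i)\cong\Hom(E_i,E_i)^\vee\neq 0$. Moreover, for general exceptional bundles a slope inequality does not force $\Ext^1=0$.

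Your case split supplies exactly what the one-liner leaves implicit:
\begin{itemize}
\item[(a)] $\Ext^2$ vanishes for all $p\geq 1$ by Serre duality and stability.
\item[(b)] $\Ext^1$ vanishes by Lemma~\ref{lem_ex} for pairs $(E_\ell,E_m)$ with $\ell<m<\ell+n$, which lie in a common thread.
\item[(c)] For $m-\ell\geq n$ the gap $\mu(E_m)-\mu(E_\ell)\geq K_Z^2>0$ is strict. Use the sequence $0\to E_{m-n}\to E_m\to E_m|_C\to 0$ and $\Ext^1_Z(E_\ell,E_m|_C)\cong\Hom_C(E_m|_C,E_\ell|_C)^\vee=0$. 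Here the restrictions are simple by \cite[Lemma 3.6]{KO}, hence stable. Then induct on $m-\ell$, with base cases $m-n=\ell$ or $(E_\ell,E_{m-n})$ in a common thread.
\end{itemize}
The paper's version is shorter. Yours costs more length and needs a smooth anticanonical curve, but every pair is actually covered.

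Two corrections to how you present it. First, make the restriction-to-$C$ argument your main line. Drop the first sketch, with its BS-type Koszul resolutions and ``positivity argument'': as written it is not an argument.

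Second, do not state the key step as ``$\Ext^1=0$ between stable bundles with slope gap at least $K_Z^2$''. That is false in general. On the blow-up of $\bP^2$ at a point, let $h$ be the pullback of a line and $e$ the exceptional curve. Then $\cO_Z$ and $\cO_Z(2h+2e)$ are exceptional with slope gap exactly $K_Z^2=8$. Yet $h^0=6$, $\chi=5$ and $h^2=0$ give $H^1(\cO_Z(2h+2e))\cong\bC$. What makes your claim true is the helix base case feeding the induction, so phrase the step as an induction along the helix, not as a property of pairs of stable bundles.
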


\begin{proof}
The implication (i) $\implies$(ii) from very strong to cyclic strong is immediate. 
To prove (ii) $\implies$ (iii), note that $\bE$ cyclic strong implies that, up to a shift, $\bE$ is a collection of vector bundles by Lemma \ref{lem_bundles}. The inequalities \eqref{slope} then follow from the cylic strong condition together with Lemma \ref{lem_ex}, using that $\mu(E_i \otimes \omega_Z^{-1})=\mu(E_i)+K_Z^2$, and $K_Z^2>0$ since $Z$ is a del Pezzo surface.
Finally, it remains to prove (iii) $\implies$ (i). Let $\bE=(E_1,\dots, E_n)$ be a full strong exceptional collection of vector bundles satisfying
$\mu(E_1) \leq \dots \leq \mu(E_n)\leq \mu(E_1)+K_Z^2$. 
To show that $\bE$ is very strong, let $1 \leq i,j \leq n$ and $p \geq 0$. If $p=0$, then $\Hom^k(E_i,E_j)=0$ for $k \neq 0$ since $\bE$ is strong. If $p>0$, then $\mu(E_i) \leq \mu(E_1)+K_Z^2 \leq \mu(E_j)+K_Z^2 \leq \mu(E_j)+pK_Z^2=\mu(E_j \otimes \omega_Z^{-p})$. Hence, by Lemma \ref{lem_ex_0} and Lemma \ref{lem_ex}(i), we obtain that $\Hom^k(E_i, E_j \otimes \omega_Z^{-p})=0$ for all $k \neq 0$. 
Therefore, $\bE$ is very strong, completing the proof.
\end{proof}

The following result shows that very strong exceptional collections and geometric helices exist on all del Pezzo surfaces.

\begin{prop} \label{prop_existence}
Let $Z$ be a del Pezzo surface. Then, 
\begin{itemize}
    \item[(i)] there exists a geometric helix, and so very strong exceptional collections, on $Z$,
    \item[(ii)] there exists a very strong exceptional collection of line bundles on $Z$ if and only if $\mathrm{rk}\, NS(Z)\leq 7$, that is, if and only if $Z$ is not isomorphic to a blow-up of $\bP^2$ in seven or eight points.
\end{itemize}
\end{prop}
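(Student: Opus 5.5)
By Proposition \ref{prop_bundles}, it is enough in both parts to produce full strong exceptional collections of vector bundles satisfying the slope inequalities \eqref{slope}. Such a collection is very strong, hence a thread of a geometric helix. The plan is to write down explicit collections in each deformation class and check them. For that check we use Lemma \ref{lem_ex_0} and Lemma \ref{lem_ex}: for an exceptional pair of bundles, $\Ext^{>0}$ vanishes once slopes increase. So strongness reduces to the collection being exceptional and its slopes being non-decreasing, with orthogonal objects allowed at equal slope.

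\textbf{Part (ii), line bundles.} For $\bP^2$ take $(\cO,\cO(1),\cO(2))$, with slopes $0,3,6\leq 0+9$. For $\bP^1\times\bP^1$ take $(\cO,\cO(1,0),\cO(0,1),\cO(1,1))$, with slopes $0,2,2,4\leq 8$.

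For $Z=\mathrm{Bl}_m\bP^2$ with $1\leq m\leq 6$, we start from an ordering of the line bundles $\cO, \cO(E_1),\dots,\cO(E_m), \cO(H),\cO(2H)$ and adjust it. Candidates of the shape $(\cO,\cO(E_i)_i,\cO(H),\cO(2H))$ have slopes $0,1,\ldots,3,6$, and we need $6\leq 9-m$, which forces $m\leq 3$. For larger $m$ we use a more balanced collection, built from the classes $\cO$, $\cO(H-E_i)$, $\cO(H)$, $\cO(2H-\sum E_i)$ and the like. Each step of the check is a cohomology computation for line bundles on a del Pezzo surface: $\chi$ together with Kodaira vanishing, or the fact that $\cO(-D)$ has no cohomology for $D$ a $(-1)$-curve. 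This is routine. Fullness is automatic by \cite[Theorem 6.11]{KO} since the length is $n$. Up to $m\leq 6$ one can consult the toric and Hille--Perling examples, which give explicit collections.

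For the \emph{only if} direction, suppose $m\geq 7$ and $(L_1,\dots,L_n)$ is a very strong collection of line bundles. Every difference $L_j-L_i$ with $i<j$ is then exceptional-pair data with $\chi(L_i,L_j)=d(L_j)-d(L_i)\geq 0$ by \eqref{eq_chi_ex}. The $n=m+3$ degrees lie in an interval of length $K_Z^2=9-m\leq 2$. For a pair with $d(L_j)=d(L_i)$, the classes must be orthogonal. In that case $D=c_1(L_j)-c_1(L_i)$ satisfies $D\cdot K_Z=0$ and $\chi(\cO(D))=1+D^2/2=0$, so $D$ is a root. Hence the line bundles at a fixed degree have pairwise differences that are roots, with any two differing by a root and mutually orthogonal. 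I then bound how many such line bundles can share a degree, and how large degree jumps of $1$ or $2$ can be given $\chi=d$. The aim is to show that $n$ line bundles cannot fit. I expect this counting argument to be the main obstacle; I would try an alternative route if it gets stuck. That route uses Riemann--Roch: for $D$ with $D\cdot(-K)=1$ or $2$, the condition $\chi(\cO(D))=D^2/2+d/2+1=d$ forces $D^2=d-2$. The plan is then to show that a sum over consecutive differences, totaling at most $9-m$, cannot produce $n$ distinct classes spanning $K(Z)$.

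\textbf{Part (i).} For $m\leq 6$ this follows from (ii). For $m=7,8$, I would take a very strong collection on $\mathrm{Bl}_6\bP^2$ and modify it via the blow-up formula. Alternatively, I would use known constructions of Herzog \cite{Her} or of \cite{BS}, which use higher-rank exceptional bundles, for instance the rank-two bundles that appear as mutations. In that case I would verify the slope condition and strongness through Lemma \ref{lem_ex}.
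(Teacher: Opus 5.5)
Your reduction to exceptional collections of bundles with non-decreasing slopes is correct, and so are your explicit collections for $\bP^2$, $\bP^1\times\bP^1$ and $m\leq 3$. The real gap is the \emph{only if} half of (ii). You stop at ``I then bound how many such line bundles can share a degree\ldots'' and never reach a contradiction, and the two ingredients you have cannot produce one.

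First, equal-degree blocks can be large. On the blow-up of $\bP^2$ in seven points, $\cO(E_1),\dots,\cO(E_7)$ are mutually orthogonal of degree $1$, since each $E_i-E_j$ is a root with vanishing cohomology. Second, the degree bookkeeping is numerically consistent. Write $A_i=c_1(L_{i+1})-c_1(L_i)$ for $i<n$ and $A_n=c_1(L_1)-K_Z-c_1(L_n)$. The slope condition gives $d(A_i)\geq 0$ and $\sum_i d(A_i)=K_Z^2$. Your relation $A_i^2=d(A_i)-2$ is satisfied, for example, by degrees $(1,1,0,\dots,0)$ when $m=7$, so nothing breaks at this level.

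The missing idea is the global structure of the $A_i$. Applying Riemann--Roch to $\chi(L_j,L_i)=0$ for all $i<j$ in a thread of the helix gives:
\begin{itemize}
\item[(a)] $A_i\cdot A_{i\pm1}=1$;
\item[(b)] $A_i\cdot A_j=0$ for non-adjacent indices, taken cyclically;
\item[(c)] $\sum_i A_i=-K_Z$.
\end{itemize}
So $(A_i)$ is a Hille--Perling toric system. It determines a smooth complete toric surface with $n$ rays whose toric divisors have self-intersections $A_i^2=d(A_i)-2\geq -2$. Such a surface is weak del Pezzo, so its ray generators are boundary lattice points of a reflexive polygon. Hence $n\leq 9$, that is, $\mathrm{rk}\,NS(Z)\leq 7$. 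This is the content of \cite[Theorems 5.13--5.14]{HP}, which the paper cites for cyclic strong collections before passing to very strong ones via Proposition~\ref{prop_bundles}.

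The paper's own framework also gives a shortcut. For line bundles, Lemma~\ref{lem_rank} gives $c_{\bs(\bE),i}=r(E_i)=1$ for all $i$. So every edge of the T-polygon $P_{\bs(\bE)}$ of \S\ref{sec_q_painleve} lies at lattice distance one from the origin, and its boundary contains exactly $n$ lattice points. Thus $P_{\bs(\bE)}$ is reflexive, and reflexive polygons have at most $9$ boundary lattice points.

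The remaining existence statements are also not actually established. For $4\leq m\leq 6$ you never write down a collection. These surfaces are not toric, so ``toric examples'' do not apply, and this case rests entirely on \cite{HP}.

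For $m=7,8$ the blow-up-formula route does not work as stated. Orlov's decomposition puts the torsion sheaf $\cO_E(-1)$ into the collection, while by Lemma~\ref{lem_bundles} a very strong collection consists, up to shift, of vector bundles. Converting such a collection into bundles satisfying \eqref{slope} is exactly the nontrivial input the paper imports. It comes either from the Karpov--Nogin three-block collections \cite[Proposition 4.2]{KN} used in \cite[Example 8.6]{BS}, or from \cite[Claim 6.5]{KO} via \cite[Proposition 7.3]{bergh2002non}.

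So outside the easy cases, your proposal reduces to the same citations the paper uses for existence. The one part you attempt independently, non-existence for $m\geq 7$, remains open.
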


\begin{proof}
 Statement (i) follows from the proof of \cite[Proposition 7.3]{bergh2002non}, which in turn relies on \cite[Claim 6.5]{KO}.
Alternatively, it can be deduced from \cite[Example 8.6]{BS}, where explicit examples of very strong exceptional collections are constructed on all del Pezzo surfaces, using the existence of three-block exceptional collections on del Pezzo surfaces which are blow-ups of $\bP^2$ at $m \geq 3$ points \cite[Proposition 4.2]{KN}.  

Statement (ii) holds for cyclic strong exceptional collections by \cite[Theorems 5.13-5.14]{HP}, and so for very strong exceptional collections by Proposition \ref{prop_bundles}.
\end{proof}

\subsection{Geometric helices and tilting operations}
\label{sec_geometric_tilting}

Let $\bH=(E_i)_{i \in \bZ}$ be a geometric helix on a del Pezzo surface $Z$. Then, one can produce new geometric helices $\bH'=(E_i')_{i\in \bZ}$ using the following operations:
\begin{itemize}
    \item[(i)] \emph{Rotation} by $k \in \bZ$: $E_i'= E_{i+k}$ for all $i \in \bZ$.
    \item[(ii)] \emph{Shift} in the derived category by 
    $k \in \bZ$: $E_i'=E_i[k]$ for all $i \in \bZ$.
    \item[(iii)] \emph{Orthogonal reordering}: there exist $j, k \in \bZ$, $j<k$, such that $E_j, E_{j+1}, \dots, E_k$ are mutually orthogonal, as reviewed at the end of \S \ref{sec_ex_collections}: then, $E_i'=E_i$ for $i \notin \{j,k\}$, $E_j'=E_{k}$, $E_{k}'=E_j$.
    \item[(iv)] \emph{Tensor product by a line bundle} $L \in \Pic(Z)$: $E_i'=E_i \otimes L$ for all $i \in \bZ$.
\end{itemize}

In this section, we review the notion of tilting for geometric helices, which is a more interesting operation on the set of geometric helices introduced in \cite{BS}; see also \cite[\S 5.1]{Her}. We begin by recalling some properties of the dual exceptional collection.
By \cite[Lemma 2.5]{BS}, for every full exceptional collection $\bE=(E_1, \dots,E_n)$ in $D(Z)$, there exists a unique full exceptional collection 
$\bF=(F_n, \dots, F_1)$ in $D(Z)$, called the \emph{dual exceptional collection} to $\bE$, such that $\Hom^k(E_i, F_j)=\bC$ if $i=j$ and $k=0$, and 
$\Hom^k(E_i, F_j)=0$ else.
Note that the dual exceptional collection
$\bF=(F_n,\dots,F_1)$ to a very strong exceptional collection $\bE=(E_1,\dots,E_n)$
is never very strong. Indeed, by \cite[Corollary 2.10]{BS} and \cite[Lemma 2.5]{BS}, we have $F_1=E_1$ and $F_n=E_n\otimes \omega_Z[2]$. In particular, $\bF$ does not satisfy Lemma \ref{lem_bundles}, and so is not very strong by Proposition \ref{prop_bundles}.

\begin{prop} \label{prop_cyclic}
Let $Z$ be a del Pezzo surface and let  $\bE=(E_1,\dots, E_n)$ be  a very strong exceptional collection on $Z$, with dual exceptional collection  $\bF=(F_n, \dots, F_1)$. Let $p \in \bZ$ be as in Lemma \ref{lem_bundles} the unique integer such that all objects $E_i[p]$ lie in $\Coh(Z)$. Then, up to orthogonal reordering, 
\begin{itemize}
\item[(i)] there exist sheaves $F_n', \dots, F_1'$ and $n \geq b \geq c >1$ such that 
\[ (F_n[p], \dots, F_1[p])= (F_n'[2], \dots, F_b'[2], F_{b-1}'[1], \dots, F_{c}'[1], F_{c-1}', \dots, F_1')\,.\]
Moreover, $\mu(F_i') \geq \mu(F_j')$ for every $1 \leq i,j \leq n$ such that either 
$n \geq i>j\geq b$, or $b-1 \geq i>j \geq c$, or $c-1\geq i >j \geq 1$.
\item[(ii)] the vectors $(r(F_i), d(F_i)) \in \bZ^2 $ are cyclically ordered in $\bR^2$, that is,  there exist determinations $\theta_i$ of the arguments of the complex numbers $r(F_i)+\sqrt{-1} d(F_i)$ such that 
\[ \theta_1 \leq \theta_2 \leq \dots \leq \theta_{n-1} \leq \theta_n \leq \theta_1 + 2 \pi\,.\]
\end{itemize}
\end{prop}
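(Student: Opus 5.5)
The plan is to identify the dual objects $F_j$ with the simple modules of the finite-dimensional algebra $A=\mathrm{End}(\bigoplus_i E_i)$ on $Z$ itself. We then read off their cohomological degrees from the geometry, and finally translate the resulting sheaf-theoretic data into the cyclic ordering of their classes. After the global shift by $p$, we may assume by Lemma \ref{lem_bundles} and Proposition \ref{prop_bundles} that all $E_i$ are vector bundles with $\mu(E_1)\le\dots\le\mu(E_n)\le\mu(E_1)+K_Z^2$.

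\textbf{Step 1 (cohomological amplitude).} The functor $\mathrm{RHom}(\bigoplus E_i,-)$ gives an equivalence $D(Z)\simeq D(A)$ sending $F_j$ to the simple module $S_j$. Since $\Hom^k(E_i,E_{i'})=0$ for $k\neq 0$ and $Z$ is a surface, the resolution of $S_j$ by the projectives $P_i=\Hom(\bigoplus E,E_i)$ has length at most $2$, so $\mathrm{gldim}\,A\le 2$. Hence $F_j$ lies in the subcategory generated by $E_i,E_i[1],E_i[2]$, i.e.\ it has cohomology sheaves in degrees $-2,-1,0$. By \cite[Proposition 2.10]{KO}, each exceptional $F_j$ is a single shifted sheaf $F_j'[s_j]$ with $s_j\in\{0,1,2\}$. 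The endpoints are fixed by the identities already recalled, $F_1=E_1$ and $F_n=E_n\otimes\omega_Z[2]$, so $s_1=0$ and $s_n=2$.

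\textbf{Step 2 (monotonicity of shifts, the main obstacle).} I need $s_i\ge s_j$ whenever $i>j$, up to orthogonal reordering. First I would try the following. Suppose $i>j$ with $s_i<s_j$. Orthogonality of the pair $(F_i,F_j)$ in the dual collection, together with Lemmas \ref{lem_ex_0}--\ref{lem_ex} for the exceptional pair of sheaves $(F_i',F_j')$, forces either $\mu(F_i')=\mu(F_j')$, in which case the two are orthogonal and can be reordered, or a nonzero $\Hom$ or $\Ext^1$ between $F_i'$ and $F_j'$. In the latter case I would transport this back to $\Ext^k_A(S_i,S_j)$ with $k<0$ or $k>2$, or to a negative-direction $\Ext$ that is impossible because $A$ is directed. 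If that bookkeeping fails, the fallback is the vanishing $\Hom^k(E_i\otimes\omega_Z^{-1},F_j)=0$ coming from the helix condition. This gives three consecutive groups with shifts $2,1,0$ and defines $b,c$; $s_1=0$ and $s_n=2$ give $n\ge b\ge c>1$. Within one group the objects are sheaves and form an exceptional collection $(F_i',F_j')$ for $i>j$. By Lemma \ref{lem_ex}, and since $\Ext^1$ between sheaves of the same group would produce $\Ext^1_A$ between simples in the wrong direction or a forbidden degree, the only surviving option is $\Hom\neq0$ or full orthogonality. This gives $\mu(F_i')\ge\mu(F_j')$ after orthogonal reordering, proving (i).

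\textbf{Step 3 (cyclic ordering).} Each $F_i'$ is an exceptional sheaf, so either $r(F_i')>0$, or it is a torsion sheaf $\cO_D(k)$ with $(r,d)=(0,1)$. Hence $(r,d)(F_i')$ lies in the half-plane $\{r>0\}\cup\{r=0,d>0\}$, whose arguments lie in $(-\pi/2,\pi/2]$, and the argument is a decreasing function of the slope. An even shift preserves the class and an odd shift negates it, so $(r,d)(F_i)=(-1)^{s_i}(r,d)(F_i')$. Within each of the three groups, the reversed slope ordering from (i) makes the arguments increase in $i$. The middle group, with shift $1$, is rotated by $\pi$. I then need to check the junctions: the last vector of one group and the first of the next should advance by less than $\pi$, and the total sweep from $\theta_1$ to $\theta_n$ should be at most $2\pi$. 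Since groups $1$ and $3$ both lie in the same half-plane, I would verify the junctions and the total sweep using the relations $\chi(F_i,F_j)=r(F_i)d(F_j)-r(F_j)d(F_i)$ from Lemma \ref{lem_ex_0}, which make the sign of $\langle\psi(F_i),\psi(F_j)\rangle$ determined by the $\Hom$-data. The bound $\mu(E_n)\le\mu(E_1)+K_Z^2$, in the form $F_n=E_n\otimes\omega_Z[2]$ versus $F_1=E_1$, should supply the last inequality $\theta_n\le\theta_1+2\pi$.
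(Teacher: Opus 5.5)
\textbf{The slope ordering inside a group is derived backwards.} Take $i>j$ with $s_i=s_j$. Then $\Hom(F_i',F_j')=\Hom(F_i,F_j)=\Hom_A(S_i,S_j)=0$, because distinct simple modules admit no nonzero maps. By contrast, $\Ext^1(F_i',F_j')=\Ext^1_A(S_i,S_j)$ lies in the allowed direction of the exceptional pair $(F_i,F_j)$; it is exactly what records the arrows of the quiver of $A$, so it is not forbidden.

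A concrete check: on $Z=\mathbb{F}_1$ take the very strong collection $(\cO,\cO(H-E),\cO(H),\cO(2H-E))$. There $F_2=\cO(E-H)[1]$ and $F_3=\cO(-E)[1]$ lie in the same group, $\Hom(\cO(-E),\cO(E-H))=H^0(\cO(2E-H))=0$, and $\Ext^1(\cO(-E),\cO(E-H))=H^1(\cO(2E-H))=\bC$.

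Your conclusion ``$\Hom\neq0$ or full orthogonality'' would give $\mu(F_i')<\mu(F_j')$ by Lemma~\ref{lem_ex}(i), which is the opposite of (i). So the sentence ``This gives $\mu(F_i')\ge\mu(F_j')$'' does not follow. The correct dichotomy is ``$\Ext^1\neq0$ or orthogonal'', and this gives $\mu(F_i')\ge\mu(F_j')$ with no reordering at all (in the example, $-1\ge-2$).

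The single fact doing the work is $\Hom^k(F_i,F_j)=0$ for $k\le 0$ and $i\neq j$, since the $F_i$ are the simple objects of the heart $\mathrm{mod}\,A$. The same fact makes your shift-monotonicity bookkeeping succeed without any fallback. If $i>j$, $s_i<s_j$ and $\Ext^m(F_i',F_j')\neq0$ for some $m\in\{0,1\}$, then $\Hom^k(F_i,F_j)\neq0$ with $k=m+s_i-s_j\le0$. The case $k=0$ is excluded by $\Hom_A(S_i,S_j)=0$, not by directedness. Hence out-of-order adjacent pairs are orthogonal and can be sorted by adjacent orthogonal swaps, since the $F$'s remain the same simples after each swap.

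Two further steps need repair, after which your approach gives a self-contained proof of (i); the paper instead imports (i) from \cite[Lemma 8.3]{BS}.

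First, in Step 1, strongness plus $\dim Z=2$ does not give $\mathrm{gldim}\,A\le2$. On $\bP^1\times\bP^1$ the full strong, but not very strong, collection $(\cO,\cO(1,0),\cO(3,1),\cO(4,1))$ has $F_4=\cO(2,-1)[2]$ and $\Ext^3_A(S_4,S_1)=H^1(\cO(-2,1))\neq0$. You do not need global dimension here. Once $F_j=F_j'[s_j]$ by \cite[Proposition 2.10]{KO}, the condition $\Hom(E_j,F_j)=\Ext^{s_j}(E_j,F_j')\neq0$ already forces $0\le s_j\le2$.

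Second, in Step 3, for $r>0$ the argument of $r+\sqrt{-1}d$ is $\arctan\mu$, which is increasing in the slope, not decreasing. The correct ordering in (i) then gives increasing arguments directly within each group. No junction condition is needed, and in particular consecutive vectors need not advance by less than $\pi$. Choose $\theta_i$ in $(-\pi/2,\pi/2]$, $(\pi/2,3\pi/2]$ and $(3\pi/2,5\pi/2]$ for the shift-$0$, $1$ and $2$ groups respectively; monotonicity across groups is then automatic. The only remaining input is $\mu(F_n')\le\mu(F_1')$, which gives $\theta_n\le\theta_1+2\pi$, exactly as in the paper.
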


\begin{proof}
Statement (i) follows from \cite[Lemma 8.3]{BS}, together with  $F_1=E_1$ and $F_n=E_n\otimes \omega_Z[2]$ to determine the range of shifts. For (ii), first note that the vectors $r(F_i)+\sqrt{-1}d(F_i)$ lie in the left half-plane (resp.\! the right half-plane) for $1 \leq i \leq c-1$ and $b \leq i \leq n$ (resp.\! $c \leq i \leq b-1$), and so it suffices to check the inequalities on arguments in each of these intervals.
Then, 
$ \theta_1 \leq \theta_2 \leq \dots \leq \theta_{n-1} \leq \theta_n$ follows from (i). Finally, since $F_1=E_1$,  $F_n=E_n\otimes \omega_Z[2]$, and $\mu(E_n)\leq \mu(E_1)+K_Z^2$ by Proposition \ref{prop_bundles}, we have 
\[ \mu(F_n')=\mu(E_n \otimes \omega_Z)=\mu(E_n) -K_Z^2 \leq \mu(E_1) =\mu(F_1') \,,\]
and so $\theta_n \leq \theta_1  + 2 \pi$.
\end{proof}

\begin{example} \label{example_cyclic}
By \cite[Example 2.8]{BS}, $\bE=(\cO, \cO(1,0), \cO(0,1), \cO(1,1))$ is a full strong exceptional collection on $Z=\bP^1 \times \bP^1$,
with dual collection $\bF=(\cO(-1,-1)[2], \cO(0,-1)[1], \cO(-1,0)[1], \cO)$.
We have 
\[ 0=\mu(\cO) \leq \mu(\cO(1,0))=2 \leq \mu(\cO(0,1))=2 \leq \mu(\cO(1,1))=4 \leq \mu(\cO)+K_Z^2=8 \,,\]
and so $\bE$ is very strong by Proposition \ref{prop_bundles}. The vectors $(r(F_i),d(F_i))$ are $(1, -4)$, $(-1,2)$, $(-1,2)$, $(1,0)$, which are indeed cyclically ordered in $\bR^2$ -- see Figure \ref{fig1}.
\end{example}

\begin{figure}
\begin{center}
\begin{tikzpicture}[scale=0.7]

    % Axes
    \draw[->] (-5,0) -- (5,0) node[right] {$r$};
    \draw[->] (0,-4) -- (0,3) node[above] {$d$};

    % Vectors
    \draw[->, thick] (0,0) -- (1,0) 
        node[below right] {$(r(F_1), d(F_1))$};

    \draw[->, thick] (0,0) -- (-1,2) 
        node[above left] {$(r(F_2), d(F_2))=(r(F_3), d(F_3))$};

    \draw[->, thick] (0,0) -- (1,-4) 
        node[below right] {$(r(F_4), d(F_4))$};
\end{tikzpicture}
\caption{The cyclically oriented vectors $(r(F_i), d(F_i))$ for the very strong exceptional collection $\bE$ on $Z=\bP^1 \times \bP^1$ considered in Example \ref{example_cyclic}.}
\label{fig1}
\end{center}
\end{figure}
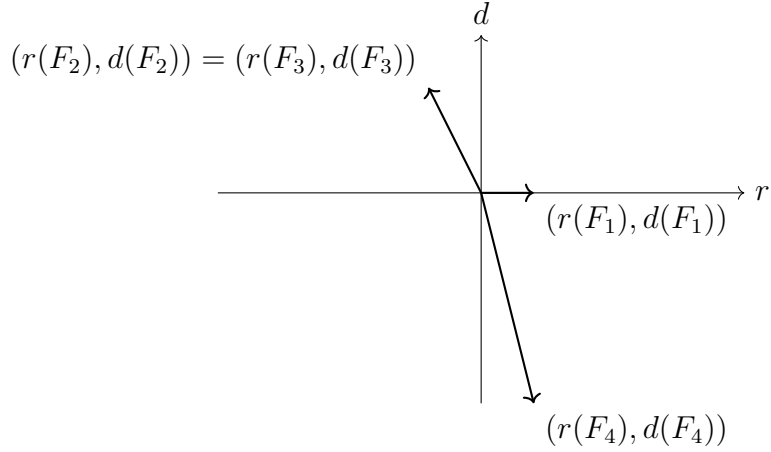

We first define tilting operations on good very strong exceptional collections.

\begin{definition} \label{def_good}
A very strong exceptional collection $\bE=(E_1,\dots,E_j, \dots, E_n)$ on a del Pezzo surface $Z$ is called \emph{good}
for the object $E_j$
if the dual exceptional collection $\bF = (F_n, \ldots,F_j,\ldots F_1)$ has the following property: 
\[k>j \implies \Hom^p(F_k,F_j)=0\text{ unless }p=1,\]
\[k<j  \implies \Hom^p(F_j,F_k)=0\text{ unless }p=1\,.\]
\end{definition}

We denote by $\langle -,-\rangle$ the pullback by 
$\psi=(r,d): K(Z) \rightarrow \bZ^2$ of 
$\langle-,-\rangle=\det(-,-)$ on $\bZ^2$.

\begin{lemma} \label{lem_good}
A very strong exceptional collection $\bE=(E_1,\dots,E_j, \dots, E_n)$ on a del Pezzo surface $Z$ is good
for the object $E_j$
if and only if the dual exceptional collection $\bF = (F_n, \ldots,F_j,\ldots F_1)$ has the following property: 
\[k>j \implies \langle [F_k], [F_j]\rangle \leq 0 \,,\]
\[k<j  \implies \langle [F_k], [F_j] \rangle \geq 0,.\]
\end{lemma}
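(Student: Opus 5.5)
Since $\bF$ is an exceptional collection, the plan is to reduce each condition in Definition \ref{def_good} to a sign condition on an Euler characteristic. For $k>j$ the pair $(F_k,F_j)$ is exceptional (the collection is $(F_n,\dots,F_1)$), so $\Hom^p(F_j,F_k)=0$ for all $p$. Lemma \ref{lem_sym}(i) then gives
\[\chi(F_k,F_j)=\chi(F_k,F_j)-\chi(F_j,F_k)=r(F_k)d(F_j)-r(F_j)d(F_k)=\langle [F_k],[F_j]\rangle .\]
For $k<j$ the same argument applied to the exceptional pair $(F_j,F_k)$ gives $\chi(F_j,F_k)=\langle [F_j],[F_k]\rangle=-\langle [F_k],[F_j]\rangle$. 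In both cases, the condition of Lemma \ref{lem_good} therefore says exactly that the Euler characteristic of the relevant ordered pair (the one appearing in Definition \ref{def_good}) is $\leq 0$.

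Next I would use the shape of $\bF$ from Proposition \ref{prop_cyclic}(i): up to a global shift and orthogonal reordering, each $F_i$ is $F_i'[s_i]$ with $F_i'$ a sheaf and $s_i\in\{0,1,2\}$, non-increasing as $i$ decreases. Exceptional objects are shifts of sheaves, so for an exceptional pair $(A[a],B[b])$ of shifted sheaves Lemma \ref{lem_ex_0} applies to $(A,B)$: $\Ext^2(A,B)=0$ and at most one of $\Hom(A,B)$, $\Ext^1(A,B)$ is nonzero. Hence $\Hom^p(A[a],B[b])$ is concentrated in the single degree $p=b-a$ or $p=b-a+1$, determined by the slope comparison in Lemma \ref{lem_ex}. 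It follows that $\chi(A[a],B[b])$ has sign $(-1)^{p}$ when it is nonzero.

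With this, both implications go as follows. If $\Hom^\bullet$ is concentrated in degree $1$, then $\chi\leq 0$, which gives one direction. Conversely, suppose $\chi\leq 0$. If $\chi=0$, all $\Hom^p$ vanish (they are concentrated in one degree), so the condition of Definition \ref{def_good} holds trivially. If $\chi<0$, the unique nonzero degree $p$ is odd. Since $p\in\{b-a,b-a+1\}$ and the shifts lie in $\{0,1,2\}$ with the ordering of Proposition \ref{prop_cyclic}(i), I expect $p\in\{-1,0,1,2,3\}$, and one must rule out $p=-1$ and $p=3$. For $k>j$ we have $s_k\geq s_j$, so $b-a=s_j-s_k\leq 0$ and $p\leq 1$. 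For $k<j$ the pair is $(F_j,F_k)$, again with $b-a=s_k-s_j\leq 0$. So the only concern is $p=-1$.

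That last step is the main obstacle: excluding $p=-1$, that is, $\Hom$ or $\Ext^1$ landing in negative degree. This happens only when $s_k-s_j=-2$ with $\Ext^0\ne0$, or $s_k-s_j=-1$ with $\Ext^0\neq 0$ (which gives $p=-1$). I would exclude these using the duality with $\bE$: $\Hom^p(F_k,F_j)$ is computed from $\Hom$'s among the $E_i$ via \cite[Lemma 2.5]{BS}. Alternatively, one can use that $\bF$ is the dual of a very strong (hence cyclic strong) collection, so $\bF[p]$ lies in the heart of the tilted $t$-structure of \cite[Lemma 8.3]{BS}. In that heart, $\Hom^{<0}$ between objects vanishes, so $p\geq 0$. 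Once $p=-1$ is excluded, odd $p$ forces $p=1$, which completes the equivalence.
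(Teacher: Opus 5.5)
\textbf{There is a genuine gap.} Your reduction is exactly the paper's argument: $\chi(F_k,F_j)=\langle [F_k],[F_j]\rangle$ for $k>j$ (resp.\ $\chi(F_j,F_k)=-\langle [F_k],[F_j]\rangle$ for $k<j$), together with concentration of $\Hom^\bullet$ in a single degree. You are right that one must still check which degree carries the nonzero group. However, your bookkeeping of that degree has a sign error. For sheaves $A,B$ one has $\Hom^p(A[a],B[b])=\Hom(A,B[p+b-a])=\Ext^{p+b-a}(A,B)$, so the possible degrees are $p\in\{a-b,\,a-b+1\}$, not $\{b-a,\,b-a+1\}$. With $a=s_k\geq b=s_j$ this gives $p\in\{0,1,2,3\}$.

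Two consequences follow. First, negative degrees never occur, so the case $p=-1$ you single out as the main obstacle is vacuous, and the heart argument giving $p\geq 0$ adds nothing. Second, your claim $p\leq 1$ is false: in Example \ref{example_cyclic}, $\Hom^2(F_4,F_1)=H^0(\cO(1,1))\neq 0$. The case that genuinely has to be excluded is $p=3$. It would occur when $s_k=s_j+2$ (that is, $F_k$ in the $[2]$-block and $F_j$ in the $[0]$-block of Proposition \ref{prop_cyclic}(i)) and $\Ext^1(F_k',F_j')\neq 0$. Then $\chi<0$ would hold with $\Hom^\bullet$ concentrated in degree $3$, and the ``if'' direction of the lemma would fail. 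Your proposal never addresses this case; it is dismissed only through the sign error.

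The missing step is a slope comparison, which Proposition \ref{prop_cyclic} provides:
in the $[2]$-block, slopes are non-decreasing in the index, so $\mu(F_k')\leq\mu(F_n')$;
in the $[0]$-block, $\mu(F_j')\geq\mu(F_1')$;
and $\mu(F_n')=\mu(E_n)-K_Z^2\leq\mu(E_1)=\mu(F_1')$ by Proposition \ref{prop_bundles}(iii).
Hence $\mu(F_k')\leq\mu(F_j')$, so $\Ext^1(F_k',F_j')=0$ by Lemma \ref{lem_ex}(ii), which rules out degree $3$. The case $k<j$ is identical with the pair $(F_j,F_k)$. With this in place, $\chi\leq 0$ forces either total vanishing or concentration in degree $1$, as you wanted. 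If you prefer a $t$-structure argument, it has to bound degrees from above (rule out degree $3$), not from below.
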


\begin{proof}
For $k>j$, the pair $(F_k, F_j)$ is exceptional, and so $\chi(F_j,F_k)=0$, and $\chi(F_k,F_j)=\langle [F_k], [F_j]\rangle$ by Lemma \ref{lem_ex_0}. Moreover, by Lemma \ref{lem_ex_0}, at most one of the spaces $\Hom^p(F_k,F_j)$ is non-zero, and so the result follows. For $k <j$, the result follows in a similar way.
\end{proof} 

Recall that for any objects $E$, $F$ in $D(Z)$, the left and right mutations $L_F E$ and $R_F E$ are the unique objects in $D(Z)$ up to isomorphism for which there exist exact triangles
\begin{equation}\label{eq_left_mutation}
\Hom^\bullet(F,E) \otimes F \xrightarrow{ev} E \rightarrow L_F E\,, \end{equation}
and
\[ R_F E \rightarrow E \xrightarrow{coev} \Hom^\bullet(E,F)^\star \otimes F \,. \]

\begin{definition} \label{def_tilt_ex}
Let $Z$ be a del Pezzo surface, and $\bE=(E_1,\dots,E_j, \dots, E_n)$ be a very strong exceptional collection on $Z$ which is good for $E_j$. Then, the left 
\emph{tilt} of $\bE$ from $j$ to $1$ is
\begin{equation} \label{eq_left_tilt}
\mu_{j,1}^+(\bE)
=(L_{E_1} \dots L_{E_{j-1}} E_j[-1], E_1, \dots, E_{j-1}, E_{j+1}, \dots, E_n)\end{equation}
and the right \emph{tilt} of $\bE$ from $1$ to $j$ is
\begin{equation} \label{eq_right_tilt}
\mu_{1,j}^-(\bE)
= (E_2, \dots, E_{j}, R_{E_j} \dots R_{E_2} E_1[1], E_{j+1}, \dots, E_n)
\end{equation}
respectively -- see \cite[Remark 7.7]{BS}.
By \cite[Theorem 7.4]{BS}, both $\mu_{j,1}^+(\bE)$
and $\mu_{j,1}^-(\bE)$ are very strong exceptional collections.
\end{definition}

By construction, $\mu_{j,1}^+(\bE)=(E_1', \dots, E_n')$ is good for $E_j'=E_{j-1}$, and 
\[ \mu_{1,j}^- \circ \mu_{j,1}^+(\bE) = \bE \,.\]
Similarly, $\mu_{1,j}^-(\bE)=(E_1'', \dots, E_n'')$ is good for $E_j''= R_{E_j} \dots R_{E_2} E_1[1]$, and 
\[ \mu_{j,1}^+ \circ \mu_{1,j}^-(\bE) = \bE \,.\]

The following result determines the 
corresponding dual exceptional collections.

\begin{lemma} \label{lem_dual}
Let $\bE=(E_1,\dots,E_j, \dots, E_n)$ be a very strong exceptional collection on a del Pezzo surface $Z$ that is good for $E_j$.
Let $\bF=(F_n, \dots, F_j, \dots, F_1)$ be the dual exceptional collection of $\bE$.
Then, the dual exceptional collection of $\mu_{j,1}^+(\bE)$ is 
\[ (F_n, \dots, F_{j+1}, L_{F_j[-1]}F_{j-1}, \dots, L_{F_j[-1]} F_1, F_j[-1])\,,\]
and the dual exceptional collection of 
$\mu_{1,j}^-(\bE)$
is 
\[ (F_n, \dots, F_{j+1}, F_1[1], R_{F_1[1]} F_j, \dots, R_{F_1[1]} F_2) \,.\]
\end{lemma}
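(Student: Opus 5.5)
We verify the defining property of the dual collection directly: a full exceptional collection $(G_n,\dots,G_1)$ is dual to $(E'_1,\dots,E'_n)$ exactly when $\Hom^k(E'_i,G_\ell)=\bC$ for $i=\ell$, $k=0$, and vanishes otherwise. By the uniqueness in \cite[Lemma 2.5]{BS}, it suffices to show that the proposed sequence is an exceptional collection and satisfies these orthogonality relations. We treat $\mu_{j,1}^+$ in detail; $\mu_{1,j}^-$ follows by the symmetric argument, with right mutations and the adjoint triangles.

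Write $\mu_{j,1}^+(\bE)=(E_1',\dots,E_n')$, so $E_1'=L_{E_1}\cdots L_{E_{j-1}}E_j[-1]$, $E_i'=E_{i-1}$ for $2\le i\le j$, and $E_i'=E_i$ for $i>j$. The candidate dual is $G_i=F_i$ for $i>j$, $G_1=F_j[-1]$, and $G_i=L_{F_j[-1]}F_{i-1}$ for $2\le i\le j$.

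\emph{Step 1: exceptionality.} The sequence is an exceptional collection because it is obtained from $\bF$ by a sequence of mutations. Moving $F_j$ to the right end past $F_{j-1},\dots,F_1$ by left mutations gives $(F_n,\dots,F_{j+1},L_{F_j}F_{j-1},\dots,L_{F_j}F_1,F_j)$. Since $L_{F_j[-1]}=L_{F_j}$ up to shift, this reduces to a shift bookkeeping. Here the goodness hypothesis of Lemma \ref{lem_good} is used: it says $\Hom^\bullet(F_j,F_k)$ is concentrated in degree $1$ for $k<j$. Hence each triangle \eqref{eq_left_mutation} is a genuine cone with predictable shift, and the labelled shifts match.

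\emph{Step 2: orthogonality.}
\begin{itemize}
\item For $i>j$, all pairings with $E_i'=E_i$ hold because $E_i$ is untouched and $F_i$ is unchanged.
\item For $2\le i\le j$ (so $E_i'=E_{i-1}$), $\Hom^\bullet(E_{i-1},F_j)=0$ since $i-1\ne j$. Applying $\Hom^\bullet(E_{i-1},-)$ to the triangle defining $L_{F_j[-1]}F_{\ell}$ therefore gives $\Hom^\bullet(E_{i-1},L_{F_j[-1]}F_\ell)\cong\Hom^\bullet(E_{i-1},F_\ell)$, which is $\delta_{i-1,\ell}\bC$ in degree $0$. This handles $G_2,\dots,G_j$, and $\Hom^\bullet(E_{i-1},G_1)=\Hom^\bullet(E_{i-1},F_j[-1])=0$.
\item For $E_1'$, one computes $\Hom^\bullet(E_1',F_j[-1])=\Hom^\bullet(L_{E_1}\cdots L_{E_{j-1}}E_j,F_j)$. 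Since $F_j\in\langle E_1,\dots,E_{j-1}\rangle^{\perp}$ on the appropriate side (it is right-orthogonal to $E_i$ for $i\ne j$), the left mutations do not change this Hom, which therefore equals $\Hom^\bullet(E_j,F_j)=\bC$. Similarly, $\Hom^\bullet(E_1',G_\ell)$ for $\ell\ge2$ reduces to vanishing: $E_1'$ lies in $\langle E_1,\dots,E_j\rangle$, while $G_\ell$ for $2\le\ell\le j$ is orthogonal to the relevant pieces by construction.
\end{itemize}

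The main obstacle is the shift bookkeeping in Step 1, together with the last bullet: checking that $\Hom^\bullet(E_1',L_{F_j[-1]}F_\ell)=0$ with the correct degrees. A cleaner route to avoid errors is to use the general fact (\cite[\S2]{BS}, Bondal) that the dual of the left mutation $L_{E_{i}}E_{i+1}$ of an exceptional collection is the corresponding right/left mutation of the dual collection at the swapped position. Composing the $j-1$ elementary mutations and the final shift $[-1]$ then yields the formula directly. Goodness is needed only to guarantee that the result is still the stated objects, so that no additional orthogonal-reordering ambiguity arises.
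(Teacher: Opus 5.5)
There is a genuine gap. Your Step 1 and the first two bullets of Step 2 are fine, but the last bullet, which you yourself flag as the main obstacle, does not go through as written. To get $\Hom^\bullet(E_1',L_{F_j}F_{\ell-1})=0$ for $2\le\ell\le j$, you appeal to $E_1'\in\langle E_1,\dots,E_j\rangle$ and to $G_\ell$ being ``orthogonal to the relevant pieces''. But $G_\ell$ is not right-orthogonal to $E_1,\dots,E_j$:
\begin{itemize}
\item your own second bullet gives $\Hom^\bullet(E_{\ell-1},G_\ell)\cong\bC$;
\item applying $\Hom^\bullet(E_j,-)$ to the triangle defining $G_\ell$ gives $\Hom^\bullet(E_j,G_\ell)\cong\Hom^\bullet(F_j,F_{\ell-1})[1]$, which is nonzero as soon as $\chi(F_j,F_{\ell-1})\neq 0$.
\end{itemize}
So the vanishing is a cancellation that cannot be read off from membership of $E_1'$ in $\langle E_1,\dots,E_j\rangle$. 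The missing idea is the identification $E_1'=L_{E_1}\cdots L_{E_{j-1}}E_j[-1]\cong F_j[-1]$. This is exactly \cite[Lemma 2.5]{BS}, namely $F_i=L_{E_1}\cdots L_{E_{i-1}}E_i$. With it the bullet takes one line: $\Hom^\bullet(F_j,L_{F_j}X)=0$ for every $X$, by the defining triangle of $L_{F_j}$ and exceptionality of $F_j$. It also makes your computation of $\Hom^\bullet(E_1',G_1)$ trivial.

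That same formula is all the paper uses. Applied directly to $\mu_{j,1}^+(\bE)=(E_1',\dots,E_n')$, it gives the lemma with no orthogonality checks at all:
\begin{itemize}
\item $F_1'=E_1'=F_j[-1]$;
\item for $2\le i\le j$, $F_i'=L_{F_j[-1]}L_{E_1}\cdots L_{E_{i-2}}E_{i-1}=L_{F_j[-1]}F_{i-1}$;
\item for $i>j$, $L_{F_j[-1]}L_{E_1}\cdots L_{E_{j-1}}=L_{E_1}\cdots L_{E_j}$, since both are left mutation through $\langle F_j,E_1,\dots,E_{j-1}\rangle=\langle E_1,\dots,E_j\rangle$; hence $F_i'=F_i$.
\end{itemize}
The right tilt is the same computation, using $R_{E_1}L_{E_1}X\cong X$ for $X\in\langle E_2,\dots,E_j\rangle$ and $L_{E_2}\cdots L_{E_j}R_{E_j}\cdots R_{E_2}E_1\cong E_1$. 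Your fallback (the dual of an elementary mutation is the corresponding mutation of the dual) uses the same mechanism and would also work, but you only assert it.

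Two smaller corrections. First, $L_{F[-1]}=L_F$ exactly, so there is no shift bookkeeping. Second, goodness plays no role in this lemma: the formula holds verbatim for $(L_{E_1}\cdots L_{E_{j-1}}E_j[-1],E_1,\dots,E_{j-1},E_{j+1},\dots,E_n)$ built from any full exceptional collection. Goodness is needed for the tilt to be very strong \cite[Theorem 7.4]{BS} and to produce the $[\cdot]_+$ in Lemma \ref{lem_good_tilt}. It is not needed for your Step 1, nor to remove any reordering ambiguity.
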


\begin{proof}
This follows by direct calculation using \cite[Lemma 2.5]{BS} which states that the dual exceptional collection $(F_n, \dots, F_1)$ of an exceptional collection $(E_1, \dots, E_n)$ is given by $F_j=L_{E_1} \dots L_{E_{j-1}}E_j$ for all $1 \leq j \leq n$.
\end{proof}

To define tilting operations on geometric helices, we will need to find good threads in helices. To do this, we will use the following result. 

\begin{lemma} \label{lem_rotation}
Let $\bE=(E_1, \dots, E_n)$ be a thread with dual collection $\bF=(F_n, \dots, F_1)$ in a helix $\bH=(E_i)_{i\in \bZ}$ on a del Pezzo surface $Z$. 
Consider the neighboring thread $(E_0, \dots, E_{n-1})$ and let $\bF'=(F_{n-1}', \dots, F_0')$
be its dual collection.
Then the classes in $K(Z)$ of the objects of  $\bF'$ 
are given by
$[F_i']= T_{[F_n]}([F_i])$ for all  $1 \leq i \leq n-1$   and  $[F_0']=T_{[F_n]}([F_n])=[F_n]$,
where, for any $\alpha \in K(Z)$,  
the linear map $T_{\alpha}: K(Z) \rightarrow K(Z)$ is defined by
\begin{equation} \label{eq_T_alpha}
    T_\alpha(\beta)=\beta + \langle \beta, \alpha\rangle \alpha \,.
\end{equation}
\end{lemma}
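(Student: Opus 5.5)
We compute the dual collection of the thread $(E_0,\dots,E_{n-1})$ directly from the characterization of duals in \cite[Lemma 2.5]{BS}: the dual of $(G_1,\dots,G_n)$ is $(H_n,\dots,H_1)$ with $H_j=L_{G_1}\cdots L_{G_{j-1}}G_j$. For the thread $(E_0,E_1,\dots,E_{n-1})$ this gives
\[
F_0'=E_0,\qquad F_i'=L_{E_0}L_{E_1}\cdots L_{E_{i-1}}E_i=L_{E_0}F_i \quad (1\le i\le n-1),
\]
where the second equality uses $F_i=L_{E_1}\cdots L_{E_{i-1}}E_i$. On classes, the triangle \eqref{eq_left_mutation} gives
\[
[L_{E_0}X]=[X]-\chi(E_0,X)[E_0].
\]

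The next step is to identify $E_0$ with $F_n$ up to shift. Since $E_0=E_n\otimes\omega_Z$ and $F_n=E_n\otimes\omega_Z[2]$ (by \cite[Corollary 2.10, Lemma 2.5]{BS}, as recalled before Proposition \ref{prop_cyclic}; this holds for any full exceptional collection by Serre duality), we get $E_0=F_n[-2]$. Hence $[E_0]=[F_n]$, which is the claim $[F_0']=[F_n]=T_{[F_n]}([F_n])$, since $\langle [F_n],[F_n]\rangle=0$.

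For $1\le i\le n-1$ this gives $[F_i']=[F_i]-\chi(F_n,F_i)[F_n]$. The pair $(F_n,F_i)$ is exceptional, because $F_n$ comes first in the dual collection $(F_n,\dots,F_1)$. So $\chi(F_i,F_n)=0$, and Lemma \ref{lem_sym}(i) gives
\[
\chi(F_n,F_i)=r(F_n)d(F_i)-r(F_i)d(F_n)=\langle [F_n],[F_i]\rangle=-\langle [F_i],[F_n]\rangle.
\]
Therefore $[F_i']=[F_i]+\langle [F_i],[F_n]\rangle[F_n]=T_{[F_n]}([F_i])$, as required.

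The main point needing care is the sign and order conventions. We must check that $L_{E_0}$ applied to $F_i$ really equals $L_{E_0}L_{E_1}\cdots L_{E_{i-1}}E_i$, which is immediate from the formula. We must also check that the sign convention of $\langle-,-\rangle$ matches \eqref{eq_T_alpha}. The shift by $[-2]$ does not affect classes, since it is even. It is also worth confirming that the triangle $\Hom^\bullet(F,E)\otimes F\to E\to L_FE$ gives the class formula with $\chi(F,E)$, which it does.
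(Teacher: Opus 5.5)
Your proof is correct and follows the same route as the paper. The paper cites \cite[Lemma 3.4]{BS} for $\bF'=(L_{F_n}F_{n-1},\dots,L_{F_n}F_1,F_n[-2])$, whereas you re-derive this from \cite[Lemma 2.5]{BS} together with $E_0=E_n\otimes\omega_Z=F_n[-2]$ (note $L_{F_n[-2]}=L_{F_n}$); after that, the class computation $[F_i']=[F_i]-\chi(F_n,F_i)[F_n]$ and the sign check via $\chi(F_i,F_n)=0$ and Lemma \ref{lem_sym}(i) are the same as in the paper.
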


\begin{proof}
By \cite[Lemma 3.4]{BS}, we have \begin{equation*}
   \bF' = (L_{F_n} F_{n-1}, \dots, L_{F_n} F_1, F_n[-2]) \,,
\end{equation*}
and so $[F_0']=[F_n]$, and for all $1 \leq i \leq n-1$, 
\[[F_i']=[F_i] - \chi(F_n, F_i) [F_n] \,.\]
For $n>i$, the pair $(F_n, F_i)$ is exceptional, so $\chi(F_i, F_n)=0$, and so 
\[ \chi(F_n, F_i)= \langle [F_n], [F_i]\rangle=- \langle [F_i], [F_n] \rangle\]
by Lemma \ref{lem_ex_0}. Hence, we obtain 
$[F_i']=[F_i]+\langle [F_i], [F_n] \rangle [F_n]$, and this 
completes the proof.
\end{proof}

The proof of the following proposition is a reformulation of the argument given in \cite[\S 8]{BS} -- see also \cite[\S 5.1]{Her}.

\begin{prop} \label{prop_good}
Let $\bH= (E_i)_{i\in \bZ}$ 
be a geometric helix on a del Pezzo surface $Z$ and choose an element $E_j$ of $\bH$. 
Then there is a thread $\bE= (E_{i+1},\ldots,E_j,\ldots, E_{i+n})$ of $\bH$ which is good for $E_j$.
\end{prop}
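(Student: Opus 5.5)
The plan is to translate goodness into a condition on the cyclic configuration of the vectors $\psi([F_i])=(r(F_i),d(F_i))$, using Lemma \ref{lem_good}, and then to pick the thread by rotation, using Lemma \ref{lem_rotation} to track how the dual classes change.

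First fix any thread $\bE=(E_{i+1},\dots,E_{i+n})$ containing $E_j$, with dual $\bF$. By Proposition \ref{prop_cyclic}(ii), the vectors $v_k:=\psi([F_k])$ are cyclically ordered counterclockwise, with all arguments in a window of length $2\pi$. Lemma \ref{lem_good} says that goodness for $E_j$ means the following. For $k>j$ we need $\langle v_k,v_j\rangle\le 0$, so $v_k$ lies weakly clockwise of $v_j$ within angle $\pi$. For $k<j$ we need $\langle v_k,v_j\rangle\ge 0$, so $v_k$ lies weakly counterclockwise of $v_j$ within angle $\pi$. Because the vectors are cyclically ordered, this is a statement about the arguments: it holds exactly when all $\theta_k$ with $k\ne j$ lie within angle $\pi$ of $\theta_j$, on the correct side.

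Second, understand the rotation step. Passing from the thread $(E_{i+1},\dots,E_{i+n})$ to $(E_i,\dots,E_{i+n-1})$ replaces $F_{i+n}$ by an object of the same class, now placed in the first position. Every other $[F_k]$ is replaced by $T_{[F_{i+n}]}([F_k])=[F_k]+\langle [F_k],[F_{i+n}]\rangle [F_{i+n}]$. The vector $\psi$ of the moved element is unchanged, but it now sits at the other end of the cyclic order. The other vectors are sheared by the transvection $v\mapsto v+\langle v,v_{i+n}\rangle v_{i+n}$, which fixes the line spanned by $v_{i+n}$ and preserves the orientation $\langle-,-\rangle$. In terms of the determinants relevant to $E_j$, where $j$ is not the index being moved, we get
\[\langle v_k',v_j'\rangle=\langle v_k,v_j\rangle+\langle v_k,v_{i+n}\rangle\langle v_{i+n},v_j\rangle+\langle v_{i+n},v_j\rangle\langle v_k,v_{i+n}\rangle\cdot(\ldots),\]
a transvection-conjugated pairing. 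I expect it simply to equal $\langle v_k,v_j\rangle$, since $T$ preserves $\det$. The only pairings that change sign class are those involving the moved element, which switches from the ``$k>j$'' side to the ``$k<j$'' side. So rotating by one step moves a single vector from one constraint set to the other while keeping all pairings with $v_j$ unchanged.

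Third, choose the thread. With these facts, the condition reduces to the following. Among the elements $E_k$ of the threads containing $E_j$, the signs of $\langle \psi[F_k],\psi[F_j]\rangle$ are rotation invariant. We must choose the cut point $i$ so that all $k$ with $\langle v_k,v_j\rangle>0$ come before $j$, and all $k$ with negative pairing come after $j$. Zero pairings are harmless; where necessary they can be handled by orthogonal reordering. Concretely, I would take the thread in which $E_j$ is placed so that the vectors of $\bF$ are cut at the ray opposite to $v_j$, that is, so that $v_j$ is ``in the middle''. Starting from $E_j$ at an end, I would rotate step by step and check that each step moves across $v_j$ exactly one vector of the appropriate sign, using the cyclic ordering of Proposition \ref{prop_cyclic}(ii) for every thread.

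The main obstacle is the bookkeeping of the arguments under the transvections. One has to show that the cyclic picture really is the stated $\pi$-window condition, and that a single cut point satisfying it exists; this is where the inequality $\theta_n\le\theta_1+2\pi$ and the shift pattern of Proposition \ref{prop_cyclic}(i) must be used. I would first try to verify this directly in the example of Example \ref{example_cyclic} to fix the signs.
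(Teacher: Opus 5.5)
Your proposal is correct and follows essentially the paper's argument. It uses the cyclic ordering of the vectors $\psi([F_k])$ from Proposition~\ref{prop_cyclic}(ii). It uses Lemma~\ref{lem_rotation}: one rotation step applies the single transvection $T_{[F_{i+n}]}\in SL(2,\bZ)$ to all vectors and relabels them cyclically, so every pairing with $\psi([F_j])$ is unchanged. It then chooses the thread cut at the ray $-\psi([F_j])$ and concludes with Lemma~\ref{lem_good}; the paper does the same, only more tersely.

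Two small corrections. With $\langle-,-\rangle=\det$, the condition for $k>j$ means $\psi([F_k])$ lies weakly \emph{counterclockwise} of $\psi([F_j])$ within angle $\pi$, and clockwise for $k<j$; this is what matches the counterclockwise cyclic order. Also, zero pairings satisfy both inequalities, so no orthogonal reordering is ever needed. That matters, because the statement asks for a thread of $\bH$ itself.
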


\begin{proof}
Start with any thread $\bE=(E_{i+1}, \dots, E_{i+n})$ of $\bH$ containing $E_j$. Let $\bF=(F_{i+n}, \dots, F_{i+1})$ be the dual exceptional collection. By Proposition \ref{prop_cyclic}(ii), the vectors $(r(F_{i+k}), d(F_{i+k}))_{0 \leq k \leq n}$ are cyclically ordered in $\bR^2$.
By Lemma \ref{lem_rotation}, rotating $\bE$ acts on the set of vectors $(r(F_{i+k}), d(F_{i+k}))_{1 \leq k \leq n}$ by an element of $SL(2,\bZ)$ and by cyclic relabeling. 
Therefore, up to replacing $\bE$ by a different thread in $\bH$, one can assume that $k >j$ implies 
$\langle F_k, F_j\rangle \leq 0$ and that $k <j$ implies 
$\langle F_k, F_j\rangle \geq 0$. Then $\bE$ is good for $E_j$ by Lemma \ref{lem_ex}.
\end{proof}

We can finally define tilting operations on geometric helices.

\begin{definition} \label{def_tilitng_helices}
Let $\bH=(E_i)_{i \in \bZ}$ be a geometric helix on a del Pezzo surface $Z$. For every $j\in \bZ$, there exists a thread $\bE$ of $\bH$ that is good for $E_j$ by Proposition 
\ref{prop_good}. Then, we define the left and right \emph{tilts} $\mu_{j,1}^+(\bH)$
and $\mu_{1,j}^-(\bH)$
of $\bH$ at $j$  as the helices generated by the very strong exceptional collection $\mu_{j,1}^+(\bE)$ and $\mu_{1,j}^-(\bE)$ respectively  defined as in 
\eqref{eq_left_tilt}-\eqref{eq_right_tilt}.
By \cite[Theorem 7.4]{BS}, the helices $\mu_{j,1}^+(\bH)$ and $\mu_{1,j}^-(\bH)$ are geometric. 
Moreover, for every $j \in \bZ$, the tilted helices $\mu_{j,1}^+(\bH)$
and $\mu_{1,j}^-(\bH)$
are uniquely determined up to orthogonal reordering of objects of $\bH$, and only depend on $j$ modulo $n$.
\end{definition}

\begin{lemma} \label{lem_shift}
    Let $\bH=(E_i)_{i \in \bZ}$ be a geometric helix on a del Pezzo surface $Z$, 
and let $p \in \bZ$ be as in Lemma
\ref{lem_bundles} the unique integer such that all objects $E_i[p]$ lie in $\Coh(Z)$. Then, for every $j \in \bZ$, and for every object $E_i'$ of the tilted helices  $\mu_{j,1}^+(\bH)$
and $\mu_{1,j}^-(\bH)$, the object $E_i'[p]$ lies in $\Coh(Z)$.
\end{lemma}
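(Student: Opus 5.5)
Set $p=0$ by shifting, so every $E_i$ in $\bH$ is a vector bundle (Lemma \ref{lem_bundles}). It suffices to show that the new object produced by the tilt is a sheaf. All other objects of the tilted thread are objects of $\bH$, and every object of the tilted helix is a twist of a thread object by a power of $\omega_Z^{-1}$, which preserves $\Coh(Z)$. By the symmetry $\mu_{1,j}^-\circ\mu_{j,1}^+=\mathrm{id}$, we treat the left tilt in detail; the right tilt is dual.

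Fix a thread $\bE=(E_1,\dots,E_n)$ good for $E_j$, with dual $\bF=(F_n,\dots,F_1)$. The new object is $E'_1:=L_{E_1}\cdots L_{E_{j-1}}E_j[-1]$. By \cite[Lemma 2.5]{BS}, applied to the subcollection $(E_1,\dots,E_j)$, we have $F_j=L_{E_1}\cdots L_{E_{j-1}}E_j$, so $E'_1=F_j[-1]$. Hence the statement reduces to showing that $F_j$ is concentrated in degree $1$, that is, $F_j=F'_j[1]$ for a sheaf $F'_j$.

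By Proposition \ref{prop_cyclic}(i), $F_j$ is a sheaf shifted by $0$, $1$ or $2$, and the shift is $0$, $1$ or $2$ according as $j<c$, $c\le j<b$ or $j\ge b$. By Proposition \ref{prop_cyclic}(ii) and Lemma \ref{lem_sheaf}, this corresponds to $(r(F_j),d(F_j))$ lying in the left or right half-plane, with the boundary handled by the slope ordering. Goodness (Lemma \ref{lem_good}) says that $\langle[F_k],[F_j]\rangle\le0$ for $k>j$ and $\ge0$ for $k<j$. Together with cyclic ordering, this forces $\psi(F_j)$ to be the ``middle'' direction: the vectors $\psi(F_1),\dots,\psi(F_{j-1})$ lie weakly on one side of the line through $\psi(F_j)$, and $\psi(F_{j+1}),\dots,\psi(F_n)$ lie on the other.

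The main obstacle is turning this into the conclusion $c\le j\le b-1$. We argue by contradiction. Suppose $j<c$, so $F_j$ is a sheaf $F'_j$ in the left half-plane. Then $F_n=E_n\otimes\omega_Z[2]$, with $r(F_n)>0$ and shift $2$, satisfies $\Hom^p(F_n,F_j)=\Hom^{p-2}(E_n\otimes\omega_Z,F'_j)$. Goodness requires this to be nonzero only for $p=1$, that is, only $\Hom^{-1}$ between two sheaves, which vanishes. So all these Homs vanish and $\chi(F_n,F_j)=0$, which means $\mu(F'_j)=\mu(E_n\otimes \omega_Z)$ by Lemma \ref{lem_ex}(iii). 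The same argument applies to every $F_k$ with $k\ge b$. Together with $F_1=E_1$ being a sheaf for $k<j$ (where goodness demands that $\Hom^p(F_j,F_1)$ be concentrated in $p=1$, hence $\mu(F'_j)>\mu(E_1)$ or the two are equal), and using $\mu(E_n)-K_Z^2\le\mu(E_1)$, we get a contradiction with the slope inequalities of Proposition \ref{prop_cyclic}(i), unless $F_j$ is orthogonal to the relevant objects. In that degenerate case we would reorder orthogonally, which changes nothing up to the allowed ambiguity, and replace $j$. The case $j\ge b$ is symmetric, using $F_1=E_1$ with shift $0$ against $F_j$ with shift $2$.

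Hence $F_j=F'_j[1]$, so $E'_1=F'_j$ is a sheaf, and likewise for the right tilt, where $R_{E_j}\cdots R_{E_2}E_1[1]$ is identified with an object of the dual collection of the reversed data via Lemma \ref{lem_dual}.
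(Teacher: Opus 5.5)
Your reduction is correct. With $p=0$, the only new object of $\mu_{j,1}^+(\bE)$ is $L_{E_1}\cdots L_{E_{j-1}}E_j[-1]=F_j[-1]$, so what you need is that $F_j$ is a sheaf placed in degree $1$. The argument you give for this does not close.

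\textbf{The inequalities give equality, not a contradiction.} For $j<c$ you collect three facts:
\begin{itemize}
\item $k=n$ gives $\mu(F_j')=\mu(E_n)-K_Z^2$;
\item $k=1$ (or $F_j=E_1$ when $j=1$) gives $\mu(F_j')\geq\mu(E_1)$;
\item very strongness gives $\mu(E_n)-K_Z^2\leq\mu(E_1)$.
\end{itemize}
Together these yield $\mu(E_1)\leq\mu(F_j')=\mu(E_n)-K_Z^2\leq\mu(E_1)$, which are equalities. So your ``degenerate case'' is not an exception. It is the only situation these inequalities ever reach, and the same happens for $j\geq b$. It is exactly what has to be excluded.

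\textbf{The dismissal of that case is not valid.} The tilt is taken at the given object $E_j$, so you cannot ``replace $j$''. An orthogonal reordering of the thread only permutes the dual objects: if $E_j,E_{j+1}$ are orthogonal then $L_{E_{j+1}}E_j=E_j$. It leaves the object dual to $E_j$, and its cohomological degree, unchanged.

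\textbf{The right tilt is not proved.} The identity $\mu_{j,1}^+\circ\mu_{1,j}^-=\mathrm{id}$ only transports ``consists of sheaves'' from $\mu_{1,j}^-(\bE)$ to $\bE$, which is the wrong direction. The closing sentence about Lemma \ref{lem_dual} is not an argument. Also, the lemma \texttt{lem\_sheaf} you cite does not exist.

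\textbf{What the paper does instead.} By \cite[Theorem 7.4]{BS} (see Definition \ref{def_tilitng_helices}), the tilted helix is geometric. By Lemma \ref{lem_bundles}, all its objects therefore become sheaves after one common shift $p'$. Since $\mu_{j,1}^+(\bE)$ still contains the objects $E_1,\dots,E_{j-1},E_{j+1},\dots,E_n$ of $\bH$, this forces $p'=p$. The same argument works for $\mu_{1,j}^-$. This makes the slope analysis unnecessary.

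\textbf{How to repair your direct route.} You would have to genuinely rule out the equality case, for instance as follows.
\begin{itemize}
\item In that case $F_1,\dots,F_j$ and $F_b,\dots,F_n$ all have slope $\mu(E_1)$. Hence $\langle\psi([F_j]),\psi([F_k])\rangle=0$ for these $k$.
\item Goodness gives $\langle\psi([F_j]),\psi([F_k])\rangle\geq 0$ for $j<k<b$.
\item Since $\sum_k r(E_k)\psi([F_k])=\psi(\delta)=0$ with all $r(E_k)>0$ (proof of Lemma \ref{lem_rank}), all these pairings must vanish.
\item Then every $\psi([F_k])$ is parallel to $\psi([F_j])$. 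This contradicts the finiteness of the cokernel of $\psi$, because the $[F_k]$ form a basis of $K(Z)$.
\end{itemize}
The case $j\geq b$ is handled symmetrically.
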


\begin{proof}
Since $\mu_{j,1}^+(\bH)$ is a geometric helix, there exists a unique $p'\in \bZ$ such that all the objects $E_i'$ of $\mu_{j,1}^+(\bH)$ satisfy $E_i'[p']\in \Coh(Z)$ by Lemma \ref{lem_bundles}
and Proposition \ref{prop_bundles}. 
By Definition \ref{def_tilt_ex}, there exist objects of $\mu_{j,1}^+(\bH)$ contained in $\bH$, and so $p'=p$. The result follows similarly for $\mu_{1,j}^-(\bH)$.  
\end{proof}

\section{Classification of geometric helices on del Pezzo surfaces}
\label{sec_final}

In this section, we first explain how to associate a seed of $q$-Painlevé type to every very strong exceptional collection on a del Pezzo surface. We then prove that tilting of geometric helices generated by very strong exceptional collections is compatible with seed mutations. Next, we show that the affine Weyl group acts on the set of very strong exceptional collections via sequences of tilting operations and orthogonal reorderings. Finally, we prove the main result of this paper, Theorem~\ref{thm_main}, referred to as Theorem \ref{thm_main_intro} in the Introduction, which shows that all geometric helices on a del Pezzo surface are related to each other by a sequence of elementary operations.

\subsection{Very strong exceptional collections and seeds of $q$-Painlev\'e type}

In this section, we first explain how to construct seeds from very strong exceptional collections on a del Pezzo surface $Z$. We then show in Theorem \ref{thm_q_painleve} that these seeds are of $q$-Painlev\'e type in the sense reviewed in \S \ref{sec_q_painleve}. 
We will apply the notation and results of $\S \ref{sec_seeds_mutations} $ with $N=K(Z)$ and 
\[ \psi=(r,d): N= K(Z) \rightarrow \bZ^2\,.\]
Note that $\psi$ has indeed a finite cokernel by the following elementary result.

\begin{lemma}
If the del Pezzo surface $Z$ is not isomorphic to $\bP^2$ or $\bP^1 \times \bP^1$, then the map $\psi=(r,d):K(Z) \rightarrow \bZ^2$ is surjective.
 If $Z= \bP^2$ (resp.\! $Z=\bP^1 \times \bP^1$), then the cokernel of $\psi$ is a finite group of order two (resp.\! three).
\end{lemma}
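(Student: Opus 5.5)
The plan is to reduce the statement to computing the image of the anticanonical degree map $d$ on $NS(Z)$. This works because $r$ and $d$ are already "split" by the decomposition \eqref{eq_K_Z}.

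First, use \eqref{eq_K_Z}, $K(Z)\simeq \bZ[\cO_Z]\oplus NS(Z)\oplus \bZ\delta$. On these summands $\psi=(r,d)$ takes the following values:
\[
\psi([\cO_Z])=(1,0), \qquad \psi(0,C,0)=(0,-C\cdot K_Z), \qquad \psi(\delta)=(0,0).
\]
Hence the image of $\psi$ is $\bZ\oplus i_Z\bZ$, where $i_Z\geq 1$ is the positive generator of the subgroup $\{C\cdot K_Z \,|\, C\in NS(Z)\}\subset\bZ$. Since $NS(Z)$ is unimodular, $i_Z$ is exactly the divisibility of $K_Z$ in $NS(Z)$ (the Fano index). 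Therefore $\mathrm{coker}\,\psi\simeq \bZ/i_Z\bZ$, which is always finite since $K_Z\neq 0$. So the whole statement reduces to computing $i_Z$ case by case, using the classification recalled in \S\ref{sec_dP}.

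Second, treat the blow-ups of $\bP^2$ in $m\geq 1$ points. Here $NS(Z)=\bZ H\oplus\bigoplus_{i=1}^m\bZ E_i$ and $K_Z=-3H+\sum_i E_i$. Pairing with $E_1$ gives $E_1\cdot K_Z=-1$, so $i_Z=1$ and $\psi$ is surjective. This is the first assertion of the statement.

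Third, treat the two remaining cases by direct computation. For $Z=\bP^2$ we have $NS(Z)=\bZ H$ and $K_Z=-3H$, so $i_Z=3$. For $Z=\bP^1\times\bP^1$ we have $NS(Z)=\bZ A\oplus\bZ B$ with $A^2=B^2=0$, $A\cdot B=1$, and $K_Z=-2A-2B$, so $i_Z=2$.

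The computation therefore gives cokernels of orders $3$ and $2$ respectively, with $\bP^2$ contributing the factor $3$ and $\bP^1\times\bP^1$ the factor $2$. This is the reverse of the assignment as worded in the statement. I expect no step to be a real obstacle, since everything is immediate from \eqref{eq_K_Z}. The only point needing care is exactly this bookkeeping of which surface gives which order. I would flag it as a likely typo in the statement, with orders $3$ and $2$ for $\bP^2$ and $\bP^1\times\bP^1$. It is harmless for the paper's purposes, because only the finiteness of the cokernel is used in \S\ref{sec_seeds_mutations}.
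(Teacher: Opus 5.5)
Your proof is correct and is essentially the paper's argument. The paper also gets surjectivity from $\psi([\cO_Z])=(1,0)$ together with an exceptional curve $E$, using $\psi([\cO_E])=(0,1)$, which is equivalent to your $E_1\cdot K_Z=-1$; it reads off the two remaining cases from $K_Z=\cO_{\bP^2}(-3)$ and $K_Z=\cO_{\bP^1\times\bP^1}(-2,-2)$. You are also right that the orders in the statement are swapped: that computation gives cokernels of order $3$ for $\bP^2$ and $2$ for $\bP^1\times\bP^1$, consistent with the divisibilities $\ell=3$ and $\ell=2$ the paper itself uses in the proof of Lemma~\ref{lem_2}.
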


\begin{proof}
    Since $\psi([\cO_Z])=(1,0)$, the vector $(1,0)$ always lies in the image of $K(Z)$. If $Z$ is not isomorphic to $\bP^2$ or $\bP^1 \times \bP^1$, then $Z$ contains an exceptional curve $E$. In this case, $\psi([\cO_E])=(0,1)$, so $(0,1)$ is also in the image of $K(Z)$, and $\psi$ is surjective.
   If $Z=\bP^2$ (resp. $\bP^1 \times \bP^1$), we have $K_Z=\cO_{\bP^2}(-3)$ (resp.\! $\cO_{\bP^1 \times \bP^1}(-2,-2)$) and so the result follows.
\end{proof}

\begin{definition} \label{def_seeds_from_ex}
Let $\bE=(E_1, \dots, E_n)$ be a very strong exceptional collection on a del Pezzo surface $Z$. Let $\bF=(F_n, \dots, F_1)$ be the dual exceptional collection. 
The \emph{seed} associated to $\bE$ is the multiset $\{[F_i]\}_{1 \leq i \leq n}$ of classes in $N=K(Z)$:
\[ \bs(\bE):= \{[F_i]\}_{1 \leq i\leq n}\,.\]
The \emph{cyclically ordered} seed $\tilde{\bs}(\bE)$ associated to $\bE$ is the seed $\bs(\bE)$ together with the cyclic ordering induced by the ordering of $\bE$. 
\end{definition}

\begin{remark}
    By Lemma \ref{lem_primitive}, the vectors $\psi([F_i])$ are non-zero and primitive in $\bZ^2$. Moreover, since $\bE$ and $\bF$ are full exceptional collections, the classes $([F_i])_{1 \leq i \leq n}$ form a basis of $K(Z)$. Hence, $\bs(\bE)$ is indeed a seed in the sense of \S\ref{sec_seeds}. By Proposition \ref{prop_cyclic}, the vectors $\psi([F_i])$ are cyclically ordered in $\bR^2$. Therefore,  $\tilde{\bs}(\bE)$ is indeed a cyclically ordered seed in the sense of \eqref{eq_cyclic} in \S\ref{sec_seeds}.
\end{remark}

We defined in \S\ref{sec_intersection} a bilinear form $\chi_{\tilde{\bs}}(-,-)$ on $N$ for every cyclically ordered seed -- see \eqref{eq_bilinear}. On the other hand, as reviewed in \S\ref{sec_dP}, $K(Z)$ carries a natural Euler form $\chi(-,-)$. 
The result below shows that these two bilinear form coincide when $\tilde{\bs}(\bE)$ is the cyclically ordered seed associated with a very strong exceptional collection.

\begin{lemma} \label{lem_isometry1}
Let $\bE=(E_1, \dots, E_n)$ be a very strong exceptional collection on a del Pezzo surfaces $Z$. Then, the bilinear forms $\chi_{\tilde{\bs}(\bE)}(-,-)$ and $\chi(-,-)$ on $N=K(Z)$ coincide:
\[ \chi_{\tilde{\bs}(\bE)}(-,-) = \chi(-,-) \,.\]
\end{lemma}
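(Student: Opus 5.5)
Both forms are bilinear on $N=K(Z)$, and $\{[F_i]\}_{1\leq i\leq n}$ is a basis of $K(Z)$ because $\bF$ is a full exceptional collection. So I will compare them on pairs of basis elements $([F_i],[F_j])$. The cyclic ordering of $\tilde{\bs}(\bE)$ is the one induced by the ordering of $\bE$, so the index $i$ in \eqref{eq_bilinear} is the index of $F_i$. The dual collection is written $\bF=(F_n,\dots,F_1)$, meaning $F_i$ precedes $F_j$ in the exceptional collection exactly when $i>j$. I will check the three cases of \eqref{eq_bilinear} separately.

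\emph{Diagonal case} $i=j$. Each $F_i$ is exceptional, so $\chi(F_i,F_i)=\dim\Hom(F_i,F_i)=1$, which equals $\chi_{\tilde{\bs}(\bE)}([F_i],[F_i])=1$.

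\emph{Case} $i<j$. Here $F_j$ precedes $F_i$, so $(F_j,F_i)$ is an exceptional pair. By the definition of an exceptional collection, $\Hom^k(F_i,F_j)=0$ for all $k$, hence $\chi(F_i,F_j)=0=\chi_{\tilde{\bs}(\bE)}([F_i],[F_j])$.

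\emph{Case} $i>j$. Now $(F_i,F_j)$ is an exceptional pair, so $\chi(F_j,F_i)=0$. Lemma \ref{lem_sym}(i) then gives
\[\chi(F_i,F_j)=\chi(F_i,F_j)-\chi(F_j,F_i)=r(F_i)d(F_j)-r(F_j)d(F_i)=\langle [F_i],[F_j]\rangle,\]
where $\langle-,-\rangle$ is the pullback of $\det$ along $\psi=(r,d)$. This is exactly $\chi_{\tilde{\bs}(\bE)}([F_i],[F_j])$. Equivalently, one can cite \eqref{eq_chi_ex} of Lemma \ref{lem_ex_0}; that lemma is stated for arbitrary objects of $D(Z)$, so the shifts $F_i=F_i'[k]$ from Proposition \ref{prop_cyclic} cause no trouble, since both $\chi$ and $\langle-,-\rangle$ only depend on classes.

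There is no real obstacle; the only point needing care is the index convention, which determines which of $\chi(F_i,F_j)$ and $\chi(F_j,F_i)$ vanishes. The check is that for $i>j$ the object $F_i$ comes first in $(F_n,\dots,F_1)$, so $\Hom^\bullet(F_j,F_i)=0$. This matches \eqref{eq_bilinear}, which is nonzero off the diagonal only for $i>j$. The three cases together prove the equality of the two bilinear forms on a basis, and hence on all of $N=K(Z)$.
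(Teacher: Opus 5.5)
Your proposal is correct and follows essentially the same route as the paper: compare the two forms on the basis $\{[F_i]\}$, using exceptionality on the diagonal and Lemma \ref{lem_ex_0} (equivalently Lemma \ref{lem_sym}(i) with $\chi(F_j,F_i)=0$) for $i>j$. The only difference is that you also spell out the $i<j$ case, which the paper leaves implicit.
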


\begin{proof}
Let $\bF=(F_n, \dots, F_1)$ be the dual exceptional collection.
Since $([F_i])_{1 \leq i \leq n}$  forms a basis of $K(Z)$, it suffices to prove that $\chi_{\bs(\bE)}([F_i], [F_j]) = \chi(F_i, F_j)$ for all $1 \leq i, j \leq n$. If $i=j$, then $\chi(F_i,F_i)=1$ because $F_i$ is an exceptional object. 
If $i>j$, the pair $(F_i, F_j)$ is exceptional and so 
$\chi(F_j, F_i)=0$, and $\chi(F_i, F_j)=\langle [F_i], [F_j]\rangle $ by Lemma \ref{lem_ex_0}. The result then follows by comparison with the definition of 
$\chi_{\tilde{\bs}(\bE)}(-,-)$ given in \eqref{eq_bilinear}.
\end{proof}

As a result, we obtain the following description of the intersection form $(-,-)_{\mathscr{S}(\bE)}$ associated by \S\ref{sec_intersection} to the mutation equivalence class $\mathscr{S}(\bE)$ of the seed $\bs(\bE)$.

\begin{lemma} \label{lem_isometry2}
Let $\bE=(E_1, \dots, E_n)$ be a very strong exceptional collection on a del Pezzo surface $Z$. Then, the intersection form $(-,-)_{\mathscr{S}(\bE)}$ on $K=\mathrm{Ker}(\psi)$ coincides with minus the restriction of the Euler form: for every $\alpha, \beta  \in K=\mathrm{Ker}(\psi)$, we have 
\[ (\alpha, \beta)_{\mathscr{S}(\bE)} = - \chi(\alpha, \beta) \,.\]
\end{lemma}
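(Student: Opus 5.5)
This follows by chaining Lemma \ref{lem_brackets} with Lemma \ref{lem_isometry1}, applied to the same cyclically ordered seed. Fix the very strong exceptional collection $\bE$ and let $\tilde{\bs}(\bE)$ be its cyclically ordered seed (Definition \ref{def_seeds_from_ex}). By the remark following that definition, $\tilde{\bs}(\bE)$ is a genuine cyclically ordered seed in $N=K(Z)$ for $\psi=(r,d)$. Its underlying seed $\bs(\bE)$ lies in the mutation equivalence class $\mathscr{S}(\bE)$ by definition.

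The first step is to apply Lemma \ref{lem_brackets} to the cyclically ordered seed $\tilde{\bs}(\bE)$. It gives, for all $\alpha,\beta\in K=\mathrm{Ker}(\psi)$,
\[ (\alpha,\beta)_{\mathscr{S}(\bE)} = -\chi_{\tilde{\bs}(\bE)}(\alpha,\beta)\,. \]
The second step is to invoke Lemma \ref{lem_isometry1}, which identifies $\chi_{\tilde{\bs}(\bE)}(-,-)$ with the Euler form $\chi(-,-)$ on all of $N=K(Z)$. Restricting that identity to $K\times K$ and substituting into the previous display yields $(\alpha,\beta)_{\mathscr{S}(\bE)}=-\chi(\alpha,\beta)$, which is the claim.

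The only point needing care is the standing hypothesis of \S\ref{sec_seeds_mutations} that $\psi$ has finite cokernel, since the construction of the intersection form via framed toric models relies on it. This is supplied by the elementary lemma preceding Definition \ref{def_seeds_from_ex}. I do not expect a genuine obstacle, as the two cited lemmas do all the work.
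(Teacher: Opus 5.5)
Your proposal is correct and is the paper's own argument: it applies Lemma \ref{lem_brackets} to the cyclically ordered seed $\tilde{\bs}(\bE)$, then substitutes the identity $\chi_{\tilde{\bs}(\bE)}=\chi$ from Lemma \ref{lem_isometry1}. Your extra check that $\psi$ has finite cokernel is a harmless consistency remark; the paper takes it as already established.
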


\begin{proof}
By Lemma \ref{lem_brackets}, we have $(\alpha, \beta)_{\mathscr{S}(\bE)}=-\chi_{\tilde{\bs}(\bE)}(\alpha, \beta)$ for all $\alpha, \beta \in K$. By Lemma \ref{lem_isometry1}, we also have $\chi_{\tilde{\bs}(\bE)}(\alpha, \beta) = \chi(\alpha, \beta)$ for all $\alpha$, $\beta \in K(Z)$, and so the result follows.
\end{proof}

\begin{theorem} \label{thm_q_painleve}
   Let $\bE=(E_1, \dots, E_n)$ be a very strong exceptional collection on a del Pezzo surface $Z$. Then, the mutation equivalence class of seeds $\mathscr{S}(\bE)$ is of $q$-Painlev\'e type.
\end{theorem}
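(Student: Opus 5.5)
By Definition \ref{def_q_painleve}, I need to show that the intersection form $(-,-)_{\mathscr{S}(\bE)}$ on $K=\mathrm{Ker}(\psi)$ is negative semi-definite but not negative definite. Lemma \ref{lem_isometry2} gives this form directly on the fixed seed: for $\alpha,\beta\in K$,
\[ (\alpha,\beta)_{\mathscr{S}(\bE)} = -\chi(\alpha,\beta). \]
The statement therefore reduces to a property of the Euler form on $K(Z)$ alone, namely that $\chi$ restricted to $\mathrm{Ker}(\psi)$ is positive semi-definite but not positive definite. This property does not depend on $\bE$.

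I will compute $\chi$ on $\mathrm{Ker}(\psi)$ using \eqref{eq_ker_psi}, which gives the decomposition $\mathrm{Ker}(\psi)=K_Z^\perp\oplus\bZ\delta$. Write $\alpha=(0,C,m)$ and $\beta=(0,C',m')$ with $C,C'\in K_Z^\perp$. Lemma \ref{lem_sym}(iii) gives $\chi(\alpha,\beta)=-C\cdot C'$, since these classes have rank zero. Equivalently, formula \eqref{eq_chi_new} with $r=r'=0$ gives the same expression. Hence
\[ (\alpha,\alpha)_{\mathscr{S}(\bE)} = C\cdot C. \]

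By the Hodge index theorem, recalled in \S\ref{sec_dP}, the intersection form is negative definite on $K_Z^\perp$ because $K_Z^2>0$. So $(\alpha,\alpha)_{\mathscr{S}(\bE)}\le 0$, with equality exactly when $C=0$, that is, when $\alpha\in\bZ\delta$. This shows the form is negative semi-definite. It is not negative definite, because $\delta=[\cO_x]$ is a nonzero element of $K$ with $(\delta,\delta)_{\mathscr{S}(\bE)}=-\chi(\delta,\delta)=0$.

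The only delicate point is that $(-,-)_{\mathscr{S}}$ is defined through a framed toric model for an arbitrary seed in the mutation class. This is handled by Lemma \ref{lem_brackets}, which identifies the form with $-\chi_{\tilde{\bs}}$ for any cyclic ordering, and by its independence of the seed within $\mathscr{S}$. For our particular seed, Lemma \ref{lem_isometry1} identifies $\chi_{\tilde{\bs}(\bE)}$ with the Euler form, and this requires that $\tilde{\bs}(\bE)$ really is cyclically ordered, which is Proposition \ref{prop_cyclic}(ii). With those results taken as given, the argument is a direct computation and I do not expect a real obstacle. As a byproduct, the element $\delta_{\mathscr{S}(\bE)}$ of \eqref{eq_delta_S} should be $\pm\delta$, with the sign fixed by positivity of the coefficients in the basis $\{[F_i]\}$.
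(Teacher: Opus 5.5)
Your proposal is correct and follows the paper's proof essentially verbatim. You reduce via Lemma \ref{lem_isometry2} to $-\chi$ on $\mathrm{Ker}(\psi)=K_Z^\perp\oplus\bZ\delta$, then use Lemma \ref{lem_sym}(iii) to identify this with the intersection form on $K_Z^\perp$, which is negative definite by the Hodge index theorem, with radical $\bZ\delta$. Your side remark that $\delta_{\mathscr{S}(\bE)}=\pm\delta$ is consistent with Lemma \ref{lem_rank}, which fixes the sign to $+$ when the $E_i$ are bundles.
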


\begin{proof}
By Lemma \ref{lem_isometry2}, we have $(K, (-,-)_{\mathscr{S}(\bE)}) \simeq (\mathrm{Ker}(\psi), -\chi(-,-)|_{\mathrm{Ker}(\psi)})$.
On the other hand, we have $\mathrm{Ker}(\psi) =K_Z^{\perp} \oplus \bZ \delta$
by \eqref{eq_ker_psi}. 
By Lemma \ref{lem_sym}(iii), $\bZ \delta$ is the kernel of $-\chi(-,-)|_{\mathrm{Ker}(\psi)}$ and $-\chi(-,-)|_{K_Z^\perp}$ is the intersection form on $K_Z^\perp$, which is negative definite by the Hodge index theorem. Thus, the intersection form $(-,-)_{\mathscr{S}(\bE)}$ is negative semi-definite but not negative definite. 
Hence, the mutation equivalence class of seeds $\mathscr{S}(\bE)$ is of 
$q$-Painlev\'e type by Definition \ref{def_q_painleve}.
\end{proof}

By \eqref{eq_delta_S} in \S \ref{sec_q_painleve}, for every seed $\bs=\{e_i\}_{1 \leq i \leq n}$ whose mutation equivalence class $\mathscr{S}$ is of $q$-Painlev\'e type, there exists a unique primitive element $\delta_{\mathscr{S}} = \sum_{i=1}^n c_{\bs, i} e_i$ with $c_{\bs,i}\in \bZ_{>0}$ for all $1 \leq i \leq n$ and $(\delta_{\mathscr{S}}, \delta_{\mathscr{S}})_{\mathscr{S}}=0$.

\begin{lemma} \label{lem_rank}
  Let $\bE=(E_1, \dots, E_n)$ be a very strong exceptional collection of bundles on a del Pezzo surface $Z$. Then, we have $\delta = \delta_{\mathscr{S}(\bE)}$
  and $r(E_i) = c_{\bs(\bE),i}$ for all $1 \leq i \leq n$.
\end{lemma}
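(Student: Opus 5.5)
Since the dual collection $\bF=(F_n,\dots,F_1)$ gives a basis $\{[F_i]\}$ of $K(Z)$, we expand $\delta=\sum_{i=1}^n c_i [F_i]$. Pairing with the $E_j$ via the duality $\chi(E_j,F_i)=\delta_{ij}$ (which holds because $\Hom^k(E_j,F_i)=\bC$ for $i=j,k=0$ and vanishes otherwise) gives $c_j=\chi(E_j,\delta)$. By the Riemann--Roch computation after \eqref{eq_RR}, $\chi(E_j,\delta)=r(E_j)$. Hence
\[ \delta=\sum_{i=1}^n r(E_i)\,[F_i]\,. \]
Since the $E_i$ are vector bundles (the hypothesis ``collection of bundles'', i.e.\ the normalization $p=0$ of Lemma \ref{lem_bundles}), all coefficients $r(E_i)$ are strictly positive integers.

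It remains to check that $\delta$ has the properties characterizing $\delta_{\mathscr{S}(\bE)}$ in \eqref{eq_delta_S}. It lies in $K=\mathrm{Ker}(\psi)$, since $r(\delta)=d(\delta)=0$. By Lemma \ref{lem_isometry2}, $(\delta,\delta)_{\mathscr{S}(\bE)}=-\chi(\delta,\delta)=0$. It is primitive in $K(Z)$, because $\chi(\cO_Z,\delta)=1$, so $\delta$ cannot be a nontrivial integer multiple of another class. By Theorem \ref{thm_q_painleve}, $\mathscr{S}(\bE)$ is of $q$-Painlev\'e type, so the uniqueness statement recalled from \cite[Proposition 3.22 (3)]{Mizuno} applies and gives $\delta=\delta_{\mathscr{S}(\bE)}$. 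Comparing coefficients in the basis $\bs(\bE)$ then yields $c_{\bs(\bE),i}=r(E_i)$.

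The main point to get right is the duality convention: $\chi(E_j,F_i)=\delta_{ij}$ with $E$ in the first slot, so that the coefficient of $[F_j]$ is $\chi(E_j,\delta)$ and not $\chi(\delta,E_j)$. This convention is consistent with the definition of $\bF$ recalled before Lemma \ref{lem_dual}. The slot ordering does not affect the outcome in any case, since $\chi(\delta,E)=\chi(E,\delta)=r(E)$. Positivity of the coefficients also needs care: it relies on the $E_i$ being genuine bundles rather than shifts of bundles, since a shift by an odd integer would flip all the signs.
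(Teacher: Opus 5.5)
Your proof is correct and follows the paper's argument essentially verbatim. You expand $\delta$ in the dual basis $\{[F_i]\}$, read off the coefficients as $\chi(E_j,\delta)=r(E_j)>0$, and conclude by the uniqueness of $\delta_{\mathscr{S}(\bE)}$ using $(\delta,\delta)_{\mathscr{S}(\bE)}=0$ from Lemma \ref{lem_isometry2}; your justification of primitivity via $\chi(\cO_Z,\delta)=1$ supplies a step the paper only asserts.
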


\begin{proof}
Let $\bF=(F_n, \dots, F_1)$ be the dual exceptional collection. 
Write $\delta =[\cO_x]= \sum_{i=1}^n a_i [F_i]$ with $a_i \in \bZ$. Then, applying $\chi(E_j, -)$ and using the duality between $\bE$ and $\bF$, we obtain
\[ r(E_j) = \chi(E_j, \cO_x)=\chi(E_j, \delta)=\sum_{i=1}^n a_i \chi(E_j, F_i)
=\sum_{i=1}^n a_i \delta_{ji}=a_j \,.\]
Hence $\delta=\sum_{j=1}^n r(E_j)[F_j]$.
Moreover, we have $r(E_j)\in \bZ_{>0}$ for all $1 \leq k \leq n$ since the $E_j$'s are bundles by assumption, 
and $(\delta, \delta)_{\mathscr{S}(\bE)}=-\chi(\delta, \delta)=0$ by Lemma \ref{lem_isometry2}. Since in addition $\delta$ is a primitive element of $K(Z)$, it follows that $\delta=\delta_{\mathscr{S}(\bE)}$ and $r(E_j) = c_{\bs(\bE),j}$ for all $1 \leq j \leq n$ by uniqueness of $\delta_{\mathscr{S}(\bE)}$ and $(c_{\bs(\bE),j})_{1 \leq j \leq n}$.
\end{proof}

\begin{remark} \label{rem_del_pezzo_painleve}
    As reviewed in \S \ref{sec_q_painleve}, when $\mathscr{S}$ is of $q$-Painlev\'e type, the lattice $(K,(-,-)_{\mathscr{S}})$ is isometric to a lattice of the form $\bZ \delta_{\mathscr{S}} \oplus Q_{\mathscr{S}}$, where $Q_{\mathscr{S}}$ is negative definite. By the proof of Theorem \ref{thm_q_painleve}, we obtain an isomorphism $Q_{\mathscr{S}(\bE)} \simeq K_Z^\perp$.
\end{remark}

\subsection{Tilting operations and seed mutations}
 In this section, we describe the compatibility between the seed mutations introduced in 
 \S \ref{sec_seeds} and the tilting operations of geometric helices reviewed in \S\ref{sec_geometric_tilting}. We begin by explaining  the compatibility with tilting operations of good very strong exceptional collections in the sense of Definition \ref{def_good}.

\begin{lemma} \label{lem_good_tilt}
Let $\bE=(E_1, \dots, E_n)$ be a very strong exceptional collection on a del Pezzo surface $Z$. Assume that $\bE$ is good for the object $E_j$ for some $1 \leq j \leq n$. Then, tilts of $\bE$ commute with seed mutations of $\bs(\bE)$, that is, 
\[ \bs(\mu_{j,1}^+(\bE)) = \mu_j^+(\bs(\bE))\text{      and     } \bs(\mu_{1,j}^-(\bE)) = \mu_1^-(\bs(\bE))\,.\]
\end{lemma}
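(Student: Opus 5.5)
We compute the seed of the tilted collection directly from Lemma \ref{lem_dual}, which gives its dual collection, and compare the resulting classes in $K(Z)$ with the mutation formula \eqref{eq_seed_mutation}. Throughout, we use Lemma \ref{lem_ex_0} to convert Euler pairings between members of an exceptional pair into the skew form $\langle -,-\rangle$. We use the goodness condition of Lemma \ref{lem_good} to control the signs that appear in $[\,\cdot\,]_+$.

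\textbf{Left tilt.} By Lemma \ref{lem_dual}, the dual collection of $\mu_{j,1}^+(\bE)$ consists of three groups of objects. The objects $F_k$ with $k>j$ are unchanged. The object $F_j$ becomes $F_j[-1]$, with class $-[F_j]$, which is the rule $e_j' = -e_j$. The remaining objects are $L_{F_j[-1]}F_k$ for $k<j$. From the triangle \eqref{eq_left_mutation}, their classes are
\[ [L_{F_j[-1]}F_k] = [F_k] - \chi(F_j[-1],F_k)[F_j[-1]] = [F_k] - \chi(F_j,F_k)[F_j]. \]
Since $(F_j,F_k)$ is an exceptional pair for $k<j$, Lemma \ref{lem_ex_0} gives $\chi(F_j,F_k)=\langle [F_j],[F_k]\rangle = -\langle [F_k],[F_j]\rangle$. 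Hence this class equals $[F_k]+\langle [F_k],[F_j]\rangle [F_j]$. Goodness gives $\langle [F_k],[F_j]\rangle \geq 0$ for $k<j$, so the coefficient equals $[\langle [F_k],[F_j]\rangle]_+$. For $k>j$, goodness gives $\langle [F_k],[F_j]\rangle\leq 0$, so $[\langle [F_k],[F_j]\rangle]_+=0$, which matches the fact that $F_k$ is unchanged. Since seeds are unordered sets, this shows $\bs(\mu_{j,1}^+(\bE))=\mu_j^+(\bs(\bE))$.

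\textbf{Right tilt.} Here we must use goodness of $\bE$ for $E_j$ in the form needed for the element $F_1$. Lemma \ref{lem_dual} gives the dual collection of $\mu_{1,j}^-(\bE)$ as follows:
\begin{itemize}
\item[(a)] $F_1$ becomes $F_1[1]$, with class $-[F_1]$;
\item[(b)] the objects $F_k$ with $k>j$ are unchanged;
\item[(c)] the objects $F_k$ with $2\leq k\leq j$ become $R_{F_1[1]}F_k$.
\end{itemize}
For the objects in (c), the right-mutation triangle gives
\[ [R_{F_1[1]}F_k] = [F_k]-\chi(F_k,F_1[1])[F_1[1]] = [F_k]-\chi(F_k,F_1)[F_1]. \]
Lemma \ref{lem_ex_0} gives $\chi(F_k,F_1)=\langle [F_k],[F_1]\rangle$, so this class is $[F_k]-\langle [F_k],[F_1]\rangle[F_1]$.

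To match $\mu_1^-$, we need two sign conditions: $\langle [F_k],[F_1]\rangle\geq 0$ for $2\leq k\leq j$, and $\langle [F_k],[F_1]\rangle\leq 0$ for $k>j$. This is where care is needed, since goodness is stated for $F_j$ and not for $F_1$. We plan to use the cyclic ordering of Proposition \ref{prop_cyclic}(ii) together with goodness for $F_j$. Goodness forces all $\psi([F_k])$ with $k<j$ to lie in the closed half-plane to the left of $\psi([F_j])$, and all those with $k>j$ to lie in the closed half-plane to the right. With the cyclic ordering, this places $\psi([F_1]),\dots,\psi([F_j])$ within an angle of at most $\pi$, so $\langle [F_k],[F_1]\rangle\geq0$ for $k\leq j$.

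For $k>j$, the vectors run cyclically from $\psi([F_j])$ back to $\psi([F_1])+2\pi$. The required inequality $\langle [F_k],[F_1]\rangle \leq 0$ then needs these vectors to lie within an angle $\pi$ clockwise of $\psi([F_1])$. An alternative route is to use that $\mu_{1,j}^-(\bE)$ is good for its new object $E_j''$, and to apply the already proved left-tilt statement to it. Since $\mu_{j,1}^+\circ\mu_{1,j}^-(\bE)=\bE$, this gives $\mu_j^+(\bs(\mu_{1,j}^-(\bE)))=\bs(\bE)$. Applying $\mu^-$ at the element $-[F_1]$ and using $\mu^-\mu^+=\mathrm{id}$ then yields the claim, after identifying indices: the new $j$-th dual element of $\mu_{1,j}^-(\bE)$ has class $[F_1[1]] = -[F_1]$. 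We expect to use this involution argument, which avoids the sign analysis. It relies only on the goodness statement recorded after Definition \ref{def_tilt_ex} and on the left-tilt computation.

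The main obstacle is bookkeeping rather than substance. We must track index conventions and signs: the shift by $[-1]$ flipping classes, and the direction of $\langle -,-\rangle$ in Lemma \ref{lem_ex_0}. We must also make sure that the seed mutation is taken at the correct basis element, namely $-[F_1]$ in the reversed tilt.
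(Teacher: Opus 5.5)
Your left-tilt computation is correct and is the paper's argument; you even handle $k>j$ more carefully than the paper, via goodness. The right-tilt half, however, has a genuine gap.

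First, there is a sign slip. Matching $[F_k]-\langle [F_k],[F_1]\rangle[F_1]$ with $e_k+[-\langle e_k,e_1\rangle]_+e_1$ requires $\langle [F_k],[F_1]\rangle\le 0$ for $2\le k\le j$ and $\langle [F_k],[F_1]\rangle\ge 0$ for $k>j$, which is the reverse of what you wrote. Since $\det(u,v)>0$ when $v$ is counterclockwise of $u$, your half-plane argument actually gives $\langle [F_k],[F_1]\rangle\le 0$ for $k\le j$, so the first condition is fine.

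The second condition does not follow from goodness for $E_j$, and the involution route does not avoid it. By Lemma~\ref{lem_dual}, the dual of $\mu_{1,j}^-(\bE)$ has the unchanged $F_k$ ($k>j$) above position $j$, the object $F_1[1]$ in position $j$, and classes $[F_k]-\langle[F_k],[F_1]\rangle[F_1]$ ($2\le k\le j$) below it. So by Lemma~\ref{lem_good}, the assertion that $\mu_{1,j}^-(\bE)$ is good for $E_j''$ is equivalent to exactly the two sign conditions above. You are importing the missing hypothesis from an unproved remark.

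This cannot be fixed, because the right-tilt identity is false under goodness for $E_j$ alone. In Example~\ref{example_cyclic}, $\bE=(\cO,\cO(1,0),\cO(0,1),\cO(1,1))$ on $\bP^1\times\bP^1$ is good for $E_2$, yet $\langle [F_3],[F_1]\rangle=\det((-1,2),(1,0))=-2<0$. Correspondingly, $\mu_{1,2}^-(\bE)=(\cO(1,0),\cO(2,0),\cO(0,1),\cO(1,1))$ is not even strong, since $\Ext^1(\cO(2,0),\cO(0,1))=H^1(\cO(-2,1))\neq 0$. Its seed $\{[F_4],[F_3],-[F_1],[F_2]+2[F_1]\}$ differs from $\mu_1^-(\bs(\bE))$, which contains $[F_3]+2[F_1]$ instead of $[F_3]$. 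So the paper's ``analogously'' tacitly relies on the same extra condition.

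The right-tilt half holds under the hypothesis appropriate to $\mu_{1,j}^-$: $\langle [F_k],[F_1]\rangle\le 0$ for $2\le k\le j$ and $\ge 0$ for $k>j$. This holds automatically when $\bE=\mu_{j,1}^+(\bE_0)$ with $\bE_0$ good for its $j$-th object, i.e.\ when the right tilt is used as the inverse of a left tilt. Under it, your formula gives $\mu_1^-(\bs(\bE))$ in one line. In the situation $\bE=\mu_{j,1}^+(\bE_0)$ your involution argument is also valid, with $\mu_{1,j}^-(\bE)=\bE_0$.
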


\begin{proof}
By Lemma \ref{lem_dual}, the classes
$[F_i']$, $1 \leq i \leq n$, 
of the objects of the 
exceptional collection dual to $\mu_{j,1}^+(\bE)$ are given by $[F_j']=-[F_j]$, and for every $i \neq j$, $[F_i']=[F_i] - \chi(F_j, F_i)[F_j]$. For $j \geq i$, the pair $(F_j, F_i)$ is exceptional, so $\chi(F_i, F_j)=0$ and $\chi(F_j,F_i)=\langle [F_j], [F_i]\rangle=-
\langle [F_i], [F_j] \rangle \leq 0$ by Lemmas \ref{lem_ex_0}-\ref{lem_ex}. Hence, for all $i \neq j$, we obtain 
\[ [F_i']=[F_i]+[\langle [F_i], [F_j]\rangle]_+ [F_j]\,,\]
and so the result follows by comparison with the formula \eqref{eq_seed_mutation} describing seed mutations. The result for $\mu_{1,j}^-(\bE)$ is obtained analogously.
\end{proof}

In the following result, we denote by $\mathrm{Aut}_K(K(Z),\langle-,-\rangle)$ the group of automorphisms of $K(Z)$ preserving the skew-symmetric bilinear form $\langle -,-\rangle$ and acting trivially on $K=\mathrm{Ker}(\langle -,- \rangle)$.

\begin{prop} \label{prop_tilting}
    Let $\bH=(E_i)_{i\in \bZ}$ be a geometric helix on a del Pezzo surface $Z$. Let $\bE$ be a thread of $\bH$ containing the object $E_j$ for some $j \in \bZ$. 
    Then, there exist threads $\bE'_+$ and $\bE'_-$ in the tilted helices $\mu_{j,1}^+(\bH)$ and $\mu_{1,j}^-(\bE)$ respectively, and $f \in \mathrm{Aut}_K(K(Z),\langle-,-\rangle)$ such that
    \[ \bs(\bE'_+)=f(\mu_j^+(\bs(\bE)))\text{          and       }  \bs(\bE'_-)= f (\mu_1^-(\bs(\bE)))\,.\]
\end{prop}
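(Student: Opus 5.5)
The plan is to reduce to the good case treated in Lemma \ref{lem_good_tilt}, using Lemma \ref{lem_rotation} to control how the seed changes when we pass from the given thread $\bE$ to a good thread. By Proposition \ref{prop_good}, there is a thread $\bE^\circ$ of $\bH$ containing $E_j$ which is good for $E_j$. The tilted helix $\mu_{j,1}^+(\bH)$ is by Definition \ref{def_tilitng_helices} the helix generated by $\mu_{j,1}^+(\bE^\circ)$, and similarly for $\mu_{1,j}^-$. So we will take $\bE'_+ := \mu_{j,1}^+(\bE^\circ)$ and $\bE'_- := \mu_{1,j}^-(\bE^\circ)$, suitably relabeled so that the object of index $j$ is tracked. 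Lemma \ref{lem_good_tilt} then gives
\[ \bs(\bE'_+)=\mu_j^+(\bs(\bE^\circ)), \qquad \bs(\bE'_-)=\mu_1^-(\bs(\bE^\circ)). \]

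It therefore suffices to compare $\bs(\bE^\circ)$ with $\bs(\bE)$ and show that mutation is equivariant for the comparison map. The thread $\bE^\circ$ is obtained from $\bE$ by finitely many one-step rotations, forwards or backwards. By Lemma \ref{lem_rotation}, a backward rotation changes the classes of the dual collection by applying $T_{[F_n]}$, where $T_\alpha(\beta)=\beta+\langle\beta,\alpha\rangle\alpha$. The last class $[F_n]$ is mapped to itself, and the labels are shifted cyclically. A forward rotation is the inverse operation, whose linear part is $T_{[F_1]}^{-1}$, again up to relabeling.

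Each $T_\alpha$ preserves $\langle-,-\rangle$: the form is skew, so $\langle\alpha,\alpha\rangle=0$ and the cross terms cancel. Each $T_\alpha$ also acts trivially on $K=\mathrm{Ker}\langle-,-\rangle$, since $\langle\beta,\alpha\rangle=0$ for $\beta\in K$. Hence each $T_\alpha$ lies in $\mathrm{Aut}_K(K(Z),\langle-,-\rangle)$. Composing these maps gives $g\in \mathrm{Aut}_K(K(Z),\langle-,-\rangle)$ with $\bs(\bE^\circ)=g(\bs(\bE))$ as unordered seeds, with the element indexed by $E_j$ mapped to the element indexed by $E_j$.

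Next we use that seed mutation \eqref{eq_seed_mutation} is defined purely in terms of $\langle-,-\rangle$ and the basis. For any $g$ preserving $\langle-,-\rangle$, we get $\mu_{g(e_j)}^\epsilon(g(\bs))=g(\mu_{e_j}^\epsilon(\bs))$. Therefore
\[ \bs(\bE'_+)=\mu_j^+(g\,\bs(\bE))=g(\mu_j^+(\bs(\bE))), \]
and likewise for the minus case, so we may take $f=g$.

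The main obstacle is index bookkeeping. The mutation $\mu_1^-$ in Lemma \ref{lem_good_tilt} refers to the first element of the good thread, namely the element $[F_1]$ of $\bE^\circ$. We need to check that, under $g$ and the cyclic relabeling, this corresponds to the intended element of $\bs(\bE)$. The statement also uses the same $f$ for both tilts, which requires a single good thread $\bE^\circ$ to serve for both the left and right tilt; I would arrange this by choosing $\bE^\circ$ good for $E_j$ and interpreting $\mu_1^-$ relative to it. A secondary point is that orthogonal reordering ambiguities do not matter, since orthogonal objects have equal $\psi$ and the seed is only an unordered multiset.
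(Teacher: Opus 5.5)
Your proposal is correct and follows the paper's proof essentially step for step. You pass to a thread good for $E_j$ via Proposition~\ref{prop_good}, write the change of seed as a composition of the maps $T_\alpha$ of Lemma~\ref{lem_rotation} (and their inverses) lying in $\mathrm{Aut}_K(K(Z),\langle-,-\rangle)$, apply Lemma~\ref{lem_good_tilt} to that single good thread for both tilts, and use that seed mutation commutes with automorphisms preserving $\langle-,-\rangle$. Your remarks that forward rotations contribute $T_{[F_1]}^{-1}$, and that the index of $\mu_1^-$ must be transported through the cyclic relabeling, make explicit two points the paper leaves implicit.
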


\begin{proof}
By Proposition \ref{prop_good}, there exists a thread $\widetilde{\bE}$ of $\bH$ which is good with respect to $E_j$. Since $\widetilde{\bE}$ is obtained from $\bE$ by rotation in $\bH$, there exists $f \in \mathrm{Aut}(K(Z))$ such that $\bs(\widetilde{\bE})=f(\bs(\bE))$ by Lemma \ref{lem_rotation}. More precisely, $f$ can be chosen as a composition of automorphisms of the form $T_\alpha$ as in \eqref{eq_T_alpha}, each of which preserves $\langle -,- \rangle$ and acts trivially on $K=\mathrm{Ker}(\langle -,- \rangle)$. Consequently, $f$ is contained in the group  $\mathrm{Aut}_K(K(Z),\langle-,-\rangle)$.
By Lemma \ref{lem_good_tilt}, setting $\bE'_+ := \mu_{j,1}^+(\widetilde{\bE})$
and $\bE'_- := \mu_{1,j}^-(\widetilde{\bE})$ , we obtain $\bs(\bE'_+)= \mu_{j}^+(\bs(\widetilde{\bE}))$ and 
$\bs(\bE'_-)= \mu_{1}^-(\bs(\widetilde{\bE}))$. 
Moreover, since $f$ preserves $\langle-,-\rangle$, its action on seeds commutes with the seed mutations given by 
\eqref{eq_seed_mutation}.
Therefore, we conclude that
\[ \bs(\bE'_+)=\mu_{j}^+(\bs(\widetilde{\bE})) = \mu_j^+(f(\bs(\bE)))=f(\mu_j^+(\bs(\bE)))\,,\]
and similarly $\bs(\bE'_-)= f (\mu_1^-(\bs(\bE)))$.
\end{proof}

\subsection{Affine Weyl groups and tilting operations}
\label{sec_weyl_affine}
We introduced in Definitions 
\ref{def_affine}-\ref{def_orthogonal} 
the affine Weyl group $\widehat{W}(Z)$ and the orthogonal $O(Z)$ of a del Pezzo surface $Z$, which both act by isometries on the Grothendieck group $K(Z)$. In this section, we first show that $O(Z)$ admits a natural action on the set of geometric helices. We then prove that, for any very strong exceptional collection $\bE$ on $Z$, the group $\widehat{W}(Z)$ coincides with the Weyl group $W_{\mathscr{S}(\bE)}$ associated in \S\ref{sec_weyl} to the mutation equivalence class $\mathscr{S}(\bE)$ of the seed $\bs(\bE)$. As an application, Theorem \ref{thm_weyl_action} shows that the action of $\widehat{W}(Z) \subset O(Z)$ on geometric helices can be described in terms of tilting operations and orthogonal reorderings.

\begin{lemma} \label{lem_weyl_action}
Let $\bH=(E_i)_{i \in \bZ}$ be a geometric helix on a del Pezzo surface $Z$, 
and let $p \in \bZ$ be as in Lemma \ref{lem_bundles} the unique integer such that all objects $E_i[p]$ lie in $\Coh(Z)$. For every $w \in O(Z)$, there exists a unique geometric helix $w \cdot \bH :=(E_i')_{i \in \bZ}$ such that $[E_i']=w([E_i])$ for all $i\in \bZ$, and all objects $E_i'[p]$ lie in $\Coh(Z)$.

\end{lemma}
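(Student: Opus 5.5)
Since $O(Z)=W(Z)\ltimes \Pic^0(Z)$ by Proposition \ref{prop_weyl_affine_gen}, I will prove existence separately for $w \in \Pic^0(Z)$ and for $w$ a reflection $r_\alpha$ with $\alpha \in \Phi_Z$. Composing these steps gives existence for all of $O(Z)$; uniqueness is proved directly. For $w=L\in \Pic^0(Z)$ the helix is simply $(E_i\otimes L)_{i\in\bZ}$. Tensoring by a line bundle preserves exceptionality, fullness, the relation $E_{i+n}=E_i\otimes\omega_Z^{-1}$, the vanishing of $\Hom^k$, and the property of being a shifted sheaf. Moreover, $[E_i\otimes L]=t_L([E_i])$ by Lemma \ref{lem_tensor_0}.

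Uniqueness comes first, since it is cheap. Each $E_i'[p]$ is an exceptional sheaf, and the class $[E_i'[p]]=(-1)^p w([E_i])$ is prescribed. By \cite[Corollary 2.5]{Gor}, an exceptional sheaf is determined up to isomorphism by its class. Hence each $E_i'$ is unique, and so is the helix.

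For a finite root $\alpha\in\Phi_Z$, I will realize $r_\alpha$ by a deformation (or automorphism) argument. A reflection in a $(-2)$-class of $K_Z^\perp$ is induced by the monodromy of a family of del Pezzo surfaces: one degenerates $Z$ to a weak del Pezzo surface containing a $(-2)$-curve of class $\alpha$, and takes the monodromy around the discriminant. Exceptional objects and full exceptional collections deform uniquely along smooth families, because $\Ext^2(E,E)=0$ and $\Ext^1(E,E)=0$ make them rigid and unobstructed. The Ext-vanishing conditions in (iii) are open and semicontinuous. They are in fact constant, because $\chi$ is constant and Lemma \ref{lem_ex_0} shows that at most one $\Ext$ group is nonzero.

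Transporting $\bH$ around a loop in the space of smooth del Pezzo surfaces deformation equivalent to $Z$ therefore produces a geometric helix on $Z$ whose classes are the images under the monodromy. The monodromy group of this family is exactly $W(Z)$, and it acts trivially on $K_Z$, rank, and $\delta$. The compatibility $E'_{i+n}=E'_i\otimes\omega_Z^{-1}$ holds because $\omega$ is transported to $\omega$, and the sheaf property is preserved in the flat family.

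The main obstacle is making the monodromy statement precise: every element of $W(Z)$ must arise as monodromy in a family of smooth del Pezzo surfaces, and exceptional sheaves must extend over the whole family. For the first, I will use the classical fact that the space of $m$ points of $\bP^2$ in general position has monodromy acting on $NS$ by the full Weyl group \cite[Ch 8]{dolgachev}. For $\bP^1\times\bP^1$, the reflection is induced by the automorphism swapping the factors. For the second, I will use that exceptional sheaves are stable (or $\cO_D(k)$), together with the unobstructedness above.

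If this geometric route proves awkward, the alternative is to construct $r_\alpha\cdot\bH$ via tilting, using Theorem \ref{thm_weyl}. That is presumably the later content of the paper, so I prefer the monodromy argument here.
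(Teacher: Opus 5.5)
Your uniqueness argument and your treatment of $\Pic^0(Z)$ match the paper's. For $W(Z)$ the paper avoids deformations entirely. It invokes \cite[Proposition 5.3]{KN} to obtain a full exceptional collection $\bE'$ with $[E_i']=w([E_i])$. It then notes that ranks, degrees, and hence slopes are unchanged, and concludes from Proposition \ref{prop_bundles} that $\bE'$ is very strong. Your monodromy route is the alternative sketched in Remark \ref{rem_KN}, and it would remove the dependence on Kuleshov--Orlov's transitivity theorem.

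However, the step you propose for realizing $W(Z)$ as monodromy is wrong as stated. Consider the family of blow-ups of $\bP^2$ at $m$ points in general position, ordered or unordered. Its monodromy fixes $H$, because the pullback of $\cO_{\bP^2}(1)$ is a line bundle on the total space, and it permutes the $E_i$. So the monodromy lies in $S_m$. For $m\geq 3$ this misses, for example, the reflection in $H-E_1-E_2-E_3$, which sends $H$ to $2H-E_1-E_2-E_3$.

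For the same reason your local picture needs care. In the family of blow-ups, the weak del Pezzo fibre is smooth and there is no monodromy around it. The reflection $s_\alpha$ appears as Picard--Lefschetz monodromy only in the family of anticanonical models, where the $(-2)$-curve is contracted to an $A_1$ node. The correct input is that the family of all smooth del Pezzo surfaces of the given degree has monodromy equal to all of $W(Z)$. Examples of such families are the anticanonical models in $\bP^d$, or in weighted projective space in degrees $1,2$. The reason is that the space of marked surfaces (ordered points in general position modulo $PGL(3)$) is connected, and $W(Z)$ acts simply transitively on markings. For $Z$ of degree $5,6,7$ or $\bP^1\times\bP^1$, you can instead use that $\Aut(Z)\to W(Z)$ is surjective and pull back by automorphisms.

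The transport step also needs two closedness statements that your sketch does not supply.

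\emph{Extending each $E_i$ along the whole loop.} Unobstructedness only gives local extensions. To extend along the loop, use that exceptional bundles are $-K_Z$-stable with coprime $(r,d)$ by Lemma \ref{lem_primitive}. Then the relative moduli space of stable sheaves in that class is proper over the base. It is also étale, since any stable $F$ with $\chi(F,F)=1$ on a del Pezzo surface is exceptional: stability and Serre duality kill $\Ext^2(F,F)$.

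\emph{Keeping the pairs exceptional.} Your appeal to Lemma \ref{lem_ex_0} is circular, because that lemma assumes the pair is already exceptional. Instead argue as follows for $i<j$ in a thread. Stability and the slope inequalities \eqref{slope}, with distinct classes in the equality cases, give $\Hom(E_j,E_i)=0$ and $\Ext^2(E_j,E_i)=\Hom(E_i,E_j\otimes\omega_Z)^\vee=0$. Then $\Ext^1(E_j,E_i)=0$ follows from $\chi(E_j,E_i)=0$.

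This numerical argument shows that any exceptional bundles with classes $w([E_i])$ automatically form a very strong collection. So all the monodromy really has to provide is the existence of an exceptional bundle in each class $w([E_i])$. That is exactly what the paper extracts from \cite{KN}.
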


\begin{proof}
Up to a global shift, we may assume that $p=0$, so that $\bH$ is a geometric helix of sheaves. By \cite[Corollary 2.5]{Gor}, exceptional sheaves are uniquely determined by their classes in $K(Z)$; this immediately yields the uniqueness of $w \cdot \bH$. It therefore remains to establish existence.
By Proposition \ref{prop_weyl_affine_gen}, we have a decomposition $O(Z)=W(Z)\ltimes \Pic^0(Z)$. Since $\Pic^0(Z)$ acts on geometric helices by tensor product, it suffices to construct the action for elements $w \in W(Z)$.
Let $\bE=(E_1,\dots, E_n)$ be a very strong exceptional collection arising as a thread of $\bH$. 
By \cite[Proposition 5.3]{KN}, there exists a full exceptional collection $\bE'=(E_1', \dots, E_n')$ such that $[E_i']=w([E_i])$ for all $1 \leq i \leq n$. In particular, we have $r(E_i')=r(E_i)$ and $d(E_i')=d(E_i)$, hence $\mu(E_i')=\mu(E_i)$ for all $1 \leq i\leq n$.
It then follows from Proposition \ref{prop_bundles} that $\bE'$ is again a very strong exceptional collection. We therefore define $w \cdot \bE$ to be the geometric helix generated by $\bE'$.
\end{proof}

\begin{remark} \label{rem_KN}
The proof of \cite[Proposition 5.3]{KN} given by Karpov–Nogin, which establishes the existence of the action of $W(Z)$ on the set of full exceptional collections of sheaves on a del Pezzo surface $Z$, relies on the fact that all 
full exceptional collections are related by mutations \cite[Theorem 7.7]{KO}.
An alternative, more elementary and geometric argument can be obtained by interpreting $W(Z)$ as a monodromy group of the moduli space of del Pezzo surfaces, together with the observation that exceptional collections on such surfaces deform in smooth families. Indeed, the reflection with respect to a $(-2)$-curve class $\alpha \in K_Z^\perp$ is exactly the monodromy around a small loop in the moduli space of del Pezzo surfaces encircling the locus where $\alpha$ becomes effective.
\end{remark}

\begin{theorem} \label{thm_weyl_affine}
    Let $\bE$ be a very strong exceptional collection on a del Pezzo surface $Z$. Then, the Weyl group $W_{\mathscr{S}(\bE)}$ coincides with the affine Weyl group $\widehat{W}(Z)$, and the set of roots $\Phi_{\mathscr{S}(\bE)}$ coincides with the set of affine roots $\widehat{\Phi}_Z$:
    \[ W_{\mathscr{S}(\bE)} = \widehat{W}(Z) \,\,\,\,\text{and}\,\,\,\, \Phi_{\mathscr{S}(\bE)}=\widehat{\Phi}_Z \,.\]
\end{theorem}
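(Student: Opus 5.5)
The plan is to prove the equality of root sets $\Phi_{\mathscr{S}(\bE)}=\widehat{\Phi}_Z$ first, and then deduce the equality of Weyl groups. For the deduction, recall that by Definition~\ref{def_weyl_S} and Lemma~\ref{lem_tijk}, $W_{\mathscr{S}(\bE)}$ is generated by the maps $t^{\bs}_{jk}$. Each of these is given on all of $N=K(Z)$ by $\beta\mapsto\beta-\chi_{\tilde{\bs}}(\beta,\alpha)\alpha$ with $\alpha=e_j-e_k$. Every seed in $\mathscr{S}(\bE)$ is obtained from $\bs(\bE)$ by mutations, and every cyclic ordering of it gives a form $\chi_{\tilde{\bs}}$. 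By Lemma~\ref{lem_isometry1}, $\chi_{\tilde{\bs}(\bE)}=\chi$. Moreover, $\alpha\in K$ and $\chi_{\tilde{\bs}}(\beta,\alpha)-\chi_{\tilde{\bs}}(\alpha,\beta)=\langle\beta,\alpha\rangle$ depends only on $\psi$. I would therefore check that $\beta\mapsto\chi_{\tilde{\bs}}(\beta,\alpha)$ is independent of the seed and ordering for $\alpha\in K$. Both forms restrict to $-(-,-)_{\mathscr{S}}$ on $K$ by Lemma~\ref{lem_brackets}. To compare them off $K$, I would use the observation that $\chi_{\tilde{\bs}}(e_i,\alpha)$ is computed from $\langle-,-\rangle$ and the diagonal, and verify invariance directly under one mutation. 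With this, $t^{\bs}_{jk}=r_\alpha$ as in \eqref{eq_reflections}. So $W_{\mathscr{S}(\bE)}$ is the group generated by $r_\alpha$ for $\alpha\in\Phi_{\mathscr{S}(\bE)}$, and equality of root sets gives $W_{\mathscr{S}(\bE)}=\widehat W(Z)$.

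The inclusion $\Phi_{\mathscr{S}(\bE)}\subset\widehat{\Phi}_Z$ is immediate. A root lies in $K=\mathrm{Ker}(\psi)$, and by Lemma~\ref{lem_root} it has $(\alpha,\alpha)_{\mathscr{S}}=-2$, hence $\chi(\alpha,\alpha)=2$ by Lemma~\ref{lem_isometry2}.

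For the reverse inclusion, I would use Theorem~\ref{thm_weyl}, which identifies $\Phi_{\mathscr{S}(\bE)}$ with the roots $\Phi$ of the log Calabi--Yau surface $(Y,D)$. Two consequences follow. First, $\Phi_{\mathscr{S}(\bE)}$ is stable under $W_{\mathscr{S}(\bE)}$, since $\Phi$ is stable under $W$ by deformation invariance of roots. Second, if $\alpha$ and $\alpha'$ are both roots, then $r_\alpha r_{\alpha'}$ preserves $\Phi$. By \eqref{eq_ker_psi}, an affine root has the form $\beta+m\delta$ with $\beta\in K_Z^\perp$, $\beta\cdot\beta=-2$, that is, $\beta$ is a finite root, and $m\in\bZ$. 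I would proceed in three steps.
\begin{enumerate}
\item[(a)] Produce one root of $\mathscr{S}(\bE)$ in each $W(Z)$-orbit of finite roots. There is one orbit in the simply-laced cases and two in the case $A_2+A_1$.
\item[(b)] Show that $\Phi_{\mathscr{S}(\bE)}$ is stable under the translations $\beta+m\delta\mapsto\beta+(m+1)\delta$.
\item[(c)] Conclude by $W(Z)$-conjugacy of finite roots together with (b).
\end{enumerate}

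For (a) and (b), I would exploit the geometric side. Via $\iota_\pi$ and \cite[Theorem 6.12]{friedman2015geometry}, I would show that every class $\gamma\in\Lambda_{(Y,D)}$ with $\gamma^2=-2$ is realised, after deformation, as $E-F$ for disjoint $(-1)$-curves meeting the same component of $D$. Here $\delta$ corresponds to the class of the elliptic fibre $D$ (Lemma~\ref{lem_log_cy-painleve}(iv)). Adding $\delta$ corresponds to replacing $E$ by another section-type $(-1)$-curve $E'$ with $E'-E$ a fibre-class shift. Concretely, I would try to show that $\delta-\alpha$ is a root whenever $\alpha$ is. This can be obtained from the elliptic involution used in the proof of Theorem~\ref{thm_q_P}, or from the reflection of $\alpha$ in the component structure of singular fibres. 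Then $r_{\delta-\alpha}r_\alpha$ acts on roots as the translation $\gamma\mapsto\gamma+(\gamma,\alpha)\delta$, and it preserves $\Phi_{\mathscr{S}}$. Since $W(Z)$ acts transitively on finite roots within each orbit, and for each finite root $\beta$ there is a root $\alpha$ with $(\beta,\alpha)=\pm1$, this yields all shifts $\beta+m\delta$.

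The main obstacle is step (b), in particular showing $\delta-\alpha\in\Phi$. It is also where the degenerate cases need separate care: the blow-ups of $\bP^2$ in one or two points, where $R(Z)\neq K_Z^\perp$, and $\bP^2$, where $\widehat\Phi_Z=\emptyset$ and there is nothing to prove. If the geometric argument fails there, I would fall back on the explicit list in Proposition~\ref{prop_classification_log_cy} and on the classical fact that for Sakai's surfaces the $(-2)$-classes of $\Lambda$ realised by nodal curves are exactly the real roots of the affine root system (\cite{Sak}).
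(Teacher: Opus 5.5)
\textbf{Roots.} Your inclusion $\Phi_{\mathscr{S}(\bE)}\subset\widehat\Phi_Z$, via Lemmas \ref{lem_root} and \ref{lem_isometry2}, is correct and more direct than the paper's. The reverse inclusion has a genuine gap. Steps (a) and (b) are only announced. You never exhibit a root of $\mathscr{S}(\bE)$ in each $W(Z)$-orbit, and $\delta-\alpha\in\Phi$ is left open with only heuristic suggestions. Also, in an $A_1$ component no root pairs with $\beta$ to $\pm1$. Nor can you obtain ``every $\gamma\in\Lambda_{(Y,D)}$ with $\gamma^2=-2$ is $E-F$ after deformation'' from \cite[Theorem 6.12]{friedman2015geometry}. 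That result only applies to classes already known to be roots, and knowing that every $(-2)$-class is a root is exactly what you need to prove. The missing ingredient is \cite[Theorem 9.14]{friedman2015geometry}. Since $(-,-)_{\mathscr{S}(\bE)}$ is not negative definite (Theorem \ref{thm_q_painleve}), the roots of $(Y,D)$ are precisely the classes $\gamma\in\Lambda_{(Y,D)}$ with $\gamma^2=-2$. With Theorem \ref{thm_weyl} and Lemma \ref{lem_isometry2}, this gives $\Phi_{\mathscr{S}(\bE)}=\Phi=\{\gamma\in K:\chi(\gamma,\gamma)=2\}=\widehat\Phi_Z$ at once and uniformly in $Z$. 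There is no orbit or translation bookkeeping, and no separate treatment of the blow-ups of $\bP^2$ in one or two points. This is the paper's argument, and your fallback to \cite{Sak} is in effect an appeal to the same fact.

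\textbf{Weyl groups.} Your first paragraph contains a step that would fail. For $\alpha\in K$, the functional $\chi_{\tilde{\bs}}(-,\alpha)$ does not depend only on $\mathscr{S}(\bE)$, so $t^{\bs}_{jk}$ need not equal $r_\alpha$ off $K$. It is not independent of the ordering either: moving $e_1$ from first to last position changes $\chi_{\tilde{\bs}}(\beta,\alpha)$ by $\alpha_1\langle e_1,\beta\rangle$, where $\alpha_1$ is the $e_1$-coefficient of $\alpha$. More seriously, let $\alpha=e_j-e_k$ be a root of $\bs$. Then $\bs'=\mu_j^+\mu_j^+(\bs)=T^+_{e_j}(\bs)=\{e_\ell+\langle e_\ell,e_j\rangle e_j\}_\ell$ still contains $e_j$ and $e_k$. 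Hence $t^{\bs'}_{jk}(e_\ell)=e_\ell+\langle e_\ell,e_j\rangle\alpha$, whereas $t^{\bs}_{jk}(e_\ell)=e_\ell$. In Example \ref{example_cyclic} take $j=2$, $k=3$, $\ell=1$. There $t^{\bs(\bE)}_{23}=r_\alpha$, but $t^{\bs'}_{23}(e_1)=e_1+2\alpha$ and $\chi(e_1+2\alpha,e_1+2\alpha)=9$. So $t^{\bs'}_{23}\in W_{\mathscr{S}(\bE)}$ is not even in $O(Z)$. Moreover, $t^{\bs'}_{23}t^{\bs(\bE)}_{23}$ is a non-trivial transvection acting trivially on $K$, which contradicts the injectivity asserted in Lemma \ref{lem_W_S}. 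No invariance check can therefore succeed. The equality $W_{\mathscr{S}(\bE)}=\widehat W(Z)$ holds only after restriction to $K$. There both groups are generated by the reflections $s_\alpha=r_\alpha|_K$ with $\alpha\in\Phi_{\mathscr{S}(\bE)}=\widehat\Phi_Z$, by Lemmas \ref{lem_tijk} and \ref{lem_isometry2}. That is the only sense in which the paper's one-line deduction is valid, and you should argue it that way.
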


\begin{proof}
Let $(Y,D) \rightarrow (\overline{Y}, \overline{D})$ be a framed toric model for $\bs(\bE)$ as in \S \ref{sec_log_CY}. By Theorem \ref{thm_q_painleve}, the seed $\bs(\bE)$ is of $q$-Painlev\'e type. In particular, 
the intersection form $(-,-)_{\mathscr{S}(\bE)}$ is not negative definite. Hence, the hypotheses of  \cite[Theorem 9.14]{friedman2015geometry} 
are satisfied, and so the set of roots $\Phi$ of $(Y,D)$ is the set of classes $\alpha \in \Lambda_{(Y,D)}$ such that $\alpha \cdot \alpha=-2$.
Since $\Lambda_{(Y,D)} \simeq K_Z^\perp$, it follows that $\Phi$ coincides with the set $\widehat{\Phi}_Z$ of affine roots of $Z$. On the other hand, by Theorem \ref{thm_weyl}, $\Phi$ is also precisely the set of roots 
$\Phi_{\mathscr{S}(\bE)}$ of $\mathscr{S}(\bE)$, and so 
\begin{equation} \label{eq_Phi_Phi}
\widehat{\Phi}_Z = \Phi = \Phi_{\mathscr{S}(\bE)} \,.\end{equation}
By Lemma \ref{lem_tijk} together with Lemma \ref{lem_isometry1}, the Weyl group $W_{\mathscr{S}(\bE)}$ is generated by the reflections $\beta \mapsto \beta - \chi(\beta,\alpha)\alpha$ with $\alpha \in \Phi_{\mathscr{S}(\bE)}$. On the other hand, 
by Definition \ref{def_affine}, the affine Weyl group $\widehat{W}(Z)$ is generated by the reflections $\beta \mapsto \beta - \chi(\beta,\alpha)\alpha$ with $\alpha \in \widehat{\Phi}_Z$. Therefore, the equality $\Phi_{\mathscr{S}(\bE)}= \widehat{\Phi}_Z$ implies that $W_{\mathscr{S}(\bE)}= \widehat{W}(Z)$. 
\end{proof}

\begin{remark}
In \cite[Appendix A]{Mizuno}, building on the previous work \cite{BGM}, Mizuno constructs by means  of an explicit case by case analysis an embedding of $\widehat{W}(Z)$, regarded as a group of isometries of $K$,
into the group $W_{\mathscr{S}(\bE)}$ introduced in \S\ref{sec_weyl}.
Consequently, the identification $\widehat{W}(Z)=W_{\mathscr{S}(\bE)}$ given by Theorem \ref{thm_weyl_affine} 
admits an explicit description
using the formulas given in \cite[Appendix A]{Mizuno}.
Conversely, the results of the present paper furnish a geometric and conceptual explanation for the existence of the embedding 
constructed in \cite[Appendix A]{Mizuno}.
\end{remark}

\begin{lemma} \label{lem_orthogonal}
Let $\bE=(E_1, \dots, E_n)$ be a full exceptional collection on a del Pezzo surface $Z$, and $\bF=(F_n, \dots, F_1)$ be the dual exceptional collection. Then, consecutive objects $E_{j+1}, \dots, E_{j+k}$ are mutually orthogonal if and only if $F_{j+k}, \dots, F_{j+1}$ are mutually orthogonal.
\end{lemma}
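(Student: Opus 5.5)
Since $\bE$ and $\bF$ are both full exceptional collections, I would argue with the Gram matrices of the Euler form and the duality relation $\chi(E_i,F_j)=\delta_{ij}$. The point is that orthogonality of exceptional objects on a del Pezzo surface is a numerical condition. By Lemma \ref{lem_ex}(iii) and the remark after it, together with the fact that exceptional objects are shifts of sheaves \cite[Proposition 2.10]{KO}, two exceptional objects $A,B$ are orthogonal if and only if $\mu(A)=\mu(B)$. For an exceptional pair $(A,B)$ this is in turn equivalent, by Lemma \ref{lem_ex_0}, to $\chi(A,B)=0$, i.e. to $\langle [A],[B]\rangle=0$.

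So the first step is to reduce the statement to: $\langle [E_a],[E_b]\rangle=0$ for all $a,b\in\{j+1,\dots,j+k\}$ if and only if $\langle [F_a],[F_b]\rangle=0$ for all such $a,b$. Note that $\langle-,-\rangle$ depends only on $\psi=(r,d)$. Hence, for each collection, mutual orthogonality of the block says that the vectors $\psi([E_{j+1}]),\dots,\psi([E_{j+k}])$ are pairwise proportional, and likewise for the $F$'s.

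Next I would use the explicit description $F_i=L_{E_1}\cdots L_{E_{i-1}}E_i$ from \cite[Lemma 2.5]{BS}, but the cleanest route is to argue by mutations rather than by matrices. If $E_{j+1},\dots,E_{j+k}$ are mutually orthogonal, then the mutations $L_{E_a}$ act trivially on the others in the block, since $\Hom^\bullet(E_a,E_b)=0$. Therefore
\[ F_{j+m}=L_{E_1}\cdots L_{E_j}E_{j+m}=\Phi(E_{j+m}),\qquad \Phi:=L_{E_1}\cdots L_{E_j}, \]
for every $1\le m\le k$. The functor $\Phi$ is, up to the equivalence $\langle E_1,\dots,E_j\rangle^{\perp}\to {}^{\perp}\langle E_1,\dots,E_j\rangle$, fully faithful on the subcategory $\langle E_1,\dots,E_j\rangle^{\perp}$, which contains all the $E_{j+m}$. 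Hence $\Hom^\bullet(F_{j+a},F_{j+b})=\Hom^\bullet(E_{j+a},E_{j+b})=0$, giving one direction.

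For the converse I would use the symmetric description of $\bE$ in terms of $\bF$: $\bE$ is the dual of $\bF$ read with right mutations, $E_i=R_{F_1}\cdots R_{F_{i-1}}F_i$ up to the conventions of \cite{BS}. The same argument then applies with right mutations, using that $R_{F_1}\cdots R_{F_j}$ is fully faithful on ${}^{\perp}\langle F_j,\dots,F_1\rangle$.

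The main obstacle is getting the indexing and shifts in the duality correct, in particular checking that $F_{j+m}$ (not some shift or other index) equals $\Phi(E_{j+m})$ when the block is orthogonal, and that the right-mutation formula recovers $\bE$ from $\bF$. If this becomes messy, the fallback is the numerical route. Since $\chi(E_i,F_j)=\delta_{ij}$, the Gram matrix of $\chi$ in the basis $[F_i]$ is the inverse transpose of the Gram matrix in the basis $[E_i]$. Both matrices are unitriangular, and block-diagonality of the consecutive block $\{j+1,\dots,j+k\}$ is preserved under inversion of a unitriangular matrix. Combined with orthogonality being detected by $\chi$ (Lemma \ref{lem_ex_0}), this gives both directions at once.
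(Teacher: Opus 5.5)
Your proposal is correct, and its main line differs from the paper's at the key step.

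The paper makes the same reduction $F_{j+\ell}=L_{E_1}\cdots L_{E_j}E_{j+\ell}$ but then argues numerically. Each $L_{E_i}$ acts on $\psi$-classes by the transvection $v\mapsto v-\langle\psi([E_i]),v\rangle\psi([E_i])$ of $\bZ^2$. So collinearity of the vectors $\psi([E_{j+\ell}])$ passes to the $\psi([F_{j+\ell}])$, and Lemma \ref{lem_ex}(iii) turns equal slopes back into orthogonality. The converse is left as similar.

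You instead transport the vanishing of $\Hom^\bullet$ itself, using that $L_{E_1}\cdots L_{E_j}$ is fully faithful on objects $X$ with $\Hom^\bullet(X,E_i)=0$ for $i\leq j$. In the usual convention this subcategory is ${}^{\perp}\langle E_1,\dots,E_j\rangle$, so your two orthogonals are swapped, though the subcategory you mean is clear. This gives the stronger statement $\Hom^\bullet(F_{j+a},F_{j+b})\cong\Hom^\bullet(E_{j+a},E_{j+b})$ and uses nothing specific to del Pezzo surfaces, whereas the paper relies on the slope criterion for exceptional pairs. The price is that your converse needs the right-dual description $E_i=R_{F_1}\cdots R_{F_{i-1}}F_i$, which is standard but should be checked or cited.

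Your fallback is a third, fully symmetric route. Write $G_E,G_F$ for the Gram matrices; then $G_F=G_E^{-T}$ is correct. Because the block is consecutive, $G_E$ is block upper triangular for the partition $\{1,\dots,j\}\cup\{j+1,\dots,j+k\}\cup\{j+k+1,\dots,n\}$. Hence the middle diagonal block of $G_F$ is the inverse transpose of that of $G_E$, so one is the identity iff the other is. Since orthogonality of an exceptional pair on $Z$ is equivalent to the vanishing of $\chi$ by Lemma \ref{lem_ex_0}, this gives both directions at once with no mutation bookkeeping.
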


\begin{proof} We first prove the ``only if" direction. 
By \cite[Lemma 2.5]{BS}, for every $1 \leq \ell \leq k$, we have $F_{j+\ell}=L_{E_1}\dots L_{E_{j+\ell-1}}(E_{j+\ell})$. As $E_{j+1}, \dots, E_{j+k}$ are mutually orthogonal, we have $L_{E_{j+m}}(E_{j+\ell})=E_{j+\ell}$ for all $1 \leq m \leq k$, and so
\begin{equation} \label{eq_FE}
F_{j+\ell}=L_{E_1}\dots L_{E_j} (E_{j+\ell}) \,.\end{equation}
For every objects $E$ and $A$ such that $\Hom^i(A,E)=0$ for all $i\in \bZ$, we have $\chi(E,A)=\langle \psi([E]), \psi([A])\rangle$ by Lemma \ref{lem_sym}(i), and so, using the definition of $L_E(A)$ in \eqref{eq_left_mutation}, 
\begin{equation} \label{eq_psiL}
\psi([L_E A])=\psi([A])-\langle \psi([E]), \psi([A])\rangle \psi([E])\,.\end{equation}
As $E_{j+1}, \dots, E_{j+k}$ are mutually orthogonal, the vectors $\psi([E_{j+1}]), \dots, \psi([E_{j+k}])$
are collinear in $\bZ^2$ by Lemma \ref{lem_ex}(iii). Since $\bE$ is an exceptional collection, one can repeatedly apply \eqref{eq_psiL} to deduce from \eqref{eq_FE} that the vectors $\psi([F_{j+1}]), \dots, \psi([F_{j+k}])$ are also collinear in $\bZ^2$, and so $F_{j+k}, \dots, F_{j+1}$ are mutually orthogonal by Lemma \ref{lem_ex}(iii). The ``if" direction is proved similarly.
\end{proof}

\begin{theorem} \label{thm_weyl_action}
Let $\bH=(E_i)_{i\in \bZ}$ be a geometric helix on a del Pezzo surface $Z$. For any element $w \in \widehat{W}(Z)$,
the geometric helix $w \cdot \bH$ is related to $\bH$ by a sequence of tilting operations and orthogonal reorderings. 
\end{theorem}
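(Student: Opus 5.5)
It suffices to treat a generator of $\widehat{W}(Z)$. By Theorem \ref{thm_weyl_affine}, $\widehat{W}(Z)=W_{\mathscr{S}(\bE)}$, where $\bE$ is a thread of $\bH$. So it is enough to take $w=t_{jk}^{\bs'}$ for a seed $\bs'\in\mathscr{S}(\bE)$ and distinct elements $e_j,e_k$ of $\bs'$ with $\psi(e_j)=\psi(e_k)$. By Lemma \ref{lem_W_S}, $w$ is determined by its restriction $s_\alpha$ to $K$, with $\alpha=e_j-e_k$. The uniqueness in Lemma \ref{lem_weyl_action} gives $(w_1w_2)\cdot\bH=w_1\cdot(w_2\cdot\bH)$, since both sides are helices of sheaves up to the same shift with the right classes. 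Hence the claim for products follows from the claim for generators.

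Fix such a generator and write $\bs'=\mu(\bs(\bE))$ for a sequence of seed mutations $\mu$. I realize $\mu$ by tilting, one mutation at a time, using Proposition \ref{prop_tilting}. A mutation $\mu_j^\pm$ applied to $\bs(\bE)$ is realized, up to some $f\in \mathrm{Aut}_K(K(Z),\langle-,-\rangle)$, by a thread of a tilted helix. For the mutation $\mu_1^-$ one also has to arrange that the relevant index sits in first position, which a rotation of the thread handles, again at the cost of a map of the form $T_\alpha$. Since such maps commute with seed mutations, iterating gives a geometric helix $\bH'$, obtained from $\bH$ by tilts, with a thread $\bE'$ satisfying $\bs(\bE')=f(\bs')$ for some $f\in\mathrm{Aut}_K$. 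The elements $f(e_j)$ and $f(e_k)$ are then classes $[F_a'],[F_b']$ of two dual objects with equal $\psi$. Proposition \ref{prop_cyclic}(ii) and Lemma \ref{lem_ex}(iii) show they are mutually orthogonal, possibly together with other dual objects of the same $\psi$ lying between them in the cyclic order. After rotating the thread so that this block is not split, Lemma \ref{lem_orthogonal} shows that the corresponding $E'_a,E'_b$ lie in a block of mutually orthogonal consecutive objects. Swapping them is therefore an orthogonal reordering of $\bH'$.

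The key computation is the effect of this swap on classes. Because $E'_a,E'_b$ are orthogonal, swapping them in $\bE'$ swaps $F'_a,F'_b$ in the dual collection, via \eqref{eq_FE}. So the new seed is $t^{f(\bs')}_{ab}(f(\bs'))$ and, as a basis permutation, the swap acts on $K(Z)$ by $t_{ab}^{f(\bs')}=f\,t^{\bs'}_{jk}f^{-1}$. Since $f$ is trivial on $K$ and $f\alpha=\alpha$, Lemma \ref{lem_tijk} gives $t^{f(\bs')}_{ab}=s_\alpha=w$ on $K$. Off $K$, the formula $\beta\mapsto\beta-\chi(\beta,\alpha)\alpha$ from Lemmas \ref{lem_tijk} and \ref{lem_isometry1} holds on all of $K(Z)=N$, so in fact $t^{f(\bs')}_{ab}=r_\alpha=w$ as automorphisms of $K(Z)$. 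Thus the swapped helix $\bH''$ has thread classes obtained from those of $\bH'$ by applying $w$. The dual classes determine the classes $[E_i]$ by duality, and $w$ is an isometry of $\chi$, so $w$ carries duals to duals. Hence the classes of $\bH''$ are $w$ applied to those of $\bH'$, and by uniqueness in Lemma \ref{lem_weyl_action} (shift preserved by Lemma \ref{lem_shift}) we get $\bH''=w\cdot\bH'$.

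Finally I undo the tilts. Reversing the tilting sequence applied to $\bH''$ should give $w\cdot\bH$, because tilting is equivariant under $w$. Both $\bH'$ and $w\cdot\bH'$ have threads with the same ranks and degrees, tilts are computed by mutations whose classes depend only on $\chi$, and $w\in O(Z)$ preserves $\chi,r,d$. So the classes of the inverse tilts of $w\cdot\bH'$ are $w$ applied to those of $\bH$. Goodness depends only on $\langle-,-\rangle$ by Lemma \ref{lem_good}, so it is preserved. Uniqueness then identifies the result with $w\cdot\bH$. I expect the main obstacle to be this bookkeeping: tracking the $\mathrm{Aut}_K$ twists and rotations so that the swap genuinely realizes $w$ itself rather than a conjugate, and ensuring the orthogonal block can be made consecutive inside a single thread.
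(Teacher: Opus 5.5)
Your proposal is correct and follows the paper's overall skeleton:
\begin{enumerate}
\item reduce via Theorem \ref{thm_weyl_affine} to generators $t^{\bs}_{jk}$;
\item realize the mutation sequence by tilts and rotations via Proposition \ref{prop_tilting}, up to some $f\in\mathrm{Aut}_K(K(Z),\langle-,-\rangle)$;
\item perform the orthogonal swap, justified by Proposition \ref{prop_cyclic}(ii) and Lemma \ref{lem_orthogonal};
\item undo the tilts.
\end{enumerate}

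The genuine difference is the closing step. The paper stays at the level of seeds. It writes $\bs(\bE')=\mu^{-1}\circ f^{-1}\circ t^{\bs}_{jk}\circ f(\mu(\bs(\bE)))$ and cancels $f$ and $\mu$ by asserting that $t^{\bs}_{jk}$ commutes with both; for $f$ the stated reason is that $f$ is trivial on $K$. You argue differently in two stages:
\begin{enumerate}
\item At the intermediate thread, you show the swap acts on all of $K(Z)$ by exactly $r_\alpha$, with $\alpha=f(e_j-e_k)=e_j-e_k$. This uses Lemma \ref{lem_tijk} for the cyclically ordered seed of a very strong collection, together with Lemma \ref{lem_isometry1}. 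Hence $\bH''=w\cdot\bH'$.
\item You transport back using $O(Z)$-equivariance of rotations and tilts. Classes of mutated objects depend only on $\chi$, goodness depends only on $\psi$ by Lemma \ref{lem_good}, and the shift is fixed by Lemma \ref{lem_shift}. The uniqueness in Lemma \ref{lem_weyl_action} then finishes.
\end{enumerate}

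The paper's argument is more compact. Yours is more robust, because it never needs $t^{\bs}_{jk}$ to commute with $f$. That commutation does not follow from $f|_K=\mathrm{id}$ alone: for $\alpha\in K$ and $T_\gamma$ as in \eqref{eq_T_alpha}, one has $(r_\alpha T_\gamma-T_\gamma r_\alpha)(\beta)=-\langle\beta,\gamma\rangle\,\chi(\gamma,\alpha)\,\alpha$. Your route also disposes of your own worry about realizing $w$ itself rather than a conjugate.

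It also makes explicit two points the paper leaves tacit:
\begin{itemize}
\item $(w_1w_2)\cdot\bH=w_1\cdot(w_2\cdot\bH)$, so it suffices to treat generators.
\item The thread must be rotated so that the block of dual objects parallel to $\psi(e_j)$ is not split between the two ends of the thread. Otherwise the objects between $E'_a$ and $E'_b$ need not be orthogonal to them, and the swap would not be an orthogonal reordering.
\end{itemize}
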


\begin{proof}
Let $\bE$ be any thread of $\bH$.
By Theorem \ref{thm_weyl_affine}, we have $\widehat{W}(Z)=W_{\mathscr{S}(\bE)}$.
By Definition \ref{def_weyl_S}, the group $W_{\mathscr{S}(\bE)}$
is generated by the elements $t_{jk}^{\bs}$ as in \eqref{eq_t_jk}
indexed by seeds 
$\bs =\{e_i\}_{1 \leq i\leq n} \in \mathscr{S}(\bE)$ and distinct elements $e_j$, $e_k$, $j<k$, 
of $\bs$
such that $\psi(e_j)=\psi(e_k)$. 
Hence, it suffices to prove
Theorem \ref{thm_weyl_action} for $w$ equal to one of these generators $t_{jk}^{\bs}$. Since both seeds $\mathbf{s}(\bE)$ and $\bs$ are in $\mathscr{S}(\bE)$, there exists a sequence $\mu$ of seed mutations such that $\bs= \mu(\bs(\bE))$.

By the correspondence between seed mutations and tilting operations of geometric helices
given in Proposition \ref{prop_tilting}, there exists a very strong exceptional collection  $\widetilde{\mu}(\bE)$, obtained from $\bE$ by a sequence $\widetilde{\mu}$ of helix rotations and tilting operations, such that $\bs(\widetilde{\mu}(\bE))=f(\mu(\bs(\bE)))= f(\bs)$ for some 
$f \in \mathrm{Aut}_K(K(Z), \langle-,-\rangle)$. 
Write $\widetilde{\mu}(\bE)=(\widetilde{E}_1, \dots, \widetilde{E}_n)$, and consider the dual exceptional collection 
$(\widetilde{F}_n, \dots, \widetilde{F}_1)$.
Since $\bs(\widetilde{\mu}(\bE))=f(\bs)$, we have $[\widetilde{F}_j]=f(e_j)$ and $[\widetilde{F}_k]=f(e_k)$. As $\psi(e_j)=\psi(e_k)$, we have $e_j-e_k \in K$, so $f(e_j)-f(e_k)=f(e_j-e_k)=e_j-e_k \in K$, and so $\psi(f(e_j))=\psi(f(e_k))$.
Moreover, the vectors $\psi([\widetilde{F}_i])$ for $1 \leq i \leq n$ are cyclically ordered in $\bR^2$ by Proposition \ref{prop_cyclic}(ii), and so
the objects
$\widetilde{F}_k,\widetilde{F}_{k-1}, \dots, \widetilde{F}_j$ are mutually orthogonal by Lemma \ref{lem_ex}(iii).

Hence, by Lemma \ref{lem_orthogonal}, the objects $\widetilde{E}_j, \widetilde{E}_{j+1}, \dots, \widetilde{E}_k$ are also mutually orthogonal,  and we denote by $(j \,k) \circ \widetilde{\mu}(\bE)$ the very strong exceptional collection obtained from  $\widetilde{\mu}(\bE)$ by the orthogonal reordering defined by the permutation exchanging $\widetilde{E}_j$ and $\widetilde{E}_k$. 
Since the images by $\psi=(r,d): K(Z) \rightarrow \bZ^2$ of 
$\bs((j \,k) \circ \widetilde{\mu}(\bE))$  and $\bs(\widetilde{\mu}(\bE))$ coincide up to relabeling of parallel vectors, there exists a sequence $\widetilde{\mu}^{-1}$ of inverses of the helix rotations and tilting operations in $\widetilde{\mu}$. Set $\bE' := \widetilde{\mu}^{-1} \circ (j \,k) \circ \widetilde{\mu}(\bE)$. Then, we have 
\[ \bs(\bE') = \mu^{-1} \circ f^{-1} \circ t_{jk}^\bs  \circ f (\mu(\bs(\bE))) \,.\]
Since $t_{jk}^\bs$ is given by Lemma \ref{lem_tijk} and $f$ acts trivially on $K$, it follows that $t_{jk}^\bs$ and $f$ commute. Therefore, we obtain 
\[ \bs(\bE') = \mu^{-1} \circ t_{jk}^\bs \circ \mu(\bs(\bE)) \,. \]
Similarly, since $t_{jk}^\bs$ is given by Lemma \ref{lem_tijk} and the seeds mutations are given by \eqref{eq_seed_mutation}, it follows that $t_{jk}^\bs$ and $\mu$ commute, and so
$\bs(\bE') = t_{jk}^\bs(\bs(\bE))$. By \cite[Corollary 2.5]{Gor}, exceptional sheaves are uniquely determined by their classes in $K(Z)$, and so we necessarily have $\bH'= t_{jk}^{\bs} \cdot \bH$, where $\bH'$ is the geometric helix generated by $\bE'$.
\end{proof}

\subsection{Classification of geometric helices}
\label{sec_classification}

In this section, we prove the main result of this paper, Theorem \ref{thm_main}, referred to as Theorem \ref{thm_main_intro} in the Introduction, providing a classification of geometric helices on del Pezzo surfaces. We first prove some preliminary results, which will be combined in the proof of Theorem \ref{thm_main}.

\begin{lemma} \label{lem_1}
Let $\bE$ and $\bE'$ be two very strong exceptional collections on a del Pezzo surface $Z$, generating geometric helices $\bH$ and $\bH'$.
Assume that there exists an element $g \in SL(2,\bZ)$ such that 
\[ \psi(\bs(\bE')) = g \circ \psi(\bs(\bE))\,.\]
Then there exist a helix $\bH''$ obtained from $\bH'$ by rotations and orthogonal reorderings, a thread $\bE''$ in $\bH''$,  
and an element $f \in SL(2,\bZ)$ such that, writing $\bE''=(E_1'', \dots, E_n'')$ and $\bE=(E_1, \dots, E_n)$, we have:
\[ \psi([F_i'']) = f( \psi([F_i])) \text{  for all  } 1 \leq i\leq n\,,\]
where $\bF''=(F_n'', \dots, F_1'')$ and $\bF=(F_n, \dots, F_1)$ are the exceptional collections dual to $\bE''$ and $\bE$ respectively.
\end{lemma}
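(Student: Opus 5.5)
The hypothesis only says that the multisets $\psi(\bs(\bE'))$ and $g\circ\psi(\bs(\bE))$ coincide. The indexing of the two dual collections is not controlled, because the cyclic order of $\bE'$ may start at a different place in the cycle, and parallel vectors may appear in a different order. The plan is to fix this in two steps: first rotate $\bH'$ so that the cyclic starting point matches, then reorder within blocks of parallel vectors using orthogonal reorderings.

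\emph{Step 1: comparing cyclic orders.} By Proposition \ref{prop_cyclic}(ii), the vectors $v_i:=\psi([F_i])$ are cyclically ordered, and so are $v'_i:=\psi([F'_i])$. Since $g\in SL(2,\bZ)$ preserves orientation, the vectors $g(v_i)$ are still counterclockwise cyclically ordered. The sequences $(v'_i)$ and $(g(v_i))$ are therefore two counterclockwise cyclic orderings of the same multiset of vectors. Hence there is a cyclic shift $\sigma$ of $\{1,\dots,n\}$ such that $v'_{\sigma(i)}$ and $g(v_i)$ agree up to permutations inside each maximal block of equal vectors.

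\emph{Step 2: rotating $\bH'$.} By Lemma \ref{lem_rotation}, rotating the thread of $\bH'$ by one step changes the vectors in two ways: it applies $\psi$-induced transformations $T_{[F'_n]}$, which act on $\bZ^2$ by the elementary element $v\mapsto v+\langle v,\psi([F'_n])\rangle\psi([F'_n])$ of $SL(2,\bZ)$, and it relabels cyclically, moving $F'_n$ to position $0$. Iterating, I would pick the thread $\bE'''$ of $\bH'$ whose first element corresponds to $\sigma(1)$. Its dual vectors are then $h(v'_{\sigma(i)})$ in the natural order, for some $h\in SL(2,\bZ)$. Setting $f:=hg$, this gives, up to permutations within parallel blocks,
\[ \psi([F'''_i])=f(v_i). \]
One subtlety is the boundary case where the block of vectors equal to $v_n$ wraps around to $v_1$, i.e.\ $\theta_n=\theta_1+2\pi$. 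In that case I would choose the rotation so that both cyclic orders start at the same position within that block, or else treat the wrap-around block with an additional rotation.

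\emph{Step 3: fixing order within parallel blocks.} If consecutive dual objects $F'''_{j},\dots,F'''_{k}$ have equal $\psi$-vectors, they are mutually orthogonal by Lemma \ref{lem_ex}(iii). By Lemma \ref{lem_orthogonal}, the objects $E'''_j,\dots,E'''_k$ are then mutually orthogonal as well. Orthogonal reorderings of these objects permute the corresponding duals: reordering mutually orthogonal objects permutes the left mutations in \cite[Lemma 2.5]{BS} trivially, so the duals are permuted accordingly. This permutation realizes the required matching within each block, and it yields $\bH''$ and its thread $\bE''$ with $\psi([F''_i])=f(\psi([F_i]))$ for every $i$.

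\emph{Main obstacle.} The delicate step is Step 1 together with the wrap-around subtlety. One must check that the cyclic ordering of the dual classes is preserved under helix rotation after applying the $SL(2,\bZ)$ element from Lemma \ref{lem_rotation}. One must also check that equal vectors on the wrap-around, which may or may not be separated by the cut, can be aligned by rotation.
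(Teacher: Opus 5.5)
Your proposal is correct and follows the paper's proof. Both arguments use Proposition \ref{prop_cyclic}(ii) and the fact that $g\in SL(2,\bZ)$ preserves orientation to find a cyclic shift $\sigma$ with $\psi([F'_{\sigma(i)}])=g(\psi([F_i]))$, then realize $\sigma$ by rotating $\bH'$ via Lemma \ref{lem_rotation}, which costs an extra $h\in SL(2,\bZ)$, and set $f=hg$. The only difference is that you do the orthogonal reorderings last. In fact your Step~3 and your wrap-around worry are harmless but unnecessary: parallel primitive vectors are equal, so both $\psi$-sequences are rotations of the same cyclic word, and $\sigma$ can be chosen to match them exactly.
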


\begin{proof}
An element  $g \in SL(2,\bZ)$ preserves the orientation of $\bR^2$. On the other hand, the collections of vectors $\psi(\bs(\bE'))$ and $\psi(\bs(\bE))$ are 
each cyclically ordered in $\bR^2$ by Proposition \ref{prop_cyclic}(ii). 
Therefore, up to orthogonal reorderings, there exists a cyclic permutation $\sigma$ of $\bZ/n\bZ$ such that, $\psi([F'_{\sigma(i)}])  =g(\psi([F_i]))$ for all $i$, where $\bF'=(F_n', \dots, F_1')$ and $\bF=(F_n, \dots, F_1)$ are the exceptional collections dual to $\bE'$ and $\bE$ respectively. By Lemma \ref{lem_rotation}, there exist a helix $\bH''$ obtained from $\bH'$ by rotation and orthogonal reordering, a thread $\bE''$ in $\bH''$,  
and an element $h \in SL(2,\bZ)$ such that 
$\psi([F_i''])=h(\psi([F'_{\sigma(i)}]))$ for all $1 \leq i \leq n$, where $\bF''=(F_n'', \dots, F_1'')$ is the exceptional collection dual to $\bE''$. Therefore, we obtain 
$ \psi([F_i'']) = f( \psi([F_i]))$ for all $1 \leq i \leq n$, where $f=h \circ g \in SL(2,\bZ)$.
\end{proof} 

\begin{lemma} \label{lem_2}
Let $\bE=(E_1, \dots, E_n)$ and $\bE'
=(E_1', \dots, E_n')$ be two very strong exceptional collections of bundles on a del Pezzo surface $Z$.
Suppose that there exists an element $f \in SL(2,\bZ)$ such that 
\[ \psi([F_i])=f(\psi([F_i']))\text{    for all   }1 \leq i \leq n\,,\]
where $\bF'=(F_n', \dots, F_1')$ and $\bF=(F_n, \dots, F_1)$ are the exceptional collections dual to $\bE'$ and $\bE$ respectively.
Then, there exists a line bundle $L \in \Pic(Z)$ such that, writing $\bE'' = L \otimes \bE$, we have
\[ \psi([F_i'']) = \psi([F_i'])\text{    for all    } 1\leq i \leq n\,,\]
where $\bF''=(F_n'', \dots, F_1'')$ is the exceptional collections dual to $\bE'$
\end{lemma}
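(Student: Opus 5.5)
The plan is to show that $f$ must be a shear $\begin{pmatrix}1&0\\ k&1\end{pmatrix}$ with $k$ divisible by the index of $K_Z$. Such a shear is exactly the effect on $\psi$ of tensoring by a suitable line bundle, so this proves the lemma.

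\emph{Tensoring.} If $\bE$ has dual $\bF$, then $L\otimes\bE$ has dual $L\otimes\bF$. This holds because $\Hom^k(L\otimes E_i,L\otimes F_j)=\Hom^k(E_i,F_j)$ and the dual collection is unique by \cite[Lemma 2.5]{BS}. For $L=\cO_Z(D)$ we have $\psi([F\otimes L])=(r(F),\,d(F)+r(F)\,(D\cdot(-K_Z)))$. So tensoring by $L$ acts on all the vectors $\psi([F_i])$ simultaneously by the shear with parameter $D\cdot(-K_Z)$.

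\emph{Equal ranks.} The seeds $\bs(\bE),\bs(\bE')$ are of $q$-Painlev\'e type by Theorem \ref{thm_q_painleve}. With their cyclic orderings from Proposition \ref{prop_cyclic}, Lemma \ref{lem_coeff} applied to $f$ gives $c_{\bs(\bE),i}=c_{\bs(\bE'),i}$. Lemma \ref{lem_rank} then gives $r(E_i)=r(E_i')$ for all $i$.

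\emph{Equal ranks of the duals.} Next we show $r(F_i)=r(F_i')$. From the proof of Lemma \ref{lem_rank}, $\delta=\sum_k r(E_k)[F_k]$. Since $\chi(\delta,x)=r(x)$, Lemma \ref{lem_isometry1} gives
\[ r(F_i)=\sum_k r(E_k)\chi(F_k,F_i)=r(E_i)+\sum_{k>i} r(E_k)\,\langle \psi([F_k]),\psi([F_i])\rangle . \]
The analogous formula holds for the primed collection. We have $r(E_k)=r(E_k')$, and the pairings agree because $f\in SL(2,\bZ)$ preserves $\det$. Hence $r(F_i)=r(F_i')$.

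\emph{$f$ is a shear.} The first coordinate of $f(\psi([F_i']))$ therefore equals that of $\psi([F_i'])$. The vectors $\psi([F_i'])$ span $\bR^2$, since $\psi$ has finite cokernel. So $f$ preserves the rank functional, and $f=\begin{pmatrix}1&0\\ k&1\end{pmatrix}$ for some $k\in\bZ$.

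\emph{The shear is realized by a line bundle.} This is the main obstacle. Let $m\bZ$ be the image of $D\mapsto D\cdot(-K_Z)$ on $NS(Z)$; this is also the image of $d$ on $K(Z)$. We have $k\,r(F_i)=d(F_i)-d(F_i')\in m\bZ$ for every $i$. The $[F_i]$ form a basis of $K(Z)$ and $r([\cO_Z])=1$, so $\gcd_i r(F_i)=1$. Hence $k\in m\bZ$. Choose $D$ with $D\cdot(-K_Z)=-k$ and set $L=\cO_Z(D)$. Then $\psi([F_i''])=f^{-1}\psi([F_i])=\psi([F_i'])$, where $\bF''=L\otimes\bF$ is the dual of $\bE''=L\otimes\bE$.
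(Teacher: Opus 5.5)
Your proof is correct and follows the paper's argument step for step. You get $r(E_i)=r(E_i')$ via Lemmas \ref{lem_coeff} and \ref{lem_rank}, then $r(F_i)=r(F_i')$ from the $SL(2,\bZ)$-invariance of the Gram matrix of $\bF$ (you pair $\delta=\sum_k r(E_k)[F_k]$ against $F_i$, whereas the paper expands $[F_i]$ in the basis $[E_j]$, which is the same computation transposed), then $f$ is a shear, then divisibility of $k$ and tensoring by a line bundle. The only real variation is the divisibility step, where you use $\gcd_i r(F_i)=1$ (because $[\cO_Z]$ lies in the $\bZ$-span of the basis $[F_i]$) instead of the coprimality of $r(F_j)$ and $d(F_j)$ from Lemma \ref{lem_primitive}; this is slightly more elementary, since it avoids Atiyah's classification.
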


\begin{proof}
Since $\psi([F_i])=f(\psi([F_i']))$ for all $1 \leq i\leq n$ and $f\in SL(2,\bZ)$, it follows from Lemma \ref{lem_coeff}
that $c_{\bs(\bE),i}=c_{\bs(\bE)',i}$ for all $1 \leq i\leq n$.
As $\bE$ and $\bE'$ are very strong exceptional collections of bundles, we have $r(E_i)=c_{\bs(\bE),i}$ and 
$r(E_i')=c_{\bs(\bE)',i}$ for all $1\leq i\leq n$ by
Lemma \ref{lem_rank}. Thus, 
we obtain that $r(E_i)=r(E_i')$ for all $1\leq i \leq n$.

We now prove that $r(F_i)=r(F_i')$ for all $1 \leq i\leq n$.
Let $\bF=(F_n, \dots, F_1)$ and $\bF'=(F_n',\dots, F_1')$ be the dual exceptional collections to $\bE$ and $\bE'$ respectively. By duality between $\bE$ and $\bF$, and similarly for $\bE'$ and $\bF'$, we have 
\begin{equation} 
\label{eq_duality_E_F}
[F_i]= \sum_{j=1}^n \chi(F_i, F_j) [E_j] \,\,\,\text{and}\,\,\,[F_i']= \sum_{j=1}^n \chi(F_i', F_j') [E_j']  \,\,\,\, \text{for all}\,\,\,\, 1 \leq i \leq n \,.\end{equation}
Since the pairs $(F_i, F_j)$ with $i\geq j$ are exceptional,  the Euler form is described as follows by Lemma \ref{lem_ex_0}:
\begin{equation} \label{eq_euler_form}
\chi(F_i, F_j):=
\begin{cases}
1 \,\,\,\text{if} \,\,\, i=j \\
\langle \psi([F_i]), \psi([F_j']) \rangle\,\,\text{if}\,\,\, i>j\\
0 \,\,\,\, \text{if}\,\,\, i <j \,.
\end{cases}
\end{equation}
and similarly for $\bF'$. Since $\psi([F_i])=f(\psi([F_i']))$ for all $1 \leq i \leq n$ and
$f\in SL(2,\bZ)$ preserves $\langle-,-\rangle$, we obtain
\[ \langle \psi([F_i]), \psi([F_j])\rangle=\langle \psi([F_i']), \psi([F_j'])\rangle\,\,\,\text{for all}\,\,\, i, j \,,\]
and so 
$\chi(F_i,F_j)=\chi(F_i', F_j')$ for all $i$, $j$. Therefore, comparing the expressions in \eqref{eq_duality_E_F},  and using that $r(E_i)=r(E_i')$ for all $i$, we deduce that $r(F_i)=r(F_i')$ for all $i$. 

Since the classes $(\psi([F_i]))_{1\leq i \leq n}$ span $\bZ^2 \otimes \bQ$, it follows that  $r(f(\alpha))=r(\alpha)$ for all $\alpha \in 
\bZ^2$. 
Because $f\in SL(2,\bZ)$, this implies that $f$ has the form \[ f((r,d))=(r,d+kr)\] for some $k \in \bZ$.
Let $\ell \in \bZ_{>0}$ be the divisibility of $K_Z$ in $NS(Z)$. Since $([F_i])_{1\leq i \leq n}$ forms a basis of $K(Z)$, there exists $j$ with $r(F_j)\neq 0$. By \cite[Proposition 2.10]{KO}, $F_j$ is a shift of a vector bundle, so $r(F_j)$ and $d(F_j)$ are 
non-zero and coprime by Lemma \ref{lem_primitive}. On the other hand, the relation $\psi([F_j'])= f(\psi([F_j]))$ implies \[ d(F_j')=d(F_j)+k \,r(F_j)\,.\]
Since $d(F_j')=c_1(F_j') \cdot (-K_Z)$ and $d(F_j)=c_1(F_j) \cdot (-K_Z)$, both $d(F_j)$ and $d(F_j')$ are divisible by $\ell$. Therefore, $\ell$ divides $k \, r(F_j)$. Since $r(F_j)$ is coprime to $d(F_j)$ and $d(F_j)$ is divisible by $\ell$, it follows that $\ell$ does not divide $r(F_j)$ and $\ell$ divides $k$.

There exists a line bundle $L \in \Pic(Z)$ with $d(L)=\ell$. Indeed, if $Z$ is not $\bP^2$ or $\bP^1 \times \bP^1$, then $Z$ contains an exceptional curve $D$, so with $D^2=-1$ and $D\cdot (-K_Z)=1$, and one may take $L=\cO(D)$. If $Z=\bP^2$, then $K_Z=\cO_{\bP^2}(-3)$, $\ell=3$, and one may take $L=\cO_{\bP^2}(1)$. If $Z=\bP^1 \times \bP^1$, then $K_Z=\cO_{\bP^1 \times \bP^1}(-2,-2)$, $\ell=2$, and one may take $L=\cO_{\bP^1 \times \bP^1}(1,0)$.
Finally, set $\bE'' = L^{\frac{k}{\ell}} \otimes \bE$. Then, $\psi([F_i'']) = \psi([F_i'])$ for all $1 \leq i\leq n$, and this completes the proof.
\end{proof}

\begin{lemma} \label{lem_3}
Let $\bE$ and $\bE'$ be two very strong exceptional collections on a del Pezzo surface $Z$ such that 
\[ \psi([F_i'])=\psi([F_i]) \text{    for all  }
1 \leq i\leq n\,,\]
where $\bF'=(F_n', \dots, F_1')$ and $\bF=(F_n, \dots, F_1)$ are the exceptional collections dual to $\bE'$ and $\bE$ respectively.
Then, there exists an element $g \in O(Z)$ of the orthogonal group of $Z$ such that,  
\[ [E_i']=g([E_i])\text{     for all     }1\leq i\leq n\,.\]
\end{lemma}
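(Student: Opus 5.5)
The idea is to define $g$ directly on the dual bases and check it lies in $O(Z)$. Since $([F_i])_{1\leq i\leq n}$ and $([F_i'])_{1\leq i\leq n}$ are both bases of $K(Z)$, there is a unique linear automorphism $g$ of $K(Z)$ with $g([F_i])=[F_i']$ for all $1\leq i\leq n$.

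\textbf{Step 1: $g$ is an isometry.} By Lemma \ref{lem_isometry1}, $\chi(-,-)=\chi_{\tilde{\bs}(\bE)}(-,-)$ and $\chi(-,-)=\chi_{\tilde{\bs}(\bE')}(-,-)$. By \eqref{eq_bilinear}, these forms depend only on the ordering and on the pairings $\langle [F_i],[F_j]\rangle=\det(\psi([F_i]),\psi([F_j]))$. The hypothesis $\psi([F_i'])=\psi([F_i])$ therefore gives $\chi(F_i',F_j')=\chi(F_i,F_j)$ for all $i,j$. So $g$ is an isometry of $(K(Z),\chi)$.

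\textbf{Step 2: $g$ preserves rank and degree.} We have $\psi\circ g=\psi$ on the basis $([F_i])$, hence on all of $K(Z)$ by linearity. So $g$ preserves $r$ and $d$, and therefore $g\in O(Z)$ by Definition \ref{def_orthogonal}.

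\textbf{Step 3: $g$ carries $[E_i]$ to $[E_i']$.} The collection $([E_i])$ is characterized as the basis dual to $([F_j])$ under $\chi$, via $\chi(E_i,F_j)=\delta_{ij}$. Since $g$ is an isometry, $\chi(g[E_i],[F_j'])=\chi(g[E_i],g[F_j])=\chi(E_i,F_j)=\delta_{ij}$. On the other hand, $\chi(E_i',F_j')=\delta_{ij}$, and the form $\chi$ is nondegenerate: its matrix on the basis $([F_j'])$ is unitriangular by \eqref{eq_bilinear}. Hence $g([E_i])=[E_i']$.

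Each step is essentially formal. The only point needing care is Step 1, namely checking that $\chi$ on the dual basis is determined by $\psi$ alone. This is exactly what Lemma \ref{lem_isometry1}, combined with Lemma \ref{lem_ex_0}, provides, with the same index ordering used for both collections.
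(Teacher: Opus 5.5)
Your proof is correct and follows the paper's argument. You define $g$ by $g([F_i])=[F_i']$, use the description of $\chi$ on the dual bases (Lemma~\ref{lem_isometry1}, equivalently Lemma~\ref{lem_ex_0}) to see that $g$ is an isometry, and observe that $\psi\circ g=\psi$. Your Step~3, which deduces $g([E_i])=[E_i']$ from $\chi(E_i,F_j)=\delta_{ij}$ and the unitriangularity of the Gram matrix, makes explicit a point the paper's proof leaves implicit.
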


\begin{proof}
By assumption,  
\begin{equation} \label{eq_psi_F}
\psi([F_i])=\psi([F_i']) \,\,\,\, \text{for all} \,\,\,\,1 \leq i \leq n\,.
\end{equation}
Since the Euler form is described as in
\eqref{eq_euler_form} for both $\bF$ and $\bF'$, we deduce that
\begin{equation} \label{eq_chi_F}
    \chi(F_i,F_j) = \chi(F_i', F_j')\,\,\,\, \text{for all}\,\,\,\, 1 \leq i,j \leq n \,.
\end{equation}
Since both $([F_i])_{1 \leq i \leq n}$ and $([(F_i)'])_{1 \leq i \leq n}$ are bases of $K(Z)$, there exists a unique automorphism $g: K(Z) \rightarrow K(Z)$ such that $g([F_i])=[(F_i)']$ for all $1 \leq i \leq n$.
By \eqref{eq_psi_F} and \eqref{eq_chi_F}, $g$ is an isometry of $(K(Z), \chi(-,-))$ commuting with $\psi=(r,d)$, and so is an element of the orthogonal group $O(Z)$ by Definition \ref{def_orthogonal}.
\end{proof}

As reviewed in \S\ref{sec_geometric_tilting}, given a geometric helix 
 on a del Pezzo surface, one can produce new geometric helices 
 by applying the following operations: rotation, shifting, orthogonal reordering, tensoring by a line bundle, and tilting. The following result, which is the main theorem of this paper, referred to as Theorem \ref{thm_main_intro} in the Introduction, shows conversely that any two geometric helices are related by a sequence of these operations.

\begin{theorem} \label{thm_main}
    Let $Z$ be a del Pezzo surface. Any two geometric helices on $Z$ are related by a sequence of the following operations:
     rotation, shifting, orthogonal reordering, tensoring by a line bundle, and tilting.
\end{theorem}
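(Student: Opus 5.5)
Let $\bH$ and $\bH'$ be two geometric helices on $Z$, and fix threads $\bE$ and $\bE'$. By Lemma \ref{lem_bundles} and Proposition \ref{prop_bundles}, after shifting we may assume both are very strong exceptional collections of vector bundles. The plan has four steps.

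\textbf{Step 1: make the seeds combinatorially equivalent.} By Theorem \ref{thm_q_painleve}, the seeds $\bs(\bE)$ and $\bs(\bE')$ are of $q$-Painlev\'e type. By Lemma \ref{lem_isometry2}, both intersection forms on $K=\mathrm{Ker}(\psi)$ equal $-\chi|_K$, so they are isometric (even equal). Theorem \ref{thm_q_P} then gives a sequence of mutations $\mu$ and $g\in SL(2,\bZ)$ with $\psi(\bs(\bE'))=g\circ\psi(\mu(\bs(\bE)))$.

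\textbf{Step 2: realize $\mu$ by tilts.} Apply Proposition \ref{prop_tilting} one mutation at a time. This produces a very strong exceptional collection $\bE_1$, obtained from $\bE$ by rotations and tilts, with $\bs(\bE_1)=f(\mu(\bs(\bE)))$ for some $f\in\mathrm{Aut}_K(K(Z),\langle-,-\rangle)$. One must check that $\psi\circ f$ factors through an element of $SL(2,\bZ)$ on $\bZ^2$. This holds because $f$ is a composite of the maps $T_\alpha$, and $\psi\circ T_\alpha$ is the symplectic transvection of $\bZ^2$ along $\psi(\alpha)$. Hence $\psi(\bs(\bE'))=g'\circ\psi(\bs(\bE_1))$ with $g'\in SL(2,\bZ)$.

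Each mutation in $\mu$ is taken at some index $j$, with sign $\pm$. Proposition \ref{prop_tilting} only gives $\mu_j^+$ and $\mu_1^-$ directly, so $\mu_j^-$ has to be handled separately. I would try to obtain it by relabelling via rotation, or as the inverse of a left tilt. I expect this bookkeeping between signs, indices and the ambiguity up to $\mathrm{Aut}_K$ to be the main obstacle. The saving point is that $\mathrm{Aut}_K$ commutes with mutations and only changes $\psi$ by $SL(2,\bZ)$.

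\textbf{Step 3: match the $\psi$-images of the dual classes.} Lemma \ref{lem_1} replaces $\bH'$, by rotations and orthogonal reorderings, with a thread $\bE''$ satisfying $\psi([F_i''])=f'(\psi([F^{(1)}_i]))$ for all $i$, where $f'\in SL(2,\bZ)$ and $F^{(1)}$ denotes the dual of $\bE_1$. Lemma \ref{lem_2} then tensors $\bE_1$ by a line bundle $L$ so that $\psi$ of the dual classes of $L\otimes\bE_1$ agrees with that of $\bE''$ termwise.

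\textbf{Step 4: remove the orthogonal group element.} Lemma \ref{lem_3} gives $w\in O(Z)$ with $[E''_i]=w([L\otimes E_{1,i}])$. By Proposition \ref{prop_weyl_affine_gen}, write $w=u\circ t$ with $u\in W(Z)$ and $t\in\Pic^0(Z)$; the $t$ part is just tensoring by a line bundle. The finite Weyl group $W(Z)$ consists of reflections in finite roots, which are affine roots, so $W(Z)\subset\widehat W(Z)$. Theorem \ref{thm_weyl_action} therefore realizes the action of $u$ on helices (Lemma \ref{lem_weyl_action}) by tilts and orthogonal reorderings.

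To finish, note that $u\cdot(t\otimes L\otimes\bH_1)$ and the helix of $\bE''$ have threads with the same $K$-classes and both consist of sheaves. Exceptional sheaves are determined by their classes \cite[Corollary 2.5]{Gor}, so these helices coincide. Chaining Steps 1–4 connects $\bH$ to $\bH'$ by the allowed operations.
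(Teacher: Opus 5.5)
Your proposal is correct and follows essentially the same route as the paper's proof. The chain is the same: Theorem \ref{thm_q_P} with Proposition \ref{prop_tilting}, then Lemmas \ref{lem_1}, \ref{lem_2} and \ref{lem_3}, the decomposition $O(Z)=W(Z)\ltimes\Pic^0(Z)$ together with Theorem \ref{thm_weyl_action}, and finally uniqueness of exceptional sheaves by their class. Your extra bookkeeping is sound and somewhat more careful than the paper's, and your worry about $\mu_j^-$ dissolves: one has $\mu_j^+(\bs)=T_{e_j}(\mu_j^-(\bs))$ with $T_{e_j}\in\mathrm{Aut}_K(K(Z),\langle-,-\rangle)$, and this map induces a transvection in $SL(2,\bZ)$ on $\psi$-images.
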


\begin{proof}
Let $\bH$ and $\bH'$ be two geometric helices in $D(Z)$. We prove that they are related by the operations described in Theorem \ref{thm_main}. 
By Lemma \ref{lem_bundles} and 
Proposition \ref{prop_bundles}, each of $\bH$ and $\bH'$ differs by a global shift from a geometric helix of bundles. Hence, after applying suitable global shifts, we may assume that both $\bH$ and $\bH'$ are geometric helices of bundles.
We prove that $\bH$ and $\bH'$ 
are related by rotation, orthogonal reordering, tensoring by a line bundle, and tilting.  These operations preserve geometric helices of bundles: this is immediate for   rotation, orthogonal reordering, tensoring by a line bundle, and follows from Lemma \ref{lem_shift} in the case of tilting.

Let $\bE$ and $\bE'$ be very strong exceptional collections obtained as threads of $\bH$ and $\bH'$ respectively. 
Consider the associated seeds $\bs(\bE)$ and $\bs(\bE')$ as in Definition \ref{def_seeds_from_ex}. 
By Theorem \ref{thm_q_painleve}, their mutation
equivalence classes $\mathscr{S}(\bE)$ and $\mathscr{S}(\bE')$ are both of $q$-Painlev\'e type. 
Moreover, Lemma \ref{lem_isometry2} shows that $(K,(-,-)_{\mathscr{S}(\bE)})$ and 
$(K,(-,-)_{\mathscr{S}(\bE')})$ are both isometric to 
\[ 
(K_Z^\perp \oplus \bZ \delta, -\chi(-,-))\,.\]
By Theorem \ref{thm_q_P} together with the correspondence between seed mutations and tilting operations of geometric helices given in Proposition \ref{prop_tilting}, 
there exists a thread $\mu(\bE)$ in a geometric helix obtained from $\bH$ by a sequence of tilting operations and orthogonal reorderings, and an automorphism 
$g \in  SL(2,\bZ)$ such that 
\[ \psi(\bs(\bE')) = g \circ \psi(\mu(\bs(\bE))) \,.\]
Replacing $\bE$ by $\mu(\bE)$, we may assume that 
\[ \psi(\bs(\bE')) = g \circ \psi(\bs(\bE)) \,.\]

We then apply Lemma \ref{lem_1} and Lemma \ref{lem_2}: after replacing $\bE'$ by a thread of a helix obtained from $\bH'$ by a sequence of 
rotations, orthogonal reorderings, and tensorings by a line bundle, we may arrange that
\[ \psi([F_i'])=\psi([F_i]) \text{    for all  }
1 \leq i\leq n\,,\]
where $\bF'=(F_n', \dots, F_1')$ and $\bF=(F_n, \dots, F_1)$ are the exceptional collections dual to $\bE'$ and $\bE$ respectively.

Lemma \ref{lem_3} then shows that there exists an element $g \in O(Z)$ of the 
orthogonal group of $Z$ such that $[E_i'] = g([E_i])$ for all $1 \leq i\leq n$. 
By Proposition \ref{prop_weyl_affine_gen}, we have  $O(Z)=W(Z)\ltimes \Pic^0(Z)$, where $\Pic^0(Z)$ acts on $K(Z)$ by tensor product with line bundles. 
Therefore, up to tensoring $\bE'$ by a line bundle, we may assume that $g$ is an element  of the finite Weyl group $W(Z)$, and so in particular of the affine Weyl group $\widehat{W}(Z)$.
By  Theorem \ref{thm_weyl_action},
there exists a very strong exceptional collection $\bE''=(E_1'',\dots, E_n'')$ such that \[ [E_i'']=g([E_i])\,\,\, \text{for all}\,\,\, 1 \leq i \leq n\,.\]
Moreover, the geometric helix generated by $\bE''$ is obtained from the geometric 
helix generated by $\bE$ by a sequence of tilting operations and orthogonal reorderings. 
Therefore, up to replacing $\bE$ by $\bE''$, we may assume that $[E_i']=[E_i]$ for all $1 \leq i \leq n$.

Finally, by \cite[Corollary 2.5]{Gor}, an exceptional sheaf is uniquely determined by its class in $K(Z)$. Therefore, since $\bE$ and $\bE'$ are very strong exceptional collections of bundles, $[E_i']=[E_i]$ implies $E_i'=E_i$, so $\bE'=\bE$ and this completes the proof.
\end{proof}

\begin{remark}
The group of auto-equivalences $\Aut(D(Z))$ of $D(Z)$ naturally acts on the set of geometric helices.
Since $\omega_Z^{-1}$ is ample, it follows from \cite[Theorem 3.1]{bondal_orlov} that
$\Aut(D(Z))=\Aut(Z) \rtimes (\Pic(Z) \times \bZ[1])$, where $\Aut(Z)$ is the group of automorphisms of $Z$.
We now explain why $\Aut(Z)$ does not appear in the statement of Theorem \ref{thm_main}. 
First, the exceptional objects are rigid, and so the  connected component $\Aut^0(Z)$ of the identity acts trivially on exceptional collections. Hence, the action of $\Aut(Z)$ factors through an action of the discrete quotient $\Aut(Z)/\Aut^0(Z)$. 
By \cite[Proposition 8.2.39]{dolgachev}, this quotient injects via its action on $K_Z^\perp$ in the finite Weyl group $W(Z)$.
By Theorem \ref{thm_weyl_action}, $W(Z)$ acts on geometric helices via tilting operations and orthogonal reorderings.
Thus, the action of $\Aut(Z)$ is already accounted for by these operations.
\end{remark}

\subsection{Orthogonal group and admissible isometries}

Let $Z$ be a del Pezzo surface and $\bE$ a very strong exceptional collection on $Z$. In this final section, we give an interpretation of the orthogonal group $O(Z)$ of $Z$ introduced in Definition \ref{def_orthogonal} in terms of the geometry of a framed toric model $(Y,D)$ for $\bs(\bE)$ as in \S\ref{sec_log_CY}. This result is not used in the proof of the main result in \S \ref{sec_classification} above but is of independent interest. 

Given a log Calabi--Yau surface $(Y,D)$, consisting of a smooth projective surface $Y$ and a singular reduced anticanonical divisor $D$, the group of \emph{admissible isometries} $\Gamma(Y,D)$ of $(Y,D)$ is the group of isometries of $NS(Y)$ fixing the classes of the irreducible components of $D$ and preserving the nef cone of a generic deformation of $D$ -- see \cite[Definition 9.1]{friedman2015geometry} and \cite[Lemma 4.5]{GHK}. By \cite[Theorem 5.15]{GHK}, $\Gamma(Y,D)$ is also the global monodromy group of deformations of $(Y,D)$. 

For the following statement, observe that the restriction to $\Lambda_{(Y,D)}$ as in \eqref{eq_lambda} induces an injection of $\Gamma(Y,D)$ in the group of isometries of $\Lambda_{(Y,D)}$ since $\Gamma(Y,D)$ fixes the classes of the irreducible components of $D$. Moreover, recall from \eqref{eq_isom} that there exists a natural identification $ K \simeq \Lambda_{(Y,D)}$.

\begin{prop} \label{prop_admissible}
Let $Z$ be a del Pezzo surface, $\bE$ a very strong exceptional collection on $Z$ and $(Y,D)$ a framed toric model of the seed $\bs(\bE)$.  Then, the actions of the orthogonal group $O(Z)$ and of the group of admissible isometries $\Gamma(Y,D)$ on $ K \simeq \Lambda_{(Y,D)}$  induce an isomorphism 
\[ O(Z) \simeq \Gamma(Y,D) \,.\]
\end{prop}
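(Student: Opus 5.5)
We identify both groups with subgroups of the isometry group of the lattice $K=\mathrm{Ker}(\psi)\simeq \Lambda_{(Y,D)}$ and show that the images coincide. By Lemma \ref{lem_isometry2}, the intersection form on $\Lambda_{(Y,D)}$ is $-\chi$ restricted to $K=K_Z^\perp\oplus\bZ\delta$. Restriction to $K$ embeds $\Gamma(Y,D)$ into $O(K)$, as recalled just before the statement. For $O(Z)$ we need a separate injectivity argument, and then we compare images.

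\textbf{Step 1: $O(Z)\to O(K)$ is injective.} Take $g\in O(Z)$ acting trivially on $K$. Then $g$ fixes $\delta$. Since $\chi$ is nondegenerate on $K(Z)$ (the Gram matrix in the basis $[F_i]$ is unitriangular) and $g$ preserves $\psi$, consider $g(x)-x$ for any $x$. It lies in $K$ because $\psi$ is preserved, and it is $\chi$-orthogonal to $K$. The orthogonal of $K_Z^\perp$ inside $K$ is $\bZ\delta$, and this is compatible with $K$ only if $g(x)-x\in \bZ\delta$. Pairing with $\delta$ gives $\chi(\delta,g(x)-x)=r(g(x))-r(x)=0$, which is no constraint, so we argue instead with the decomposition $O(Z)=W(Z)\ltimes\Pic^0(Z)$ of Proposition \ref{prop_weyl_affine_gen}. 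The map $O(Z)\to O(K_Z^\perp)$ has kernel $\Pic^0(Z)$, and by Lemma \ref{lem_tensor} the element $t_L$ acts on $K$ by $C\mapsto C$, $\delta\mapsto\delta$ only shifted by $C\cdot D$. So $t_L$ is trivial on $K$ iff $D=0$, using nondegeneracy on $K_Z^\perp$. Also $W(Z)\to O(K_Z^\perp)$ is injective. Together these give injectivity.

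\textbf{Step 2: $O(Z)\subseteq\Gamma(Y,D)$.} By Theorem \ref{thm_weyl_affine} and Theorem \ref{thm_weyl}, $\widehat W(Z)=W_{\mathscr S(\bE)}=W$ is the Weyl group of $(Y,D)$, and $W\subseteq\Gamma(Y,D)$ by \cite{friedman2015geometry,GHK}. It remains to treat $\Pic^0(Z)$, or equivalently $O(Z)$ modulo $\widehat W(Z)$, which is described in Proposition \ref{prop_orthogonal_affine_weyl}. In case (i) there is nothing left to do. For the blow-ups of $\bP^2$ in one or two points, we must show that the extra $\bZ$ (tensoring by a generator of $\Pic^0$ outside the root part) is admissible. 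The plan is to realize it by a sequence of tilting operations, rotations and tensoring, i.e. by a mutation sequence $\mu$ with $\mu(\bs(\bE))=t_L\circ f(\bs(\bE))$. This mutation sequence gives a change of toric model on the same $(Y,D)$. The induced map on $NS(Y)$ is then a monodromy, i.e. lies in $\Gamma(Y,D)$ by \cite[Theorem 5.15]{GHK}. Concretely, the tensored helix $L\otimes\bH$ is related to $\bH$ by Theorem \ref{thm_main}, and Proposition \ref{prop_tilting} converts this into seed mutations up to $\mathrm{Aut}_K$, which acts trivially on $K$. Following the framed toric models along the mutations via the GHK cluster construction identifies the action on $\Lambda_{(Y,D)}$ with $t_L|_K$, and this action is admissible.

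\textbf{Step 3: $\Gamma(Y,D)\subseteq O(Z)$.} An admissible isometry restricts to an isometry of $(K,-\chi)$. It therefore preserves the radical $\bZ\delta$ and induces an isometry of $K_Z^\perp\simeq K/\bZ\delta$. It must fix $\delta$, because $\delta$ corresponds to the class of the fiber $D$ (the nef class $\sum c_i$) and admissible isometries preserve the nef cone. The induced isometry of $K_Z^\perp$ extends to $NS(Z)$ fixing $K_Z$ exactly when it lies in $W(Z)$ (Proposition \ref{prop_finite_weyl}). Then any isometry of $K$ fixing $\delta$ and inducing an element of $W(Z)$ is of the form $w\circ t_L$, because an isometry fixing $\delta$ and trivial on $K/\delta$ is $C\mapsto C+(C\cdot D)\delta$ for some $D\in K_Z^\perp$.

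\textbf{Main obstacle.} The hard part is showing that the induced isometry of $K_Z^\perp$ lies in $W(Z)$ and is not just an arbitrary isometry. This matters for the one- and two-point blow-ups, where $\Gamma/W$ is governed by reflections like $\alpha^2=-8,-14$. I would compare the explicit descriptions of $\Gamma(Y,D)$ in \cite[\S 9]{friedman2015geometry} for the ten Sakai types with Proposition \ref{prop_orthogonal_affine_weyl}. Case by case, the quotient $\Gamma/W$ should match $O(Z)/\widehat W(Z)$, namely trivial, $\bZ$, or $\bZ$.
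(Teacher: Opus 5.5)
Your Step 1 is correct once you switch to the decomposition $O(Z)=W(Z)\ltimes\Pic^0(Z)$, and Step 2 is fine in case (i). The two inclusions nevertheless have genuine gaps. \textbf{The linear part.} The point you flag as the main obstacle is the heart of the statement, and you leave it unproved. That point is that the isometry of $K/\bZ\delta\simeq K_Z^\perp$ induced by an admissible isometry lies in $W(Z)$. A case-by-case comparison with \cite[\S 9]{friedman2015geometry} is only a plan. The missing idea is lattice-theoretic. If $L$ is a saturated sublattice of a unimodular lattice $L_0$, the isometries of $L$ that extend to $L_0$ acting trivially on $L^\perp$ are exactly those acting trivially on the discriminant group $L^\star/L$, so this group does not depend on $L_0$. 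By Proposition \ref{prop_finite_weyl}, $W(Z)$ is this group for $K_Z^\perp\subset NS(Z)$. By \cite[Lemma 9.18]{friedman2015geometry}, $\Gamma(Y,D)=G\ltimes\overline{\Lambda}$, where $G$ is the same kind of group for $\overline{\Lambda}\simeq K_Z^\perp$ inside a unimodular lattice $\Lambda_0$ of type $E_8$. Hence $G=W(Z)$ with no case analysis, which is exactly how the paper argues.

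\textbf{The translation part.} Here two of your steps fail. First, the last sentence of Step 3 is false as stated. An isometry of $K$ fixing $\delta$ and trivial on $K/\bZ\delta$ has the form $C\mapsto C+\phi(C)\delta$ for an arbitrary $\phi\in\Hom(K_Z^\perp,\bZ)$. It has the form $C\mapsto C+(C\cdot D)\delta$ with $D\in K_Z^\perp$ only when $\phi$ lies in the image of $K_Z^\perp$ in its dual. That image is a proper subgroup whenever $K_Z^\perp$ is not unimodular. For example, for $Z=\bP^1\times\bP^1$, take $\phi(\beta)=1$ with $\beta^2=-2$. To exclude such $\phi$ you must use that the element comes from an isometry of the unimodular lattice $NS(Y)$ fixing the boundary classes. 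Friedman's lemma packages this by saying the translation subgroup of $\Gamma(Y,D)$ is exactly $\overline{\Lambda}$.

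Second, in Step 2, for the blow-ups of $\bP^2$ in one or two points, your argument for $\Pic^0(Z)\subset\Gamma(Y,D)$ is vacuous. Theorem \ref{thm_main} includes tensoring by a line bundle among its allowed operations. It therefore relates $L\otimes\bH$ to $\bH$ trivially and produces no sequence of seed mutations realizing $t_L$. Even granting such a sequence, you assert without argument that the resulting change of toric model induces an admissible isometry equal to $t_L|_K$. In the paper this inclusion comes for free: Friedman's lemma supplies the embedding $\overline{\Lambda}\hookrightarrow\Gamma(Y,D)$. After identifying $\bZ[D]=\bZ\delta$ and $\overline{\Lambda}=K_Z^\perp$, one compares its explicit formula with \eqref{eq_t_L}.
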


\begin{proof}
By Theorem \ref{thm_q_painleve}, the seed $\bs(\bE)$ is of $q$-Painlev\'e type. In particular, 
the intersection form $(-,-)_{\mathscr{S}(\bE)}$ is negative semi-definite but not negative definite. Hence, \cite[Lemma 9.18]{friedman2015geometry} applies: there exists an orthogonal decomposition $\Lambda_{(Y,D)} = \bZ[D] \oplus \overline{\Lambda}$, together with an injection $\overline{\Lambda} \rightarrow \Gamma(Y,D)$, inducing a semi-direct product decomposition 
\[ \Gamma(Y,D)= G \ltimes \overline{\Lambda} \,,\]
where $G$ is the group of isometries of $\overline{\Lambda}$ which extend to isometries of a unimodular lattice $\Lambda_0$ of type $E_8$ containing $\overline{\Lambda}$ as a saturated sublattice. 
On the other hand, by Proposition \ref{prop_weyl_affine_gen}, we have
\[ O(Z)=W(Z)\ltimes \Pic^0(Z) \,. \]
Thus, to prove Proposition \ref{prop_admissible}, it suffices to identify these semi-direct product descriptions of $\Gamma(Y,D)$ and $O(Z)$.
Using the identifications $\Lambda_{(Y,D)} = K= \mathrm{Ker}(\psi)= \bZ \delta \oplus K_Z^\perp$, one checks that $\bZ [D]=\bZ \delta$ and $\overline{\Lambda}=K_Z^\perp$.
Moreover, comparing \eqref{eq_t_L} with the explicit description of the injection $\overline{\Lambda} \rightarrow \Gamma(Y,D)$ given in the proof of \cite[Lemma 9.18]{friedman2015geometry}, one sees that the action by tensor product of $\Pic^0(Z)$ on $K_Z^\perp$ coincides with the action of $\overline{\Lambda}$ on $\Lambda_{(Y,D)}$. 
It remains to identify $G$ and $W(Z)$ as groups acting by isometries on $\overline{\Lambda}=K_Z^\perp$. To do this, note that if $L$ is a lattice, realized as a saturated sublattice of an unimodular lattice $L_0$, then the group of isometries of $L$ that extends to  isometries of $L_0$ acting trivially on $L^\perp$ is independent of $L_0$: indeed, it is
precisely the group of isometries of $L$ acting trivially on the discriminant group $L^\star/L$. By definition, $G$ is the group of isometries of $\overline{\Lambda}$ that extend to isometries of the unimodular lattice $\Lambda_0$ acting trivially on $\overline{\Lambda}^\perp$. On the other hand, by Proposition \ref{prop_finite_weyl}, $W(Z)$ is the precisely the group of isometries of $K_Z^\perp = \overline{\Lambda}$ that extend to isometries of the unimodular lattice $NS(Z)$ acting trivially on $\bQ K_Z \cap NS(Z)=(K_Z^\perp)^\perp$. Hence, we conclude that $G=W(Z)$ and this completes the proof.
\end{proof}

\begin{remark}
    Let $\bs$ be a seed of $q$-Painlev\'e type and let $(Y,D)$ be a framed toric model for $\bs$. After contracting any exceptional curve contained in $D$, we may assume that no component of $D$ is exceptional.
    Then, it follows from \cite[Proposition 3.22]{Mizuno} and \cite[Proposition 2]{Sak} that $Y$ is isomorphic to a blow-up of $\bP^2$ in nine points, and that
    $D$ is an anticanonical cycle of $(-2)$-curves $D_i$. In \cite{Mizuno, Sak}, the \emph{Cremona group} $\mathrm{Cr}(Y)$ is defined to be the group of isometries of $NS(Y)$ which fix the class of $D$ and preserve the nef cone of a generic deformation. In contrast to admissible isometries, elements of $\mathrm{Cr}(Y)$ need not preserve the individual classes of the divisors $D_i$.
    The action of $\mathrm{Cr}(Y)$ by permutations on the components $D_i$ yields a semi-direct product decomposition
   \[ \mathrm{Cr}(Y) = H \ltimes \Gamma(Y,D)\,,\]
   where $H$ is a finite group described explicitly in \cite[Theorem 26]{Sak} and \cite[Theorem 3.34]{Mizuno}. 
Although Proposition \ref{prop_admissible} identifies $\Gamma(Y,D)$ with the orthogonal group $O(Z)$ of the corresponding del Pezzo surface $Z$, it is not clear to us whether the entire Cremona group $\mathrm{Cr}(Y)$ admits a similarly natural interpretation in terms of $Z$.
\end{remark}

%%%%%%%%%%%%%%%%%%%%%%%%%%%%%%%%%%%%%%%%%%%%%%%
%%%%%%%%%%%%%%%%%%%%%%%%%%%%%%%%%%%%%%%%%%%%%%%
%%%%%%%%%%%%%%%%%%%%%%%%%%%%%%%%%%%%%%%%%%%%%%%
%%%%%%%%%%%%%%%%%%%%%%%%%%%%%%%%%%%%%%%%%%%%%%%

\bibliographystyle{plain}
\bibliography{bibliography}

@article {mirror_orbifold,
    AUTHOR = {Akhtar, Mohammad and Coates, Tom and Corti, Alessio and
              Heuberger, Liana and Kasprzyk, Alexander and Oneto, Alessandro
              and Petracci, Andrea and Prince, Thomas and Tveiten, Ketil},
     TITLE = {Mirror symmetry and the classification of orbifold del {P}ezzo
              surfaces},
   JOURNAL = {Proc. Amer. Math. Soc.},
  FJOURNAL = {Proceedings of the American Mathematical Society},
    VOLUME = {144},
      YEAR = {2016},
    NUMBER = {2},
     PAGES = {513--527},
      ISSN = {0002-9939,1088-6826},
   MRCLASS = {14J26 (52B20)},
  MRNUMBER = {3430830},
MRREVIEWER = {Makiko\ Mase},
       DOI = {10.1090/proc/12876},
       URL = {https://doi.org/10.1090/proc/12876},
}

@article {mutations,
    AUTHOR = {Akhtar, Mohammad and Coates, Tom and Galkin, Sergey and
              Kasprzyk, Alexander M.},
     TITLE = {Minkowski polynomials and mutations},
   JOURNAL = {SIGMA Symmetry Integrability Geom. Methods Appl.},
  FJOURNAL = {SIGMA. Symmetry, Integrability and Geometry. Methods and
              Applications},
    VOLUME = {8},
      YEAR = {2012},
     PAGES = {Paper 094, 17},
      ISSN = {1815-0659},
   MRCLASS = {52B20 (14J33 14M25 16S34)},
  MRNUMBER = {3007265},
MRREVIEWER = {Nathan\ Owen\ Ilten},
       DOI = {10.3842/SIGMA.2012.094},
       URL = {https://doi.org/10.3842/SIGMA.2012.094},
}

@article{arguz2026mock,
  title={Mock modularity of log {G}romov--{W}itten invariants: the mirror to $\mathbb{P}^2$},
  author={Arg{\"u}z, H{\"u}lya},
  journal={arXiv preprint arXiv:2602.08153},
  year={2026}
}

@article {ABquivers,
    AUTHOR = {Arg\"uz, H\"ulya and Bousseau, Pierrick},
     TITLE = {Quivers and curves in higher dimension},
   JOURNAL = {Trans. Amer. Math. Soc.},
  FJOURNAL = {Transactions of the American Mathematical Society},
    VOLUME = {378},
      YEAR = {2025},
    NUMBER = {1},
     PAGES = {389--420},
      ISSN = {0002-9947,1088-6850},
   MRCLASS = {14N35},
  MRNUMBER = {4840309},
       DOI = {10.1090/tran/9232},
       URL = {https://doi.org/10.1090/tran/9232},
}

@article {aspinwall,
    AUTHOR = {Aspinwall, Paul S. and Melnikov, Ilarion V.},
     TITLE = {D-branes on vanishing del {P}ezzo surfaces},
   JOURNAL = {J. High Energy Phys.},
  FJOURNAL = {Journal of High Energy Physics. A SISSA Journal},
      YEAR = {2004},
    NUMBER = {12},
     PAGES = {042, 30},
      ISSN = {1126-6708,1029-8479},
   MRCLASS = {81T30 (14J81 16G20 18E30)},
  MRNUMBER = {2128461},
MRREVIEWER = {Bal\'azs\ Szendr\H oi},
       DOI = {10.1088/1126-6708/2004/12/042},
       URL = {https://doi.org/10.1088/1126-6708/2004/12/042},
}

@article {aspinwall2,
    AUTHOR = {Aspinwall, Paul S. and Fidkowski, Lukasz M.},
     TITLE = {Superpotentials for quiver gauge theories},
   JOURNAL = {J. High Energy Phys.},
  FJOURNAL = {Journal of High Energy Physics. A SISSA Journal},
      YEAR = {2006},
    NUMBER = {10},
     PAGES = {047, 25},
      ISSN = {1126-6708,1029-8479},
   MRCLASS = {81T30 (14J81 18E30 81T13)},
  MRNUMBER = {2266656},
MRREVIEWER = {Yang-Hui\ He},
       DOI = {10.1088/1126-6708/2006/10/047},
       URL = {https://doi.org/10.1088/1126-6708/2006/10/047},
}

@article {atiyah,
    AUTHOR = {Atiyah, Michael F.},
     TITLE = {Vector bundles over an elliptic curve},
   JOURNAL = {Proc. London Math. Soc. (3)},
  FJOURNAL = {Proceedings of the London Mathematical Society. Third Series},
    VOLUME = {7},
      YEAR = {1957},
     PAGES = {414--452},
      ISSN = {0024-6115,1460-244X},
   MRCLASS = {14.55 (14.20)},
  MRNUMBER = {131423},
MRREVIEWER = {F.\ Hirzebruch},
       DOI = {10.1112/plms/s3-7.1.414},
       URL = {https://doi.org/10.1112/plms/s3-7.1.414},
}

@article {AKO,
    AUTHOR = {Auroux, Denis and Katzarkov, Ludmil and Orlov, Dmitri},
     TITLE = {Mirror symmetry for del {P}ezzo surfaces: vanishing cycles and
              coherent sheaves},
   JOURNAL = {Invent. Math.},
  FJOURNAL = {Inventiones Mathematicae},
    VOLUME = {166},
      YEAR = {2006},
    NUMBER = {3},
     PAGES = {537--582},
      ISSN = {0020-9910,1432-1297},
   MRCLASS = {14J32 (14A22 14J26 18E30 53D12 53D40)},
  MRNUMBER = {2257391},
MRREVIEWER = {Richard\ P.\ Thomas},
       DOI = {10.1007/s00222-006-0003-4},
       URL = {https://doi.org/10.1007/s00222-006-0003-4},
}

@article {BMP,
    AUTHOR = {Beaujard, Guillaume and Manschot, Jan and Pioline, Boris},
     TITLE = {Vafa-{W}itten invariants from exceptional collections},
   JOURNAL = {Comm. Math. Phys.},
  FJOURNAL = {Communications in Mathematical Physics},
    VOLUME = {385},
      YEAR = {2021},
    NUMBER = {1},
     PAGES = {101--226},
      ISSN = {0010-3616,1432-0916},
   MRCLASS = {14N35 (81T30)},
  MRNUMBER = {4275783},
       DOI = {10.1007/s00220-021-04074-2},
       URL = {https://doi.org/10.1007/s00220-021-04074-2},
}

@article {BGM,
    AUTHOR = {Bershtein,  Mikhail  and Gavrylenko, Pavlo and Marshakov, Andrei},
     TITLE = {Cluster integrable systems, {$q$}-{P}ainlev\'e{} equations and
              their quantization},
   JOURNAL = {J. High Energy Phys.},
  FJOURNAL = {Journal of High Energy Physics},
      YEAR = {2018},
    NUMBER = {2},
     PAGES = {077, front matter+33},
      ISSN = {1126-6708,1029-8479},
   MRCLASS = {81R12},
  MRNUMBER = {3789593},
       DOI = {10.1007/jhep02(2018)077},
       URL = {https://doi.org/10.1007/jhep02(2018)077},
}

@article {blanc,
    AUTHOR = {Blanc, J\'er\'emy},
     TITLE = {Symplectic birational transformations of the plane},
   JOURNAL = {Osaka J. Math.},
  FJOURNAL = {Osaka Journal of Mathematics},
    VOLUME = {50},
      YEAR = {2013},
    NUMBER = {2},
     PAGES = {573--590},
      ISSN = {0030-6126},
   MRCLASS = {14E07 (53D05)},
  MRNUMBER = {3080816},
MRREVIEWER = {Paul\ Michael\ Reschke},
       URL = {http://projecteuclid.org/euclid.ojm/1371833501},
}

@article {bondal_orlov,
    AUTHOR = {Bondal, Alexei and Orlov, Dmitri},
     TITLE = {Reconstruction of a variety from the derived category and
              groups of autoequivalences},
   JOURNAL = {Compositio Math.},
  FJOURNAL = {Compositio Mathematica},
    VOLUME = {125},
      YEAR = {2001},
    NUMBER = {3},
     PAGES = {327--344},
      ISSN = {0010-437X,1570-5846},
   MRCLASS = {18E30 (14F05)},
  MRNUMBER = {1818984},
MRREVIEWER = {Richard\ P.\ Thomas},
       DOI = {10.1023/A:1002470302976},
       URL = {https://doi.org/10.1023/A:1002470302976},
}

@article {bridgeland_tstructures,
    AUTHOR = {Bridgeland, Tom},
     TITLE = {t-structures on some local {C}alabi-{Y}au varieties},
   JOURNAL = {J. Algebra},
  FJOURNAL = {Journal of Algebra},
    VOLUME = {289},
      YEAR = {2005},
    NUMBER = {2},
     PAGES = {453--483},
      ISSN = {0021-8693,1090-266X},
   MRCLASS = {14J32 (14J81 18E30)},
  MRNUMBER = {2142382},
MRREVIEWER = {Andrei\ D.\ Halanay},
       DOI = {10.1016/j.jalgebra.2005.03.016},
       URL = {https://doi.org/10.1016/j.jalgebra.2005.03.016},
}

@article{bridgeland2024invariant,
  title={Invariant stability conditions on certain {C}alabi-{Y}au threefolds},
  author={Bridgeland, Tom and Del Monte, Fabrizio and Giovenzana, Luca},
  journal={arXiv preprint arXiv:2412.08531},
  year={2024}
}

@article {BS,
    AUTHOR = {Bridgeland, Tom and Stern, David},
     TITLE = {Helices on del {P}ezzo surfaces and tilting {C}alabi-{Y}au
              algebras},
   JOURNAL = {Adv. Math.},
  FJOURNAL = {Advances in Mathematics},
    VOLUME = {224},
      YEAR = {2010},
    NUMBER = {4},
     PAGES = {1672--1716},
      ISSN = {0001-8708,1090-2082},
   MRCLASS = {14F05 (14J26 16E35 16G20)},
  MRNUMBER = {2646308},
MRREVIEWER = {Johannes\ Walcher},
       DOI = {10.1016/j.aim.2010.01.018},
       URL = {https://doi.org/10.1016/j.aim.2010.01.018},
}

@article {closset,
    AUTHOR = {Closset, Cyril and Del Zotto, Michele},
     TITLE = {On 5d {SCFT}s and their {BPS} quivers part {I}: {B}-branes and
              brane tilings},
   JOURNAL = {Adv. Theor. Math. Phys.},
  FJOURNAL = {Advances in Theoretical and Mathematical Physics},
    VOLUME = {26},
      YEAR = {2022},
    NUMBER = {1},
     PAGES = {37--142},
      ISSN = {1095-0761,1095-0753},
   MRCLASS = {81T40 (14M25 16G20 81T30 81T33 81T60)},
  MRNUMBER = {4504847},
MRREVIEWER = {Chien-Hao\ Liu},
       DOI = {10.4310/atmp.2022.v26.n1.a2},
       URL = {https://doi.org/10.4310/atmp.2022.v26.n1.a2},
}

@article {clossetu,
    AUTHOR = {Closset, Cyril and Magureanu, Horia},
     TITLE = {The {$U$}-plane of rank-one 4d {$N = 2$} {KK} theories},
   JOURNAL = {SciPost Phys.},
  FJOURNAL = {SciPost Physics},
    VOLUME = {12},
      YEAR = {2022},
    NUMBER = {2},
     PAGES = {Paper No. 065, 139},
      ISSN = {2542-4653},
   MRCLASS = {81T40 (81T60)},
  MRNUMBER = {4387554},
       DOI = {10.21468/scipostphys.12.2.065},
       URL = {https://doi.org/10.21468/scipostphys.12.2.065},
}

@article{corti2023cluster,
  title={Cluster varieties and toric specializations of {F}ano varieties},
  author={Corti, Alessio},
  journal={arXiv preprint arXiv:2304.04141},
  year={2023}
}

@article {DWZ,
    AUTHOR = {Derksen, Harm and Weyman, Jerzy and Zelevinsky, Andrei},
     TITLE = {Quivers with potentials and their representations. {I}.
              {M}utations},
   JOURNAL = {Selecta Math. (N.S.)},
  FJOURNAL = {Selecta Mathematica. New Series},
    VOLUME = {14},
      YEAR = {2008},
    NUMBER = {1},
     PAGES = {59--119},
      ISSN = {1022-1824,1420-9020},
   MRCLASS = {16G10 (13F60 16G20 16S38)},
  MRNUMBER = {2480710},
MRREVIEWER = {M\'aty\'as\ Domokos},
       DOI = {10.1007/s00029-008-0057-9},
       URL = {https://doi.org/10.1007/s00029-008-0057-9},
}

@book {dolgachev,
    AUTHOR = {Dolgachev, Igor V.},
     TITLE = {Classical algebraic geometry},
      NOTE = {A modern view},
 PUBLISHER = {Cambridge University Press, Cambridge},
      YEAR = {2012},
     PAGES = {xii+639},
      ISBN = {978-1-107-01765-8},
   MRCLASS = {14-02 (14-01)},
  MRNUMBER = {2964027},
MRREVIEWER = {Arnaud\ Beauville},
       DOI = {10.1017/CBO9781139084437},
       URL = {https://doi.org/10.1017/CBO9781139084437},
}

@article {evans_smith,
    AUTHOR = {Evans, Jonathan David and Smith, Ivan},
     TITLE = {Markov numbers and {L}agrangian cell complexes in the complex
              projective plane},
   JOURNAL = {Geom. Topol.},
  FJOURNAL = {Geometry \& Topology},
    VOLUME = {22},
      YEAR = {2018},
    NUMBER = {2},
     PAGES = {1143--1180},
      ISSN = {1465-3060,1364-0380},
   MRCLASS = {53D35 (14J17 53D42)},
  MRNUMBER = {3748686},
MRREVIEWER = {Alexander\ Fel\cprime shtyn},
       DOI = {10.2140/gt.2018.22.1143},
       URL = {https://doi.org/10.2140/gt.2018.22.1143},
}

@article{fomin2024cyclically,
  title={Cyclically ordered quivers},
  author={{F}omin, {S}ergey and {N}eville, {S}cott},
  journal={arXiv preprint arXiv:2406.03604},
  year={2024}
}

@article{friedman2015geometry,
  title={On the geometry of anticanonical pairs},
  author={{F}riedman, {R}obert},
  journal={arXiv preprint arXiv:1502.02560},
  year={2015}
}

@book {Fultontoric,
    AUTHOR = {Fulton, William},
     TITLE = {Introduction to toric varieties},
    SERIES = {Annals of Mathematics Studies},
    VOLUME = {131},
      NOTE = {The William H. Roever Lectures in Geometry},
 PUBLISHER = {Princeton University Press, Princeton, NJ},
      YEAR = {1993},
     PAGES = {xii+157},
      ISBN = {0-691-00049-2},
   MRCLASS = {14M25 (14-02 14J30)},
  MRNUMBER = {1234037},
MRREVIEWER = {T.\ Oda},
       DOI = {10.1515/9781400882526},
       URL = {https://doi.org/10.1515/9781400882526},
}

@article {GGLP,
    AUTHOR = {Grassi, Antonella and Gugiatti, Giulia and Lutz, Wendelin and
              Petracci, Andrea},
     TITLE = {Reflexive polygons and rational elliptic surfaces},
   JOURNAL = {Rend. Circ. Mat. Palermo (2)},
  FJOURNAL = {Rendiconti del Circolo Matematico di Palermo. Second Series},
    VOLUME = {72},
      YEAR = {2023},
    NUMBER = {6},
     PAGES = {3185--3221},
      ISSN = {0009-725X,1973-4409},
   MRCLASS = {14J33 (14J26 14J27)},
  MRNUMBER = {4622146},
MRREVIEWER = {Dino\ Festi},
       DOI = {10.1007/s12215-023-00922-3},
       URL = {https://doi.org/10.1007/s12215-023-00922-3},
}

@article {Gor,
    AUTHOR = {Gorodentsev, Alexey L.},
     TITLE = {Exceptional bundles on surfaces with a moving anticanonical
              class},
   JOURNAL = {Izv. Akad. Nauk SSSR Ser. Mat.},
  FJOURNAL = {Izvestiya Akademii Nauk SSSR. Seriya Matematicheskaya},
    VOLUME = {52},
      YEAR = {1988},
    NUMBER = {4},
     PAGES = {740--757, 895},
      ISSN = {0373-2436},
   MRCLASS = {14F05},
  MRNUMBER = {966982},
MRREVIEWER = {I.\ Dolgachev},
       DOI = {10.1070/IM1989v033n01ABEH000813},
       URL = {https://doi.org/10.1070/IM1989v033n01ABEH000813},
}

@article {GorKul,
    AUTHOR = {Gorodentsev, Alexey L. and Kuleshov,  Sergey A.},
     TITLE = {Helix theory},
   JOURNAL = {Mosc. Math. J.},
  FJOURNAL = {Moscow Mathematical Journal},
    VOLUME = {4},
      YEAR = {2004},
    NUMBER = {2},
     PAGES = {377--440, 535},
      ISSN = {1609-3321,1609-4514},
   MRCLASS = {14F05 (14J32 14J45 18F20 18F30)},
  MRNUMBER = {2108443},
MRREVIEWER = {Andrei\ D.\ Halanay},
       DOI = {10.17323/1609-4514-2004-4-2-377-440},
       URL = {https://doi.org/10.17323/1609-4514-2004-4-2-377-440},
}

@article {GHKbir,
    AUTHOR = {Gross, Mark and Hacking, Paul and Keel, Sean},
     TITLE = {Birational geometry of cluster algebras},
   JOURNAL = {Algebr. Geom.},
  FJOURNAL = {Algebraic Geometry},
    VOLUME = {2},
      YEAR = {2015},
    NUMBER = {2},
     PAGES = {137--175},
      ISSN = {2313-1691,2214-2584},
   MRCLASS = {14E05 (13F60 14M25)},
  MRNUMBER = {3350154},
MRREVIEWER = {Sergio\ Mathew\ Da Silva},
       DOI = {10.14231/AG-2015-007},
       URL = {https://doi.org/10.14231/AG-2015-007},
}

@article{gugiatti2025mirrors,
  title={On the mirrors of low-degree del {P}ezzo surfaces},
  author={Gugiatti, Giulia and Rota, Franco},
  journal={arXiv preprint arXiv:2506.21758},
  year={2025}
}

@article {GHK,
    AUTHOR = {Gross, Mark and Hacking, Paul and Keel, Sean},
     TITLE = {Moduli of surfaces with an anti-canonical cycle},
   JOURNAL = {Compos. Math.},
  FJOURNAL = {Compositio Mathematica},
    VOLUME = {151},
      YEAR = {2015},
    NUMBER = {2},
     PAGES = {265--291},
      ISSN = {0010-437X,1570-5846},
   MRCLASS = {14J10 (14J26)},
  MRNUMBER = {3314827},
MRREVIEWER = {Makiko\ Mase},
       DOI = {10.1112/S0010437X14007611},
       URL = {https://doi.org/10.1112/S0010437X14007611},
}

@article {hacking,
    AUTHOR = {Hacking, Paul},
     TITLE = {Exceptional bundles associated to degenerations of surfaces},
   JOURNAL = {Duke Math. J.},
  FJOURNAL = {Duke Mathematical Journal},
    VOLUME = {162},
      YEAR = {2013},
    NUMBER = {6},
     PAGES = {1171--1202},
      ISSN = {0012-7094,1547-7398},
   MRCLASS = {14J60 (14D06 14J10)},
  MRNUMBER = {3053568},
MRREVIEWER = {Arvid\ Perego},
       DOI = {10.1215/00127094-2147532},
       URL = {https://doi.org/10.1215/00127094-2147532},
}

@incollection {hacking2,
    AUTHOR = {Hacking, Paul},
     TITLE = {Compact moduli spaces of surfaces and exceptional vector
              bundles},
 BOOKTITLE = {Compactifying moduli spaces},
    SERIES = {Adv. Courses Math. CRM Barcelona},
     PAGES = {41--67},
 PUBLISHER = {Birkh\"auser/Springer, Basel},
      YEAR = {2016},
      ISBN = {978-3-0348-0920-7; 978-3-0348-0921-4},
   MRCLASS = {14J10 (14J29 14J60 58D27)},
  MRNUMBER = {3495111},
MRREVIEWER = {Alan\ Matthew\ Thompson},
}

@article {hacking_keating,
    AUTHOR = {Hacking, Paul and Keating, Ailsa},
     TITLE = {Homological mirror symmetry for log {C}alabi-{Y}au surfaces},
      NOTE = {With an appendix by Wendelin Lutz},
   JOURNAL = {Geom. Topol.},
  FJOURNAL = {Geometry \& Topology},
    VOLUME = {26},
      YEAR = {2022},
    NUMBER = {8},
     PAGES = {3747--3833},
      ISSN = {1465-3060,1364-0380},
   MRCLASS = {53D37 (14B05 14J33 18G70)},
  MRNUMBER = {4562569},
MRREVIEWER = {Donatella\ Iacono},
       DOI = {10.2140/gt.2022.26.3747},
       URL = {https://doi.org/10.2140/gt.2022.26.3747},
}

@article {hacking_keating_2,
    AUTHOR = {Hacking, Paul and Keating, Ailsa},
     TITLE = {Symplectomorphisms of some {W}einstein 4-manifolds},
   JOURNAL = {Geom. Topol.},
  FJOURNAL = {Geometry \& Topology},
    VOLUME = {30},
      YEAR = {2026},
    NUMBER = {2},
     PAGES = {645--699},
      ISSN = {1465-3060,1364-0380},
   MRCLASS = {14J17 (53D05 53D37)},
  MRNUMBER = {5047784},
       DOI = {10.2140/gt.2026.30.645},
       URL = {https://doi.org/10.2140/gt.2026.30.645},
}

@article{hara2026stability,
  title={Stability conditions on noncommutative crepant resolutions of 3-dimensional isolated singularities},
  author={Hara, Wahei and Hirano, Yuki},
  journal={arXiv preprint arXiv:2603.04858},
  year={2026}
}

@article {Her,
    AUTHOR = {Herzog, Christopher P.},
     TITLE = {Seiberg duality is an exceptional mutation},
   JOURNAL = {J. High Energy Phys.},
  FJOURNAL = {Journal of High Energy Physics. A SISSA Journal},
      YEAR = {2004},
    NUMBER = {8},
     PAGES = {064, 31},
      ISSN = {1126-6708,1029-8479},
   MRCLASS = {81T30 (14J81 81T45)},
  MRNUMBER = {2109842},
MRREVIEWER = {Yang-Hui\ He},
       DOI = {10.1088/1126-6708/2004/08/064},
       URL = {https://doi.org/10.1088/1126-6708/2004/08/064},
}

@article {Her_Karp,
    AUTHOR = {Herzog, Christopher P. and Karp, Robert L.},
     TITLE = {On the geometry of quiver gauge theories (stacking exceptional
              collections)},
   JOURNAL = {Adv. Theor. Math. Phys.},
  FJOURNAL = {Advances in Theoretical and Mathematical Physics},
    VOLUME = {13},
      YEAR = {2009},
    NUMBER = {3},
     PAGES = {599--636},
      ISSN = {1095-0761,1095-0753},
   MRCLASS = {14F05 (14J81 81T30)},
  MRNUMBER = {2610572},
MRREVIEWER = {Yang-Hui\ He},
       DOI = {10.4310/atmp.2009.v13.n3.a1},
       URL = {https://doi.org/10.4310/atmp.2009.v13.n3.a1},
}

@article {HP,
    AUTHOR = {Hille, Lutz and Perling, Markus},
     TITLE = {Exceptional sequences of invertible sheaves on rational
              surfaces},
   JOURNAL = {Compos. Math.},
  FJOURNAL = {Compositio Mathematica},
    VOLUME = {147},
      YEAR = {2011},
    NUMBER = {4},
     PAGES = {1230--1280},
      ISSN = {0010-437X,1570-5846},
   MRCLASS = {14F05 (14J26 14M25)},
  MRNUMBER = {2822868},
MRREVIEWER = {Pawel\ Sosna},
       DOI = {10.1112/S0010437X10005208},
       URL = {https://doi.org/10.1112/S0010437X10005208},
}

@book {Huy,
    AUTHOR = {Huybrechts, Daniel},
     TITLE = {Fourier-{M}ukai transforms in algebraic geometry},
    SERIES = {Oxford Mathematical Monographs},
 PUBLISHER = {The Clarendon Press, Oxford University Press, Oxford},
      YEAR = {2006},
     PAGES = {viii+307},
      ISBN = {978-0-19-929686-6; 0-19-929686-3},
   MRCLASS = {14F05 (14-02 18E30)},
  MRNUMBER = {2244106},
MRREVIEWER = {Bal\'azs\ Szendr\H oi},
       DOI = {10.1093/acprof:oso/9780199296866.001.0001},
       URL = {https://doi.org/10.1093/acprof:oso/9780199296866.001.0001},
}

@article {IyamaReiten,
    AUTHOR = {Iyama, Osamu and Reiten, Idun},
     TITLE = {Fomin-{Z}elevinsky mutation and tilting modules over
              {C}alabi-{Y}au algebras},
   JOURNAL = {Amer. J. Math.},
  FJOURNAL = {American Journal of Mathematics},
    VOLUME = {130},
      YEAR = {2008},
    NUMBER = {4},
     PAGES = {1087--1149},
      ISSN = {0002-9327,1080-6377},
   MRCLASS = {16G30 (16G20 16S38)},
  MRNUMBER = {2427009},
MRREVIEWER = {Hugh\ Ross\ Thomas},
       DOI = {10.1353/ajm.0.0011},
       URL = {https://doi.org/10.1353/ajm.0.0011},
}

@article {IyamaWemyss,
    AUTHOR = {Iyama, Osamu and Wemyss, Michael},
     TITLE = {Maximal modifications and {A}uslander-{R}eiten duality for
              non-isolated singularities},
   JOURNAL = {Invent. Math.},
  FJOURNAL = {Inventiones Mathematicae},
    VOLUME = {197},
      YEAR = {2014},
    NUMBER = {3},
     PAGES = {521--586},
      ISSN = {0020-9910,1432-1297},
   MRCLASS = {13C14 (13D09 16G70)},
  MRNUMBER = {3251829},
MRREVIEWER = {Siamak\ Yassemi},
       DOI = {10.1007/s00222-013-0491-y},
       URL = {https://doi.org/10.1007/s00222-013-0491-y},
}

@article{IWtits,
  title={Tits cone intersections and applications},
  author={Iyama, Osamu and Wemyss, Michael},
  journal={ preprint, available at \url{https://www.maths.gla.ac.uk/~mwemyss/MainFile_for_web.pdf}},
  year={2023}
}

@book {kac,
    AUTHOR = {Kac, Victor G.},
     TITLE = {Infinite-dimensional {L}ie algebras},
   EDITION = {Third},
 PUBLISHER = {Cambridge University Press, Cambridge},
      YEAR = {1990},
     PAGES = {xxii+400},
      ISBN = {0-521-37215-1; 0-521-46693-8},
   MRCLASS = {17B65 (17B67 17B68 58F07)},
  MRNUMBER = {1104219},
       DOI = {10.1017/CBO9780511626234},
       URL = {https://doi.org/10.1017/CBO9780511626234},
}

@article {KN,
    AUTHOR = {Karpov, Boris V. and Nogin, Dmitri Yu.},
     TITLE = {Three-block exceptional sets on del {P}ezzo surfaces},
   JOURNAL = {Izv. Ross. Akad. Nauk Ser. Mat.},
  FJOURNAL = {Izvestiya Rossiiskoi Akademii Nauk. Seriya Matematicheskaya},
    VOLUME = {62},
      YEAR = {1998},
    NUMBER = {3},
     PAGES = {3--38},
      ISSN = {1607-0046,2587-5906},
   MRCLASS = {14J60 (14F05)},
  MRNUMBER = {1642152},
MRREVIEWER = {Jaros\l aw\ A.\ Wi\'sniewski},
       DOI = {10.1070/im1998v062n03ABEH000205},
       URL = {https://doi.org/10.1070/im1998v062n03ABEH000205},
}

@article {KNP,
    AUTHOR = {Kasprzyk, Alexander and Nill, Benjamin and Prince, Thomas},
     TITLE = {Minimality and mutation-equivalence of polygons},
   JOURNAL = {Forum Math. Sigma},
  FJOURNAL = {Forum of Mathematics. Sigma},
    VOLUME = {5},
      YEAR = {2017},
     PAGES = {Paper No. e18, 48},
      ISSN = {2050-5094},
   MRCLASS = {14J26 (14J33 14M25 52B20)},
  MRNUMBER = {3686766},
MRREVIEWER = {Dimitrios\ I.\ Dais},
       DOI = {10.1017/fms.2017.10},
       URL = {https://doi.org/10.1017/fms.2017.10},
}

@article {KO,
    AUTHOR = {Kuleshov, Sergey A. and Orlov, Dmitri O.},
     TITLE = {Exceptional sheaves on {D}el {P}ezzo surfaces},
   JOURNAL = {Izv. Ross. Akad. Nauk Ser. Mat.},
  FJOURNAL = {Izvestiya Rossiiskoi Akademii Nauk. Seriya Matematicheskaya},
    VOLUME = {58},
      YEAR = {1994},
    NUMBER = {3},
     PAGES = {53--87},
      ISSN = {1607-0046,2587-5906},
   MRCLASS = {14J60},
  MRNUMBER = {1286839},
MRREVIEWER = {Yuri\ G.\ Prokhorov},
       DOI = {10.1070/IM1995v044n03ABEH001609},
       URL = {https://doi.org/10.1070/IM1995v044n03ABEH001609},
}

@article {lutz,
    AUTHOR = {Lutz, Wendelin},
     TITLE = {Mirrors to del {P}ezzo surfaces and the classification of
              {$T$}-polygons},
   JOURNAL = {SIGMA Symmetry Integrability Geom. Methods Appl.},
  FJOURNAL = {SIGMA. Symmetry, Integrability and Geometry. Methods and
              Applications},
    VOLUME = {20},
      YEAR = {2024},
     PAGES = {Paper No. 095, 20},
      ISSN = {1815-0659},
   MRCLASS = {14J33 (14E07)},
  MRNUMBER = {4843384},
       DOI = {10.3842/SIGMA.2024.095},
       URL = {https://doi.org/10.3842/SIGMA.2024.095},
}

@article {mandel,
    AUTHOR = {Mandel, Travis},
     TITLE = {Classification of rank 2 cluster varieties},
   JOURNAL = {SIGMA Symmetry Integrability Geom. Methods Appl.},
  FJOURNAL = {SIGMA. Symmetry, Integrability and Geometry. Methods and
              Applications},
    VOLUME = {15},
      YEAR = {2019},
     PAGES = {Paper 042, 32},
      ISSN = {1815-0659},
   MRCLASS = {13F60 (14J32)},
  MRNUMBER = {3954363},
MRREVIEWER = {Lara\ Bossinger},
       DOI = {10.3842/SIGMA.2019.042},
       URL = {https://doi.org/10.3842/SIGMA.2019.042},
}

@book {manin,
    AUTHOR = {Manin, Yuri I.},
     TITLE = {Cubic forms},
    SERIES = {North-Holland Mathematical Library},
    VOLUME = {4},
   EDITION = {Second},
      NOTE = {Algebra, geometry, arithmetic,
              Translated from the Russian by M. Hazewinkel},
 PUBLISHER = {North-Holland Publishing Co., Amsterdam},
      YEAR = {1986},
     PAGES = {x+326},
      ISBN = {0-444-87823-8},
   MRCLASS = {11Gxx (14Gxx 14J20)},
  MRNUMBER = {833513},
}

@article {mizoguchi,
    AUTHOR = {Mizoguchi, Shun'ya and Yamada, Yasuhiko},
     TITLE = {{$W(E_{10})$} symmetry, {M}-theory and {P}ainlev\'e{}
              equations},
   JOURNAL = {Phys. Lett. B},
  FJOURNAL = {Physics Letters. B. Particle Physics, Nuclear Physics and
              Cosmology},
    VOLUME = {537},
      YEAR = {2002},
    NUMBER = {1-2},
     PAGES = {130--140},
      ISSN = {0370-2693,1873-2445},
   MRCLASS = {81T30 (14J99 34M55)},
  MRNUMBER = {1910991},
MRREVIEWER = {Johannes\ Walcher},
       DOI = {10.1016/S0370-2693(02)01870-1},
       URL = {https://doi.org/10.1016/S0370-2693(02)01870-1},
}

@article {Mizuno,
    AUTHOR = {Mizuno, Yuma},
     TITLE = {{$q$}-{P}ainlev\'e{} equations on cluster {P}oisson varieties
              via toric geometry},
   JOURNAL = {Selecta Math. (N.S.)},
  FJOURNAL = {Selecta Mathematica. New Series},
    VOLUME = {30},
      YEAR = {2024},
    NUMBER = {2},
     PAGES = {Paper No. 19, 37},
      ISSN = {1022-1824,1420-9020},
   MRCLASS = {13F60 (14M25 34M55 39A13)},
  MRNUMBER = {4695875},
MRREVIEWER = {Xueqing\ Chen},
       DOI = {10.1007/s00029-023-00906-2},
       URL = {https://doi.org/10.1007/s00029-023-00906-2},
}

@article{nord,
  title={{F}ull exceptional collections on
{F}ano varieties and mutations},
  author={{N}ordskova, {A}nya},
  journal={Doctoral dissertation, {H}asselt {U}niversity},
  year={2025}
}

@article{NvdB,
  title={{NCCR}s of cones over del {P}ezzo surfaces},
  author={{N}ordskova, {A}nya and {V}an den Bergh, {M}ichel},
  journal={arXiv preprint arXiv:2604.11319},
  year={2026}
}

@article {perling,
    AUTHOR = {Perling, Markus},
     TITLE = {Combinatorial aspects of exceptional sequences on (rational)
              surfaces},
   JOURNAL = {Math. Z.},
  FJOURNAL = {Mathematische Zeitschrift},
    VOLUME = {288},
      YEAR = {2018},
    NUMBER = {1-2},
     PAGES = {243--286},
      ISSN = {0025-5874,1432-1823},
   MRCLASS = {14F05 (14J26 14J29 14M25 32S25)},
  MRNUMBER = {3774412},
MRREVIEWER = {Benjamin\ Schmidt},
       DOI = {10.1007/s00209-017-1887-y},
       URL = {https://doi.org/10.1007/s00209-017-1887-y},
}

@article {Sak,
    AUTHOR = {Sakai, Hidetaka},
     TITLE = {Rational surfaces associated with affine root systems and
              geometry of the {P}ainlev\'e{} equations},
   JOURNAL = {Comm. Math. Phys.},
  FJOURNAL = {Communications in Mathematical Physics},
    VOLUME = {220},
      YEAR = {2001},
    NUMBER = {1},
     PAGES = {165--229},
      ISSN = {0010-3616,1432-0916},
   MRCLASS = {14H10 (14D06 14J26 32G20 34M55 37K20 37K35)},
  MRNUMBER = {1882403},
MRREVIEWER = {I.\ Dolgachev},
       DOI = {10.1007/s002200100446},
       URL = {https://doi.org/10.1007/s002200100446},
}

@article {segal,
    AUTHOR = {Segal, Ed},
     TITLE = {The {$A_\infty$} deformation theory of a point and the derived
              categories of local {C}alabi-{Y}aus},
   JOURNAL = {J. Algebra},
  FJOURNAL = {Journal of Algebra},
    VOLUME = {320},
      YEAR = {2008},
    NUMBER = {8},
     PAGES = {3232--3268},
      ISSN = {0021-8693,1090-266X},
   MRCLASS = {16E30 (16G20)},
  MRNUMBER = {2450725},
MRREVIEWER = {Mikael\ Vejdemo Johansson},
       DOI = {10.1016/j.jalgebra.2008.06.019},
       URL = {https://doi.org/10.1016/j.jalgebra.2008.06.019},
}

@article{tevelev2022categorical,
  title={Categorical aspects of the {K}oll\'ar--{S}hepherd-{B}arron correspondence},
  author={{T}evelev, {J}enia and {U}rz\'ua, {G}iancarlo},
  journal={arXiv preprint arXiv:2204.13225},
  year={2022}
}

@article{bergh2002non,
  title={Non-commutative crepant resolutions},
  author={{V}an den Bergh, {M}ichel},
  journal={arXiv preprint math/0211064},
  year={2002}
}

@article{urzua2025wahl,
  title={{W}ahl singularities in degenerations of del {P}ezzo surfaces},
  author={{U}rz\'ua, {G}iancarlo and {Z}\'uniga, {J}uan {P}ablo},
  journal={arXiv preprint arXiv:2504.19929},
  year={2025}
}

@article {vianna,
    AUTHOR = {Vianna, Renato},
     TITLE = {Infinitely many monotone {L}agrangian tori in del {P}ezzo
              surfaces},
   JOURNAL = {Selecta Math. (N.S.)},
  FJOURNAL = {Selecta Mathematica. New Series},
    VOLUME = {23},
      YEAR = {2017},
    NUMBER = {3},
     PAGES = {1955--1996},
      ISSN = {1022-1824,1420-9020},
   MRCLASS = {53D12 (53D05)},
  MRNUMBER = {3663599},
MRREVIEWER = {Yingbo\ Han},
       DOI = {10.1007/s00029-017-0312-z},
       URL = {https://doi.org/10.1007/s00029-017-0312-z},
}

@article {vianna0,
    AUTHOR = {Vianna, Renato},
     TITLE = {Infinitely many exotic monotone {L}agrangian tori in
              $\mathbb{P}^2$},
   JOURNAL = {J. Topol.},
  FJOURNAL = {Journal of Topology},
    VOLUME = {9},
      YEAR = {2016},
    NUMBER = {2},
     PAGES = {535--551},
      ISSN = {1753-8416,1753-8424},
   MRCLASS = {53D12 (53D37 53D42)},
  MRNUMBER = {3509972},
MRREVIEWER = {Stefan\ Nemirovski},
       DOI = {10.1112/jtopol/jtw002},
       URL = {https://doi.org/10.1112/jtopol/jtw002},
}

@article {deTh_vdB,
    AUTHOR = {de Thanhoffer de V\"olcsey, Louis and Van den Bergh, Michel},
     TITLE = {Some new examples of nondegenerate quiver potentials},
   JOURNAL = {Int. Math. Res. Not. IMRN},
  FJOURNAL = {International Mathematics Research Notices. IMRN},
      YEAR = {2013},
    NUMBER = {20},
     PAGES = {4672--4686},
      ISSN = {1073-7928,1687-0247},
   MRCLASS = {16G20},
  MRNUMBER = {3118872},
MRREVIEWER = {Michael\ Wemyss},
       DOI = {10.1093/imrn/rns182},
       URL = {https://doi.org/10.1093/imrn/rns182},
}

@incollection {vdB_ICM,
    AUTHOR = {Van den Bergh, Michel},
     TITLE = {Noncommutative crepant resolutions, an overview},
 BOOKTITLE = {I{CM}---{I}nternational {C}ongress of {M}athematicians. {V}ol.
              2. {P}lenary lectures},
     PAGES = {1354--1391},
 PUBLISHER = {EMS Press, Berlin},
      YEAR = {2023},
      ISBN = {978-3-98547-060-0; 978-3-98547-560-5; 978-3-98547-058-7},
   MRCLASS = {14A22 (14E30 18G80)},
  MRNUMBER = {4680283},
}

\end{document}